\documentclass[11pt,twoside,final]{article}
\usepackage{amsmath,amsthm,amssymb,bm}
\usepackage[reset,a4paper,vmargin=3truecm,hmargin=2truecm]{geometry}

\numberwithin{equation}{section}
\allowdisplaybreaks

\DeclareMathAlphabet\matheu{U}{eur}{m}{n}
\DeclareMathAlphabet\matheuscr{U}{eus}{m}{n}
\DeclareMathAlphabet\mathscr{U}{rsfs}{m}{n}

\newcommand{\MSC}[2]{\medskip\noindent
\textbf{2010 Mathematics Subject Classification:} \textit{Primary} #1, \textit{Secondary} #2.}
\newcommand{\KeyWords}[1]{\medskip\noindent{\bfseries Keywords and phrases:} {#1}}
\newcommand{\Kakenhi}[2]{Grant-in-Aid for #1 No.\,#2}
\newcommand{\CREST}{JST CREST Grant Number JPMJCR14D6, Japan}

\newenvironment{ackn}{\subsection*{Acknowledgement}}{}

\newcommand{\eqed}{\pushQED{\qed}\qedhere\popQED}

\theoremstyle{plain}
\newtheorem{thm}{Theorem}[section]
\newtheorem{lem}[thm]{Lemma}
\newtheorem{prop}[thm]{Proposition}
\newtheorem{cor}[thm]{Corollary}
\newtheorem{conjecture}[thm]{Conjecture}
\newtheorem{lemma}[thm]{Lemma}

\theoremstyle{definition}
\newtheorem{dfn}{Definition}[section]
\newtheorem{ex}{Example}[section]
\newtheorem{prob}{Problem}[section]

\theoremstyle{remark}
\newtheorem{rem}{Remark}[section]
\newtheorem{remark}[rem]{Remark}

\newcommand{\N}{\mathbb{N}}
\newcommand{\Z}{\mathbb{Z}}
\newcommand{\Q}{\mathbb{Q}}
\newcommand{\R}{\mathbb{R}}
\newcommand{\C}{\mathbb{C}}

\newcommand{\M}{\mathbb{M}}

\newcommand{\X}{\mathfrak{X}}
\newcommand{\Holo}{\mathcal{H}}
\newcommand{\Mero}{\mathcal{M}}

\newcommand{\cZ}[1][m]{{Z}_{[#1]}}
\newcommand{\cB}[1][m]{{B}_{[#1]}}
\newcommand{\cH}[1][m]{{H}_{[#1]}}
\newcommand{\pZ}[1][m]{\widetilde{Z}_{[#1]}}
\newcommand{\pB}[1][m]{\widetilde{B}_{[#1]}}
\newcommand{\pH}[1][m]{\widetilde{H}_{[#1]}}
\newcommand{\pC}{\widetilde{C}}

\newcommand{\el}{l} 

\newcommand{\I}{\sqrt{-1}}

\newcommand{\jth}[2]{{\underset{\text{\sffamily$#1$-th}}{#2}}}

\newcommand{\kakko}[1]{\left(#1\right)}
\newcommand{\ckakko}[1]{\left\{#1\right\}}
\newcommand{\abs}[1]{\left\lvert#1\right\rvert}
\newcommand{\card}[1]{\##1}

\newcommand{\set}[3][big]
{\csname#1l\endcsname\{#2\,\csname#1\endcsname\vert\,#3\csname#1r\endcsname\}}
\newcommand{\Set}[2]{\left\{#1\,\middle\vert\,#2\right\}}

\DeclareMathOperator{\tr}{tr}
\DeclareMathOperator{\Tr}{Tr}
\DeclareMathOperator{\diag}{diag}
\DeclareMathOperator{\sgn}{sgn}
\DeclareMathOperator{\Sym}{Sym}
\DeclareMathOperator*{\Res}{Res}
\DeclareMathOperator{\Map}{Map}
\DeclareMathOperator{\im}{im}
\DeclareMathOperator{\Mat}{Mat}

\newcommand{\deq}{:=}

\newcommand{\B}{\boldsymbol{B}}
\newcommand{\E}{\boldsymbol{E}}
\newcommand{\F}{\boldsymbol{F}}

\newcommand{\va}{\boldsymbol{a}}
\newcommand{\vb}{\boldsymbol{b}}
\newcommand{\vd}{\boldsymbol{d}}
\newcommand{\ve}{\boldsymbol{e}}
\newcommand{\vi}{\boldsymbol{i}}
\newcommand{\vj}{\boldsymbol{j}}
\newcommand{\vo}{\boldsymbol{o}}
\newcommand{\vt}{\boldsymbol{t}}
\newcommand{\vu}{\boldsymbol{u}}
\newcommand{\vv}{\boldsymbol{v}}
\newcommand{\vx}{\boldsymbol{x}}

\newcommand{\symx}[1]{\operatorname{Sym}^{\times}_{#1}}

\newcommand{\esum}[1]{\left\|#1\right\|}

\newcommand{\cT}[1]{\mathcal{T}_{#1}}
\newcommand{\U}[1]{\boldsymbol{U}_{#1}}
\newcommand{\V}[1]{\boldsymbol{V}_{#1}}
\newcommand{\den}[1]{\mathcal{W}_{#1}}

\newcommand{\hA}[2]{\frac{1-u_{#1}^4u_{#2}^4}{(1-u_{#1}^4)(1-u_{#2}^4)}}
\newcommand{\hB}[1]{\frac{-u_{#1}^2}{1-u_{#1}^4}}

\newcommand{\e}{\varepsilon}
\newcommand{\conj}[1]{\overline{#1}}
\newcommand{\sym}[1]{\mathfrak{S}_{#1}}
\newcommand{\cyclic}[1]{\mathcal{C}_{#1}}

\newcommand{\hgf}[5]{{}_{#1}F_{#2}\!\left(#3;#4;#5\right)} 

\newcommand{\J}[2]{J_{#1}(#2)} 
\newcommand{\tJ}[2]{\tilde{J}_{#1}(#2)} 
\newcommand{\T}[3]{T_{#1,#2}(#3)}
\newcommand{\tempJ}[3]{J_{#2,#1-#2}(#3)}

\newcommand{\eqsp}{\phantom{{}={}}}

\newcommand{\generators}[1]{\left\langle#1\right\rangle}
\newcommand{\uhp}{\mathfrak{h}} 

\newcommand{\dG}[2][{}]{\mathrm{dG}_{#2}^{#1}} 
\newcommand{\dE}[2][{}]{\mathrm{dE}_{#2}^{#1}} 

\newcommand{\omegaul}{\underline{\omega}} 
\newcommand{\psum}{\sideset{}{'}{\sum}} 

\newcommand{\mat}[1]{\begin{pmatrix}#1\end{pmatrix}}

\newcommand{\iE}{\bm{E}}
\newcommand{\iG}{\bm{G}}
\newcommand{\iGt}{\widetilde{\iG}}

\newcommand{\der}{\partial}

\newcommand{\ds}[2]{\frac{\partial}{\partial s}#1\bigg|_{s=#2}} 

\newcommand{\q}{\kappa}
\newcommand{\cobd}{\delta} 
\newcommand{\bgamma}{\underline{\gamma}}
\newcommand{\truncate}{\mathcal{T}}
\newcommand{\contract}{\mathcal{C}}

\newcommand{\hecke}{\mathfrak{G}(2)}
\newcommand{\congsubgp}{\Gamma}
\newcommand{\suchthat}{\quad\text{s.t.}\quad}

\newcommand{\ges}[3]{G^{(#1;#2,#3)}} 
\newcommand{\Torus}{\mathbb{T}^2}
\newcommand{\data}[2]{\noindent\qquad$\displaystyle\tJ#1n\ \colon\ #2$ \bigskip}
\newcommand{\cf}[2]{A_{#1,#2}}

\newcommand{\per}{R}

\newcommand{\liesl}{\mathfrak{sl}}

\newcommand{\evenZ}{Z^{\text{\upshape even}}}
\newcommand{\oddZ}{Z^{\text{\upshape odd}}}
\newcommand{\evenY}{Y^{\text{\upshape even}}}
\newcommand{\oddY}{Y^{\text{\upshape odd}}}
\newcommand{\tA}{\widetilde A}
\newcommand{\tB}{\widetilde B}
\newcommand{\then}{\DOTSB\;\Longrightarrow\;}
\DeclareMathOperator{\ord}{ord}
\newcommand{\floor}[1]{\left\lfloor#1\right\rfloor}
\newcommand{\fracpart}[1]{\left\{#1\right\}}

\title{Ap\'ery-like numbers for non-commutative harmonic oscillators and automorphic integrals}
\author{Kazufumi Kimoto%
\thanks{Partially supported by \Kakenhi{Scientific Research (C)}{18K03248}, JSPS and by \CREST.}
\, and \,
Masato Wakayama%
\thanks{Partially supported by \Kakenhi{Scientific Research (C)}{16K05063}, JSPS and by \CREST.}
}
\pagestyle{myheadings}

\markboth{K.~Kimoto and M.~Wakayama}{Ap\'ery-like numbers for NcHOs and automorphic integrals}

\begin{document}
\maketitle

\begin{abstract}
The purpose of the present paper is to study the number theoretic properties of the special values of the spectral zeta functions of the non-commutative harmonic oscillator (NcHO), especially in relation to modular forms and elliptic curves from the viewpoint of Fuchsian differential equations, and deepen the understanding of the spectrum of the NcHO. We study first the general expression of special values of the spectral zeta function $\zeta_Q(s)$ of the NcHO at $s=n$ $(n=2,3,\dots)$ and then the generating and meta-generating functions for \emph{Ap\'ery-like numbers} defined through the analysis of special values $\zeta_Q(n)$. Actually, we show that the generating function $w_{2n}$ of such Ap\'ery-like numbers appearing (as the ``first anomaly'') in $\zeta_Q(2n)$ for $n=2$ gives an example of \emph{automorphic integral with rational period functions} in the sense of Knopp,
but still a better explanation remains to be clarified explicitly for $n>2$. This is a generalization of our earlier result on showing that $w_2$ is interpreted as a $\Gamma(2)$-modular form of weight $1$. Moreover, certain congruence relations over primes for ``normalized" Ap\'ery-like numbers are also proven. In order to describe $w_{2n}$ in a similar manner as $w_2$, we introduce a \emph{differential Eisenstein series} by using analytic continuation of a classical generalized Eisenstein series due to Berndt. The differential Eisenstein series is actually a typical example of the automorphic integral of negative weight. We then have an explicit expression of $w_4$ in terms of the differential Eisenstein series. We discuss also shortly the Hecke operators acting on such automorphic integrals and relating Eichler's cohomology group.

\MSC{11M41}{11A07, 33C20}

\KeyWords{
spectral zeta functions,
special values,
Ap\'ery-like numbers,
congruence relations,
Mahler measures,
Hecke operators,
Eichler cohomology groups.
}
\end{abstract}

\setcounter{tocdepth}{1} 
\tableofcontents

\section{Introduction}\label{Int}

Let $Q$ be a parity preserving matrix valued ordinary differential operator defined by
\begin{equation*}
Q=Q_{\alpha,\beta}
=\begin{pmatrix}\alpha & 0 \\ 0 & \beta\end{pmatrix}\kakko{-\frac12\frac{d^2}{dx^2}+\frac12x^2}
+\begin{pmatrix}0 & -1 \\ 1 & 0\end{pmatrix}\kakko{x\frac{d}{dx}+\frac12}.
\end{equation*}
The system defined by $Q$ is called the \emph{non-commutative harmonic oscillator} (NcHO),
which was introduced in \cite{PW2001, PW2002}
(see also \cite{P, P2014} for references therein and for recent progress).
Throughout the paper,
we always assume that $\alpha,\beta>0$ and $\alpha\beta>1$.
Under this assumption, the operator $Q$ becomes a positive self-adjoint unbounded operator on $L^2(\R;\C^2)$,
the space of $\C^2$-valued square-integrable functions on $\R$,
and $Q$ has only a discrete spectrum with uniformly bounded multiplicity:
\begin{equation*}
0<\lambda_1\le\lambda_2\le\lambda_3\le\dots(\nearrow\infty).
\end{equation*}
It was proved recently that the lowest eigenstate is multiplicity free \cite{HS2013B} and also the multiplicity of general eigenstate is less than or equal to $2$ \cite{W2013} (see \cite{ W2013-2} for the proof).

The aim of the present paper is to advance a number theoretic study of the spectrum of the NcHO through observing special values of the spectral zeta function $\zeta_Q(s)$ (\cite{IW2005a, IW2005KJM}) defined below, and further to deepen the understanding of the spectrum:
$$
\zeta_Q(s):=\sum_{n=1}^\infty \lambda_n^{-s} \quad (\Re(s)>1).
$$
It is noted that, when $\alpha=\beta$, $Q=Q_{\alpha,\alpha}$ is unitarily equivalent to the couple of quantum harmonic oscillators, whence the eigenvalues are easily calculated as
$\set{\sqrt{\alpha^2-1}\big(n+\frac12\big)}{n\in \Z_{\geq0}}$
having multiplicity 2.
Actually, when $\alpha=\beta$, behind $Q$, there exists a structure corresponding to the tensor product of the $2$-dimensional trivial representation and the oscillator representation (see e.g.\ \cite{HT1992}) of the Lie algebra $\liesl_2$. Namely, in this case, $\zeta_Q(s)$ is essentially given by the Riemann zeta function $\zeta(s)$ as
$\zeta_Q(s)=2(2^s-1)\sqrt{\alpha^2-1}\zeta(s)$. In other words, $\zeta_Q(s)$ is a $\frac{\alpha}{\beta}$-analogue of $\zeta(s)$.
The clarification of the spectrum in the general $\alpha\not=\beta$ case is, however, considered to be highly non-trivial.
Indeed, while the spectrum is described theoretically by using certain continued fractions \cite{PW2002} and also by Heun's ordinary differential equations (those have four regular singular points) \cite{Heun2008} in a certain complex domain \cite{O2001CMP, W2013-2}, almost no satisfactory information on each eigenvalue is available in reality when $\alpha\ne\beta$ (see \cite{P} and references therein).

It is nevertheless worth mentioning that, in recent years, special attention has been paid to studying the spectrum of self-adjoint operators with non-commutative coefficients, like the Jaynes-Cummings model, the quantum Rabi model and its generalized version, etc., not only in mathematics but also in theoretical/experimental physics (see e.g.\ \cite{HR2008, B2011PRL, BCBS2016JPA, Y-S2018} and references therein).
The NcHO has been expected similarly to provide one of these Hamiltonians describing such quantum interacting systems, i.e.\ a Hamiltonian describing such an interaction between photons and atoms.
Although it does not seem to be expected, it has been shown in \cite{W2013-2} that (the ``Heun picture" of) the quantum Rabi model can be obtained by the second order element of the universal enveloping algebra $U(\liesl_2)$ naturally arising from the NcHO through the oscillator representation. It is, in fact, caught by taking particular parameters and considering general \emph{confluence procedure, i.e.\ confluence of two singular points} in Heun's ordinary differential equation obtained in the action of the non-unitary principal series representation of $\liesl_2$.

Therefore, in place of hunting each eigenvalue of $Q$, it is significant to study the spectral zeta function $\zeta_Q(s)$ of the NcHOs as a sort of generating function of the eigenvalues. From the physical point of view, $\zeta_Q(s)$ is also regarded as the Mellin transform of the partition function of the system defined by the NcHO.
This paper discusses the number theoretic properties of the special values of $\zeta_Q(s)$ at integer points. We notice that special values are considered as moments of the partition functions.
We have actually studied congruence properties of the \emph{Ap\'ery-like numbers} in \cite{KW2006KJM} that have arisen naturally from the special values $\zeta_Q(2)$ at $s=2$ by the same idea guided in the studies for the Ap\'ery numbers for $\zeta(2)$ in \cite{Beu1985} (and references therein). This study of congruence properties led us further to show that the generating function $w_2$ of the Ap\'ery-like numbers for $\zeta_Q(2)$ is interpreted as a $\Gamma(2)$-modular form of weight $1$ \cite{KW2007} in the same way as in a pioneering study by Beukers \cite{Beu1983, Beu1987} for the Ap\'ery numbers. In other words, the recurrence equation of these Ap\'ery-like numbers defined in \cite{KW2006KJM} provide one of the particular examples listed in Zagier \cite{Z} (\#19)\footnote{Although the terminology ``\emph{Ap\'ery-like}'' is the identical, the usage/definition of the name in the current paper is different from the one in the title of \cite{Z}.}.
Moreover, it is known in \cite{KY2009} that the Ap\'ery-like numbers corresponding to $\zeta_Q(2)$ are described by a finite convolution of the Hurwitz zeta function and certain variation of multiple $L$-values.
Also, recently, certain nice congruence relations among these Ap\'ery-like numbers that are quite resembled to the Rodriguez-Villegas type congruence \cite{Mortenson2003JNT} and conjectured in \cite{KW2006KJM} are proved in \cite{LOS2014}. Further interesting congruence that involves Bernoulli numbers has been obtained in \cite{L2018} (see also \cite{Sun2019}). The congruence in \cite{L2018} can be considered as a one step deep congruence of the one proved in \cite{LOS2014} corrected by the remainder term.

It is hard in general to obtain the precise information of the higher special values of $\zeta_Q(n)$ $(n >2)$
as the same level of $\zeta_Q(2)$. Thus, in this paper we introduce the Ap\'ery-like numbers $J_k(n)$ $(k=0,1,2,\ldots)$ for each $n$ defined through the \emph{first anomaly} of $\zeta_Q(n)$ $(n >2)$. These Ap\'ery-like numbers share the properties of the one for $\zeta_Q(2)$, e.g.\ satisfy a similar recurrence relation as in the case of $\zeta_Q(2)$ and hence the ordinary differential equation satisfied by the generating function follows from the recurrence relation. Remarkably, each of the homogeneous part of those differential equations is identified to be a ($n$ dependent) power of the homogeneous part of the one corresponding to $\zeta_Q(2)$.
Further, we observe that the meta-generating functions of Ap\'ery-like numbers are described explicitly by
the modular Mahler measures studied by Rodriguez-Villegas in \cite{RV1999}.
Through this relation, we may expect to discuss an interesting aspect of a discrete dynamical system behind the NcHO defined by some group via (weighted) Cayley graphs (see \cite{DL2009}, also e.g.\ \cite{LSW1990}) in the future.
Moreover, we show that the generating function $w_{2n}$ of Ap\'ery-like numbers corresponding to the first anomaly in $\zeta_Q(2n)$ when $n=2$ is given by an automorphic integral with a rational period function in the sense of Knopp \cite{Knopp1978Duke}. This is obviously a generalization of our earlier result \cite{KW2007} showing that $w_2$ is interpreted as a $\Gamma(2)$-modular form of weight $1$. However, it is still unclear whether there is a similar explicit (geometric and algebraic) interpretation in general for $\zeta_Q(n)$ ($n>2$). Further, the study of the special values of the spectral zeta function for the quantum Rabi model \cite{S2016NMJ} and comparison to one for NcHO is a quite interesting future problem as NcHO is a ``covering'' of the model.

The organization of the paper is as follows:
In \S\ref{SV} we calculate (Theorem \ref{thm:specialvalues}) the special values of the spectral zeta function for the NcHO. These explicit formulas are referred already in \cite{KW2012RIMS} (see \cite{K2010CCRH}) by multiple integrals like (a generalization of) the original Ap\'ery cases for $\zeta(2)$ and $\zeta(3)$ using Legendre functions \cite{Beu1983, BP1984}.
The basic idea is on the same line as \cite{IW2005KJM} but some essentially new techniques are explored.

In \S\ref{ALN} we derive the recursion formula for the Ap\'ery-like numbers associated to the first anomalies of special values of $\zeta_Q(s)$ and the differential equations satisfied by the generating functions of such Ap\'ery-like numbers. Although our study is very much influenced by the classical (algebro-geometric) work on Ap\'ery numbers in \cite{Beu1983, Beu1987, BP1984} and its subsequent developments, since the family of generating functions for Ap\'ery-like numbers arising via the NcHO possesses a remarkable hierarchical structure, there is a decisive difference between these two.
We then define the \emph{normalized} Ap\'ery-like numbers which are shown to be rational numbers, and present a numerical data of these numbers. In the end of this section \S \ref{CongruenceProperty}, we give a certain conjecture (Conjecture \ref{GeneralCongruence}) for the congruence among those normalized Ap\'ery-like numbers which are the generalization of the results in \cite{KW2006KJM} based on numerical experiments.  We can only show in this paper a weaker/partial result in Theorem \ref{weaker version}, which may be considered as a version of the classical Kummer congruence for the special values at negative odd integer points of $\zeta(s)$.
We remark that, however, it is quite difficult to expect an exact generalization of the congruence relation (i.e. of the same shape which is relevant to the hypergeometric series) shown by employing $p$-adic analysis in \cite{LOS2014} (and \cite{L2018}) for $\zeta(2)$.

We study in \S\ref{MGF} also meta-generating functions for Ap\'ery-like numbers in relation to the study on modular Mahler measures in \cite{RV1999}.
In \S\ref{Modular}, we first recall briefly the modular form interpretation of the generating function for the Ap\'ery-like numbers for $\zeta_Q(2)$ from \cite{KW2007} and discuss the corresponding generating function $w_{2n}$ for the Ap\'ery-like numbers for (the first anomaly in) $\zeta_Q(2n)$. We may also study the Ap\'ery numbers associated with $\zeta_Q(2n+1)$ but the structure behind this is different from the one in \cite{BP1984} that is relating with $K3$ surfaces. Actually, although the homogeneous part of the differential equation satisfied by the Ap\'ery-like numbers arisen from odd special values are the same as the even case, even the $\zeta_Q(3)$ can not be interpreted as a picture of $K3$ spaces.
We recall then in \S\ref{Modular} a notion of \emph{automorphic integrals with rational period functions} in the sense of Knopp \cite{Knopp1978Duke} (that is a slightly generalized notion of the automorphic integrals \cite{G1961}). Then we study $w_{2n}$ from the viewpoint of Fuchsian differential equations. Indeed, we show that $w_{2n}$ can be expressed by the linear space spanned by higher derivatives of automorphic integrals and $w_2$. In other words, we observe that $w_{2k}$ is obtained by some linear combination of the multiple integral of the (same) modular forms. For instance, the explicit expression of $w_{6}$ by such a linear span of integrals is given in \S5.5.
In order to describe $w_{2n}$ in a similar manner as $w_2$, it is necessary to introduce a \emph{differential Eisenstein series} by using analytic continuation of a classical generalized Eisenstein series due to Berndt
\cite{B1975Cr} in \S\ref{EF}. These differential Eisenstein series provide typical examples of the automorphic integral of negative weight and we have an explicit expression of $w_4$ in terms of the differential Eisenstein series. We notice that the differential Eisenstein series is periodic, whence has a Fourier expansion at the infinity.
Further, we discuss shortly the Hecke operators acting on such automorphic integrals and compute the associated $L$-function of the differential Eisenstein series (which has an Euler product). In the final section \S\ref{Eichler}, we discuss briefly the Eichler cohomology groups relevant to the periodic automorphic integrals.
A part of ideas of the paper has been discussed in our proceedings paper \cite{KW2012RIMS}, but there is a certain misleading terminology \cite{KW2012RIMS} so that we will fix those in this paper\footnote{The general definition of ``residual modular forms'' in \cite{KW2012RIMS} is too demanded. Although the example given in \cite{KW2012RIMS} satisfies such strong condition in the definition, if the level $N$ is large, i.e.\ the number of inequivalent cusps is increasing, the definition of residual modular forms allows only the zero form. In this paper, we find actually that the notion of the automorphic integrals in the sense of \cite{Knopp1978Duke} is sufficient for our study.}.

\section{Special values of the spectral zeta function}\label{SV}

From the sequence of the eigenvalues
$0<\lambda_1\le\lambda_2\le\lambda_3\le\dots(\to\infty)$ of $Q$, we define the spectral zeta function of $Q$ by the Dirichlet series
\begin{equation*}
\zeta_Q(s)=\sum_{n=1}^\infty\frac1{\lambda_n^s}.
\end{equation*}
This series is absolutely convergent and defines a holomorphic function in $s$ in the region $\Re s>1$.
We call this function $\zeta_Q(s)$ the \emph{spectral zeta function} for the non-commutative harmonic oscillator $Q$ \cite{IW2005a}.
The zeta function $\zeta_Q(s)$ is analytically continued to the whole complex plane $\C$ as a single-valued meromorphic function which is holomorphic except for the simple pole at $s=1$.
It is notable that $\zeta_Q(s)$ has `trivial zeros' at $s=0,-2,-4,\dots$ from the presence of $\Gamma(s)^{-1}$ at the analytic continuation to the whole complex plane \cite{IW2005a}.
When the two parameters $\alpha$ and $\beta$ are equal, then $\zeta_Q(s)$ essentially gives the Riemann zeta function $\zeta(s)$ (see Remark \ref{rem:degenerate_case}).

We are interested in the special values of $\zeta_Q(s)$, that is, the values $\zeta_Q(s)$ at $s=2,3,4,\dots$.
In \cite{IW2005KJM} the first two special values are calculated as
\begin{align*}
\zeta_Q(2)&=2\kakko{\frac{\alpha+\beta}{2\sqrt{\alpha\beta(\alpha\beta-1)}}}^{\!\!2}\\
&\quad\eqsp\times\Biggl(\zeta(2,1/2)+\kakko{\frac{\alpha-\beta}{\alpha+\beta}}^{\!\!2}
\int_{[0,1]^2}\frac{4du_1du_2}{\sqrt{(1-u_1^2u_2^2)^2+(1-u_1^4)(1-u_2^4)/(\alpha\beta-1)}}\Biggr),\\
\zeta_Q(3)&=2\kakko{\frac{\alpha+\beta}{2\sqrt{\alpha\beta(\alpha\beta-1)}}}^{\!\!3}\\
&\quad\eqsp\times\Biggl(\zeta(3,1/2)+3\kakko{\frac{\alpha-\beta}{\alpha+\beta}}^{\!\!2}
\int_{[0,1]^3}\frac{8du_1du_2du_3}{\sqrt{(1-u_1^2u_2^2u_3^2)^2+(1-u_1^4)(1-u_2^4u_3^4)/(\alpha\beta-1)}}\Biggr),
\end{align*}
where $\zeta(s,x)=\sum_{n=0}^\infty(n+x)^{-s}$ is the Hurwitz zeta function.
These values are also given by the contour integral expressions using a solution of a certain Fuchsian differential equation.
Later, in \cite{O2008RJ} Ochiai gave an expression of $\zeta_Q(2)$ using the complete elliptic integral or the hypergeometric function, and the present authors \cite{KW2006KJM} gave a similar formula for $\zeta_Q(3)$.

In this section, we present an explicit calculation of the special values of the spectral zeta function $\zeta_Q(k)$ of the non-commutative harmonic oscillator $Q$ for all positive integers $k>1$, and express them in terms of integrals of certain algebraic functions (see Theorem \ref{thm:specialvalues} for the formula).

\subsection{Preliminaries for calculating special values}

Following to the method in \cite{IW2005KJM}, we first explain how to calculate the special values of $\zeta_Q(s)$.

Put
\begin{equation*}
\varepsilon\deq\frac1{\sqrt{\alpha\beta}},\quad
\q\deq\varepsilon(1-\varepsilon^2)^{-1/2}=\frac1{\sqrt{\alpha\beta-1}}
\end{equation*}
and
\begin{equation*}
A\deq\begin{pmatrix}
\alpha & 0 \\ 0 & \beta
\end{pmatrix},\quad
I\deq\begin{pmatrix}
1 & 0 \\ 0 & 1
\end{pmatrix},\quad
J\deq\begin{pmatrix}
0 & -1 \\ 1 & 0
\end{pmatrix}.
\end{equation*}
Notice that $0<\e<1$ and $\q>0$.
Since it is difficult to find the heat kernel of the NcHO
\begin{equation*}
Q=\frac12(-\partial_x^2+x^2)A+\Bigl(x\partial_x+\frac12\Bigr)J,
\end{equation*}
we look at a slightly modified one
\begin{equation*}
Q'=A^{-1/2}QA^{-1/2}=\frac12(-\partial_x^2+x^2)+\e J\Bigl(x\partial_x+\frac12\Bigr),
\end{equation*}
whose heat kernel is explicitly obtained as we see below.

The heat kernel of the usual quantum harmonic oscillator is known as the Mehler kernel and is given by
\begin{equation*}
p(t,x,y)=\pi^{-1/2}e^{-t/2}(1-e^{-2t})^{-1/2}\exp\Bigl(-\frac{x^2-y^2}2-\frac{(e^{-t}x-y)^2}{1-e^{-2t}}\Bigr).
\end{equation*}
Namely, $p(t,x,y)$ satisfies
\begin{equation*}
-\partial_tp(t,x,y)=\frac12(-\partial_x^2+x^2)p(t,x,y),\qquad
p(t,x,y)\to\delta(x-y)
\quad(t\downarrow0).
\end{equation*}

Put $q(t,x,y)=(1-\e^2)^{1/4}p((1-\e^2)^{1/2}t,(1-\e^2)^{1/4}x,(1-\e^2)^{1/4}y)$.
Then
\begin{equation*}
-\partial_tq(t,x,y)=\frac12(-\partial_x^2+(1-\e^2)x^2)q(t,x,y),\qquad
q(t,x,y)\to\delta(x-y)
\quad(t\downarrow0).
\end{equation*}
Define
\begin{equation*}
K'(t,x,y)=q(t,x,y)\exp\Bigl(\frac{\e(x^2-y^2)}2J\Bigr).
\end{equation*}
We see that
\begin{equation*}
-\partial_tK'(t,x,y)=\frac12(-\partial_x^2+x^2)K'(t,x,y)+\e J\Bigl(x\partial_x+\frac12\Bigr)K'(t,x,y),
\end{equation*}
\begin{equation*}
K'(t,x,y)\to\delta(x-y)I
\quad(t\downarrow0),
\end{equation*}
which implies that $K'(t,x,y)$ is the heat kernel of $Q'$ (see \cite{IW2005KJM} for detail\footnote{%
There is a typo in (2.11b) of \cite{IW2005KJM}.
The right equation should be
\begin{equation*}
\partial_tp_\gamma(t,x,y)=-\frac12[\partial_x^2+(1-\gamma^2)x^2]p_\gamma(t,x,y),
\end{equation*}
in which the coefficient of $x^2$ is replaced from $(1-\gamma^2)^{1/2}$ to $1-\gamma^2$.
The result itself is, however, correct.}).
Hence the integral kernel $Q^{-1}(x,y)$ of $Q^{-1}$ is
\begin{align*}
Q^{-1}(x,y)
&=\int_0^\infty A^{-1/2}K'(t,x,y)A^{-1/2}\,dt \\
&=\int_0^1 A^{1/2}K(u,(1-\e^2)^{1/4}x,(1-\e^2)^{1/4}y)A^{-1/2}\,du\qquad(u=e^{-\frac t2})
\end{align*}
since $Q^{-1}=A^{-1/2}{Q'}^{-1}A^{-1/2}$, where we put
\begin{align*}
K(u,x,y)&\deq2\pi^{-1/2}(1-\e^2)^{-1/4}(1-u^4)^{-1/2}E(u,x,y)B(x,y), \\
E(u,x,y)&\deq\exp\biggl(-\begin{pmatrix} x & y \end{pmatrix}
\begin{pmatrix}
\frac{1+u^4}{2(1-u^4)} & \frac{-u^2}{1-u^4} \\
\frac{-u^2}{1-u^4} & \frac{1+u^4}{2(1-u^4)}
\end{pmatrix}
\begin{pmatrix} x \\ y \end{pmatrix}
\biggr), \\
B(x,y)&\deq A^{-1}\exp\frac{\q(x^2-y^2)}2J.
\end{align*}
Furthermore, we introduce the following functions
\begin{align*}
\B(x_1,\dots,x_k)&\deq\tr\kakko{B(x_1,x_2)B(x_2,x_3)\dots B(x_k,x_1)},\\[0.5em]
\E(u_1,\dots,u_k;x_1,\dots,x_k)&\deq E(u_1,x_1,x_2)E(u_2,x_2,x_3)\dots E(u_k,x_k,x_1),\\
\F(u_1,\dots,u_k)&\deq\int_{\R^k}\E(u_1,\dots,u_k;x_1,\dots,x_k)\B(x_1,\dots,x_k)dx_1\dots dx_k,
\end{align*}
where the symbol $\tr$ represents the matrix trace.
Hence, for a positive integer $k$, we have
\begin{equation}
\begin{split}\label{eq:special_value_primitive}
\zeta_Q(k)&=\Tr Q^{-k}\\
&=\int_{[0,1]^k}\biggl(
\int_{\R^k}\tr\kakko{K(u_1,x_1,x_2)K(u_2,x_2,x_3)\dotsb K(u_k,x_k,x_1)}d\vx
\biggr)d\vu\\
&=\biggl(\frac2{\sqrt{\pi(1-\varepsilon^2)}}\biggr)^{\!\!k}
\int_{[0,1]^k}\F(u_1,\dots,u_k)\frac{d\vu}{\sqrt{\prod_{j=1}^k(1-u_j^4)}},
\end{split}
\end{equation}
where $d\vx=dx_1\dots dx_k$ and $d\vu=du_1\dots du_k$ for short, and the symbol $\Tr$ denotes the operator trace.
This is our basis to calculate the special values.
Thus, we have only to calculate $\F(u_1,\dots,u_k)$ to get the special values of the spectral zeta function $\zeta_Q(s)$.

\subsection{Special values $\zeta_Q(k)$}

The following lemma is crucial.
\begin{lem}\label{lem:expansion_of_B}
For any positive integer $k$, it holds that
\begin{align*}
&\eqsp\B(x_1,x_2,\dots,x_k)\\
&=2\kakko{\frac{\alpha+\beta}{2\alpha\beta}}^{\!\!k}
\ckakko{1+\sum_{0<2j\le k}\kakko{\frac{\alpha-\beta}{\alpha+\beta}}^{\!\!2j}%
\sum_{1\le i_1<i_2<\dots<i_{2j}\le k}
\cos\kakko{\q\sum_{r=1}^{2j}(-1)^{r}x_{i_r}^2}}.
\end{align*}
\end{lem}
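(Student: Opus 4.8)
The plan is to use that $J^2=-I$, so that every building block is a scalar multiple of a rotation. Writing $\theta_l\deq\tfrac{\q}{2}(x_l^2-x_{l+1}^2)$ with indices read cyclically ($x_{k+1}=x_1$) and $R(\theta)\deq\cos\theta\,I+\sin\theta\,J$, the identity $\exp(\theta J)=R(\theta)$ gives $B(x_l,x_{l+1})=A^{-1}R(\theta_l)$, so that
\[
\B(x_1,\dots,x_k)=\tr\bigl(A^{-1}R(\theta_1)\,A^{-1}R(\theta_2)\cdots A^{-1}R(\theta_k)\bigr).
\]
Note also $\sum_{l=1}^k\theta_l=0$ by telescoping, which will produce the leading term.

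Next I would separate the diagonal factor as $A^{-1}=c\,(I-\rho D)$, where $c\deq\tfrac{\alpha+\beta}{2\alpha\beta}$, $\rho\deq\tfrac{\alpha-\beta}{\alpha+\beta}$ and $D\deq\diag(1,-1)$, and expand the product over all choices of $I$ or $-\rho D$ at the $k$ slots:
\[
\B=c^k\sum_{S\subseteq\{1,\dots,k\}}(-\rho)^{\card S}\,\tr\Bigl(P_1 R(\theta_1)\,P_2 R(\theta_2)\cdots P_k R(\theta_k)\Bigr),
\]
where $P_l=D$ for $l\in S$ and $P_l=I$ otherwise. The evaluation now rests on the elementary identities $R(\theta)R(\phi)=R(\theta+\phi)$, the anticommutation $D R(\theta)=R(-\theta)D$ (equivalently $DJ=-JD$) and $D^2=I$, together with the traces $\tr R(\theta)=2\cos\theta$ and $\tr\bigl(DR(\theta)\bigr)=0$. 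For a fixed $S$ I sweep every $D$ to the right end: each rotation $R(\theta_l)$ then acquires a sign $(-1)^{n_l}$ with $n_l\deq\card{\{i\in S:i\le l\}}$, the rotations merge into a single $R(\Phi_S)$ with $\Phi_S=\sum_{l=1}^k(-1)^{n_l}\theta_l$, and the $D$'s collapse to $D^{\card S}$. Hence the $S$-summand is $\tr\bigl(R(\Phi_S)D^{\card S}\bigr)$, which equals $0$ when $\card S$ is odd and $2\cos\Phi_S$ when $\card S=2j$ is even. In particular only even subsets survive, matching the shape of the claim.

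It remains to identify $\Phi_S$. Writing $S=\ckakko{i_1<\dots<i_{2j}}$, the sign $s_l\deq(-1)^{n_l}$ is piecewise constant and flips exactly at the elements of $S$. A cyclic summation by parts turns $\Phi_S=\tfrac{\q}{2}\sum_{l}s_l(x_l^2-x_{l+1}^2)$ into $\tfrac{\q}{2}\sum_l(s_l-s_{l-1})x_l^2$ (with $s_0\deq s_k$), and $s_l-s_{l-1}$ vanishes except at $l=i_r$, where it equals $2(-1)^r$; the consistency of the $r=1$ term with the wrap-around $s_0=s_k$ uses precisely that $\card S=2j$ is even, so $s_k=+1$. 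This yields $\Phi_S=\q\sum_{r=1}^{2j}(-1)^r x_{i_r}^2$. Substituting back, using $(-\rho)^{2j}=\rho^{2j}$ and grouping the surviving subsets by their cardinality $2j$ (with $S=\varnothing$ giving the leading $2c^k$) reproduces the stated formula exactly.

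The only delicate part is the bookkeeping of the last paragraph: extracting the alternating pattern $(-1)^r$ attached to the ordered indices $i_1<\dots<i_{2j}$ from the telescoped sum, and verifying that the cyclic wrap-around $x_{k+1}=x_1$ is harmless precisely because an even number of sign flips returns $s_l$ to $+1$. As a sanity check, for $k=2$ one has $\theta_2=-\theta_1$, the odd subsets $\{1\},\{2\}$ give vanishing traces, and the only nontrivial survivor is $\rho^2\tr R(-2\theta_1)=2\rho^2\cos\bigl(\q(x_2^2-x_1^2)\bigr)$, reproducing $\B(x_1,x_2)=2c^2\bigl(1+\rho^2\cos(\q(x_2^2-x_1^2))\bigr)$.
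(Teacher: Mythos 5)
Your proof is correct, and it takes a genuinely different route from the paper's. The paper argues entrywise: it expands the trace as a sum over matrix indices $s_1,\dots,s_k\in\{1,2\}$ of products of cosines, turns each cosine into complex exponentials indexed by signs $l_m\in\{\pm1\}$, resums over the $s$-indices, and then observes that the sign sequence $(l_m)$ must flip an even number of times cyclically; flip positions contribute factors $\beta^{-1}-\alpha^{-1}$, the remaining positions contribute $\beta^{-1}+\alpha^{-1}$, and the phase is identified by essentially the same alternating-sum bookkeeping you carry out. You instead argue matrix-algebraically: the splitting $A^{-1}=c(I-\rho D)$ together with the relations $DR(\theta)=R(-\theta)D$, $D^{2}=I$, $\tr R(\theta)=2\cos\theta$ and $\tr\bigl(DR(\theta)\bigr)=0$ collapses each of the $2^{k}$ terms to a single trace $\tr\bigl(R(\Phi_S)D^{\abs{S}}\bigr)$, with no complex exponentials needed. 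The combinatorial core coincides — your subsets $S$ of $D$-positions are exactly the paper's sets of sign-flip positions, and both proofs end with the identical telescoping computation of the phase, including the observation that evenness of $\abs{S}$ makes the cyclic wrap-around consistent — but the parity mechanisms differ: in the paper, odd flip-sets simply cannot occur by cyclicity, while in your expansion odd subsets do occur and are killed by $\tr\bigl(DR(\theta)\bigr)=0$; likewise the overall factor $2$ arises there from the residual sum over the global sign $l$, and here from $\tr R(\Phi_S)=2\cos\Phi_S$. Your route buys a cleaner, coordinate-free argument with a transparent structural reason for the vanishing of the odd terms; the paper's buys an explicit entrywise expansion in which the coefficients $(\beta^{-1}\pm\alpha^{-1})$ attached to each site appear directly before any resummation.
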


\begin{proof}
For convenience, let us put $i=\sqrt{-1}$, $a_{1}=\alpha^{-1}$, $a_{2}=\beta^{-1}$ and $t_j=\q x_j^2/2$ $(j=1,2,\dots,k)$.
The function $\B(x_1,x_2,\dots,x_k)$ is then calculated as follows;
\begin{align*}
&\eqsp\B(x_1,x_2,\dots,x_k)\\
&=\sum_{s_1,s_2,\dots,s_k\in\{1,2\}}
a_{s_1}a_{s_2}\dots a_{s_k}
\prod_{m=1}^k \cos\!\kakko{t_m-t_{m+1}+\frac{s_{m+1}-s_m}2\pi}\\
&=\sum_{s_1,s_2,\dots,s_k\in\{1,2\}}
a_{s_1}a_{s_2}\dots a_{s_k}
\prod_{m=1}^k \frac{i^{s_{m+1}-s_m}e^{i(t_m-t_{m+1})}+i^{-(s_{m+1}-s_m)}e^{-i(t_m-t_{m+1})}}2\\
&=\frac1{2^k}\sum_{s_1,s_2,\dots,s_k\in\{1,2\}}
\sum_{l_1,l_2,\dots,l_k\in\{1,-1\}}
\prod_{m=1}^k a_{s_m}i^{l_m(s_{m+1}-s_m)}e^{il_m(t_m-t_{m+1})}\\
&=\frac1{2^k}\sum_{l_1,l_2,\dots,l_k\in\{1,-1\}}
\sum_{s_1,s_2,\dots,s_k\in\{1,2\}}
\prod_{m=1}^k a_{s_m}i^{s_m(l_{m-1}-l_m)}e^{it_m(l_m-l_{m-1})}\\
&=\frac1{2^k}\sum_{l_1,l_2,\dots,l_k\in\{1,-1\}}
\prod_{m=1}^k (a_1i^{l_{m-1}-l_m}+a_2i^{2(l_{m-1}-l_m)})
e^{i(l_m-l_{m-1})t_m},
\end{align*}
where we set $s_0=s_k$, $s_{k+1}=s_1$, $l_0=l_k$, $l_{k+1}=l_1$, $t_0=t_k$ and $t_{k+1}=t_1$.
Here we notice that
\begin{itemize}
\item[(i)] $i^{l_{m-1}-l_m}=-(-1)^{\delta_{l_m,l_{m-1}}}$,
\item[(ii)] $\card{\Set{m\in\{1,2,\dots,K\}}{l_{m-1}\ne l_m}}$ is even (remark that $l_0=l_k$),
\item[(iii)] if there exist $i_1,\dots,i_{2j}\in\{1,2,\dots,k\}$ such that $i_1<\dots<i_{2j}$,
$l_{i_r-1}\ne l_{i_r}$ for each $r=1,2,\dots,2j$
and $l_{m-1}=l_m$ for $m\in\{1,2,\dots,k\}\setminus\{i_1,\dots,i_{2j}\}$,
then $\sum_{m=1}^k (l_m-l_{m-1})t_m=2l_{i_1}\sum_{r=1}^{2j}(-1)^rt_{i_r}$.
\end{itemize}
Thus it follows that
\begin{align*}
&\eqsp\B(x_1,x_2,\dots,x_k)\\
&=\frac1{2^k}\sum_{l\in\{1,-1\}}\kakko{1+%
\sum_{0<2j\le k}(\beta^{-1}-\alpha^{-1})^{2j}(\beta^{-1}+\alpha^{-1})^{k-2j}
\sum_{1\le i_1<\dots<i_{2j}\le k}
\cos\kakko{2l\sum_{r=1}^{2j} (-1)^rt_{i_r}}}\\
&=2\kakko{\frac{\alpha+\beta}{2\alpha\beta}}^{\!k}
\kakko{1+%
\sum_{0<2j\le k}\kakko{\frac{\alpha-\beta}{\alpha+\beta}}^{\!2j}
\sum_{1\le i_1<\dots<i_{2j}\le k}
\cos\kakko{\sum_{r=1}^{2j} (-1)^r\q x^2_{i_r}}}.
\end{align*}
This is the desired conclusion.
\end{proof}

For $\vu=(u_1,u_2,\dots,u_k)$, we define the $k$ by $k$ matrix $\Delta_k(\vu)$ by
\begin{align*}
\Delta_k(\vu)&\deq\begin{pmatrix}
\hA k1 & \hB1 & 0 & 0 & \dots & \hB k \\
\hB1 & \hA12 & \hB2 & 0 & \dots & 0 \\
0 & \hB2 & \hA23 & \hB3 & \dots & 0 \\
0 & 0 & \hB3 & \ddots & \ddots & \vdots\\
\vdots & \vdots & \vdots & \ddots & \ddots & \hB{k-1} \\
\hB k & 0 & 0 & \dots & \hB{k-1} & \hA{k-1}k
\end{pmatrix}\\
&=\sum_{i=1}^k\ckakko{\kakko{E^{(k)}_{ii}+E^{(k)}_{i+1,i+1}}\kakko{\frac1{1-u_i^4}-\frac12}%
+\kakko{E^{(k)}_{i,i+1}+E^{(k)}_{i+1,i}}\frac{-u_i^2}{1-u_i^4}}.
\end{align*}
It then follows that
\begin{equation}
\E(u_1,\dots,u_k;x_1,\dots,x_k)=\exp\kakko{-\vx\Delta_k(\vu)\vx'}
\end{equation}
and
\begin{equation}\label{eq:Vn}
\det\Delta_k(\vu)=\frac{(1-u_1^2\dots u_k^2)^2}{(1-u_1^4)\dots(1-u_k^4)}
\end{equation}
(see \cite[Theorem A.2]{IW2005KJM}).
Here $\vx=(x_1,x_2,\dots,x_k)$, $E^{(k)}_{ij}$ denotes the matrix unit of size $k$.
We also assume that the indices of $E^{(k)}_{ij}$ are understood modulo $k$,
i.e.\ $E^{(k)}_{0,j}=E^{(k)}_{k,j}$, $E^{(k)}_{k+1,j}=E^{(k)}_{1,j}$, etc.
The prime $'$ indicates the matrix transpose.
Notice that $\Delta_k(\vu)$ is real symmetric and positive definite for any $\vu\in(0,1)^k$.
For $\{i_1,i_2,\dots,i_{2j}\}\subset[k]=\{1,2,\dots,k\}$, we also put
\begin{equation*}
\Xi_k(i_1,\dots,i_{2j})\deq\I\sum_{r=1}^{2j}(-1)^rE^{(k)}_{i_r,i_r}.
\end{equation*}
Since
\begin{equation*}
\sum_{r=1}^{2j}(-1)^rx_{i_r}^2=\vx\Xi_k(i_1,\dots,i_{2j})\vx'
\end{equation*}
and
\begin{equation*}
\cos\kakko{\q\sum_{r=1}^{2j}(-1)^rx_{i_r}^2}
=\frac12\ckakko{\exp\kakko{\I \q\sum_{r=1}^{2j}(-1)^rx_{i_r}^2}+\exp\kakko{-\I \q\sum_{r=1}^{2j}(-1)^rx_{i_r}^2}},
\end{equation*}
we have
\begin{equation}\label{eq:Ecos}
\begin{split}
&\eqsp\E(u_1,\dots,u_k;x_1,\dots,x_k)\cos\kakko{\q\sum_{r=1}^{2j}(-1)^rx_{i_r}^2}\\
&=\frac12\exp\kakko{-\vx\kakko{\Delta_k(\vu)+\q\Xi_k(i_1,\dots,i_{2j})}\vx'}%
+\frac12\exp\kakko{-\vx\kakko{\Delta_k(\vu)-\q\Xi_k(i_1,\dots,i_{2j})}\vx'}.
\end{split}
\end{equation}

As in \cite[Lemma A.1]{IW2005KJM}, one proves the
\begin{lem}\label{lem:reality}
The determinant
\begin{equation}\label{eq:evenness}
\det\kakko{\Delta_k(\vu)+\q\Xi_k(i_1,\dots,i_{2j})}
\end{equation}
is even in $\q$.
In particular, this determinant is real-valued for each $\vu\in(0,1)^k$ and $\q>0$.
\qed
\end{lem}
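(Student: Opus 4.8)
The plan is to prove evenness in $\q$; reality then follows at once, since an even polynomial in $\q$ has every $\q^{2l}$-coefficient equal to $\I^{2l}=(-1)^l$ times a real number. Write $M(\q)\deq\Delta_k(\vu)+\q\Xi_k(i_1,\dots,i_{2j})=\Delta_k(\vu)+\I\q R$, where $R\deq\sum_{r=1}^{2j}(-1)^rE^{(k)}_{i_r i_r}$ is a real diagonal matrix carrying $j$ entries $+1$ and $j$ entries $-1$; in particular $\tr R=\sum_{r=1}^{2j}(-1)^r=0$. Since $\Delta_k(\vu)$ and $R$ are real while $\q\in\R$, one has $\overline{M(\q)}=M(-\q)$, hence $\overline{\det M(\q)}=\det M(-\q)$. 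Consequently the two statements ``$\det M(\q)$ is even'' and ``$\det M(\q)$ is real for every real $\q$'' are equivalent, and it suffices to establish either one.

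First I would reduce the lemma to a spectral symmetry. Since $\Delta_k(\vu)$ is real symmetric and positive definite for $\vu\in(0,1)^k$ (as noted after \eqref{eq:Vn}), it has a real square root and
\[
\det M(\q)=\det\Delta_k(\vu)\cdot\det\bigl(I_k+\I\q\,\Delta_k(\vu)^{-1/2}R\,\Delta_k(\vu)^{-1/2}\bigr)=\det\Delta_k(\vu)\prod_{l=1}^{k}(1+\I\q\mu_l),
\]
where $\mu_1,\dots,\mu_k$ are the real eigenvalues of $R\,\Delta_k(\vu)^{-1}$. The product is even in $\q$ exactly when the multiset $\{\mu_l\}$ is invariant under $\mu\mapsto-\mu$; thus the lemma is equivalent to the spectrum of $R\,\Delta_k(\vu)^{-1}$ being symmetric about the origin, i.e.\ $\tr\bigl((R\,\Delta_k(\vu)^{-1})^{2s-1}\bigr)=0$ for all $s\ge1$. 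As $R$ is supported on $\Lambda=\{i_1,\dots,i_{2j}\}$, the nonzero eigenvalues are those of the compression $D_\Lambda G_\Lambda$, where $D_\Lambda=\diag((-1)^r)_{r=1}^{2j}$ and $G_\Lambda$ is the principal submatrix of $\Delta_k(\vu)^{-1}$ on the index set $\Lambda$.

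To prove the spectral symmetry I would exploit the explicit structure of $\Delta_k(\vu)$. Substituting $u_m^2=e^{-\theta_m}$ ($\theta_m>0$), formula \eqref{eq:Vn} becomes $\det\Delta_k(\vu)=(2\sinh\tfrac{\Theta}{2})^2\big/\prod_{m}2\sinh\theta_m$ with $\Theta=\sum_m\theta_m$, and $\Delta_k(\vu)$ is (a scaling of) a second-difference operator on the cycle $\Z/k\Z$. Its inverse is then the corresponding periodic Green's function: $\Delta_k(\vu)^{-1}$ is semiseparable, with entries of the form $\cosh(\tfrac{\Theta}{2}-\mathrm{arc}(m,n))/\sinh\tfrac{\Theta}{2}$ depending only on the $\theta$-arclength between $m$ and $n$ along the circle, and in particular with constant diagonal. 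The case $s=1$ is then immediate: $\tr(R\,\Delta_k(\vu)^{-1})$ equals the constant diagonal value times $\sum_{i\in\Lambda}r_i=0$. The crux is the vanishing of the higher odd traces $\tr((D_\Lambda G_\Lambda)^{2s-1})$ for $s\ge2$, which I would deduce from the $\cosh$-arclength form of $G_\Lambda$ together with the alternating sign pattern of $D_\Lambda$, pairing the two arcs joining any two chosen sites so that the odd-order cyclic sums telescope.

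The main obstacle is precisely this combinatorial identity: showing that the semiseparable Green's-function structure forces \emph{every} odd power-trace of $D_\Lambda G_\Lambda$ to vanish. Equivalently, expanding the periodic tridiagonal $\det M(\q)$ as a sum over tilings of the cycle by monomers (diagonal entries $d_m+\I\q r_m$) and dimers (edge weights $-b_m^2$, independent of $\q$), the coefficient of each odd power $\q^{2s-1}$ is a signed sum over $(2s-1)$-element subsets $S\subseteq\Lambda$ weighted by $\prod_{i\in S}r_i$ and by complementary minors of $\Delta_k(\vu)$, and one must show this sum cancels. For small $k$ this is a direct computation — for $k=2,3$ it is exactly Lemma~A.1 of \cite{IW2005KJM}, the model the statement invokes — and a direct check for $k=4$ confirms that the alternating signs produce the required cancellation. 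Establishing it uniformly in $k$, $j$ and the positions $i_1<\dots<i_{2j}$ is the step genuinely requiring the structural input above, since for generic $\vu$ the matrix $\Delta_k(\vu)$ admits no nontrivial orthogonal symmetry exchanging the $+$ and $-$ sites.
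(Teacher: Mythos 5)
Your reductions are all correct as far as they go: evenness and reality are indeed equivalent because $\overline{\Delta_k(\vu)+\I\q R}=\Delta_k(\vu)-\I\q R$; the factorization $\det(\Delta_k(\vu)+\I\q R)=\det\Delta_k(\vu)\prod_l(1+\I\q\mu_l)$ with $\mu_l$ the real eigenvalues of $R\,\Delta_k(\vu)^{-1}$ is valid; the nonzero $\mu_l$ are those of the compression $D_\Lambda G_\Lambda$; and the structural facts you invoke are true --- $\Delta_k(\vu)^{-1}$ is the cyclic Green's function with constant diagonal $(1+\prod_m u_m^2)/(1-\prod_m u_m^2)$, so the $s=1$ trace does vanish. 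But the proposal stops exactly where the lemma begins. The vanishing of $\tr\bigl((D_\Lambda G_\Lambda)^{2s-1}\bigr)$ for every $s\ge2$, which by Newton's identities is \emph{equivalent} to the evenness to be proved, is never established; the sentence proposing to ``pair the two arcs joining any two chosen sites so that the odd-order cyclic sums telescope'' is a hope, not an argument, and you concede as much in your final paragraph. Checking $s=1$ and $k=2,3,4$ cannot substitute for a statement quantified over all $k$, all $j$ and all positions $i_1<\dots<i_{2j}$. The cancellation is genuinely delicate: already for $2j=4$ marked points the cubic coefficient vanishes only because, writing $A,B,C,D$ for the four consecutive arc products, one has the identity
\begin{equation*}
(1-A)(1-B)(1-CD)-(1-A)(1-BC)(1-D)+(1-AB)(1-C)(1-D)-(1-B)(1-C)(1-DA)=0,
\end{equation*}
which holds, but for no ``telescoping'' reason visible from your setup; as you note yourself, $\Delta_k(\vu)$ admits no symmetry exchanging the $+$ and $-$ sites, so this family of identities needs an actual combinatorial proof, uniformly in $k$, $j$ and the positions.

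For the record, the paper gives no inline argument either: the lemma is proved ``as in Lemma A.1 of \cite{IW2005KJM}'', i.e.\ by the expansion method of that appendix. The expansion needed is in fact derived later in the same section: by multilinearity, $\det(\Delta_k(\vu)+\q\Xi_k(\vj))$ is a sum over subsets $S$ of the marked indices of $(\I\q)^{\card S}(-1)^{\sum_{s\in S}s}$ times a complementary principal minor, and each such minor factors explicitly into the arc factors $C_k(\vu;\vj(S))$. A complete proof --- whether along your spectral route or the paper's --- must show that this signed sum of minors vanishes in each odd degree; that is precisely the step your proposal names as ``the main obstacle'' and leaves unproven, so the proposal does not constitute a proof of the lemma.
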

Let $\cyclic m$ denotes the cyclic subgroup of the symmetric group $\sym m$ of degree $m$
generated by the cyclic permutation $(1,2,\dots,m)\in\sym{m}$.
By Lemma \ref{lem:reality}, it follows that
\begin{equation*}
\det\kakko{\Delta_k(\vu)+\q\Xi_k(i_1,\dots,i_{2j})}
=\det\kakko{\Delta_k(\vu)+\q\Xi_k(j_{\sigma(1)},\dots,j_{\sigma(2k)})}
\end{equation*}
for any $\sigma\in \cyclic{2k}$
since $\Xi_k(j_{\sigma(1)},\dots,j_{\sigma(2k)})=\sgn(\sigma)\Xi_k(i_1,\dots,i_{2j})$.

Let $\symx k$ be the set of $k$ by $k$ \emph{complex} symmetric matrices such that all principal minors are invertible,
and $\Sym_k^+(\R)$ be the set of $k$ by $k$ \emph{positive real} symmetric matrices.
Notice that $\Delta_k(\vu)\in\Sym_k^+(\R)$ for any $\vu\in(0,1)^k$.
We need the following two lemmas for later use in the evaluation of $\F(u_1,\dots,u_k)$.

\begin{lem}[LDU decomposition]\label{lem:LDU}
Let $k$ be a positive integer.
For any $A\in\symx k$,
there exists a lower unitriangular matrix $L$ and a diagonal matrix $D$ such that $A=LDL'$.
Moreover, $D$ is given by
\begin{equation*}
D=\diag(d_1,d_2/d_1,d_3/d_2,\dots,d_k/d_{k-1}),
\end{equation*}
where $d_j$ denotes the $j$-th principal minor determinant of $A$.
\end{lem}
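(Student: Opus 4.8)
The plan is to argue by induction on the size $k$, peeling off the last row and column and using a Schur-complement (block Gaussian elimination) factorization; the symmetry of $A$ will force the two triangular factors to be transposes of one another, producing an $LDL'$ rather than a bare $LU$ decomposition. The base case $k=1$ is trivial, taking $L=(1)$ and $D=(A)$.

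For the inductive step, write $A\in\symx k$ in block form
\[
A=\begin{pmatrix} A_{k-1} & \vb \\ \vb' & a \end{pmatrix},
\]
where $A_{k-1}$ is the leading $(k-1)\times(k-1)$ principal submatrix, $\vb$ a column vector, and $a$ a scalar. The key observation is that $A_{k-1}$ again lies in $\symx{k-1}$: it is symmetric, and its principal minors are among those of $A$, hence invertible; in particular $A_{k-1}$ itself is invertible. This lets me perform the block elimination
\[
A=\begin{pmatrix} I_{k-1} & 0 \\ \vb'A_{k-1}^{-1} & 1 \end{pmatrix}
\begin{pmatrix} A_{k-1} & 0 \\ 0 & a-\vb'A_{k-1}^{-1}\vb \end{pmatrix}
\begin{pmatrix} I_{k-1} & A_{k-1}^{-1}\vb \\ 0 & 1 \end{pmatrix}.
\]
Since $A_{k-1}$ is symmetric, the rightmost factor is exactly the transpose of the leftmost, so up to treating $A_{k-1}$ itself the middle matrix is already in block-diagonal $LDL'$ shape. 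Substituting the inductive hypothesis $A_{k-1}=L_{k-1}D_{k-1}L_{k-1}'$ and absorbing $L_{k-1}$ into the outer triangular factors yields $A=LDL'$ with
\[
L=\begin{pmatrix} L_{k-1} & 0 \\ \vb'A_{k-1}^{-1}L_{k-1} & 1 \end{pmatrix},\qquad
D=\begin{pmatrix} D_{k-1} & 0 \\ 0 & a-\vb'A_{k-1}^{-1}\vb \end{pmatrix},
\]
which is lower unitriangular times diagonal times its own transpose, as required.

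To identify the diagonal entries with ratios of principal minors I would argue directly from the finished factorization rather than tracking Schur complements. For each $j$ the leading $j\times j$ block of $A=LDL'$ reads $A_j=L_jD_jL_j'$, where $L_j$ is the leading block of $L$ (still unitriangular, so $\det L_j=1$) and $D_j=\diag(D_{11},\dots,D_{jj})$. Taking determinants gives $d_j=\det A_j=\prod_{i=1}^j D_{ii}$, whence $D_{jj}=d_j/d_{j-1}$ with the convention $d_0=1$; equivalently the Schur complement $a-\vb'A_{k-1}^{-1}\vb$ above equals $\det A/\det A_{k-1}=d_k/d_{k-1}$.

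The only place where the hypothesis is genuinely used — and the single point to be careful about — is the invertibility of the leading principal submatrices $A_1,\dots,A_{k-1}$, i.e.\ $d_1,\dots,d_{k-1}\neq0$: this is what guarantees that $A_{k-1}^{-1}$ exists at every stage of the recursion and that the denominators $d_{j-1}$ in the formula for $D$ are nonzero. Beyond this I do not expect any serious obstacle: once invertibility of the leading blocks is in hand the block identity is routine and the determinantal identification of $D$ is immediate. (Uniqueness of $L$ and $D$ would follow from comparing any two such factorizations, but the statement asserts only existence, so I would not belabor it.)
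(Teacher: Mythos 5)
Your proof is correct and takes essentially the same route as the paper's: induction on $k$, splitting off the last row and column, using invertibility of the leading $(k-1)\times(k-1)$ block (guaranteed by the definition of $\symx{k}$) to form the Schur complement $a-\vb'A_{k-1}^{-1}\vb$ as the new diagonal entry; indeed your lower-left block $\vb'A_{k-1}^{-1}L_{k-1}$ coincides with the paper's choice $\vv'$, $\vv=(L_0D_0)^{-1}\va$, after simplification. The only difference is that the paper dismisses the identification $D=\diag(d_1,d_2/d_1,\dots,d_k/d_{k-1})$ as ``obvious by the construction,'' whereas you derive it explicitly by taking determinants of the leading blocks $A_j=L_jD_jL_j'$ --- a welcome addition, not a divergence.
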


\begin{proof}
Let us prove by induction on $k$.
The assertion is clear if $k=1$.
Suppose that the assertion is true for $k-1$.
Take $A\in\symx k$ and write
\begin{equation*}
A=\begin{pmatrix}
A_0 & \va \\ \va' & \alpha
\end{pmatrix}
\end{equation*}
with $A_0\in\symx{k-1}$, $\va\in\C^{k-1}$ and $\alpha\in\C$.
By the induction hypothesis,
there exist lower unitriangular matrix $L_0$ and diagonal matrix $D_0$ of size $k-1$ such that $A_0=L_0D_0L_0'$.
Put
\begin{equation*}
L=\begin{pmatrix} L_0 & \vo \\ \vv' & 1 \end{pmatrix},\quad
D=\begin{pmatrix} D_0 & \vo \\ \vo' & d \end{pmatrix},
\end{equation*}
where $\vv=(L_0D_0)^{-1}\va$ and $d=\alpha-\va'A_0^{-1}\va$
(notice that $(L_0D_0)^{-1}$ and $A_0^{-1}$ exist by the induction hypothesis)
and $\vo\in\C^{k-1}$ represents the zero vector.
Then it is straightforward to check that $A=LDL'$.
This prove the first assertion of the lemma.
The second assertion is obvious by the construction of $D$ above.
\end{proof}

\begin{lem}\label{lem:positivity_lemma}
Let $T\in\Sym_k^+(\R)$ and $D$ be a real diagonal matrix of size $k$.
Denote by $d_m$ the principal $m$-minor determinant of $T+\I D$.
Then it follows that $\Re\kakko{d_{m+1}\conj{d_m}}>0$ for $m=1,2,\dots,k-1$.
\end{lem}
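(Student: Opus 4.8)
The plan is to reduce everything to the LDU decomposition of Lemma \ref{lem:LDU} together with the elementary but decisive observation that $M\deq T+\I D$ is \emph{accretive}: since $T$ and $D$ are real symmetric, one has $M^*=T-\I D$, so that $M+M^*=2T$ and consequently $\Re(\vv^*M\vv)=\vv^*T\vv>0$ for every nonzero $\vv\in\C^k$. The same holds verbatim for each leading principal submatrix $M_m\deq T_m+\I D_m$, where $T_m\in\Sym_m^+(\R)$. In particular each $M_m$ is invertible (if $M_m\vv=\vo$ then $\vv^*T_m\vv=\Re(\vv^*M_m\vv)=0$ forces $\vv=\vo$), so that $d_m=\det M_m\ne 0$ for all $m$ and hence $M\in\symx k$, which is exactly what is needed to apply Lemma \ref{lem:LDU}.

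First I would write $\Re(d_{m+1}\conj{d_m})=|d_m|^2\,\Re(d_{m+1}/d_m)$ and note $|d_m|^2>0$, so it suffices to prove $\Re(d_{m+1}/d_m)>0$. Decomposing the principal submatrix as $M_{m+1}=\begin{pmatrix}M_m & \vb\\ \vb' & c\end{pmatrix}$ with $\vb\in\C^m$ and $c=(M_{m+1})_{m+1,m+1}$, Lemma \ref{lem:LDU} identifies $d_{m+1}/d_m$ as the last diagonal entry of the matrix $D$ in the LDU decomposition of $M_{m+1}$, namely the Schur complement
\[
\frac{d_{m+1}}{d_m}=c-\vb'M_m^{-1}\vb .
\]

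The key step is to realize this Schur complement as a single quadratic form of $M_{m+1}$. Taking the test vector $\vv=\begin{pmatrix}-M_m^{-1}\vb\\ 1\end{pmatrix}\in\C^{m+1}$, a direct multiplication gives $M_{m+1}\vv=\begin{pmatrix}\vo\\ c-\vb'M_m^{-1}\vb\end{pmatrix}$, whence $\vv^*M_{m+1}\vv=c-\vb'M_m^{-1}\vb=d_{m+1}/d_m$. Applying the accretivity observation to $M_{m+1}$ and the nonzero vector $\vv$ (its last entry is $1$), I obtain
\[
\Re\!\kakko{\frac{d_{m+1}}{d_m}}=\Re(\vv^*M_{m+1}\vv)=\vv^*T_{m+1}\vv>0 ,
\]
and multiplying by $|d_m|^2>0$ yields the claim.

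I do not expect a serious obstacle here: the only nonroutine ingredient is spotting the identity $d_{m+1}/d_m=\vv^*M_{m+1}\vv$ for the Schur-complement test vector, which converts the determinantal statement into a positivity statement about the Hermitian part of $M$. Everything else—the invertibility of the $M_m$ and the bookkeeping through Lemma \ref{lem:LDU}—is immediate once accretivity is in hand.
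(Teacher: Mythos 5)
Your proof is correct, and it reaches the conclusion by a genuinely different route from the paper at the decisive step. Both arguments pass through the same Schur-complement reduction: with $M_{m+1}=\begin{pmatrix}M_m & \vb\\ \vb' & c\end{pmatrix}$ one has $d_{m+1}/d_m=c-\vb'M_m^{-1}\vb$, so everything comes down to showing that this quantity has positive real part. The paper proves this by explicit matrix algebra: writing the blocks of $T$ and $D$ as $A$, $\va$, $\alpha$ and $U$, $u$, it substitutes $B=\sqrt A$, $X=B^{-1}UB^{-1}$, $\vb=B^{-1}\va$, invokes the identity $\kakko{I+\I X}^{-1}=(I+X^2)^{-1}-\I X(I+X^2)^{-1}$ for the real symmetric $X$, and bounds $\Re\kakko{\vb'\kakko{I+\I X}^{-1}\vb}=\vb'(I+X^2)^{-1}\vb\le\vb'\vb=\va'A^{-1}\va<\alpha$. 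You replace all of this by the accretivity observation $\Re(\vv^*M\vv)=\vv^*T\vv>0$ together with the identity $d_{m+1}/d_m=\vv^*M_{m+1}\vv$ for the test vector $\vv$ with top block $-M_m^{-1}\vb$ and last entry $1$; both ingredients are one-line verifications. Your route buys three things: it avoids matrix square roots and the resolvent-type identity entirely; it proves, rather than assumes, that all leading principal minors are nonzero (the paper simply asserts the membership $\Delta_k(\vu)+\q\Xi_k(\vi)\in\symx k$ before applying the lemma, and your accretivity argument supplies exactly this); and it generalizes verbatim to any complex matrix whose Hermitian part is positive definite, symmetry being irrelevant. What the paper's computation gives in exchange is a quantitative lower bound, $\Re\kakko{d_{m+1}\conj{d_m}}\ge\abs{\det(A+\I U)}^2\kakko{\alpha-\va'A^{-1}\va}$, expressed directly in terms of the original data.
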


\begin{proof}
Clearly, it is enough to prove the positivity of $\Re\kakko{d_{m+1}\conj{d_m}}$ with $k=m+1$.
Write $T$ and $D$ as
\begin{equation*}
T=\begin{pmatrix} A & \va \\ \va' & \alpha \end{pmatrix},\quad
D=\begin{pmatrix} U & \vo \\ \vo' & u \end{pmatrix}
\end{equation*}
with $A\in\Sym_m^+(\R)$, $\va\in\R^m$, $\alpha\in\R$, $u\in\R$ and a real diagonal matrix $U$ of size $m$.
Here $\vo\in\R^m$ is the zero vector.
Since $T$ is positive, we must have $0<\va'A^{-1}\va<\alpha$.
Put $B=\sqrt A\in\Sym_m^+(\R)$, $X=B^{-1}UB^{-1}\in\Sym_m(\R)$ and $\vb=B^{-1}\va$.
Then we have
\begin{align*}
d_{m+1}\conj{d_m}
&=\det\begin{pmatrix} A+\I U & \va \\ \va' & \alpha+\I u \end{pmatrix}
\begin{pmatrix} A-\I U & \vo \\ \vo & 1 \end{pmatrix}\\
&=\det\begin{pmatrix} (A+\I U)(A-\I U) & \va \\ \va'(A-\I U) & \alpha+\I u \end{pmatrix}\\
&=\abs{\det(A+\I U)}^2\kakko{\alpha+\I u-\va'(A+\I U)^{-1}\va}\\
&=\det B^4\det(I+X^2)\kakko{\alpha+\I u-\vb'\kakko{I+\I X}^{-1}\vb}.
\end{align*}
Since
\begin{equation*}
\kakko{I+\I X}^{-1}=(I+X^2)^{-1}-\I X(I+X^2)^{-1},
\end{equation*}
it follows that
\begin{equation*}
\Re\kakko{\vb'\kakko{I+\I X}^{-1}\vb}=\vb'(I+X^2)^{-1}\vb\le\vb'\vb=\va'A^{-1}\va<\alpha
\end{equation*}
or
\begin{equation*}
\Re\kakko{\alpha+\I u-\vb'\kakko{I+\I X}^{-1}\vb}>0.
\end{equation*}
Thus we have $\Re(d_{m+1}\conj{d_m})>0$ as desired.
\end{proof}

We recall the well-known fact.
\begin{lem}[Gaussian integral]\label{lem:gaussian}
For any $a,b\in\C$ with $\Re a>0$, it follows that
\begin{equation*}
\int_{\R}\exp(-a(x-b)^2)dx=\sqrt{\frac{\pi}{a}}.
\end{equation*}
Here $\sqrt a$ is chosen as $\Re\sqrt a>0$.
\qed
\end{lem}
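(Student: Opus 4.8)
The plan is to reduce the complex statement to the classical real Gaussian integral and then propagate it by two standard devices: analytic continuation in the parameter $a$ and a contour shift in the parameter $b$. The only content beyond $\int_\R e^{-x^2}\,dx=\sqrt\pi$ is bookkeeping of the branch of the square root, so throughout I would work in the connected open region $\Re a>0$, on which the principal branch of $\sqrt a$ is holomorphic and satisfies $\Re\sqrt a>0$; this is exactly the normalization prescribed in the statement, and it singles out a unique branch.

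First I would settle the base case $a>0$ real and $b=0$. Squaring $I_0\deq\int_\R e^{-x^2}\,dx$ and passing to polar coordinates gives $I_0^2=\int_{\R^2}e^{-(x^2+y^2)}\,dx\,dy=2\pi\int_0^\infty e^{-r^2}r\,dr=\pi$, whence $I_0=\sqrt\pi$, and the substitution $x\mapsto\sqrt a\,x$ yields $\int_\R e^{-ax^2}\,dx=\sqrt{\pi/a}$ for all real $a>0$. Keeping $b=0$, I would then continue in $a$: setting $I(a)\deq\int_\R e^{-ax^2}\,dx$, on each compact subset of $\{\Re a>0\}$ one has $\Re a\ge\delta>0$, and both $e^{-ax^2}$ and its $a$-derivative $-x^2e^{-ax^2}$ are dominated by the integrable functions $e^{-\delta x^2}$ and $x^2e^{-\delta x^2}$, so differentiation under the integral is justified and $I(a)$ is holomorphic on $\Re a>0$. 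Since $I(a)$ and the principal branch $\sqrt{\pi/a}$ are holomorphic on the connected region $\Re a>0$ and agree on the ray $(0,\infty)$, the identity theorem gives $I(a)=\sqrt{\pi/a}$ throughout.

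It remains to restore the shift $b$. Writing $b$ with real part $b_1$ and imaginary part $c$, translation invariance of the Lebesgue integral over $\R$ removes $b_1$, so it suffices to treat $b=\I c$. Viewing $e^{-aw^2}$ as an entire function of $w$ and integrating around the rectangle with horizontal sides $\R$ and $\R-\I c$, Cauchy's theorem identifies $\int_\R e^{-a(x-\I c)^2}\,dx$ with $I(a)=\sqrt{\pi/a}$, provided the two vertical sides contribute nothing in the limit. On a vertical side at real part $\pm R$ the variable $w=\pm R+\I t$ ranges over the bounded segment with $t$ between $0$ and $-c$, and $\Re(aw^2)=(\Re a)R^2+O(R)\to+\infty$ uniformly in $t$, so $\abs{e^{-aw^2}}=e^{-\Re(aw^2)}\to0$ and the vertical contributions vanish as $R\to\infty$. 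This proves the formula for all $a$ with $\Re a>0$ and all $b\in\C$.

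I expect the main obstacle, such as it is for a well-known fact, to be not any single estimate but the consistent tracking of the square-root branch: one must check that the principal branch is the unique holomorphic choice on $\Re a>0$ that is positive on $(0,\infty)$ and that it coincides with the branch singled out by the stated condition $\Re\sqrt a>0$. The uniform decay on the vertical sides of the rectangle, needed to justify the contour shift for genuinely complex $a$, is the only analytic point requiring a moment's care, and it follows at once from $\Re a>0$.
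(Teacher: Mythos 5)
Your proposal is correct. Note, however, that the paper itself offers no proof of this lemma at all: it is introduced with the words ``We recall the well-known fact'' and stated with an immediate \verb|\qed|, so there is no argument in the paper to compare yours against. What you have written is the standard complete justification that the authors chose to omit: the polar-coordinate evaluation of $\int_\R e^{-x^2}dx$, holomorphy of $a\mapsto\int_\R e^{-ax^2}dx$ on $\Re a>0$ (dominated convergence on compacta) combined with the identity theorem to pass from real $a>0$ to the half-plane, and a rectangular contour shift with uniformly vanishing vertical sides to absorb the complex shift $b$. You also correctly isolate and settle the two points where such a proof can silently go wrong: that the principal branch of $\sqrt a$ is the unique holomorphic branch on $\Re a>0$ agreeing with the stated normalization $\Re\sqrt a>0$, and that the decay $\Re(aw^2)=(\Re a)R^2+O(R)\to+\infty$ on the vertical segments is what legitimizes Cauchy's theorem for genuinely complex $a$. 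In short, the proposal supplies a sound proof where the paper supplies none.
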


By Lemma \ref{lem:LDU}, $A\in\symx k$ is decomposed as $A=LDL'$
with a certain lower unitriangular matrix $L$ and a diagonal matrix $D=\diag(d_1,d_2/d_1,\dots,d_k/d_{k-1})$,
where $d_j$ is the $j$-th principal minor determinant of $A$.
If all entries of $D$ have positive real parts, then it follows from Lemma \ref{lem:gaussian} that
\begin{equation}\label{eq:gaussian_multidim}
\int_{\R^k}\exp(-\vx A\vx')d\vx=\frac{\pi^{k/2}}{\sqrt{\det A}}.
\end{equation}

Now the matrix $\Delta_k(\vu)+\q\Xi_k(\vi)$ belongs to $\symx k$ for any $\vu\in(0,1)^k$.
Denote by $d_k=d_k(k,\vu,\q,\vi)$ the $k$-th principal minor determinant of $\Delta_k(\vu)+\q\Xi_k(\vi)$, and put $d_0=1$.
It then follows from Lemma \ref{lem:positivity_lemma} that $\Re(d_j/d_{j-1})>0$ for $j=1,2,\dots,k$.
Consequently, in view of \eqref{eq:Ecos}, \eqref{eq:evenness}, \eqref{eq:gaussian_multidim} and Lemma \ref{lem:expansion_of_B},
we can calculate $\F(u_1,\dots,u_k)$ as
\begin{align*}
&\F(u_1,\dots,u_k)=2\sqrt{\pi^k}\kakko{\frac{\alpha+\beta}{2\alpha\beta}}^{\!\!k}\\
&\times\ckakko{\frac1{\sqrt{\det\Delta_k(\vu)}}%
+\sum_{0<2j\le k}\kakko{\frac{\alpha-\beta}{\alpha+\beta}}^{\!\!2j}%
\sum_{1\le i_1<i_2<\dots<i_{2j}\le k}
\frac1{\sqrt{\det\kakko{\Delta_k(\vu)+\q\Xi_k(i_1,\dots,i_{2j})}}}}.
\end{align*}
We also notice that
\begin{equation*}
\int_{[0,1]^k}\frac{2^kd\vu}{1-u_1^2u_2^2\dotsb u_k^2}=\zeta(k,1/2)
\end{equation*}
for $k\ge2$.
From these equations together with \eqref{eq:special_value_primitive} and \eqref{eq:Vn}, we now obtain the
\begin{thm}\label{thm:specialvalues}
For each positive integer $k\ge2$, it follows that
\begin{equation}\label{eq:specialvalues}
\begin{split}
\zeta_Q(k)&=2\kakko{\frac{\alpha+\beta}{2\sqrt{\alpha\beta(\alpha\beta-1)}}}^{\!\!k}
\Biggl(\zeta(k,1/2)+\sum_{0<2j\le k}\kakko{\frac{\alpha-\beta}{\alpha+\beta}}^{\!\!2j}R_{k,j}(\q)\Biggr).
\end{split}
\end{equation}
Here $R_{k,j}(\q)$ is given by a sum of integrals
\begin{equation*}
R_{k,j}(\q)=
\sum_{1\le i_1<i_2<\dots<i_{2j}\le k}\int_{[0,1]^k}
\frac{2^kdu_1\dots du_k}{\sqrt{\den k(\vu;\q;i_1,\dots,i_{2j})}},
\end{equation*}
where the function $\den k(\vu;\q;i_1,\dots,i_{2j})$ is given by
\begin{equation*}
\den k(\vu;\q;i_1,\dots,i_{2j})=\det\kakko{\Delta_k(\vu)+\q\Xi_k(i_1,\dots,i_{2j})}\prod_{r=1}^k(1-u_r^4).
\eqed
\end{equation*}
\end{thm}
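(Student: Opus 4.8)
The plan is to assemble the ingredients prepared above. By \eqref{eq:special_value_primitive}, $\zeta_Q(k)$ equals a scalar constant times $\int_{[0,1]^k}\F(u_1,\dots,u_k)\,d\vu/\sqrt{\prod_{j=1}^k(1-u_j^4)}$, and the evaluation of $\F(u_1,\dots,u_k)$ carried out just above—via the multidimensional Gaussian formula \eqref{eq:gaussian_multidim}—expresses $\F$ as a sum of reciprocal square roots of the determinants $\det\Delta_k(\vu)$ and $\det\kakko{\Delta_k(\vu)+\q\Xi_k(i_1,\dots,i_{2j})}$. The whole proof therefore amounts to substituting this evaluation of $\F$ into \eqref{eq:special_value_primitive} and simplifying; no further analytic input is needed beyond the lemmas already proved.

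First I would collect the scalar prefactors. Multiplying $\kakko{2/\sqrt{\pi(1-\e^2)}}^k$ from \eqref{eq:special_value_primitive} by the factor $2\sqrt{\pi^k}\kakko{(\alpha+\beta)/(2\alpha\beta)}^k$ coming from $\F$, the powers of $\pi$ cancel outright. Substituting $1-\e^2=(\alpha\beta-1)/(\alpha\beta)$ then converts the surviving constant into $2^k\cdot 2\kakko{(\alpha+\beta)/(2\sqrt{\alpha\beta(\alpha\beta-1)})}^k$; pulling the overall $2\kakko{(\alpha+\beta)/(2\sqrt{\alpha\beta(\alpha\beta-1)})}^k$ out front and letting the leftover $2^k$ ride along inside each integral is exactly what produces the $2^k$ in the numerators of the integrands in the statement.

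Next I would split $\F$ into its two kinds of terms. For the leading contribution $1/\sqrt{\det\Delta_k(\vu)}$, the determinant identity \eqref{eq:Vn} gives $1/\sqrt{\det\Delta_k(\vu)}\cdot 1/\sqrt{\prod_{j}(1-u_j^4)}=1/(1-u_1^2\dotsb u_k^2)$ throughout $(0,1)^k$, and the Hurwitz identity $\int_{[0,1]^k}2^k\,d\vu/(1-u_1^2\dotsb u_k^2)=\zeta(k,1/2)$ recorded above yields precisely the term $\zeta(k,1/2)$. For each remaining summand, indexed by a subset $1\le i_1<\dots<i_{2j}\le k$ and weighted by $\kakko{(\alpha-\beta)/(\alpha+\beta)}^{2j}$, the measure factor $\prod_j(1-u_j^4)$ slides under the square root to combine with $\det\kakko{\Delta_k(\vu)+\q\Xi_k(i_1,\dots,i_{2j})}$ into the function $\den k(\vu;\q;i_1,\dots,i_{2j})$, and the resulting integrals are exactly those defining $R_{k,j}(\q)$.

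The only genuinely delicate point—and hence the step I would single out—is ensuring that every reciprocal square root appearing in $\F$ is the intended real, positive-real-part branch, so that \eqref{eq:gaussian_multidim} applies termwise. This rests on the two preparatory lemmas: Lemma \ref{lem:reality} makes $\det\kakko{\Delta_k(\vu)+\q\Xi_k(i_1,\dots,i_{2j})}$ even in $\q$ and thus real, while Lemma \ref{lem:positivity_lemma} supplies $\Re(d_m/d_{m-1})>0$ for the successive principal-minor ratios of $\Delta_k(\vu)+\q\Xi_k(i_1,\dots,i_{2j})$, which is exactly the hypothesis of the Gaussian formula. Maintaining these reality and positivity conditions while interchanging the finite sum of Lemma \ref{lem:expansion_of_B} with the $\vx$-integration is the one place demanding care; granting that, the formula \eqref{eq:specialvalues} drops out by direct bookkeeping.
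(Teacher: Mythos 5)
Your proposal is correct and follows essentially the same route as the paper: substituting the Gaussian-integral evaluation of $\F$ into \eqref{eq:special_value_primitive}, using \eqref{eq:Vn} together with the Hurwitz-zeta identity for the leading term, invoking Lemmas \ref{lem:reality} and \ref{lem:positivity_lemma} to justify the termwise application of \eqref{eq:gaussian_multidim}, and the same constant bookkeeping via $1-\e^2=(\alpha\beta-1)/(\alpha\beta)$. Nothing to add.
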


\begin{rem}
The algebraic variety $\den k(\vu;\q;i_1,\dots,i_{2j})=0$ defined by the denominator of the integral $R_{k,j}(\q)$ above is worth studying, e.g.\ from the viewpoint in \cite{Beu1985, Beu1987, BP1984}.
\end{rem}

\begin{rem}\label{rem:degenerate_case}
If $\alpha=\beta$, then we have $\zeta_Q(k)=2(\alpha^2-1)^{-k/2}\zeta(k,1/2)$,
which is a special case of the fact that
$\zeta_Q(s)=2(\alpha^2-1)^{-s/2}\zeta(s,1/2)$ for $\alpha=\beta$.
In fact, when $\alpha$ and $\beta$ are equal,
we can show that $Q\cong\sqrt{\alpha^2-1}\kakko{-\frac12\frac{d^2}{dx^2}+\frac12x^2}I$ (see \cite{P}).
\end{rem}

We give an expansion of the determinants $\den k(\vu;\q;i_1,\dots,i_{2j})$ appearing in \eqref{eq:specialvalues}.
For $\vj=\{j_1,j_2,\dots,j_r\}\subset[k]$ with $r>0$ and $j_1<j_2<\dots<j_r$, define
\begin{equation*}
C_{k}(\vu;\vj)=\prod_{i=1}^r(1-u_{j_i}^4u_{j_i+1}^4\dots u_{j_{i+1}-1}^4).
\end{equation*}
We also define $C_{k}(\vu;\emptyset)=(1-u_1^2u_2^2\dots u_k^2)^2$.
Here we regard that $j_{r+1}=k+j_1$ and $u_{i+k}=u_i$.
For instance, if $k=9$ and $\vj=\{3,6,8\}$, then
\begin{equation*}
C_{9}(\vu;\vj)=(1-u_3^4u_4^4u_5^4)(1-u_6^4u_7^4)(1-u_8^4u_9^4u_1^4u_2^4).
\end{equation*}

\begin{lem}
For a given subset $\vj=\{j_1,j_2,\dots,j_r\}\subset[k]$ with $j_1<j_2<\dots<j_r$, it follows that
\begin{equation}
\den k(\vu;\q;\vj)=\sum_{d\ge0}(-\q^2)^{d}\,\den{k,d}(\vu;\vj)
\end{equation}
with
\begin{equation}
\den{k,d}(\vu;\vj)\deq\sum_{\substack{S\subset[2k]\\\card{S}=2d}}(-1)^{\esum S}C_{k}(\vu;\vj(S)).
\end{equation}
Here $\esum S\deq\sum_{s\in S}s$ is the sum of the elements in $S$ and
$\vj(S)\deq\{j_{s_1},\dots,j_{s_l}\}$ if $S=\{s_1,\dots,s_l\}$ with $s_1<\dots<s_l$.
\end{lem}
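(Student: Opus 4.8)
The plan is to expand the determinant $\det\kakko{\Delta_k(\vu)+\q\Xi_k(\vj)}$ by multilinearity in the \emph{diagonal} perturbation $\q\Xi_k(\vj)$ and then to identify each resulting principal minor of $\Delta_k(\vu)$, multiplied by $\prod_{s=1}^k(1-u_s^4)$, with one of the products $C_k(\vu;\vj(S))$. Writing $\Xi_k(\vj)=\I\sum_{s=1}^r(-1)^sE^{(k)}_{j_s,j_s}$, which is supported on the $r$ positions $j_1,\dots,j_r$, the standard expansion of a determinant in a diagonal matrix gives
\begin{equation*}
\det\kakko{\Delta_k(\vu)+\q\Xi_k(\vj)}=\sum_{S\subseteq\{1,\dots,r\}}\q^{\card{S}}\I^{\card{S}}\kakko{\prod_{s\in S}(-1)^s}\det\Delta_k(\vu)[\,\overline{\vj(S)}\,],
\end{equation*}
where $S$ runs over the positions selected from $\vj$, so that $\vj(S)=\{j_s:s\in S\}$, $\overline{\vj(S)}=[k]\setminus\vj(S)$, and $\Delta_k(\vu)[\,\cdot\,]$ denotes the principal submatrix on the indicated index set. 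Since $\prod_{s\in S}(-1)^s=(-1)^{\esum S}$, the sign in the statement is already visible.

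Next I would discard the odd-order terms. Each summand with $\card{S}$ odd is purely imaginary, because $\q$, $(-1)^{\esum S}$ and the real principal minor $\det\Delta_k(\vu)[\,\overline{\vj(S)}\,]$ are all real while $\I^{\card{S}}=\pm\I$; by Lemma \ref{lem:reality} the whole determinant is real and even in $\q$, so these contributions must cancel. For the surviving even cardinalities $\card{S}=2d$ one has $\q^{2d}\I^{2d}=(-\q^2)^d$, which produces exactly the claimed factor $(-\q^2)^d$ together with the sign $(-1)^{\esum S}$.

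The core of the argument is the identity
\begin{equation*}
\det\Delta_k(\vu)[\,\overline{\vj(S)}\,]\prod_{s=1}^k(1-u_s^4)=C_k(\vu;\vj(S))\qquad(\vj(S)\neq\emptyset).
\end{equation*}
To prove it I would use that $\Delta_k(\vu)$ is the cyclic tridiagonal matrix attached to the $k$-cycle: deleting the vertices $\vj(S)=\{p_1<\dots<p_{2d}\}$ severs the cycle into the arcs $\{p_a+1,\dots,p_{a+1}-1\}$ (cyclically, $p_{2d+1}=k+p_1$), and the corresponding principal submatrix is block diagonal along these arcs, so its determinant factorizes over them. A short induction on the arc length, using the tridiagonal recursion and the explicit entries of $\Delta_k(\vu)$, gives for a contiguous block $\{p+1,\dots,q\}$ the path determinant
\begin{equation*}
\det\Delta_k(\vu)[\{p+1,\dots,q\}]=\frac{1-u_p^4u_{p+1}^4\dotsb u_q^4}{(1-u_p^4)(1-u_{p+1}^4)\dotsb(1-u_q^4)},
\end{equation*}
where the variable $u_p$ of the deleted left endpoint reappears because the diagonal entry of $\Delta_k(\vu)$ at a vertex carries the bond to its left neighbour. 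Taking the product over the $2d$ arcs, the numerators assemble into $\prod_a(1-u_{p_a}^4\dotsb u_{p_{a+1}-1}^4)=C_k(\vu;\vj(S))$, while the denominator ranges $\{p_a,\dots,p_{a+1}-1\}$ tile $[k]$ exactly once, so the denominators multiply to $\prod_{s=1}^k(1-u_s^4)$ and cancel the prefactor. The case $\vj(S)=\emptyset$ occurs only for $d=0$ and is the full cycle, handled directly by the determinant formula \eqref{eq:Vn}, which gives $C_k(\vu;\emptyset)=(1-u_1^2\dotsb u_k^2)^2$.

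I expect the path-determinant computation and its bookkeeping to be the main obstacle: one must track carefully which $u$-variables enter each arc factor (in particular that the deleted endpoints contribute through the neighbouring diagonal entries), confirm that empty arcs arising from adjacent deleted vertices and the cyclic wrap-around are handled consistently, and verify that the denominator ranges tile $[k]$ exactly once. Once the path determinant and the telescoping of the denominators are established, collecting the even-order terms yields precisely $\den k(\vu;\q;\vj)=\sum_{d\ge0}(-\q^2)^d\den{k,d}(\vu;\vj)$ with $\den{k,d}(\vu;\vj)=\sum_{\card{S}=2d}(-1)^{\esum S}C_k(\vu;\vj(S))$.
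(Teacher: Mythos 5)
Your proposal is correct and takes essentially the same route as the paper's own proof: both expand $\det\kakko{\Delta_k(\vu)+\q\Xi_k(\vj)}$ by multilinearity in the perturbed columns, identify each complementary principal minor times $\prod_{s=1}^k(1-u_s^4)$ with $C_k(\vu;\vj(S))$ via the factorization into tridiagonal arc determinants (including the convention for empty arcs and the cyclic wrap-around), and then invoke Lemma \ref{lem:reality} to discard the odd powers of $\I\q$ and convert $(\I\q)^{2d}$ into $(-\q^2)^d$. The only difference is presentational: you make explicit the induction on arc length for the path determinant, which the paper dismisses with ``it is easy to see.''
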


\begin{proof}
Let $\vd_i$ be the $i$-th column vector of $\Delta_k(\vu)$.
We also denote by $\{\ve_i\}_{i=1}^k$ the standard basis of $\C^k$.
By the multilinearity of a determinant, we readily get
\begin{align*}
&\eqsp\det(\Delta_k(\vu)+\q\Xi_k(\vj))\\
&=\det\Delta_k(\vu)+\sum_{r=1}^{2j}(\I \q)^r
\sum_{1\le s_1<\dots<s_r\le 2j}(-1)^{s_1+\dots+s_r}\det(\vd_1,\dots,\ve_{j_{s_1}},\dots,\ve_{j_{s_r}},\dots,\vd_k).
\end{align*}
The determinant $\det(\vd_1,\dots,\ve_{j_{s_1}},\dots,\ve_{j_{s_r}},\dots,\vd_k)$ is a product of
$r$ tridiagonal determinants
\begin{equation*}
D_i=
\begin{vmatrix}
d_{a(i)+1,a(i)+1} & d_{a(i)+1,a(i)+2} & \dots & d_{a(i)+1,a(i+1)-1} \\
d_{a(i)+2,a(i)+1} & d_{a(i)+2,a(i)+2} & \dots & d_{a(i)+2,a(i+1)-1} \\
\vdots & \vdots & \ddots & \vdots \\
d_{a(i+1)-1,a(i)+1} & d_{a(i+1)-1,a(i)+2} & \dots & d_{a(i+1)-1,a(i+1)-1}
\end{vmatrix},
\end{equation*}
where $a(i)=j_{s_i}$, $d_{ij}$ is the $(i,j)$-entry of $\Delta_k(\vu)$,
and the indices are understood modulo $k$.
If $a(i+1)=a(i)+1$, then we understand that $D_i=1$.
It is easy to see that
\begin{equation*}
D_i=\frac{1-u_{a(i)}^4u_{a(i)+1}^4\dotsb u_{a(i+1)-1}^4}{(1-u_{a(i)}^4)(1-u_{a(i)+1}^4)\dotsb(1-u_{a(i+1)-1}^4)}.
\end{equation*}
Hence we have
\begin{equation*}
\den k(\vu;\q;\vj)=\sum_{S\subset[2k]}(-1)^{\esum S}(\I \q)^{\card{S}}C_{k}(\vu;\vj(S)).
\end{equation*}
Since $\den k(\vu;\q;\vj)$ is real-valued by Lemma \ref{lem:reality},
we have the conclusion by taking the real parts.
\end{proof}

\subsection{Examples}

\subsubsection{$\den{k,d}(\vu;\vj)$ and $R_{k,j}(\q)$}

We give several examples of $\den{k,d}(\vu;\vj)$.
For convenience, we prepare some notation for abbreviation.
Let us put
\begin{equation*}
\V k(\vu)\deq(1-u_1^2\dots u_k^2)^2,\qquad
\U\vt(\vu)\deq\prod_{i=1}^m\biggl(1-\prod_{j=1}^{t_i}u^4_{j+\sum_{k<i}t_k}\biggr)
\end{equation*}
for a positive integer $k$ and a sequence $\vt=(t_1,\dots,t_m)\in\cT m(k)$, where
\begin{equation*}
\cT m(k)\deq\set[Big]{(t_1,\dots,t_m)\in[k]^4}{t_1+\dotsb+t_m=k}.
\end{equation*}
For instance, if $\vt=(2,3,2,1)\in\cT4(8)$, then
\begin{equation*}
\U\vt(u_1,\dots,u_8)=(1-u_1^4u_2^4)(1-u_3^4u_4^4u_5^4)(1-u_6^4u_7^4)(1-u_8^4).
\end{equation*}
Notice that $\den{k,0}(\vu;\vj)=\V k(\vu)$ for any $\vj$.

\begin{ex}\label{ex:Rn1}
For $\vj=\{i,j\}\subset[k]$ with $i<j$, we have
\begin{equation*}
\den{k,1}(\vu;\vj)=(-1)^{1+2}C_{k}(\vu;\vj)=-\U{(r,k-r)}(u_i,u_{i+1},\dots,u_{i-1}),
\end{equation*}
where $r=j-i$.
This fact immediately implies that $R_{k,1}(\q)$ in \eqref{eq:specialvalues} is given by
\begin{equation}\label{eq:first_anomaly}
\begin{split}
R_{k,1}(\q)
&=\frac k2\sum_{r=1}^{k-1}\int_{[0,1]^k}\frac{2^kd\vu}{\sqrt{\V k(\vu)+\q^2\U{(r,k-r)}(\vu)}}\\
&=\sum_{0<2r\le k}\frac k{1+\delta_{2r,k}}\int_{[0,1]^k}\frac{2^kd\vu}{\sqrt{\V k(\vu)+\q^2\U{(k-r,r)}(\vu)}}.
\end{split}
\end{equation}
\end{ex}

\begin{ex}\label{ex:top_term}
For $\vj\subset[k]$ with $\card\vj=2k$, it follows in general that
\begin{equation*}
\den{k,j}(\vu;\vj)=(-1)^{k}C_{k}(\vu;\vj)
\end{equation*}
since $\esum{[2k]}=k(2k-1)\equiv k\pmod2$.
\end{ex}

\begin{ex}\label{ex:Rn2}
For $\vj=\{j_1,j_2,j_3,j_4\}\subset[k]$ with $j_1<j_2<j_3<j_4$, we have
\begin{align*}
\den{k,1}(\vu;\vj)
&=(-1)^{1+2}C_{k}(\vu;j_1,j_2)+(-1)^{1+3}C_{k}(\vu;j_1,j_3)+(-1)^{1+4}C_{k}(\vu;j_1,j_4)\\
&\eqsp+(-1)^{2+3}C_{k}(\vu;j_2,j_3)+(-1)^{2+4}C_{k}(\vu;j_2,j_4)+(-1)^{3+4}C_{k}(\vu;j_3,j_4)\\
&=-(1-u_{j_1}^4\dots u_{j_2-1}^4)(1-u_{j_3}^4\dots u_{j_4-1}^4)(1-u_{j_2}^4\dots u_{j_3-1}^4)(1-u_{j_4}^4\dots u_{j_1-1}^4)\\
&\eqsp-(1-u_{j_1}^4\dots u_{j_2-1}^4u_{j_3}^4\dots u_{j_4-1}^4)(1-u_{j_2}^4\dots u_{j_3-1}^4u_{j_4}^4\dots u_{j_1-1}^4).
\end{align*}
By Example \ref{ex:top_term}, we also see that
\begin{align*}
\den{k,2}(\vu;\vj)&=C_k(\vu;\vj)\\
&=(1-u_{j_1}^4\dots u_{j_2-1}^4)(1-u_{j_2}^4\dots u_{j_3-1}^4)(1-u_{j_3}^4\dots u_{j_4-1}^4)(1-u_{j_4}^4\dots u_{j_1-1}^4).
\end{align*}
Thus we have
\begin{align*}
&\eqsp\det(\Delta_k(\vu)+\q\Xi_k(j_1,j_2,j_3,j_4))\prod_{i=1}^k(1-u_i^4)\\
&=\V k(\vu)+(\q^2+\q^4)(1-u_{j_1}^4\dots u_{j_2-1}^4)(1-u_{j_3}^4\dots u_{j_4-1}^4)(1-u_{j_2}^4\dots u_{j_3-1}^4)(1-u_{j_4}^4\dots u_{j_1-1}^4)\\
&\eqsp+\q^2(1-u_{j_1}^4\dots u_{j_2-1}^4u_{j_3}^4\dots u_{j_4-1}^4)(1-u_{j_2}^4\dots u_{j_3-1}^4u_{j_4}^4\dots u_{j_1-1}^4).
\end{align*}
If we take $(t_1,t_2,t_3,t_4)\in\cT4(k)$ such that $j_{i+1}\equiv j_i+t_i\pmod{k}$ ($i=1,2,3,4$; $j_5=j_1$),
then it follows that
\begin{multline*}
\int_{[0,1]^k}\frac{d\vu}{\sqrt{\det(\Delta_k(\vu)+\q\Xi_k(j_1,j_2,j_3,j_4))\prod_{i=1}^k(1-u_i^4)}}\\
=\int_{[0,1]^k}\frac{d\vu}{\sqrt{\V k(\vu)+\q^2\U{(t_1+t_3,t_2+t_4)}(\vu)+(\q^2+\q^4)\U{(t_1,t_2,t_3,t_4)}(\vu)}}.
\end{multline*}
The cyclic group $\cyclic 4$ of order $4$ naturally acts on $\cT4(k)$ by
\begin{equation*}
\sigma.(t_1,t_2,t_3,t_4)\deq(t_{\sigma(1)},t_{\sigma(2)},t_{\sigma(3)},t_{\sigma(4)}) \quad (\sigma\in\cyclic4).
\end{equation*}
Notice that the integral above is $\cyclic 4$-invariant.
For a given $\vt=(t_1,t_2,t_3,t_4)\in\cT4(k)$,
the number of subsets $\vj=\{j_1,j_2,j_3,j_4\}$ in $[k]$
satisfying the condition $j_{i+1}\equiv j_i+t_i\pmod{k}$ is equal to $k/\card{C_4(\vt)}$,
where $\cyclic4(\vt)$ denotes the stabilizer of $\vt$ in $\cyclic4$.
Consequently,
\begin{equation}\label{eq:second_anomaly}
\begin{split}
R_{k,2}(\q)
&=\sum_{\vt\in\cT4(k)/\cyclic4}\frac k{\card{\cyclic4(\vt)}}
\int_{[0,1]^k}\frac{2^kd\vu}{\sqrt{\V k(\vu)+\q^2\U{(t_1+t_3,t_2+t_4)}(\vu)+(\q^2+\q^4)\U{(t_1,t_2,t_3,t_4)}(\vu)}}\\
&=\frac k4\sum_{\vt\in\cT4(k)}
\int_{[0,1]^k}\frac{2^kd\vu}{\sqrt{\V k(\vu)+\q^2\U{(t_1+t_3,t_2+t_4)}(\vu)+(\q^2+\q^4)\U{(t_1,t_2,t_3,t_4)}(\vu)}},
\end{split}
\end{equation}
where $\vt=(t_1,t_2,t_3,t_4)$.
Similarly, the result in Example \ref{ex:Rn1} can be also rewritten as
\begin{align*}
R_{k,1}(\q)
&=\frac k2\sum_{\vt\in\cT2(k)}\int_{[0,1]^k}\frac{2^kd\vu}{\sqrt{\V k(\vu)+\q^2\U{(t_1,t_2)}(\vu)}}\\
&=\sum_{\vt\in\cT2(k)/\cyclic2}\frac k{\card{\cyclic2(\vt)}}\int_{[0,1]^k}\frac{2^kd\vu}{\sqrt{\V k(\vu)+\q^2\U{(t_1,t_2)}(\vu)}}.
\end{align*}
\end{ex}

\subsubsection{Several special values}

Using Theorem \ref{thm:specialvalues} and the formulas
\eqref{eq:first_anomaly} and \eqref{eq:second_anomaly} for $R_{k,1}(\q)$ and $R_{k,2}(\q)$
given in the previous examples,
we show several examples of the special values of $\zeta_Q(s)$.

\begin{ex}
The values $\zeta_Q(2)$ and $\zeta_Q(3)$ are given by
\begin{align*}
\zeta_Q(2)&=2\kakko{\frac{\alpha+\beta}{2\sqrt{\alpha\beta(\alpha\beta-1)}}}^{\!\!2}
\Biggl(\zeta(2,1/2)+\kakko{\frac{\alpha-\beta}{\alpha+\beta}}^{\!\!2}R_{2,1}(\q)\Biggr),\\
\zeta_Q(3)&=2\kakko{\frac{\alpha+\beta}{2\sqrt{\alpha\beta(\alpha\beta-1)}}}^{\!\!3}
\Biggl(\zeta(3,1/2)+\kakko{\frac{\alpha-\beta}{\alpha+\beta}}^{\!\!2}R_{3,1}(\q)\Biggr)
\end{align*}
with
\begin{align*}
R_{2,1}(\q)&=\int_{[0,1]^2}\frac{4du_1du_2}{\sqrt{\V2(\vu)+\q^2\U{(1,1)}(\vu)}}
=\int_{[0,1]^2}\frac{4du_1du_2}{\sqrt{(1-u_1^2u_2^2)^2+\q^2(1-u_1^4)(1-u_2^4)}},\\
R_{3,1}(\q)&=3\int_{[0,1]^3}\frac{8du_1du_2du_3}{\sqrt{\V3(\vu)+\q^2\U{(2,1)}(\vu)}}
=3\int_{[0,1]^3}\frac{8du_1du_2du_3}{\sqrt{(1-u_1^2u_2^2u_3^2)^2+\q^2(1-u_1^4)(1-u_2^4u_3^4)}}.
\end{align*}
This recovers the result obtained in \cite{IW2005KJM}.
\end{ex}

\begin{ex}
The values $\zeta_Q(4)$ and $\zeta_Q(5)$ are given by
\begin{align*}
\zeta_Q(4)&=2\kakko{\frac{\alpha+\beta}{2\sqrt{\alpha\beta(\alpha\beta-1)}}}^{\!\!4}
\Biggl(\zeta(4,1/2)+\kakko{\frac{\alpha-\beta}{\alpha+\beta}}^{\!\!2}R_{4,1}(\q)
+\kakko{\frac{\alpha-\beta}{\alpha+\beta}}^{\!\!4}R_{4,2}(\q)\Biggr),\\
\zeta_Q(5)&=2\kakko{\frac{\alpha+\beta}{2\sqrt{\alpha\beta(\alpha\beta-1)}}}^{\!\!5}
\Biggl(\zeta(5,1/2)+\kakko{\frac{\alpha-\beta}{\alpha+\beta}}^{\!\!2}R_{5,1}(\q)
+\kakko{\frac{\alpha-\beta}{\alpha+\beta}}^{\!\!4}R_{5,2}(\q)\Biggr)
\end{align*}
with
\begin{align*}
R_{4,1}(\q)&=4\int_{[0,1]^4}\frac{16d\vu}{\sqrt{\V4(\vu)+\q^2\U{(3,1)}(\vu)}}
+2\int_{[0,1]^4}\frac{16d\vu}{\sqrt{\V4(\vu)+\q^2\U{(2,2)}(\vu)}},\\
R_{5,1}(\q)&=5\int_{[0,1]^5}\frac{32d\vu}{\sqrt{\V5(\vu)+\q^2\U{(4,1)}(\vu)}}
+5\int_{[0,1]^5}\frac{32d\vu}{\sqrt{\V5(\vu)+\q^2\U{(3,2)}(\vu)}}
\end{align*}
and
\begin{align*}
R_{4,2}(\q)&=\int_{[0,1]^4}\frac{16d\vu}{\sqrt{\V4(\vu)+\q^2\U{(2,2)}(\vu)+(\q^2+\q^4)\U{(1,1,1,1)}(\vu)}},\\
R_{5,2}(\q)&=5\int_{[0,1]^5}\frac{32d\vu}{\sqrt{\V5(\vu)+\q^2\U{(3,2)}(\vu)+(\q^2+\q^4)\U{(2,1,1,1)}(\vu)}}.
\end{align*}
\end{ex}

\subsubsection{Ap\'ery-like numbers for $\zeta_Q(2)$ and the elliptic integral}\label{sec:Apery-Heun}

We define the numbers $J_2(m)$ ($m\ge0$) by the expansion
\begin{equation*}
R_{2,1}(\q)=\sum_{m=0}^\infty \binom{-1/2}m J_2(m)\q^{2m}.
\end{equation*}
Then they satisfy the three-term recurrence relation \cite{IW2005KJM}
\begin{equation}\label{eq:recurrence_for_J2}
4m^2J_2(m)-(8m^2-8m+3)J_2(m-1)+4(m-1)^2J_2(m-2)=0\quad(m\ge2).
\end{equation}
This implies that the generating function $w_2(z)=\sum_{m=0}^\infty J_2(m)z^m$ satisfies
\begin{equation}\label{eq:Heun}
\ckakko{z(1-z)^2\frac{d^2}{dz^2}+(1-3z)(1-z)\frac{d}{dz}+z-\frac34}w_2(z)=0.
\end{equation}
This differential equation is the Picard-Fuchs equation for the universal family of elliptic curves equipped with rational $4$-torsion. In fact, each elliptic curve in the family is
birationally equivalent to one of the curves
$(1-u^2v^2)^2+x^2(1-u^4)(1-v^4)=0$
in the $(u,v)$-plane, which are appeared
in the denominator of the integrand of $R_{2,1}(x)$.

The equation \eqref{eq:Heun} can be reduced to the Gaussian hypergeometric differential equation
by a suitable change of variable and solved as follows \cite{O2008RJ}:
\begin{equation*}
w_2(z)=\frac{3\zeta(2)}{1-z}{}_2F_1\kakko{\frac12,\frac12;1;\frac z{z-1}},
\end{equation*}
from which we obtain
\begin{equation*}
R_{2,1}(\q)=3\zeta(2){}_2F_1\kakko{\frac14,\frac34;1;-\q^2}^{\!2}.
\end{equation*}
Thus we have the following formulas for $\zeta_Q(2)$ \cite{IW2005KJM,O2008RJ}:
\begin{align*}
\zeta_Q(2)
&=\kakko{\frac{\pi(\alpha+\beta)}{2\sqrt{\alpha\beta(\alpha\beta-1)}}}^{\!\!2}
\Biggl(1+\frac1{2\pi\I}\kakko{\frac{\alpha-\beta}{\alpha+\beta}}^{\!\!2}\int_{\abs z=r}\frac{u(z)}{z(1+\q^2z)^{1/2}}dz\Biggr)\\
&=\kakko{\frac{\pi(\alpha+\beta)}{2\sqrt{\alpha\beta(\alpha\beta-1)}}}^{\!\!2}
\Biggl(1+\kakko{\frac{\alpha-\beta}{\alpha+\beta}}^{\!\!2}{}_2F_1\kakko{\frac14,\frac34;1;-\q^2}^{\!2}\Biggr),
\end{align*}
where $u(z)=w_2(z)/3\zeta(2)$ is a normalized (unique) holomorphic solution of \eqref{eq:Heun} in $\abs z<1$ and $\q^2<r<1$.
We also have similar formulas for $\zeta_Q(3)$ \cite{IW2005KJM,KW2006KJM}.

\section{Ap\'ery-like numbers}\label{ALN}

In what follows, we restrict our attention on $R_{k,1}(\q)$
appearing in the special value formula for $\zeta_Q(s)$.
We may sometimes refer to $R_{k,1}(\q)$ as the \emph{first anomaly} in $\zeta_Q(k)$ for short.
In this section, we define Ap\'ery-like numbers $J_k(n)$, and study their recurrence equation and the differential equation satisfied by the generating function of $J_k(n)$. We lastly discuss congruence properties for the \emph{normalized} Ap\'ery-like numbers $\tJ{k}n$ (\S\ref{sec:numeric_data_of_NAS}).

\subsection{Ap\'ery-like numbers associated to the first anomalies}

We expand the first anomaly $R_{k,1}(\q)$ as follows:
\begin{align*}
R_{k,1}(\q)
&=\frac{k}2\sum_{r=1}^{k-1}\int_{[0,1]^k}
\frac{2^k du_1\dotsb du_k}{\sqrt{(1-u_1^2\dotsb u_k^2)^2+\q^2(1-u_1^4\dotsb u_r^4)(1-u_{r+1}^4\dotsb u_k^4)}}\\
&=\frac{k}2\sum_{r=1}^{k-1}
\sum_{n=0}^\infty\binom{-\frac12}{n}\tempJ{k}{r}{n}\q^{2n}\\
&=\frac{k}2\sum_{n=0}^\infty\binom{-\frac12}{n}J_k(n)\q^{2n},
\end{align*}
where we put
\begin{align*}
J_k(n)&= \sum_{r=1}^{k-1}\tempJ{k}{r}{n},\\
\tempJ{k}{r}{n} &=2^k \int_{[0,1]^k}
\frac{(1-u_1^4\dotsb u_r^4)^n(1-u_{r+1}^4\dotsb u_k^4)^n}{(1-u_1^2\dotsb u_k^2)^{2n+1}}
du_1\dotsb du_k.
\end{align*}
If we change the variables of the integral by
\begin{equation*}
u_j=e^{-\frac12x_j}\quad(j=1,2,\dots,r),\qquad
u_{r+j}=e^{-\frac12y_j}\quad(j=1,2,\dots,k-r),
\end{equation*}
then the corresponding domain of integral is
\begin{equation*}
0\le x_1\le x_2\le\dotsb\le x_r,\qquad
0\le y_1\le y_2\le\dotsb\le y_{k-r},
\end{equation*}
so that we have
\begin{align*}
\tempJ{k}{r}{n}
&=\int_{\substack{0\le x_1\le x_2\le\dotsb\le x_r\\0\le y_1\le y_2\le\dotsb\le y_{k-r}}}
\frac{(1-e^{-2x_r})^n(1-e^{-2y_{k-r}})^n}{(1-e^{-x_r-y_{k-r}})^{2n+1}}e^{-\frac12(x_r+y_{k-r})}
dx_1dx_2\dotsb dx_rdy_1dy_2\dotsb dy_{k-r}\\
&=\int_0^\infty\!\!\int_0^\infty\frac{x^{r-1}}{(r-1)!}\frac{y^{k-r-1}}{(k-r-1)!}
\frac{e^{-\frac12(x+y)}}{(1-e^{-x-y})^{2n+1}}(1-e^{-2x})^n(1-e^{-2y})^ndxdy\\
&=\int_0^\infty\frac{e^{-\frac12u}}{(1-e^{-u})^{2n+1}}du
\int_0^u\frac{t^{r-1}}{(r-1)!}\frac{(u-t)^{k-r-1}}{(k-r-1)!}
(1-e^{-2t})^n(1-e^{-2(u-t)})^ndt.
\end{align*}
By the binomial theorem, we have
\begin{equation}
\begin{split}
J_k(n)= \sum_{r=1}^{k-1}\tempJ{k}{r}{n}
&=\int_0^\infty\frac{u^{k-2}}{(k-2)!}\frac{e^{-\frac12u}}{(1-e^{-u})^{2n+1}}du
\int_0^u(1-e^{-2t})^n(1-e^{-2(u-t)})^ndt\\
&=\frac1{2^{2n+1}}\int_0^\infty\frac{u^{k-2}}{(k-2)!}\frac{e^{nu}}{(\sinh\frac{u}2)^{2n+1}}du
\int_0^u(1-e^{-2t})^n(1-e^{-2(u-t)})^ndt.
\end{split}\label{eq:definition-of-aperylike-numbers}
\end{equation}
We call the numbers $\J{k}{n}$ the \emph{Ap\'ery-like numbers}
associated to the first anomaly $R_{k,1}(\q)$ of $\zeta_Q(k)$,
or \emph{$k$-th Ap\'ery-like numbers} for short.\footnote{[Differences of conventions]
$\J2{n}$ in this article is equal to $J_n$ in \cite{IW2005KJM} (and $J_2(n)$ in \cite{KW2006KJM}).
$\J3{n}$ in this article is equal to $2J^1_n$ in \cite{IW2005KJM} (and $2J_3(n)$ in \cite{KW2006KJM}),
since our $\J3{n}$ is defined to be the sum $\tempJ{3}{1}{n}+\tempJ{3}{2}{n}$,
each summand in which is equal to $J^1_n$ in \cite{IW2005KJM}.}
By the equation \eqref{eq:definition-of-aperylike-numbers} above, one has
\begin{align}
\J{k}{n}&=\frac1{2^{2n+1}}\int_0^\infty
\frac{u^{k-2}}{(k-2)!}B_n(u)du, \label{eq:J_k(n)}
\\
B_n(u)&=\frac{e^{nu}}{(\sinh\frac{u}2)^{2n+1}}\int_0^u(1-e^{-2t})^n(1-e^{-2(u-t)})^ndt \label{eq:B_n(u)}
\end{align}
for $k=2,3,4,\dots$ and $n=0,1,2,\dots$.
We notice that the function $B_n(u)$ is continuous at $u=0$ and is of exponential decay as $u\to+\infty$
(see Proposition 4.10 in \cite{IW2005KJM}).
It is convenient to introduce the numbers $\J0n$ and $\J1n$ by
\begin{equation*}
\J0n=0,\qquad \J1n=\frac{2^nn!}{(2n+1)!!}=\frac{(1)_n(1)_n}{(\frac32)_n(1)_n}\qquad(n=0,1,2,\dots),
\end{equation*}
where $(a)_n=a(a+1)\dotsb(a+n-1)$ is the Pochhammer symbol.

\begin{ex}
We see that
\begin{align*}
B_0(u)&=\frac1{\sinh\frac{u}2}\int_0^udt=\frac{u}{\sinh\frac{u}2},\\
B_1(u)&=\frac{e^u}{(\sinh\frac{u}2)^3}\int_0^u(1-e^{-2t})(1-e^{-2(u-t)})dt
=4\frac{u}{\sinh\frac{u}2}+2\frac{u-\sinh u}{(\sinh\frac{u}2)^3}\\
&=4\frac{u}{\sinh\frac{u}2}+2\frac{u}{(\sinh\frac{u}2)^3}-4\frac{\cosh\frac{u}2}{(\sinh\frac{u}2)^2}.
\end{align*}
Thus we have
\begin{align*}
\J{k}{0}&=\frac1{2\cdot(k-2)!}\int_0^\infty\frac{u^{k-1}}{\sinh\frac u2}du,\\
\J{k}{1}&=\frac1{2\cdot(k-2)!}\int_0^\infty\frac{u^{k-1}}{\sinh\frac u2}du
+\frac1{4\cdot(k-2)!}\int_0^\infty\frac{u^{k-2}(u-\sinh u)}{(\sinh\frac{u}2)^3}du\\
&=\frac1{2\cdot(k-2)!}\int_0^\infty\frac{u^{k-1}}{\sinh\frac u2}du
+\frac1{4\cdot(k-2)!}\int_0^\infty\frac{u^{k-1}}{(\sinh\frac{u}2)^3}du
-\frac1{2\cdot(k-2)!}\int_0^\infty\frac{\cosh\frac{u}2}{(\sinh\frac{u}2)^2}u^{k-2}du.
\end{align*}
Using the formulas
\begin{align*}
\zeta\Bigl(s,\frac12\Bigr)
&=\frac1{2\Gamma(s)}\int_0^\infty\frac{u^{s-1}}{\sinh\frac u2}du
=\frac1{4\Gamma(s+1)}\int_0^\infty\frac{\cosh\frac{u}2}{(\sinh\frac u2)^2}u^{s}du
\qquad(\Re(s)>1),\\
\int_0^\infty\frac{u^{s-1}}{(\sinh\frac{u}2)^3}du
&=(s-1)\int_0^\infty\frac{\cosh\frac{u}2}{(\sinh\frac u2)^2}u^{s-2}du
-\frac12\int_0^\infty\frac{u^{s-1}}{\sinh\frac u2}du
\qquad(\Re(s)>3),
\end{align*}
we get
\begin{align*}
\J{k}{0}&=(k-1)\zeta\Bigl(k,\frac12\Bigr),\\
\J{k}{1}&=(k-3)\zeta\Bigl(k-2,\frac12\Bigr)+\frac{3(k-1)}4\zeta\Bigl(k,\frac12\Bigr)
\Bigl(=\J{k-2}{0}+\frac34\J{k}{0}\Bigr)
\end{align*}
for $k\ge4$.
It is worth noting that these formulas are also valid for $k=2$ and $k=3$;
\begin{equation*}
\J20=\zeta\Bigl(2,\frac12\Bigr),\quad \J21=\frac34\zeta\Bigl(2,\frac12\Bigr);\qquad
\J30=2\zeta\Bigl(3,\frac12\Bigr),\quad \J31=1+\frac32\zeta\Bigl(3,\frac12\Bigr).
\end{equation*}
\end{ex}
Here we use the fact that
\begin{equation*}
\zeta\Bigl(0,\frac12\Bigr)=0,\qquad
\lim_{s\to1}(s-1)\zeta\Bigl(s,\frac12\Bigr)=1.
\end{equation*}

We have now the following series expansion of $\J{k}{n}$.
\begin{lem}[Series expression]\label{Series expression of ALN}
\begin{equation*}
\J{k}{n}=\sum_{r=1}^{k-1}\sum_{m=0}^\infty\binom{2n+m}{m}
\sum_{j=0}^n(-1)^j\binom{n}{j}
\frac1{(\frac12+m+2j)^r}
\sum_{j=0}^n(-1)^j\binom{n}{j}
\frac1{(\frac12+m+2j)^{k-r}}.
\end{equation*}
\end{lem}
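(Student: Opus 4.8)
The plan is to expand the integrand defining each summand $\tempJ{k}{r}{n}$ of $\J{k}{n}=\sum_{r=1}^{k-1}\tempJ{k}{r}{n}$ as a power series and integrate term by term. Writing $\xi\deq u_1^2\dotsb u_r^2$ and $\eta\deq u_{r+1}^2\dotsb u_k^2$, the integrand of
\[
\tempJ{k}{r}{n}=2^k\int_{[0,1]^k}\frac{(1-\xi^2)^n(1-\eta^2)^n}{(1-\xi\eta)^{2n+1}}\,d\vu
\]
factors as $2^k(1-\xi^2)^n(1-\eta^2)^n(1-\xi\eta)^{-(2n+1)}$, and I would expand the last factor via the generalized binomial series $(1-\xi\eta)^{-(2n+1)}=\sum_{m\ge0}\binom{2n+m}{m}(\xi\eta)^m$, which is valid since $0\le\xi\eta<1$ on $(0,1)^k$.

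The delicate step---and the main obstacle---is justifying the interchange of this infinite sum with the integral. One must \emph{not} expand the finite binomials $(1-\xi^2)^n$ and $(1-\eta^2)^n$ first, because the resulting individual integrals $\int_{[0,1]^k}\xi^{2i+m}\eta^{2l+m}(1-\xi\eta)^{-(2n+1)}\,d\vu$ may diverge near the corner $u_1=\dots=u_k=1$ when $k\le 2n+1$; the convergence of $\tempJ{k}{r}{n}$ genuinely relies on the vanishing of the numerator there. Instead I would keep the numerator $(1-\xi^2)^n(1-\eta^2)^n$ intact: since it is nonnegative on $[0,1]^k$ and every term $\binom{2n+m}{m}(\xi\eta)^m$ is nonnegative, the partial sums increase monotonically to the integrand, so Tonelli's theorem (monotone convergence) both justifies the interchange and guarantees convergence of the resulting series, using that $\tempJ{k}{r}{n}<\infty$. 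This yields
\[
\tempJ{k}{r}{n}=2^k\sum_{m=0}^\infty\binom{2n+m}{m}\int_{[0,1]^k}(1-\xi^2)^n(1-\eta^2)^n(\xi\eta)^m\,d\vu.
\]

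For each fixed $m$ the integrand is a polynomial in the $u_j$, so I may freely expand the two finite binomials and evaluate the integral as a product of elementary one-variable integrals: with $\int_0^1u^{2N}\,du=(2N+1)^{-1}$ one gets
\[
\int_{[0,1]^k}\xi^{2i+m}\eta^{2l+m}\,d\vu=\frac1{(4i+2m+1)^r}\cdot\frac1{(4l+2m+1)^{k-r}}.
\]
Using $4i+2m+1=2\bigl(\tfrac12+m+2i\bigr)$ to absorb the factor $2^k$ (note $2^k\cdot2^{-r}\cdot2^{-(k-r)}=1$), the double finite sum factors, giving
\[
\tempJ{k}{r}{n}=\sum_{m=0}^\infty\binom{2n+m}{m}\Biggl(\sum_{j=0}^n(-1)^j\binom nj\frac1{(\tfrac12+m+2j)^r}\Biggr)\Biggl(\sum_{j=0}^n(-1)^j\binom nj\frac1{(\tfrac12+m+2j)^{k-r}}\Biggr),
\]
and summing over $r=1,\dots,k-1$ produces the claimed expression for $\J{k}{n}$. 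The only routine checks left are the elementary integral evaluation and the bookkeeping of the powers of $2$.
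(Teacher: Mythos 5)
Your proof is correct, and it reaches the same factorized inner sums by the same essential mechanism as the paper: expand $(1-\cdot)^{-(2n+1)}$ as a binomial series with nonnegative coefficients, integrate term by term, and only then expand the finite binomials $(1-\cdot)^n$. The difference is the representation you expand. The paper works from the two-variable exponential form
\begin{equation*}
\tempJ{k}{r}{n}=\int_0^\infty\!\!\int_0^\infty
\frac{x^{r-1}}{(r-1)!}\frac{y^{k-r-1}}{(k-r-1)!}
\frac{e^{-\frac12(x+y)}(1-e^{-2x})^n(1-e^{-2y})^n}{(1-e^{-x-y})^{2n+1}}\,dx\,dy,
\end{equation*}
which it had already derived via the substitution $u_j=e^{-x_j/2}$ and integration over the ordered simplex; after expanding $(1-e^{-x-y})^{-(2n+1)}=\sum_m(-1)^m\binom{-2n-1}{m}e^{-m(x+y)}$ the double integral splits into two Gamma-type integrals, each evaluated to $\sum_j(-1)^j\binom{n}{j}(\tfrac12+m+2j)^{-a}$. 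You instead work directly with the $k$-dimensional $u$-integral defining $\tempJ{k}{r}{n}$, so the exponents $r$ and $k-r$ arise as products of $r$ (resp.\ $k-r$) identical monomial integrals $\int_0^1u^{2N}du=(2N+1)^{-1}$ rather than from the weights $x^{r-1}/(r-1)!$; the two computations correspond exactly under the paper's change of variables. What your route buys is self-containedness (no reliance on the prior two-variable reduction) and rigor: you spell out the Tonelli/monotone-convergence justification of the interchange, and your warning is a genuine one --- expanding the finite binomials while the singular factor $(1-\xi\eta)^{-(2n+1)}$ is still present produces divergent integrals when $k\le 2n+1$ (equivalently, the fully expanded four-fold series is not absolutely convergent in that range), so the order of operations matters. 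The paper performs the steps in the same safe order but dismisses the interchange with ``it is elementary to see''; the same positivity argument you give is what underwrites that step there as well.
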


\begin{proof}
It is elementary to see
\begin{align*}
&\int_0^\infty\!\!\int_0^\infty
\frac{x^{r-1}}{(r-1)!}
\frac{y^{k-r-1}}{(k-r-1)!}
\frac{e^{-\frac12(x+y)}(1-e^{-2x})^n(1-e^{-2y})^n}{(1-e^{-x-y})^{2n+1}}
dxdy\\
=&\sum_{m=0}^\infty(-1)^m\binom{-2n-1}{m}
\int_0^\infty
\frac{x^{r-1}}{(r-1)!}e^{-\frac12x-mx}(1-e^{-2x})^ndx
\int_0^\infty
\frac{y^{k-r-1}}{(k-r-1)!}e^{-\frac12y-my}(1-e^{-2y})^ndy.
\end{align*}
Since
\begin{align*}
\int_0^\infty
\frac{z^{a-1}}{(a-1)!}e^{-\frac12z-mz}(1-e^{-2z})^ndz
&=\frac{1}{(a-1)!}\sum_{j=0}^n(-1)^j\binom{n}{j}
\int_0^\infty
z^{a-1}e^{-(\frac12+m+2j)z}dz\\
&=\sum_{j=0}^n(-1)^j\binom{n}{j}
\frac1{(\frac12+m+2j)^a},
\end{align*}
the desired series expansion follows immediately.
\end{proof}

\subsection{Recurrence relations among Ap\'ery-like numbers}

Third Ap\'ery-like numbers $\J3n$
satisfy the following inhomogeneous recurrence formula,
which is obtained in \cite{IW2005KJM}:
\begin{equation}\label{eq:recurrence-relation-by-IW}
\begin{split}
4n^2\J{3}{n}-(8n^2-8n+3)\J{3}{n-1}+4(n-1)^2\J{3}{n-2}=4\J{1}{n-1}
\end{split}
\end{equation}
for each $n\ge2$. One should remark that the homogeneous part
of this recurrence formula is the same as the one for $\J2n$ given in \eqref{eq:recurrence_for_J2}.

We here show that the Ap\'ery-like numbers $\J{k}{n}$ for $k\ge4$ also satisfy similar
three-term recurrence formula.
Put
\begin{equation*}
\T{l}{p}{n}=\frac1{2^{2n+1}}\int_0^\infty\frac{u^l}{l!}\Bigl(\tanh\frac{u}2\Bigr)^pB_n(u)du
\end{equation*}
for $l,p,n=0,1,2,\dots$.
Notice that $\J{k}{n}=\T{k-2}{0}{n}$ for $k\ge2$.
We also note that
\begin{equation*}
\T{l}{p}{0}=\frac1{2\cdot l!}\int_0^\infty\Bigl(\tanh\frac{u}2\Bigr)^p\frac{u^{l+1}}{\sinh\frac{u}2}du.
\end{equation*}

We need the formulas (4.36) and (4.37) in \cite{IW2005KJM}:
\begin{align}
2\tanh\frac{u}2B_n'(u)&=8nB_{n-1}(u)-(2n+1)B_n(u),\label{eq:IW:4.36}\\
n\Bigl(\tanh\frac{u}2\Bigr)^2B_n(u)&=2(2n-1)B_{n-1}(u)+2(2n-1)\Bigl(\tanh\frac{u}2\Bigr)^2B_{n-1}(u)
-16(n-1)B_{n-2}(u).\label{eq:IW:4.37}
\end{align}

It follows from \eqref{eq:IW:4.36} that
\begin{equation*}
(p+1)\T{l}{p+2}{n}-2\T{l-1}{p+1}{n}=2n\T{l}{p}{n-1}-(2n-p)\T{l}{p}{n}.
\end{equation*}
Moreover, it follows from \eqref{eq:IW:4.37} that
\begin{equation*}
2n\T{l}{p+2}{n}=(2n-1)\T{l}{p}{n-1}+(2n-1)\T{l}{p+2}{n-1}-2(n-1)\T{l}{p}{n-2}.
\end{equation*}
Combining these, we get
\begin{multline*}
2n(2n-p)\T{l}{p}{n}-(8n^2-4(p+2)n+3+2p)\T{l}{p}{n-1}+2(n-1)(2n-2-p)\T{l}{p}{n-2}\\
=4n\T{l-1}{p+1}{n}-2(2n-1)\T{l-1}{p+1}{n}.
\end{multline*}

We see that
\begin{align*}
& n\tanh\frac{u}2B_n(u)-2(2n-1)\tanh\frac{u}2B_{n-1}(u)\\
&=-2\cdot\frac{8(n-1)B_{n-2}(u)-(2n-1)B_{n-1}(u)}{\tanh\frac{u}2}\qquad(\because\eqref{eq:IW:4.37})\\
&=-4B_{n-1}'(u)\qquad(\because\eqref{eq:IW:4.36}).
\end{align*}
This implies that
\begin{multline*}
\frac{4n}{2^{2n+1}}\int_0^\infty\frac{u^l}{l!}\Bigl(\tanh\frac{u}2\Bigr)^{p+1}B_n(u)du
-\frac{2(2n-1)}{2^{2n-1}}\int_0^\infty\frac{u^l}{l!}\Bigl(\tanh\frac{u}2\Bigr)^{p+1}B_{n-1}(u)du\\
=-\frac4{2^{2n-1}}\int_0^\infty\frac{u^l}{l!}\Bigl(\tanh\frac{u}2\Bigr)^{p}B_{n-1}'(u)du,
\end{multline*}
or
\begin{equation*}
4n\T{l}{p+1}{n}-2(2n-1)\T{l}{p+1}{n-1}
=4\T{l-1}{p}{n-1}+2p\T{l}{p-1}{n-1}+2p\T{l}{p+1}{n-1}.
\end{equation*}
Hence it follows that
\begin{multline}
2n(2n-p)\T{l}{p}{n}-(8n^2-4(p+2)n+3+2p)\T{l}{p}{n-1}+2(n-1)(2n-2-p)\T{l}{p}{n-2}\\[.5em]
=4\T{l-2}{p}{n-1}+2p\T{l-1}{p-1}{n-1}+2p\T{l-1}{p+1}{n-1}.
\end{multline}
In particular, if we put $p=0$ and $l=k-2$ in the equation above
and join \eqref{eq:recurrence-relation-by-IW},
we obtain the following recurrence equation for $\J{k}{n}\, (k\geq2)$, which was announced in
\cite{KW2012RIMS}.
\begin{thm}\label{thm:recurrence-relation}
\begin{equation}\label{eq:recurrence-relation}
4n^2\J{k}{n}-(8n^2-8n+3)\J{k}{n-1}+4(n-1)^2\J{k}{n-2}=4\J{k-2}{n-1}
\end{equation}
for $k\ge2$ and $n\ge2$.
\qed
\end{thm}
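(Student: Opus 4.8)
The plan is to read off \eqref{eq:recurrence-relation} as a one-line specialization of the general three-term recurrence among the auxiliary integrals $\T{l}{p}{n}$ assembled just above, invoking only the identification $\J{k}{n}=\T{k-2}{0}{n}$ that holds for every $k\ge2$. That recurrence, obtained by playing the two integral identities \eqref{eq:IW:4.36} and \eqref{eq:IW:4.37} against each other and integrating by parts against the weight $\frac{u^l}{l!}(\tanh\frac u2)^p$, takes the form
\begin{multline*}
2n(2n-p)\T{l}{p}{n}-(8n^2-4(p+2)n+3+2p)\T{l}{p}{n-1}+2(n-1)(2n-2-p)\T{l}{p}{n-2}\\
=4\T{l-2}{p}{n-1}+2p\T{l-1}{p-1}{n-1}+2p\T{l-1}{p+1}{n-1}.
\end{multline*}

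First I would put $p=0$ and $l=k-2$. The choice $p=0$ annihilates the two inhomogeneous terms $2p\T{l-1}{p-1}{n-1}$ and $2p\T{l-1}{p+1}{n-1}$; the three left-hand coefficients collapse to $4n^2$, $-(8n^2-8n+3)$ and $4(n-1)^2$; and the lone surviving right-hand term is $4\T{k-4}{0}{n-1}$. Since $\T{k-2}{0}{m}=\J{k}{m}$ and, as soon as $k-2\ge2$, also $\T{k-4}{0}{n-1}=\J{k-2}{n-1}$, this is exactly \eqref{eq:recurrence-relation} for all $k\ge4$ and $n\ge2$.

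Next I would dispose of the two edge values $k=2,3$, where $l=k-2\in\{0,1\}$ pushes $\T{k-4}{0}{n-1}$ outside the legal range $l\ge0$ so the specialization above no longer applies literally. For $k=3$ the claim coincides with the inhomogeneous recurrence \eqref{eq:recurrence-relation-by-IW}, whose right-hand side $4\J{1}{n-1}$ is precisely $4\J{k-2}{n-1}$. For $k=2$ the right-hand side $4\J{0}{n-1}$ vanishes by the convention $\J0n=0$, and what remains is the homogeneous recurrence \eqref{eq:recurrence_for_J2} already verified for $\J2n$. With these two anchors in place the formula holds uniformly for $k\ge2$ and $n\ge2$.

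I expect that essentially all of the difficulty lives not in this evaluation but in the derivation of the $\T{l}{p}{n}$ recurrence that feeds it. The hard part will be the careful bookkeeping that eliminates the parasitic $(\tanh\frac u2)^p$-weighted integrals: one must apply \eqref{eq:IW:4.36} and \eqref{eq:IW:4.37} in a coordinated way, and integrate by parts so that the index $l$ falls by exactly two while $p$ is returned to its starting value, leaving a right-hand side that is again a $\T{\cdot}{0}{\cdot}$ when $p=0$. Once that identity is secured, the theorem is nothing more than its value at $p=0$, $l=k-2$, supplemented by the two base cases forced by the conventions for $\J0n$ and $\J1n$.
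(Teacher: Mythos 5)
Your proposal is correct and follows the paper's own route: the paper likewise obtains \eqref{eq:recurrence-relation} by setting $p=0$, $l=k-2$ in the general three-term recurrence for $\T{l}{p}{n}$ (derived from \eqref{eq:IW:4.36} and \eqref{eq:IW:4.37}) and then ``joining'' \eqref{eq:recurrence-relation-by-IW} for $k=3$, with $k=2$ covered by \eqref{eq:recurrence_for_J2} and the convention $\J0n=0$. Your explicit treatment of the edge cases $k=2,3$ is exactly the content the paper leaves implicit in that phrase, so nothing is missing.
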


\begin{rem}
The generalized Ap\'ery-like numbers defined in \cite{K2016RMJ} (which was named as $J_k(n)$ in \cite{K2016RMJ}) is identical to $J_{1,k-1}(n)$ in this paper. It is quite interesting that those generalized Ap\'ery-like numbers, i.e. $J_{1,k-1}(n)$, satisfies the following recurrence relation similarly to \eqref{eq:recurrence-relation} (i.e. having the same homogeneous part of the Ap\'ery-like numbers)
\begin{equation}
4n^2\J{1,k-1}{n}-(8n^2-8n+3)\J{1,k-1}{n-1}+4(n-1)^2\J{1,k-1}{n-2}=4\J{1, k-3}{n}
\end{equation}
for $k\ge4$ and $n\ge2$. From this observation, although $J_{1,k-1}(n)$ does not describe the special values $\zeta_Q(n)$, various $J_{r,k-r}(n)\, (r=1,2,\cdots, k-1)$ are having similar nature as the Ap\'ery-like numbers possess. This may suggest that there is a certain unexpectically significant number theoretic properties behind NcHO that should be clarified.
\end{rem}

\subsection{Differential equations for the generating functions}

For $k\ge0$, we define
\begin{align}
w_k(z)&=\sum_{n=0}^\infty \J{k}{n}z^n, \label{eq:definition of w_k(z)}\\
g_k(x)&=\sum_{n=0}^\infty \binom{-\frac12}n \J{k}{n}x^n.
\end{align}
We call $w_k(z)$ the $k$-th generating function of the Ap\'ery-like numbers.
It is immediate to see that $w_0(z)=0$, $g_0(x)=0$ and
\begin{align*}
w_1(z)&=\hgf21{1,1}{\frac32}{z}=\frac1{\sqrt{1-z}}\frac{\arcsin\sqrt z}{\sqrt z}, \\
g_1(x)&=\hgf21{\frac12,1}{\frac32}{-x}=\frac{\arctan\sqrt z}{\sqrt z}.
\end{align*}
For later use, we notice two differential equations for $w_1(z)$:
\begin{align}
\left\{z(1-z)\frac{d^2}{dz^2}+\frac32(1-2z)\frac{d}{dz}-1\right\}w_1(z)&=0, \\
\left\{2z(1-z)\frac{d}{dz}+1-2z\right\}w_1(z)&=1.
\end{align}

Let us translate the formula \eqref{eq:recurrence-relation}
into the differential equations for the generating functions $w_k(z)$.
We have
\begin{multline*}
\left\{z(1-z)^2\frac{d^2}{dz^2}+(1-z)(1-3z)\frac{d}{dz}+z-\frac34\right\}\sum_{n=0}^\infty \J{k}{n}z^n\\
=\J{k}{1}-\frac34\J{k}{0}+\frac14\sum_{n=2}^\infty(4n^2\J{k}{n}-(8n^2-8n+3)\J{k}{n-1}+4(n-1)^2\J{k}{n-2})z^{n-1}.
\end{multline*}
Using \eqref{eq:recurrence-relation}
and $\J{k}{1}=\J{k-2}{0}+\frac34\J{k}{0}$, we obtain
\begin{thm}
One has\label{thm:differential-equation-for-w}
\begin{equation}\label{eq:differential-equation-for-w}
\left\{z(1-z)^2\frac{d^2}{dz^2}+(1-z)(1-3z)\frac{d}{dz}+z-\frac34\right\}w_k(z)=w_{k-2}(z)
\end{equation}
for $k\ge2$. \qed
\end{thm}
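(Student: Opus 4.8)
The plan is to translate the three-term recurrence \eqref{eq:recurrence-relation} directly into the stated differential equation by applying the operator
$$L\deq z(1-z)^2\frac{d^2}{dz^2}+(1-z)(1-3z)\frac{d}{dz}+z-\tfrac34$$
to the formal power series $w_k(z)=\sum_{n\ge0}\J{k}{n}z^n$ and reading off coefficients. First I would expand $z(1-z)^2 w_k''$, $(1-z)(1-3z)w_k'$ and $\kakko{z-\tfrac34}w_k$ individually as power series, reindex each monomial so that all contributions are gathered on a common power $z^{n-1}$, and collect terms. The crux of the interior computation is that the coefficients of $\J{k}{n}$, $\J{k}{n-1}$, $\J{k}{n-2}$ in $[z^{n-1}]Lw_k$ collapse to $n^2$, $-\kakko{2n^2-2n+\tfrac34}$ and $(n-1)^2$, that is, to exactly $\tfrac14$ times the three coefficients in \eqref{eq:recurrence-relation}; the vanishing of the factors $(n-1)(n-2)$, $(n-2)(n-3)$, etc.\ at small $n$ automatically accounts for the missing shifted summands, so this identity is uniformly valid for $n\ge2$.

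Granting that collapse, for every $n\ge2$ the recurrence \eqref{eq:recurrence-relation} gives
$$[z^{n-1}]Lw_k=n^2\J{k}{n}-\kakko{2n^2-2n+\tfrac34}\J{k}{n-1}+(n-1)^2\J{k}{n-2}=\J{k-2}{n-1}=[z^{n-1}]w_{k-2},$$
so the coefficients of $z^m$ in $Lw_k$ and $w_{k-2}$ already agree for all $m\ge1$. What remains is the constant term, where not all of the shifted summands contribute: a short count shows that only the $w_k'$ and $\kakko{z-\tfrac34}w_k$ pieces reach $z^0$, giving $[z^0]Lw_k=\J{k}{1}-\tfrac34\J{k}{0}$. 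Here I would invoke the auxiliary identity $\J{k}{1}=\J{k-2}{0}+\tfrac34\J{k}{0}$, established in the preceding Example, to conclude $[z^0]Lw_k=\J{k-2}{0}=[z^0]w_{k-2}$. Combining the constant term with the higher coefficients and reindexing $m=n-1$ then yields $Lw_k=\sum_{m\ge0}\J{k-2}{m}z^m=w_{k-2}$, which is \eqref{eq:differential-equation-for-w}.

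I expect the only genuine obstacle to be the bookkeeping at the lowest order rather than the interior computation: the homogeneous part of $L$ is by design the $\zeta_Q(2)$ operator of \eqref{eq:Heun}, so the interior coefficients are essentially forced to align with the homogeneous recurrence, but one must check carefully that no spurious constant survives at $z^0$, and this is precisely what the identity $\J{k}{1}=\J{k-2}{0}+\tfrac34\J{k}{0}$ controls. The manipulation is purely at the level of formal power series, so strictly no analytic input is needed; if one wishes to read \eqref{eq:differential-equation-for-w} as an identity of holomorphic functions near $z=0$, the exponential decay of $B_n(u)$ recorded after \eqref{eq:B_n(u)} secures convergence of the $w_k$ and legitimizes the termwise differentiation.
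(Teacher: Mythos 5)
Your proposal is correct and follows essentially the same route as the paper: the paper likewise applies the operator to $\sum_{n\ge0}\J{k}{n}z^n$, finds that the result equals $\J{k}{1}-\tfrac34\J{k}{0}+\tfrac14\sum_{n\ge2}\bigl(4n^2\J{k}{n}-(8n^2-8n+3)\J{k}{n-1}+4(n-1)^2\J{k}{n-2}\bigr)z^{n-1}$, and then concludes via the recurrence \eqref{eq:recurrence-relation} together with the identity $\J{k}{1}=\J{k-2}{0}+\tfrac34\J{k}{0}$ from the preceding example. Your coefficient collapse ($n^2$, $-(2n^2-2n+\tfrac34)$, $(n-1)^2$) and your treatment of the constant term match the paper's computation exactly.
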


\begin{rem}
We have
\begin{equation*}
\left\{z(1-z)^2\frac{d^2}{dz^2}+(1-z)(1-3z)\frac{d}{dz}+z-\frac34\right\}^kw_{2k}(z)=0
\end{equation*}
and
\begin{equation*}
\left\{z(1-z)\frac{d^2}{dz^2}+\frac32(1-2z)\frac{d}{dz}-1\right\}
\left\{z(1-z)^2\frac{d^2}{dz^2}+(1-z)(1-3z)\frac{d}{dz}+z-\frac34\right\}^kw_{2k+1}(z)=0
\end{equation*}
for each $k\ge0$.
Namely, $w_k(z)$ is a power series solution of a linear differential equation, which is holomorphic at $z=0$.
\end{rem}

To find an explicit formula for $\J{k}{n}$, it is useful to introduce the function
\begin{equation}\label{eq:definition of v_k(t)}
v_k(t)=(1-z)w_k(z),\quad t=\frac{z}{z-1}\quad
\left(\iff w_k(z)=(1-t)v_k(t),\quad z=\frac{t}{t-1}\right).
\end{equation}
Note that
\begin{align*}
v_2(t)&=\J{2}{0}\cdot\hgf21{\frac12,\frac12}{1}{t}
=\J{2}{0}\sum_{n=0}^\infty\binom{-\frac12}{n}^{\!\!2}t^n, \\
v_{1}(t)&=\frac1{1-t}\hgf21{1,1}{\frac32}{\frac{t}{t-1}}
=\sum_{n=0}^\infty\frac{t^n}{2n+1}.
\end{align*}
The formula \eqref{eq:differential-equation-for-w} is translated as
\begin{equation}\label{eq:differential-equation-for-v}
\left\{t(1-t)\frac{d^2}{dt^2}+(1-2t)\frac{d}{dt}-\frac14\right\}v_k(t)=-v_{k-2}(t).
\end{equation}

Let us look at the (hypergeometric differential) operator
\begin{equation*}
D=t(1-t)\frac{d^2}{dt^2}+(1-2t)\frac{d}{dt}-\frac14.
\end{equation*}
It is straightforward to check that
\begin{equation*}
p_n(t)=-\frac1{(n+\frac12)^2}\binom{-\frac12}{n}^{\!\!-2}\sum_{k=0}^n\binom{-\frac12}{k}^{\!\!2}t^k,
\end{equation*}
satisfies the equation
$Dp_n(t)=t^n$
by using the fact
\begin{equation*}
D t^n=-\frac{(2n+1)^2}4t^n+n^2t^{n-1}
\end{equation*}
(see \S4 of \cite{KW2006KJM}).
Thus, if we put
\begin{equation*}
\xi_l(t)=\sum_{n=0}^\infty\binom{-\frac12}{n}^{\!\!2} \cf ln t^n\quad(l\ge0),
\end{equation*}
then
\begin{equation*}
D\left\{-\sum_{n=0}^\infty\binom{-\frac12}{n}^{\!\!2} \cf ln p_n(t)\right\}=-\xi_l(t).
\end{equation*}
On the other hand, we see that
\begin{equation*}
-\sum_{n=0}^\infty\binom{-\frac12}{n}^{\!\!2} \cf ln p_n(t)
=\sum_{n=0}^\infty \cf ln
\frac1{(n+\frac12)^2}\sum_{k=0}^n\binom{-\frac12}{k}^{\!\!2}t^k
=\sum_{k=0}^\infty\binom{-\frac12}{k}^{\!\!2}
\left\{\sum_{n=k}^\infty\frac{\cf ln}{(n+\frac12)^2}\right\}t^k.
\end{equation*}
Hence, if we \emph{assume} that the numbers $\cf lk$ satisfy the condition
\begin{equation}\label{eq:condition_among_a}
\cf{l+2}k=\sum_{n=k}^\infty\frac{\cf ln}{(n+\frac12)^2},
\end{equation}
then the functions $\xi_l(t)$ satisfy the relation
\begin{equation*}
D\xi_{l+2}(t)=-\xi_l(t)\quad(l\ge0).
\end{equation*}
Notice that we have
\begin{equation}\label{eq:extension_of_cf}
\cf{l+2m}n=\sum_{n\le s_1\le s_2\le\dots\le s_m}\frac{\cf l{s_m}}{(s_1+\frac12)^2(s_2+\frac12)^2\dotsb(s_m+\frac12)^2}
\end{equation}
under the assumption \eqref{eq:condition_among_a}.

Now we determine the numbers $\cf ln$ so that they satisfy \eqref{eq:condition_among_a}.
If we set
\begin{equation*}
\cf ln=\begin{cases}
\displaystyle
\frac12\frac1{n+\frac12}\binom{-\frac12}{n}^{\!\!-2} & l=1,\\[1em]
\displaystyle
\J20 & l=2
\end{cases}
\end{equation*}
and extend by the relation \eqref{eq:extension_of_cf},
then the relation \eqref{eq:condition_among_a} is surely satisfied.
We remark that the series \eqref{eq:extension_of_cf} indeed converges
since $A_{1,n}$ and $A_{2,n}$ are bounded so that
the positive series $A_{l+2m,n}$ is dominated by a constant multiple of the series
(\emph{multiple zeta-star value})
\begin{equation*}
\zeta^\star_m(2,2,\dots,2)=
\sum_{0<k_1\le k_2\le\dots\le k_m}(k_1k_2\dots k_m)^{-2}.
\end{equation*}
Notice that
\begin{align*}
\xi_1(t)&=\frac1{1-t}\hgf21{1,1}{\frac32}{\frac{t}{t-1}}=v_1(t),\\
\xi_2(t)&=\J20\cdot\hgf21{\frac12,\frac12}{1}{t}=v_2(t).
\end{align*}

From the discussion above we have the following (see \cite{KW2012RIMS} for the proof).

\begin{prop}
There exist constants $C_j$ $(j=1,2,\ldots)$ such that $v_l(t)$ is given by
\begin{equation}\label{v_l}
v_l(t)=\xi_l(t)+\sum_{0<j< l/2}C_{l-2j}v_{2j}(t).
\end{equation}
Moreover, the coefficients $C_{l-2}, C_{l-4}, \dots$ are determined inductively.
\end{prop}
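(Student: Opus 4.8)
The plan is to prove \eqref{v_l} by induction on $l$, exploiting the fact that the two families $v_l$ and $\xi_l$ obey the \emph{same} inhomogeneous ladder under the hypergeometric operator $D=t(1-t)\frac{d^2}{dt^2}+(1-2t)\frac{d}{dt}-\frac14$: namely $Dv_l=-v_{l-2}$ by \eqref{eq:differential-equation-for-v}, and $D\xi_l=-\xi_{l-2}$ by construction, together with the coincidences $\xi_1=v_1$ and $\xi_2=v_2$ at the bottom of the ladder. The base cases $l=1,2$ are exactly these coincidences, the sum in \eqref{v_l} being empty. For the inductive step I would assume \eqref{v_l} for all indices below $l$; the key structural observation is that the constants $C_{l-4},C_{l-6},\dots$ occurring in the formula for $v_l$ are precisely those already fixed in the formula for $v_{l-2}$, so that only the highest-index constant $C_{l-2}$ (the coefficient of $v_2$) is genuinely new at this level.

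First I would isolate the undetermined term by setting
\begin{equation*}
u_l(t)\deq v_l(t)-\xi_l(t)-\sum_{1<j<l/2}C_{l-2j}v_{2j}(t),
\end{equation*}
where every $C_{l-2j}$ with $j\ge2$ is already known by the induction hypothesis. Applying $D$ and using the three ladder relations $Dv_l=-v_{l-2}$, $D\xi_l=-\xi_{l-2}$ and $Dv_{2j}=-v_{2j-2}$ gives
\begin{equation*}
Du_l=-v_{l-2}+\xi_{l-2}+\sum_{1<j<l/2}C_{l-2j}v_{2j-2}.
\end{equation*}
Substituting the induction hypothesis $v_{l-2}=\xi_{l-2}+\sum_{0<i<(l-2)/2}C_{(l-2)-2i}v_{2i}$ into the first two terms, and reindexing the last sum by $i=j-1$ (so that $v_{2j-2}=v_{2i}$ and the range $1<j<l/2$ becomes $0<i<(l-2)/2$), the two resulting sums have identical index set and coefficients and cancel completely. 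Hence $Du_l=0$ identically.

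The decisive analytic input comes next: $t=0$ is a regular singular point of $D$ with the double indicial root $0$, so the space of solutions of $Df=0$ that are holomorphic at $t=0$ is one-dimensional, spanned by $\hgf21{\frac12,\frac12}{1}{t}$ (the second solution carries a logarithm). Since $v_l$, $\xi_l$ and all the $v_{2j}$ are power series in $t$, the function $u_l$ is holomorphic at $t=0$; being annihilated by $D$, it must therefore be a scalar multiple of $\hgf21{\frac12,\frac12}{1}{t}$, equivalently of $v_2(t)=\J20\,\hgf21{\frac12,\frac12}{1}{t}$. Defining $C_{l-2}$ to be that scalar (concretely $C_{l-2}=u_l(0)/\J20$) yields $u_l=C_{l-2}v_2$, which rearranges to \eqref{v_l} and simultaneously shows the new constant is uniquely determined. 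This closes the induction and proves that $C_{l-2},C_{l-4},\dots$ are fixed inductively.

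I expect the main obstacle to be the bookkeeping that makes the cancellation $Du_l=0$ exact: one must verify that the correction constants appearing in the formula for $v_l$ genuinely agree, in both index and value, with those inherited from $v_{l-2}$, so that the telescoping is clean and no stray terms survive (the parity separation of even and odd chains, seeded by the two base cases, is what guarantees this). The second, lighter, delicate point is the appeal to the one-dimensionality of the holomorphic solution space at the double indicial root, which is exactly what upgrades the identity $Du_l=0$ into the precise statement that $u_l$ is a single, uniquely determined multiple of $v_2$.
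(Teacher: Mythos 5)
Your proof is correct and follows essentially the approach the paper intends: the paper defers the details to \cite{KW2012RIMS}, but the discussion preceding the proposition --- the ladder relations $Dv_k=-v_{k-2}$ and $D\xi_{l+2}(t)=-\xi_l(t)$, the base coincidences $\xi_1=v_1$ and $\xi_2=v_2$, and the formula $Dt^n=-\frac{(2n+1)^2}{4}t^n+n^2t^{n-1}$, which gives exactly the one-dimensionality of the space of solutions of $Df=0$ holomorphic at $t=0$ --- is precisely the scaffolding your induction assembles. The bookkeeping you flag as the main obstacle does close cleanly, since the coefficient $C_{l-2j}$ of $v_{2j}$ in the formula for $v_l$ coincides by the index convention with the coefficient of $v_{2(j-1)}$ in the formula for $v_{l-2}$, so the telescoping under $D$ is exact and the new constant $C_{l-2}$ is the unique scalar with $u_l=C_{l-2}v_2$.
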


From this proposition, we observe
\begin{equation}\label{w_l}
w_l(z)=\frac1{1-z}\xi_l\!\left(\frac{z}{z-1}\right)+\sum_{0<j< l/2}C_{l-2j}w_{2j}(z),
\end{equation}
and in particular
\begin{equation}\label{explicitJ}
\J ln=\sum_{k=0}^n(-1)^k\binom{-\frac12}{k}^{\!\!2}\binom{n}{k}A_{l,k}+ \sum_{0<j< l/2}C_{l-2j}\J {2j}n.
\end{equation}
By this equation, we can determine $C_{l-2j}$ by putting $n=0$ inductively
and obtain explicit formulas of $\J ln$ for each $l$.
We give first few examples.

\begin{ex}
For $l=2,3,4$, we have
\begin{align*}
\J2n&=\zeta\Bigl(2,\frac12\Bigr)\sum_{k=0}^n(-1)^k\binom{-\frac12}{k}^{\!\!2}\binom{n}{k},\\
\J3n&=-\frac12\sum_{k=0}^n(-1)^k\binom{-\frac12}{k}^{\!\!2}\binom{n}{k}\sum_{0\le j<k}\frac{1}{(j+\frac12)^3}\binom{-\frac12}{j}^{\!\!-2}
+2\zeta\Bigl(3,\frac12\Bigr)\sum_{k=0}^n(-1)^k\binom{-\frac12}{k}^{\!\!2}\binom{n}{k},\\
\J4n&=-\zeta\Bigl(2,\frac12\Bigr)\sum_{k=0}^n(-1)^k\binom{-\frac12}{k}^{\!\!2}\binom{n}{k}\sum_{0\le j<k}\frac{1}{(j+\frac12)^2}
+3\zeta\Bigl(4,\frac12\Bigr)\sum_{k=0}^n(-1)^k\binom{-\frac12}{k}^{\!\!2}\binom{n}{k}.
\end{align*}
\end{ex}

\subsection{Numerical data of normalized Ap\'ery-like sequences}\label{sec:numeric_data_of_NAS}

In this section, certain numerical data of the Ap\'ery-like sequences is presented.

The \emph{normalized Ap\'ery-like numbers} $\tJ{k}n$ are defined by the conditions
\begin{align*}
\J{2s}n&=\sum_{j=0}^{s-1} \J{2s-2j}0\tJ{2j+2}n, \\
\J{2s+1}n&=\sum_{j=0}^{s-1} \J{2s+1-2j}0\tJ{2j+2}n+\tJ{2s+1}n
\end{align*}
inductively.
It is equivalent to define $\tJ kn$ by the recurrence relation
\begin{equation*}
\tJ kn=\J{r}0^{-1}\Bigl(\J kn-\sum_{0<2j<k} \J{k-2j}0\tJ{2j+2}n\Bigr),\qquad
r=\begin{cases}
1 & k\equiv1\pmod2, \\[.2em]
2 & k\equiv0\pmod2.
\end{cases}
\end{equation*}
The numbers $\tJ kn$ satisfy the relation
\begin{equation}\label{eq:reccurence-for-tJ}
4n^2\tJ kn-(8n^2-8n+3)\tJ k{n-1}+4(n-1)^2\tJ k{n-2}=4\tJ{k-2}{n-1}.
\end{equation}
Notice that this is identical to the one for $J_k(n)$.
It is elementary to check that
\begin{equation*}
\tJ1n=\J1n=\frac{2^nn!}{(2n+1)!!},\qquad \tJ2n=\frac{\J2n}{\J20}=\sum_{j=0}^n(-1)^j\binom{-\frac12}j^2\binom nj
\end{equation*}
and
\begin{equation*}
\tJ k0=\begin{cases}
1 & k=1,2, \\
0 & \text{otherwise},
\end{cases}
\qquad
\tJ k1=\begin{cases}
\dfrac23 & k=1,\\[.7em]
\dfrac34 & k=2,\\[.5em]
1 & k=3,4, \\
0 & \text{otherwise}.
\end{cases}
\end{equation*}
These are all \emph{rational numbers}.
Hence, by the recurrence relation \eqref{eq:reccurence-for-tJ} for $\tJ kn$,
all the normalized Ap\'ery-like numbers $\tJ kn$ are rational.

Let us put
\begin{align*}
\evenZ_s(k)&:=(-1)^s\sum_{k>j_1>\dots>j_s\ge0}\frac1{(j_1+\frac12)^2\dots(j_s+\frac12)^2}, \\
\oddZ_s(k)&:=\frac{(-1)^s}2\sum_{k>j_1>\dots>j_s\ge0}\frac1{(j_1+\frac12)^2\dots(j_{s-1}+\frac12)^2(j_s+\frac12)^3}\binom{-\frac12}{j_s}^{\!\!-2}
\end{align*}
for $s=1,2,3,\dots$.
We also set $\evenZ_0(k)=1$ for convenience.
Then we have the
\begin{thm}
For $s=1,2,3,\dots$, we have
\begin{align*}
\tJ{2s+2}n&=\sum_{k=0}^n(-1)^k\binom{-\frac12}{k}^{\!\!2}\binom{n}{k}\evenZ_s(k), \\
\tJ{2s+1}n&=\sum_{k=0}^n(-1)^k\binom{-\frac12}{k}^{\!\!2}\binom{n}{k}\oddZ_s(k).
\end{align*}
\end{thm}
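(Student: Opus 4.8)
The plan is to identify the generating function of the normalized numbers with an explicit ``pure'' solution of the hypergeometric recursion governed by the operator $D$ of the previous subsection. Write $\tilde w_l(z)=\sum_{n\ge0}\tJ ln z^n$ and, mirroring \eqref{eq:definition of v_k(t)}, set $\tilde v_l(t)=(1-z)\tilde w_l(z)$ with $t=\frac z{z-1}$. On the other side, put $b^{(2s+2)}_k=\evenZ_s(k)$ and $b^{(2s+1)}_k=\oddZ_s(k)$, and define
\begin{equation*}
\widehat\xi_l(t)=\sum_{k=0}^\infty\binom{-\frac12}{k}^{\!\!2}b^{(l)}_k t^k.
\end{equation*}
Using the elementary identity $\frac1{1-z}\bigl(\frac z{z-1}\bigr)^k=(-1)^k\sum_{n\ge k}\binom nk z^n$, one checks that $[z^n]\,\frac1{1-z}\widehat\xi_l\!\bigl(\frac z{z-1}\bigr)$ equals precisely the right-hand side of the asserted formula. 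Hence the theorem is equivalent to the single identity $\tilde v_l=\widehat\xi_l$, which I would establish by induction on $l$.

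First I would show that both families solve the same inhomogeneous hypergeometric equation $D(\cdot)_l=-(\cdot)_{l-2}$. For $\tilde v_l$ this is immediate: the normalized numbers obey \eqref{eq:reccurence-for-tJ}, which is literally the recurrence \eqref{eq:recurrence-relation}, and the tabulated initial data yield $\tJ l1=\tJ{l-2}0+\frac34\tJ l0$ for every $l$, so the derivation leading to \eqref{eq:differential-equation-for-v} applies verbatim and gives $D\tilde v_l=-\tilde v_{l-2}$. For $\widehat\xi_l$ I would first record the action of $D$ on a hypergeometrically weighted series: from $Dt^n=n^2t^{n-1}-(n+\frac12)^2t^n$ together with $\binom{-\frac12}{n+1}^{\!\!2}(n+1)^2=\binom{-\frac12}{n}^{\!\!2}(n+\frac12)^2$ one obtains
\begin{equation*}
D\Bigl(\sum_n\binom{-\frac12}{n}^{\!\!2}b_n t^n\Bigr)=\sum_n\binom{-\frac12}{n}^{\!\!2}\bigl(n+\tfrac12\bigr)^2(b_{n+1}-b_n)t^n.
\end{equation*}
Thus I only need the forward-difference recursions $\evenZ_s(k+1)-\evenZ_s(k)=-\evenZ_{s-1}(k)/(k+\frac12)^2$ and $\oddZ_s(k+1)-\oddZ_s(k)=-\oddZ_{s-1}(k)/(k+\frac12)^2$, which follow by peeling off the largest summation index $j_1=k$ in the nested sums defining $\evenZ_s,\oddZ_s$ (taking $\evenZ_0\equiv1$ and $\oddZ_0(k)\deq\cf1k$, so that $\widehat\xi_1=v_1$). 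Substituting these into the displayed formula gives $D\widehat\xi_l=-\widehat\xi_{l-2}$.

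With both sides solving the same equation, the induction closes by a uniqueness argument. Subtracting, $D(\tilde v_l-\widehat\xi_l)=-(\tilde v_{l-2}-\widehat\xi_{l-2})=0$ by the inductive hypothesis, so $\tilde v_l-\widehat\xi_l$ is a power series annihilated by $D$. Since $D$ is the hypergeometric operator with $c=1$ and $a=b=\frac12$, its indicial roots at $t=0$ both vanish, whence the space of solutions holomorphic at the origin is one-dimensional, spanned by $\hgf21{\frac12,\frac12}1t=\widehat\xi_2$. Therefore $\tilde v_l-\widehat\xi_l$ is a constant multiple of $\widehat\xi_2$, and comparing constant terms pins down the constant: for $l\ge3$ one has $\tJ l0=0$ and $b^{(l)}_0=\evenZ_s(0)=\oddZ_s(0)=0$ (empty nested sums), so the multiple vanishes and $\tilde v_l=\widehat\xi_l$. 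The base cases $l=1,2$ are the explicit identities $\tilde v_1=v_1=\xi_1$ and $\tilde v_2=v_2/\J20=\hgf21{\frac12,\frac12}1t$ already recorded. Extracting $[z^n]$ as in the first paragraph then produces both formulas.

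The main obstacle I anticipate is the combinatorial core of the second paragraph: proving the two forward-difference recursions for the nested sums cleanly, including the $s=1$ cases that involve the cubic factor and the binomial weight in $\oddZ_s$ (these must reproduce exactly $\oddZ_0=\cf1{}$), and verifying that the operator identity for $D$ genuinely converts them into $D\widehat\xi_l=-\widehat\xi_{l-2}$. Once these recursions and the degeneracy of the indicial roots are in hand, the remainder is routine bookkeeping of the binomial transform and the one-dimensional uniqueness step.
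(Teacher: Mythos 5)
Your proof is correct, and it takes a genuinely different route from the paper's. The paper works through the tail sums: it introduces coefficients $\tA_{l,n}$ obeying the tail recursion \eqref{eq:condition_among_a} with normalized initial data (so $\evenY_s(n)=\tA_{2s+2,n}$, $\oddY_s(n)=\tA_{2s+1,n}$), invokes the proposition of the preceding subsection to obtain an expansion $\tJ ln=\sum_{k=0}^n(-1)^k\binom{-\frac12}{k}^{2}\binom{n}{k}\tA_{l,k}+\sum_{0<j<l/2}C_{l-2j}\tJ{2j}n$, pins down the constants by setting $n=0$, and thereby reduces the theorem to the finite head-to-tail identities \eqref{eq:relation for evenZ} and \eqref{eq:relation for oddZ} relating $\evenZ_s,\oddZ_s$ to $\evenY_s,\oddY_s$, which it proves by induction on $k$. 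You bypass the tail sums altogether: after the binomial-transform reduction of the theorem to the generating-function identity $\tilde v_l=\widehat\xi_l$, you verify that the head-sum series $\widehat\xi_l$ satisfy the hierarchy $D\widehat\xi_l=-\widehat\xi_{l-2}$ directly---using exactly the same peeling-off relations $\evenZ_s(k+1)-\evenZ_s(k)=-\evenZ_{s-1}(k)/(k+\frac12)^2$ and its odd analogue that drive the paper's induction on $k$---and then close by induction on $l$, using that power-series solutions of $Dg=0$ form a one-dimensional space (double indicial root at $t=0$) together with matching of constant terms. What each buys: your argument is self-contained, since the uniqueness step replaces both the proposition whose proof the paper defers to \cite{KW2012RIMS} and the inductive determination of the constants $C_j$, and it explains conceptually why the $\evenZ_s,\oddZ_s$ arise (they are the unique coefficient sequences compatible with the difference equation induced by $D$); the paper's argument stays at the level of finite sums and yields, as a by-product, the explicit conversion identities between the head sums $Z$ and the tail sums $Y$, which have independent combinatorial interest. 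The two points you flag as delicate do check out: the initial condition $\tJ l1=\tJ{l-2}0+\frac34\tJ l0$ holds for every $l\ge3$ by the tabulated values (for $l\ge5$ all terms vanish, and for $l=3,4$ both sides equal $1$), and the $s=1$ odd difference relation reproduces $\oddZ_0(k)=\cf1k$, so that $\widehat\xi_1=v_1$ as required for the base case.
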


\begin{proof}
Define the numbers $\tA_{l,n}$ by the relation \eqref{eq:condition_among_a} satisfied by $A_{l,n}$
together with the normalized initial condition
\begin{equation*}
\tA_{1,n}=A_{1,n}=\frac12\frac1{n+\frac12}\binom{-\frac12}n^{\!-2},\quad
\tA_{2,n}=1.
\end{equation*}
We immediately have
\begin{align*}
\evenY_s(n):=\tA_{2s+2,n}&=\sum_{n\le j_1\le \dots\le j_s}\frac1{(j_1+\frac12)^2\dots(j_s+\frac12)^2}, \\
\oddY_s(n):=\tA_{2s+1,n}&=\frac12\sum_{n\le j_1\le \dots\le j_s}\frac1{(j_1+\frac12)^2\dots(j_s+\frac12)^3}\binom{-\frac12}{j_s}^{\!-2}.
\end{align*}
By the same discussion as in the previous section, we see that there exist certain numbers $C_j$ $(j=1,2,3,\dots)$ such that
\begin{equation}\label{eq:tJ-formula-1}
\tJ ln=\sum_{k=0}^n(-1)^k\binom{-\frac12}k^{\!2}\binom nk\tA_{l,k}
+\sum_{0<j<l/2}C_{l-2j}\tJ{2j}n.
\end{equation}
Put $n=0$ in \eqref{eq:tJ-formula-1}, we have
$0=\tA_{l,0}+C_{l-2}$ if $l\ge3$
since $\tJ20=1$ and $\tJ k0=0$ if $k>2$.
Thus we see that $\tJ ln$ are of the form
\begin{equation*}
\tJ ln=\sum_{k=0}^n(-1)^k\binom{-\frac12}k^{\!2}\binom nk\tB_{l,k}
\end{equation*}
with
\begin{equation*}
\tB_{l,k}=\tA_{l,k}-\sum_{0<j<l/2}\tA_{l-2j+2,0}\tB_{2j,k},\qquad
\tB_{2,k}=\tA_{2,k},\quad \tB_{3,k}=\tA_{3,k}.
\end{equation*}
Therefore it is enough to show that $\evenZ_s(k)$'s and $\oddZ_s(k)$'s satisfy the relations
\begin{align}
\evenZ_s(k)&=\evenY_s(k)-\sum_{j=0}^{s-1}\evenY_{s-j}(0)\evenZ_j(k), \label{eq:relation for evenZ}\\
\oddZ_s(k)&=\oddY_s(k)-\sum_{j=0}^{s-1}\oddY_{s-j}(0)\evenZ_j(k). \label{eq:relation for oddZ}
\end{align}
Assume that $s\ge2$, since these are directly proved when $s=1$.
We only prove \eqref{eq:relation for oddZ} by induction on $k$ (the proof of \eqref{eq:relation for evenZ} is parallel).
If $k=0$, then the both sides of \eqref{eq:relation for oddZ} is zero.
Suppose that \eqref{eq:relation for oddZ} is true for $k$.
Notice that
\begin{align*}
\evenZ_s(k+1)&=-\frac1{(k+\frac12)^2}\evenZ_{s-1}(k)+\evenZ_s(k), \\
\evenY_s(k+1)&=-\frac1{(k+\frac12)^2}\evenY_{s-1}(k)+\evenY_s(k), \\
\oddY_s(k+1)&=-\frac1{(k+\frac12)^2}\oddY_{s-1}(k)+\oddY_s(k).
\end{align*}
Using these relations together with the induction assumption,
it is straightforward to verify that the both sides of \eqref{eq:relation for oddZ} for $k+1$ coincide.
\end{proof}

\begin{rem}
Note that
\begin{equation*}
\evenZ_s(n)=\oddZ_s(n)=0\quad(0\le n<s),\qquad
\evenZ_s(s)=\oddZ_s(s)=\frac1{(s!)^2}\binom{-\frac12}s^{\!\!-2},
\end{equation*}
and hence
\begin{equation*}
\tJ{2s+2}n=\tJ{2s+1}n=0\quad(0\le n<s),\qquad
\tJ{2s+2}s=\tJ{2s+1}s=\frac1{(s!)^2}.
\end{equation*}
\end{rem}

We now provide several numerical data of $\tJ kn$:

\begin{flushleft}

\data1
{
1,\frac{2}{3},\frac{8}{15},\frac{16}{35},\frac{128}{315},\frac{256}{693},\frac{1024}{3003},\frac{2048}{6435},\frac{32768}{109395}
}

\data2
{
1,\frac{3}{4},\frac{41}{64},\frac{147}{256},\frac{8649}{16384},\frac{32307}{65536},\frac{487889}{1048576},\frac{1856307}{4194304},\frac{454689481}{1073741824}
}

\data3
{
0,1,\frac{65}{48},\frac{13247}{8640},\frac{704707}{430080},\frac{660278641}{387072000},\frac{357852111131}{204374016000},\frac{309349386395887}{173581664256000}
}

\data4
{
0,1,\frac{11}{8},\frac{907}{576},\frac{1739}{1024},\frac{6567221}{3686400},\frac{54281321}{29491200},\frac{7260544493}{3853516800},\frac{709180003579}{369937612800}
}

\data5
{
0,0,\frac{1}{4},\frac{109}{216},\frac{101717}{138240},\frac{4557449}{4838400},\frac{15689290781}{13934592000},\frac{131932666373}{102187008000},\frac{144010453389429161}{99983038611456000}
}

\data6
{
0,0,\frac{1}{4},\frac{73}{144},\frac{3419}{4608},\frac{29273}{30720},\frac{151587391}{132710400},\frac{232347221}{176947200},\frac{2444144299823}{1664719257600}
}

\data7
{
0,0,0,\frac{1}{36},\frac{515}{6912},\frac{76667}{576000},\frac{115560397}{580608000},\frac{1051251017}{3901685760},\frac{18813135818903}{54935735500800}
}

\data8
{
0,0,0,\frac{1}{36},\frac{43}{576},\frac{15389}{115200},\frac{1659311}{8294400},\frac{251914357}{928972800},\frac{10258433947}{29727129600}
}
\end{flushleft}

\subsection{Congruence of normalized Ap\'ery-like numbers}\label{CongruenceProperty}

The congruence properties of $\tJ k2$ (and $\tJ k3$) obtained in \cite{KW2006KJM} (see also \cite{LOS2014}) are considered to be  one of the consequences of the modular property that the generating function $w_2$ possesses (i.e. $w_2$ is an automorphic form for $\Gamma(2)(\cong \Gamma_0(4)$)). As we will show in \S\ref{Modular},
there is a ``weak modularity" for $w_{2n}$ (i.e.\ $w_{2n}$ is an automorphic integral for $\hecke$: see \S\ref{AI}). Therefore we may expect similar congruence properties among $\tJ kn \, (n\geq4)$. In fact, we provide below a certain reasonable conjecture on congruence relations among $\tJ kn$. The aim of this subsection is to show some weak and restricted version of the conjecture.

Based on a numerical experiment,
we conjecture that the following congruence relations among the normalized Ap\'ery-like numbers should hold.
\begin{conjecture}\label{GeneralCongruence}
For positive integers $m,n,s$ such that $mp\ge s$ and $mp^{n-1}\ge s$, we have
\begin{align*}
\frac{p^{(2s+1)n}\tJ{2s+1}{mp^n}}{p^{2s+1}\tJ{2s+1}{mp}}
&\equiv \frac{p^{(2s+1)(n-1)}\tJ{2s+1}{mp^{n-1}}}{p^{2s+1}\tJ{2s+1}{mp}} (\not\equiv0) \pmod{p^n}, \\
\frac{p^{2sn}\tJ{2s+2}{mp^n}}{p^{2s}\tJ{2s+2}{mp}}
&\equiv \frac{p^{2s(n-1)}\tJ{2s+2}{mp^{n-1}}}{p^{2s}\tJ{2s+2}{mp}} (\not\equiv0) \pmod{p^n}.
\end{align*}
\end{conjecture}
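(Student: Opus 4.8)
The plan is to prove both congruences in parallel by reducing each to a single Frobenius-type supercongruence between consecutive $p$-power levels. Fixing $m,s$ and writing $D=p^{2s+1}\tJ{2s+1}{mp}$ in the odd case, the asserted relation modulo $p^n$ is equivalent to
\begin{equation*}
p^{(2s+1)(n-1)}\bigl(p^{2s+1}\tJ{2s+1}{mp^n}-\tJ{2s+1}{mp^{n-1}}\bigr)\equiv0\pmod{p^{\,n+\ord_p D}},
\end{equation*}
together with the non-vanishing of the common residue modulo $p^n$ recorded by the annotation $(\not\equiv0)$; the even case is identical with $2s+1$ replaced by $2s$. Thus the whole conjecture asserts an Atkin--Swinnerton-Dyer / Dwork-type congruence relating level $n$ and level $n-1$, the normalizing powers of $p$ serving only to remove the predictable part of $\ord_p\tJ{k}{mp^n}$.

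First I would pin down these normalizing powers. From the explicit formula $\tJ{2s+1}n=\sum_{k=0}^n(-1)^k\binom{-\frac12}{k}^{2}\binom{n}{k}\oddZ_s(k)$ and the vanishing $\oddZ_s(k)=0$ for $k<s$ with leading value $\oddZ_s(s)=\frac1{(s!)^2}\binom{-\frac12}{s}^{-2}$, the dominant $p$-adic contribution to $\tJ{2s+1}{mp^n}$ comes from the indices with $2j+1\equiv0\pmod p$, where the denominators $(j+\frac12)^3$ and the factor $\binom{-\frac12}{j}^{-2}$ inside $\oddZ_s$ create the poles. The target here is to show $\ord_p\tJ{2s+1}{mp^n}=-(2s+1)n+c_s$ with a constant $c_s$ independent of $n$, so that $p^{(2s+1)n}\tJ{2s+1}{mp^n}$ is a $p$-adic integer that is nonzero modulo $p^n$; the even case is treated the same way with $\evenZ_s$.

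The main step is a Dwork-type congruence for the coefficients themselves. The relevant functions satisfy the iterated hypergeometric relation \eqref{eq:differential-equation-for-v}, namely $Dv_k=-v_{k-2}$, whose base solution $v_2=\J20\,\hgf21{\frac12,\frac12}{1}{t}$ carries the Frobenius structure responsible for the $\Gamma(2)$-modularity of $w_2$. I would propagate this Frobenius structure to the iterated extensions $\xi_l$ built from repeated $D^{-1}$, obtaining congruences of the shape $p^{2s+1}\tJ{2s+1}{mp^n}\equiv\tJ{2s+1}{mp^{n-1}}$ to the precision demanded above, and then combine with the valuation estimates of the previous step. The known $s=0$ case, that is the modular form $w_2$ analysed $p$-adically in \cite{LOS2014}, would serve as the base of an induction on $s$ carried through the inhomogeneous recurrence \eqref{eq:reccurence-for-tJ}.

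The hard part will be controlling the nested sums $\oddZ_s(k)$ and $\evenZ_s(k)$ to the full precision $p^n$. These are truncated multiple zeta-star values with half-integer arguments, and the additional poles at $2j+1\equiv0$, absent when $s=0$, mean that the Frobenius structure on the iterated connection is no longer that of a single modular form but of a $p$-adic iterated integral, i.e.\ a $p$-adic multiple polylogarithm. Supercongruences for such objects to precision $p^n$ are genuinely delicate, which is precisely why only a weaker, Kummer-type result, controlling the outermost zeta factor whose $p$-adic behaviour is governed by the values of $\zeta(s,\frac12)$ at negative odd integers, is within reach in Theorem~\ref{weaker version}; taming the full nested structure, and hence the complete conjecture, appears to require a substantially finer $p$-adic input.
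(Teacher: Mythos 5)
You should first note that the statement in question is Conjecture \ref{GeneralCongruence}: the paper itself offers \emph{no} proof of it. It is supported only by numerical experiments, and what the paper actually establishes is the strictly weaker Theorem \ref{weaker version} --- the even-indexed case only, for $1\le m<\frac p2$, and without the non-vanishing clause $(\not\equiv0)$ --- by a short, entirely elementary argument: insert the closed formula $\tJ{2s+2}{n}=\sum_{k}(-1)^k\binom{-1/2}{k}^{2}\binom{n}{k}\evenZ_s(k)$, discard all terms with $p\nmid k$ using $\binom{mp^n}{k}\equiv0\pmod{p^n}$, replace $\binom{-1/2}{kp}^{2}\binom{mp^n}{kp}$ by $\binom{-1/2}{k}^{2}\binom{mp^{n-1}}{k}$ via a standard binomial congruence, and prove $p^{2sn}\evenZ_s(kp)\equiv p^{2s(n-1)}\evenZ_s(k)\pmod{p^n}$ by observing that in the nested sum only indices with $p\mid 2j_i+1$ survive modulo $p^n$. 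No modular, Frobenius, or Dwork-type input appears anywhere; the odd case is left open in the paper precisely because the analogous statement for $\oddZ_s$ hinges on an unproven congruence for the factors $\binom{-1/2}{j}^{-2}$.

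Measured against this, your proposal has a genuine gap, and indeed you concede it yourself: the entire weight of your argument rests on the ``main step,'' a Dwork-type congruence $p^{2s+1}\tJ{2s+1}{mp^n}\equiv\tJ{2s+1}{mp^{n-1}}$ to precision $n+\ord_p D$, which you propose to obtain by ``propagating the Frobenius structure'' of $v_2$ through the iterated inhomogeneous equation $Dv_k=-v_{k-2}$ --- but no mechanism for this propagation is given, and this is exactly where known techniques (including the $p$-adic analysis of \cite{LOS2014}, which handles $s=0$) break down, since for $k>2$ the solutions $v_k$ are not modular forms but automorphic integrals, for which no Frobenius lifting or unit-root theory is available. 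Your preliminary valuation claim $\ord_p\tJ{2s+1}{mp^n}=-(2s+1)n+c_s$, needed both for $p$-integrality and for the non-vanishing clause, is likewise announced as a ``target'' and never established. Your initial reduction (clearing the denominator $D=p^{2s+1}\tJ{2s+1}{mp}$) is correct, but what your plan would actually deliver, were the elementary parts carried out, is at most the content of Theorem \ref{weaker version} --- which the paper obtains far more directly, with no recourse to modularity --- while the step separating that theorem from the full conjecture is precisely the one you leave blank.
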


\begin{rem}
When $m<\frac p2$, the denominator of $p^{2s}\tJ{2s+2}{mp}$ is indivisible by $p$, that is,
$p^{2s}\tJ{2s+2}{mp}\equiv p^ta \pmod{p^n}$ for some $t\in\Z_{\ge0}$
and $a\in\Z\setminus p\Z$.
In this case, the second one in the conjecture above is equivalent to
\begin{equation*}
p^{2sn-t}\tJ{2s+2}{mp^n}\equiv p^{2s(n-1)-t}\tJ{2s+2}{mp^{n-1}} \pmod{p^n}.
\end{equation*}
\end{rem}


Here we prove slightly weaker results (Theorem \ref{weaker version}).
\emph{In what follows in this subsection, $p$ always denotes an odd prime.}
We recall the following basic congruences on binomial coefficients (see (6.7), (6.12) and (6.13) in \cite{KW2006KJM}).

\begin{lem}
For any positive integers $m,n,j$,
the following congruence relations hold:
\begin{equation*}
\binom{-\frac12}{pj}^{\!2}\binom{mp^n}{pj}\equiv\binom{-\frac12}j^{\!2}\binom{mp^{n-1}}j \pmod {p^n},
\end{equation*}
\begin{equation*}
p\nmid j \then
\binom{mp^n}j\equiv 0 \pmod{p^n}.
\end{equation*}
\end{lem}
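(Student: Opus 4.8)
The plan is to argue $p$-adically in $\Z_p$ throughout, using that $p$ is odd so that everything in sight is a $p$-adic integer. The basic reduction is $\binom{-\frac12}{k}=(-4)^{-k}\binom{2k}{k}$, hence $\binom{-\frac12}{k}^2=16^{-k}\binom{2k}{k}^2$; since $16$ is a $p$-adic unit, each $\binom{-\frac12}{k}^2$ is a unit times an integer and in particular lies in $\Z_p$. The second congruence is then immediate: from $j\binom{mp^n}{j}=mp^n\binom{mp^n-1}{j-1}$ the right-hand side is divisible by $p^n$, and when $p\nmid j$ the factor $j$ is a unit, forcing $p^n\mid\binom{mp^n}{j}$.

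For the first congruence I would split the difference of the two sides as
\[
\binom{-\tfrac12}{pj}^{2}\Bigl(\tbinom{mp^{n}}{pj}-\tbinom{mp^{n-1}}{j}\Bigr)
+\Bigl(\binom{-\tfrac12}{pj}^{2}-\binom{-\tfrac12}{j}^{2}\Bigr)\binom{mp^{n-1}}{j}
\]
and handle the two summands separately. The first summand is controlled by the clean binomial congruence $\binom{mp^n}{pj}\equiv\binom{mp^{n-1}}{j}\pmod{p^n}$, which I would prove by writing the ratio as $\binom{mp^n-1}{pj-1}/\binom{mp^{n-1}-1}{j-1}$ and factoring the numerator over $i=1,\dots,pj-1$: the indices divisible by $p$ (put $i=pi'$) reassemble into exactly $\binom{mp^{n-1}-1}{j-1}$, while each of the remaining $j(p-1)$ factors satisfies $(mp^n-i)/i\equiv-1\pmod{p^n}$; since $j(p-1)$ is even their product is $\equiv1$, so the ratio is a $p$-adic unit $\equiv1\pmod{p^n}$. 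As $\binom{-\frac12}{pj}^2\in\Z_p$, the first summand is then $\equiv0\pmod{p^n}$.

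The second summand is the crux, and here no naive cancellation is available because $\binom{-\frac12}{pj}^2\not\equiv\binom{-\frac12}{j}^2\pmod{p^n}$ in general; instead one must show the valuations combine, $\ord_p\!\bigl(\binom{-\frac12}{pj}^2-\binom{-\frac12}{j}^2\bigr)+\ord_p\binom{mp^{n-1}}{j}\ge n$. The ratio $\binom{-\frac12}{pj}^2/\binom{-\frac12}{j}^2=16^{-j(p-1)}\bigl(\binom{2pj}{pj}/\binom{2j}{j}\bigr)^2$ is a $p$-adic unit --- Kummer's theorem gives $\ord_p\binom{2pj}{pj}=\ord_p\binom{2j}{j}$ because $pj$ has a trailing base-$p$ zero --- and is $\equiv1\pmod p$ by Lucas together with Fermat, so the difference has valuation at least $1$; meanwhile $\ord_p\binom{mp^{n-1}}{j}$ is computed by the Legendre/Kummer carry formula and is large exactly when $j$ carries few factors of $p$. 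The hard case is therefore the opposite regime, where $j$ is highly divisible by $p$ and $\binom{mp^{n-1}}{j}$ is nearly a unit: there one must extract additional cancellation from $\binom{2pj}{pj}/\binom{2j}{j}-1$, i.e.\ invoke prime-power refinements of Lucas' congruence (of Ljunggren--Jacobsthal--Kazandzidis type) for the central binomial coefficient, arranged to hold uniformly for every odd prime, including $p=3$ where the Wolstenholme-type strength drops. Making this valuation bookkeeping uniform in $m,j,n$ is the step I expect to be most delicate; an alternative viewpoint is to read the combined congruence as an Atkin--Swinnerton-Dyer/Dwork congruence for the coefficients $\binom{-\frac12}{k}^2$ of ${}_2F_1(\tfrac12,\tfrac12;1;x)$, which is ultimately explained by the modularity of $w_2$ proved later, though the elementary route is preferable for a self-contained argument.
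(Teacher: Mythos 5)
Your handling of the two easy pieces is correct: the second congruence via $j\binom{mp^n}{j}=mp^n\binom{mp^n-1}{j-1}$, and the pure binomial congruence $\binom{mp^n}{pj}\equiv\binom{mp^{n-1}}{j}\pmod{p^n}$ obtained by splitting $\binom{mp^n-1}{pj-1}=\prod_{i=1}^{pj-1}\frac{mp^n-i}{i}$ into the factors with $p\mid i$ (which reassemble into $\binom{mp^{n-1}-1}{j-1}$) and the $j(p-1)$ unit factors each $\equiv-1\pmod{p^n}$ (with the trivial proviso that when $j>mp^{n-1}$ both binomials vanish, so the ``ratio'' phrasing should be made multiplicative). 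But the proposal has a genuine gap exactly where you flag it: the claim $\bigl(\binom{-\frac12}{pj}^{2}-\binom{-\frac12}{j}^{2}\bigr)\binom{mp^{n-1}}{j}\equiv0\pmod{p^n}$ is never proved. You establish only valuation $\ge1$ and then describe a programme in the subjunctive; moreover the tools you point to (Ljunggren--Jacobsthal--Kazandzidis refinements of Lucas) cannot close it by themselves, because they control the integer ratio $\binom{2pj}{pj}/\binom{2j}{j}$, whereas the obstruction in $R:=\binom{-\frac12}{pj}^{2}/\binom{-\frac12}{j}^{2}=16^{-(p-1)j}\bigl(\binom{2pj}{pj}/\binom{2j}{j}\bigr)^{2}$ sits in the factor $16^{-(p-1)j}$: its failure to be $1$ modulo higher powers of $p$ is a Fermat-quotient phenomenon that no congruence between integer binomial coefficients can cancel. (For what it is worth, the paper itself offers no proof either; the lemma is quoted from (6.7), (6.12), (6.13) of \cite{KW2006KJM}, so the comparison here is purely with the correctness of your argument.)

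The missing ingredient is the single estimate $\ord_p(R-1)\ge\ord_p(j)+1$, and it follows from two elementary facts absent from your proposal. Write $j=p^a j'$ with $p\nmid j'$. First, Euler's theorem: $(p-1)j$ is a multiple of $\varphi(p^{a+1})=(p-1)p^{a}$, so $16^{(p-1)j}\equiv1\pmod{p^{a+1}}$. Second, Gauss's prime-power Wilson theorem: writing $N!_p$ for the product of the integers in $[1,N]$ prime to $p$, one has $\binom{2pj}{pj}/\binom{2j}{j}=(2pj)!_p/\bigl((pj)!_p\bigr)^{2}$, and splitting these unit products into consecutive blocks of length $p^{a+1}$, each block $\equiv-1\pmod{p^{a+1}}$, gives $(2pj)!_p\equiv(-1)^{2j'}=1$ and $\bigl((pj)!_p\bigr)^{2}\equiv1\pmod{p^{a+1}}$; hence the central-binomial ratio is itself $\equiv1\pmod{p^{a+1}}$, with no Jacobsthal-type input needed. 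Granting $\ord_p(R-1)\ge a+1$, Kummer's carry count finishes the job: if $a\le n-2$, then adding $j$ to $mp^{n-1}-j$ in base $p$ forces a carry at every digit from $a$ through $n-2$, so $\ord_p\binom{mp^{n-1}}{j}\ge n-1-a$ and the second summand has valuation at least $2\ord_p\binom{2j}{j}+(a+1)+(n-1-a)\ge n$; if $a\ge n-1$, then $\ord_p(R-1)\ge n$ alone suffices. So your decomposition is workable, but as written the argument stops short of the crux, and the crux is resolved by Euler plus Gauss--Wilson plus Kummer rather than by the refinements of Lucas you invoke.
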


%

We also need the following elementary facts.

\begin{lemma}\label{ord_p of binom^-2}
Let $\ord_p x$ be the exponent of $p$ in $x\in\Q$,
i.e. $x=\prod_p p^{\ord_px}$ for $x\in\Q$.
If $1\le 2j+1<p^{n+1}$, then
\begin{equation*}
\ord_p\binom{-\frac12}j\le n-\ord_p(2j+1).
\end{equation*}
\end{lemma}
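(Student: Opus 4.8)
The plan is to rewrite the binomial coefficient $\binom{-\frac12}{j}$ in closed form, reduce the claimed inequality to a statement about ordinary factorials, and then apply Legendre's formula. First I would record the elementary identity
\begin{equation*}
\binom{-\tfrac12}{j}=\frac{(-1)^j(2j-1)!!}{2^jj!},
\end{equation*}
where $(2j-1)!!=1\cdot3\cdot5\dotsb(2j-1)$. Since $p$ is odd, $\ord_p(2^j)=0$, so $\ord_p\binom{-\frac12}{j}=\ord_p(2j-1)!!-\ord_p(j!)$.

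The key observation is that multiplying the odd double factorial by the next odd integer produces the next double factorial, whence $\ord_p(2j+1)+\ord_p(2j-1)!!=\ord_p(2j+1)!!$. Therefore the quantity I must bound above by $n$ is exactly
\begin{equation*}
\ord_p\binom{-\tfrac12}{j}+\ord_p(2j+1)=\ord_p(2j+1)!!-\ord_p(j!).
\end{equation*}
Using $(2j+1)!!=(2j+1)!/(2^jj!)$ together with $\ord_p(2^j)=0$, this equals $\ord_p(2j+1)!-2\ord_p(j!)$.

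Now I would apply Legendre's formula $\ord_p(m!)=\sum_{i\ge1}\floor{m/p^i}$ to obtain
\begin{equation*}
\ord_p(2j+1)!-2\ord_p(j!)=\sum_{i\ge1}\Bigl(\floor{\tfrac{2j+1}{p^i}}-2\floor{\tfrac{j}{p^i}}\Bigr).
\end{equation*}
Writing $j=qp^i+r$ with $0\le r<p^i$, one checks term by term that $\floor{(2j+1)/p^i}-2\floor{j/p^i}=\floor{(2r+1)/p^i}\in\{0,1\}$, since $1\le 2r+1\le 2p^i-1$. Finally, such a term vanishes whenever $p^i>2j+1$; by the hypothesis $2j+1<p^{n+1}$ every nonzero term has $p^i\le 2j+1<p^{n+1}$, forcing $i\le n$. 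Thus the sum has at most $n$ contributions, each equal to $0$ or $1$, so it is at most $n$, which is the desired bound.

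The steps are all routine; the only point requiring care is the term-by-term evaluation $\floor{(2j+1)/p^i}-2\floor{j/p^i}\in\{0,1\}$, where the oddness of $p$ (hence of $p^i$, so that $2r+1<2p^i$) is essential---this is precisely where the standing assumption that $p$ is an odd prime enters. Once that is established, the counting argument bounding the number of nonzero terms by $n$ via $2j+1<p^{n+1}$ is immediate.
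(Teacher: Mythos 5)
Your proof is correct, but it takes a genuinely different route from the paper's. The paper works directly with $\ord_p\binom{-\frac12}{j}=\ord_p\binom{2j}{j}$ and uses the characterization $\ord_p\binom{2j}{j}=\#\set[big]{l\ge1}{\fracpart{j/p^l}\ge\frac12}$; it then writes $2j+1=mp^r$ with $m$ odd and $r=\ord_p(2j+1)$, and shows by explicit fractional-part computations that $\fracpart{j/p^l}<\frac12$ both for $1\le l\le r$ and for $l\ge n+1$, so that the count is confined to $r<l\le n$ and is at most $n-r$. You instead absorb $\ord_p(2j+1)$ into the quantity to be bounded, reducing the claim to $\ord_p\bigl((2j+1)!/(j!)^2\bigr)\le n$, and then bound each Legendre term $\floor{(2j+1)/p^i}-2\floor{j/p^i}=\floor{(2r+1)/p^i}\in\{0,1\}$, noting that all terms with $i>n$ vanish because $2j+1<p^{n+1}$. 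This is arguably cleaner: you never need the decomposition $2j+1=mp^r$ nor any fractional-part analysis. In your formulation the indices $i\le\ord_p(2j+1)$ each contribute exactly $1$ (which is precisely what accounts for the $-\ord_p(2j+1)$ on the right-hand side), whereas the paper must prove that those same indices contribute $0$ to $\ord_p\binom{2j}{j}$. What the paper's argument buys in exchange is slightly finer information: it locates exactly which indices $l$ can contribute, namely $r<l\le n$.

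One small correction to your closing remark: the inequality $2r+1<2p^i$ used in the term-by-term evaluation holds for \emph{every} prime, since $r\le p^i-1$ gives $2r+1\le 2p^i-1$; oddness of $p$ is not what makes that step work. Where oddness genuinely enters is in the two places you invoke $\ord_p(2^j)=0$ (passing from $\binom{-\frac12}{j}$ to the odd double factorial, and from $(2j+1)!!$ to $(2j+1)!/(2^jj!)$) — both of which you did state explicitly, so the proof stands as written.
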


\begin{proof}
Put $r=\ord_p(2j+1)$.
Then there is some odd integer $m$ such that $2j+1=mp^r<p^{n+1}$.
In general, we see that
\begin{equation*}
\ord_p \binom{-\frac12}j
=\ord_p \binom{2j}j
=\sum_{l\ge1}\Bigl(\floor{\frac{2j}{p^l}}-2\floor{\frac{2j}{p^l}}\Bigr)
=\#\set[bigg]{l\ge1}{\fracpart{\frac j{p^l}}\ge\frac12},
\end{equation*}
where $\fracpart x=x-\floor x$ is the fractional part of $x\in\R$.
Notice that $\fracpart{x+1}=\fracpart x$.
It follows then
\begin{equation*}
1\le l \le r \then
\fracpart{\frac j{p^l}}
=\fracpart{\frac{mp^r-1}{2p^l}}
=\fracpart{\frac{m}2p^{r-l}-\frac1{2p^l}}
=\fracpart{\frac12-\frac1{2p^l}}<\frac12,
\end{equation*}
and
\begin{equation*}
l\ge n+1 \then 0\le \frac j{p^l}<\frac{p^{n+1-l}}2\le\frac12 \then \fracpart{\frac j{p^l}}<\frac12.
\end{equation*}
Thus we have
\begin{equation*}
\ord_p\binom{-\frac12}j=\#\set[bigg]{l\ge1}{\fracpart{\frac j{p^l}}\ge\frac12,\; r<l\le n}\le n-r
\end{equation*}
as desired.
\end{proof}

%

\begin{lem}
For $k=0,1,\dots,\frac{p^n-1}2$,
\begin{equation}
p^{2sn}\evenZ_s(kp)\equiv p^{2s(n-1)}\evenZ_s(k) \pmod{p^n} \label{eq:congruence for evenZ}
\end{equation}
holds.
\end{lem}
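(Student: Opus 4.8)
The plan is to rewrite $\evenZ_s(k)$ as a multiple harmonic sum over odd integers and then isolate the contribution of the terms whose indices are \emph{all} divisible by $p$. Setting $o_i=2j_i+1$ and using $(j_i+\tfrac12)^{-2}=4\,o_i^{-2}$, one has $\evenZ_s(k)=(-1)^s4^sH_s(2k-1)$, where
\[
H_s(N):=\sum_{\substack{1\le o_s<\dots<o_1\le N\\ o_i\ \text{odd}}}\frac1{o_1^2\cdots o_s^2}.
\]
Since $p$ is odd, $(-1)^s4^s$ is a $p$-adic unit, so \eqref{eq:congruence for evenZ} is equivalent to
\[
p^{2sn}H_s(2kp-1)\equiv p^{2s(n-1)}H_s(2k-1)\pmod{p^n}.
\]

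First I would split $H_s(2kp-1)$ according to the number $t$ of indices $i$ with $p\mid o_i$. The key observation is that the odd multiples of $p$ lying in $[1,2kp-1]$ are exactly the numbers $pc$ with $c$ odd and $1\le c\le 2k-1$. Hence the \emph{main term} $t=s$ (all $o_i$ divisible by $p$), after the substitution $o_i=pc_i$, equals
\[
\sum_{\substack{1\le c_s<\dots<c_1\le 2k-1\\ c_i\ \text{odd}}}\frac1{(pc_1)^2\cdots(pc_s)^2}=p^{-2s}H_s(2k-1).
\]
Multiplying by $p^{2sn}$ reproduces precisely the desired right-hand side $p^{2s(n-1)}H_s(2k-1)$. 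It therefore remains to show that $p^{2sn}$ times the \emph{error} — the sum of all terms with $t\le s-1$ — is $\equiv 0\pmod{p^n}$.

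The valuation estimate for the error is where the hypothesis $k\le\frac{p^n-1}2$ enters decisively. From this bound one gets $o_i\le 2kp-1<p^{n+1}$, hence $\ord_p o_i\le n$ for every index. An error term has at most $s-1$ indices divisible by $p$, so its $p$-valuation is
\[
-2\sum_{i}\ord_p o_i\ge -2(s-1)n,
\]
and after multiplying by $p^{2sn}$ the valuation is at least $2sn-2(s-1)n=2n\ge n$. Taking the minimum over the finitely many error terms yields $\ord_p\bigl(p^{2sn}\cdot\text{error}\bigr)\ge 2n$, so the error is negligible modulo $p^n$; translating back through the unit $(-1)^s4^s$ gives \eqref{eq:congruence for evenZ}. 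Notably, no induction on $n$ is required — the range restriction does all the work.

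I expect the main obstacle to be bookkeeping rather than conceptual. The two points needing care are: (i) verifying that the reindexing $o_i=pc_i$ produces $H_s(2k-1)$ \emph{exactly} (the ordering of indices must be preserved, and one must note that the resulting $c_i$ may themselves be divisible by $p$ without disturbing the identity, since $H_s(2k-1)$ absorbs those factors); and (ii) confirming that the crude bound $\ord_p o_i\le n$ is adequate even when some $o_i$ are divisible by high powers of $p$, which is exactly why the range $k\le\frac{p^n-1}2$ (forcing $o_i<p^{n+1}$) is the natural and sufficient hypothesis.
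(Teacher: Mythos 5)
Your proof is correct and follows essentially the same route as the paper: both isolate the terms in $\evenZ_s(kp)$ whose indices $2j_i+1$ are all divisible by $p$ (which reindex exactly to $p^{-2s}$ times the sum for $\evenZ_s(k)$) and discard the rest modulo $p^n$. The only difference is presentational — the paper simply asserts that any summand with an index indivisible by $p$ vanishes mod $p^n$, whereas you spell out the underlying valuation estimate $\ord_p(2j_i+1)\le n$ forced by $k\le\frac{p^n-1}2$, which is exactly the justification the paper's assertion rests on.
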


Notice that the denominator of $p^{2sn}\evenZ_s(k)$ is not divisible by $p$ if $2k-1<p^{n+1}$.

\begin{proof}
First we notice that
\begin{equation*}
\set{2j+1}{0\le j<kp}\cap p\Z
=\set{p(2j+1)}{0\le j<k}.
\end{equation*}
In the sum
\begin{equation*}
p^{2sn}\evenZ_s(kp)=(-1)^s\sum_{kp>j_1>\dots>j_s\ge0}\frac{p^{2sn}}{(j_1+\frac12)^2\dots(j_s+\frac12)^2},
\end{equation*}
the summand is $\equiv0 \pmod{p^n}$ if any of $2j_1+1,\dots,2j_s+1$ is indivisible by $p$.
Hence we have
\begin{align*}
(-1)^sp^{2sn}\evenZ_s(kp)
&=\sum_{k>j_1>\dots>j_s\ge0}\frac{p^{2sn}}{(j_1+\frac12)^2\dots(j_s+\frac12)^2} \\
&\equiv\sum_{k>j_1>\dots>j_s\ge0}\frac{p^{2sn}}{(p(j_1+\frac12))^2\dots(p(j_s+\frac12))^2} \pmod{p^n} \\
&=\sum_{k>j_1>\dots>j_s\ge0}\frac{p^{2s(n-1)}}{(j_1+\frac12)^2\dots(j_s+\frac12)^2}
=(-1)^sp^{2s(n-1)}\evenZ_s(k),
\end{align*}
which implies \eqref{eq:congruence for evenZ}.
\end{proof}

%

\begin{thm}\label{weaker version}
If $1\le m<\frac p2$, then
\begin{equation}
p^{2sn}\tJ{2s+2}{mp^n}\equiv p^{2s(n-1)}\tJ{2s+2}{mp^{n-1}} \pmod{p^n}
\end{equation}
holds.
\end{thm}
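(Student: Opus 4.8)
The plan is to start from the explicit formula for the even normalized Apéry-like numbers established immediately above, namely
\[
\tJ{2s+2}{N}=\sum_{k=0}^{N}(-1)^k\binom{-\frac12}{k}^{2}\binom{N}{k}\evenZ_s(k),
\]
applied with $N=mp^n$. First I would split the sum according to whether $p\nmid k$ or $p\mid k$. For the indices with $p\nmid k$, the second congruence of the binomial lemma gives $\binom{mp^n}{k}\equiv0\pmod{p^n}$, so it suffices to check that the cofactor $p^{2sn}\binom{-\frac12}{k}^{2}\evenZ_s(k)$ is a $p$-adic integer: since $p$ is odd, $\binom{-\frac12}{k}=(-1)^k4^{-k}\binom{2k}{k}$ is $p$-integral, and the hypothesis $m<\frac{p}{2}$ forces $2k-1<p^{n+1}$ for every $k\le mp^n$, so by the remark following the preceding lemma the denominator of $p^{2sn}\evenZ_s(k)$ is prime to $p$. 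Hence every such term is $\equiv0\pmod{p^n}$ and drops out.

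For the surviving indices $k=pj$ with $0\le j\le mp^{n-1}$ (and $(-1)^{pj}=(-1)^j$ because $p$ is odd), the idea is to replace the two $p$-divisible factors by their reduced counterparts. Writing $a=\binom{-\frac12}{pj}^{2}\binom{mp^n}{pj}$, $a'=\binom{-\frac12}{j}^{2}\binom{mp^{n-1}}{j}$, $b=p^{2sn}\evenZ_s(pj)$ and $b'=p^{2s(n-1)}\evenZ_s(j)$, the first binomial congruence gives $a\equiv a'\pmod{p^n}$ while \eqref{eq:congruence for evenZ} gives $b\equiv b'\pmod{p^n}$; here the bound $j\le mp^{n-1}<\frac{p^n}{2}$ keeps us inside the stated range of validity of \eqref{eq:congruence for evenZ}. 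The elementary splitting $ab-a'b'=a(b-b')+(a-a')b'$ then yields $ab\equiv a'b'\pmod{p^n}$, once we know $a$ and $b'$ are $p$-integral: $a$ is a product of binomials, and $b'$ is $p$-integral since $2j-1<p^n$ makes the denominator of $p^{2s(n-1)}\evenZ_s(j)$ prime to $p$ (the same remark with $n$ replaced by $n-1$).

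Summing these term-by-term congruences over $j=0,1,\dots,mp^{n-1}$, the left-hand side collects to $p^{2sn}\tJ{2s+2}{mp^n}$ and the right-hand side to
\[
p^{2s(n-1)}\sum_{j=0}^{mp^{n-1}}(-1)^j\binom{-\frac12}{j}^{2}\binom{mp^{n-1}}{j}\evenZ_s(j)=p^{2s(n-1)}\tJ{2s+2}{mp^{n-1}},
\]
which is exactly the asserted congruence. (The $j=0$ term is harmless throughout, since $\evenZ_s(0)=0$ for $s\ge1$.)

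I expect the main obstacle to be the bookkeeping of $p$-integrality rather than any single hard estimate. The whole argument hinges on the hypothesis $m<\frac{p}{2}$, which confines every index appearing in both sums to the range where $p^{2sn}\evenZ_s(\cdot)$—and its shifted version $p^{2s(n-1)}\evenZ_s(\cdot)$—has denominator prime to $p$; this $p$-integrality is precisely what licenses multiplying the two independent congruences (for the binomial part and for the $\evenZ_s$ part) into a single one. Verifying that the ranges of validity of \eqref{eq:congruence for evenZ} and of the binomial lemma are simultaneously respected after the substitution $k=pj$ is the delicate point of the proof.
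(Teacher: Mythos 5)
Your proposal is correct and follows essentially the same route as the paper's proof: expand $\tJ{2s+2}{mp^n}$ via the explicit formula, discard the indices with $p\nmid k$ using the second binomial congruence, and substitute $k=pj$ using the first binomial congruence together with \eqref{eq:congruence for evenZ}. The only difference is that you make explicit the $p$-integrality bookkeeping (via the remark that the denominator of $p^{2sn}\evenZ_s(k)$ is prime to $p$ when $2k-1<p^{n+1}$) that the paper leaves implicit, which is a welcome clarification but not a different argument.
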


\begin{proof}
Using the lemma above, we have
\begin{align*}
p^{2sn}\tJ{2s+2}{mp^n}
&=\sum_{k=0}^{mp^n}(-1)^k\binom{-\frac12}{k}^{\!\!2}\binom{mp^n}{k}p^{2sn}\evenZ_s(k) \\
&\equiv\sum_{k=0}^{mp^{n-1}}(-1)^{kp}\binom{-\frac12}{kp}^{\!\!2}\binom{mp^n}{kp}p^{2sn}\evenZ_s(kp) \pmod{p^n} \\
&\equiv\sum_{k=0}^{mp^{n-1}}(-1)^{k}\binom{-\frac12}{k}^{\!\!2}\binom{mp^{n-1}}{k}p^{2s(n-1)}\evenZ_s(k) \pmod{p^n} \\
&=p^{2s(n-1)}\tJ{2s+2}{mp^{n-1}}
\end{align*}
as desired.
\end{proof}

\begin{remark}[Odd case]
We expect that the congruence formula in Theorem \ref{weaker version} also holds for odd case.
Explicitly, we conjecture that
\begin{equation}
p^{(2s+1)n}\tJ{2s+1}{mp^n}\equiv p^{(2s+1)(n-1)}\tJ{2s+1}{mp^{n-1}} \pmod{p^n}
\end{equation}
holds for $1\le m<\frac p2$.
This is reduced to the congruence
\begin{equation}
p^{(2s+1)n}\oddZ_s(kp)\equiv p^{(2s+1)(n-1)}\oddZ_s(k) \pmod{p^n}
\end{equation}
as in the even case.
To prove this, we need the following fact, which we have not succeeded to prove:
Let $j'=\frac{p(2j+1)-1}2$, i.e. $2j'+1=p(2j+1)$. Then
\begin{equation}
\frac{p^{3(n-1)}}{(j+\frac12)^3}
\Biggl(\binom{-\frac12}{j'}^{\!\!-2}-\binom{-\frac12}{j}^{\!\!-2}\Biggr)
\equiv0\pmod{p^n}
\end{equation}
when $1\le 2j+1<p^n$.
We note that by an elementary discussion, this congruence is reduced to
\begin{equation}\label{to be proved}
\ord_p\frac1{2j+1}\Biggl(1-\frac{\binom{-\frac12}{j'}^{\!2}}{\binom{-\frac12}{j}^{\!2}}\Biggr)\ge1,
\end{equation}
where $\ord_px$ for $x\in\Q$ is the exponent of $p$ in $x$, i.e. $x=\prod_p p^{\ord_px}$.
If $2j+1=mp^r$ $(p\nmid m)$ and $s=\ord_p\binom{2j}j$, then \eqref{to be proved} is equivalent to
\begin{equation}\label{to be proved 2}
\binom{2j'}{j'}\equiv(-1)^{\frac{p-1}2}\binom{2j}{j} \pmod{p^{s+r+1}}.
\end{equation}
We note that the \emph{modulo $p^{r+1}$} version of the congruence \eqref{to be proved 2} can be proved easily
as we sketch in the following.
By a repeated use of the binomial theorem (see, e.g. \cite{Beu1987}), we get
\begin{equation*}
(1-X)^{mp^{r+1}-1}\equiv (1-X^p)^{mp^r-1}\sum_{j=0}^{p-1}X^j \pmod {p^{r+1}}.
\end{equation*}
Comparing the coefficients of $X^{ap+b}$ $(0\le b<p)$ in the both sides, we have
\begin{equation*}
\binom{mp^{r+1}-1}{ap+b}\equiv(-1)^b\binom{mp^{r}-1}a \pmod{p^{r+1}}
\end{equation*}
in general. In particular, when $a=j$ and $b=\frac{p-1}2$, we obtain
\begin{equation*}
\binom{2j'}{j'}\equiv(-1)^{\frac{p-1}2}\binom{2j}{j} \pmod{p^{r+1}}.
\end{equation*}
\end{remark}

\subsection{A remark on Euler's constant for the NcHO}

We know that the spectral zeta function $\zeta_Q(s)$ can be meromorphically continued to the whole complex plane with unique pole at $s=1$ \cite{IW2005a}. Actually, it has a simple pole at $s=1$ with residue $\frac{\alpha+\beta}{\sqrt{\alpha\beta(\alpha\beta-1)}}$. By this fact, it would be reasonable to define the Euler(-Mascheroni) constant $\gamma_Q$ for the NcHO by
\begin{equation*}
\gamma_Q:= \lim_{s\to1}\Big\{\zeta_Q(s)-\frac{\alpha+\beta}{\sqrt{\alpha\beta(\alpha\beta-1)}}
\frac1{s-1}\Big\}.
\end{equation*}
Since we can not expect neither an Euler product nor functional equation for $\zeta_Q(s)$, the analysis and results developed e.g.\ in \cite{HIKW20014} for the Dedekind and Selberg zeta function is seemingly difficult. Nevertheless, we expect that $\gamma_Q$ may possess certain arithmetic significance like Kronecker's limit formula \cite{Siegel1961}, since it can be regarded as a regularized value of ``$\zeta_Q(1)$''. Exploring this problem would be desirable to obtain new information of the spectrum.

\section{Ap\'ery-like numbers and Mahler measures}\label{MGF}

In this section, we observe certain mysterious relation between our Ap\'ery-like numbers and
the modular Mahler measures discussed in \cite{RV1999} through a generating function of the generating functions $v_k(t)$ of the Ap\'ery-like numbers.

\subsection{Meta-generating functions}

We study a generating function of $v_k(t)$ (sometimes we refer it as the meta-generating (grandmother) function of Ap\'ery-like numbers) as
\begin{align*}
V^e(t,\lambda)&:= \sum_{k=0}^\infty v_{2k+2}(t)(-1)^k\lambda^{2k},\\
V^o(t,\lambda)&:= \sum_{k=0}^\infty v_{2k+1}(t)(-1)^k\lambda^{2k}.
\end{align*}
For a time being, we will force on the even meta-generating function $V^e(t,\lambda)$.
Since $v_k(0)=w_k(0)=J_k(0)=(k-1)\zeta(k,1/2)$, we have
\begin{equation*}
V^e(0,\lambda)=\sum_{k=0}^\infty (2k+1)\zeta\Bigl(2k+2,\frac12\Bigr)(-1)^k\lambda^{2k}
=\frac{\pi^2}{2\cosh^2\pi\lambda}.
\end{equation*}

\begin{lem}\label{HolomorphyV}
For a sufficiently small $\abs{\lambda}$, the function $V^e(t,\lambda)$ $($resp. $V^o(t,\lambda)$$)$ in the variable $t$
is holomorphic around $t=0$.
\end{lem}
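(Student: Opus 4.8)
The plan is to deduce the holomorphy of $V^e$ and $V^o$ in $t$ from super-geometric (factorial) decay estimates for the \emph{normalized} generating functions, using the relations that define $\tJ{k}{n}$ to separate the $t$-dependence from the $\lambda$-dependence. First I would attach to each $\tJ{m}{n}$ the function $\tilde v_m(t)\deq(1-t)^{-1}\sum_{n\ge0}\tJ{m}{n}(t/(t-1))^n$, exactly as $v_k(t)$ is attached to $\J{k}{n}$, and set $\tilde V^e(t,\lambda)=\sum_{j\ge0}\tilde v_{2j+2}(t)(-1)^j\lambda^{2j}$ and $\tilde V^o(t,\lambda)=\sum_{s\ge0}\tilde v_{2s+1}(t)(-1)^s\lambda^{2s}$. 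The normalization relations $\J{2s+2}{n}=\sum_{j=0}^{s}\J{2s+2-2j}{0}\tJ{2j+2}{n}$ and $\J{2s+1}{n}=\sum_{j=0}^{s-1}\J{2s+1-2j}{0}\tJ{2j+2}{n}+\tJ{2s+1}{n}$, together with an absolutely convergent rearrangement, would then give the factorizations
\[
V^e(t,\lambda)=V^e(0,\lambda)\,\tilde V^e(t,\lambda),\qquad
V^o(t,\lambda)=\Bigl(\sum_{i\ge1}\J{2i+1}{0}(-1)^i\lambda^{2i}\Bigr)\tilde V^e(t,\lambda)+\tilde V^o(t,\lambda),
\]
in which $V^e(0,\lambda)=\sum_{i\ge0}\J{2i+2}{0}(-1)^i\lambda^{2i}$ is the already-identified $\tfrac{\pi^2}{2\cosh^2\pi\lambda}$.

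The decisive estimates I would prove are that, on a fixed disk $\abs t\le\rho$ chosen so that $\abs{t/(t-1)}\le\tfrac13$, there is a constant $C$ with $\abs{\tilde v_{2s+2}(t)}\le C(\pi^2/2)^s/s!$ and $\abs{\tilde v_{2s+1}(t)}\le C(\pi^2/2)^{s-1}/(s-1)!$ for $s\ge1$. These follow from the explicit formulas $\tJ{2s+2}{n}=\sum_{k=0}^n(-1)^k\binom{-\frac12}{k}^2\binom nk\evenZ_s(k)$ and $\tJ{2s+1}{n}=\sum_{k=0}^n(-1)^k\binom{-\frac12}{k}^2\binom nk\oddZ_s(k)$ proved above, combined with $\binom{-\frac12}{k}^2=\binom{2k}{k}^2/16^k\le1$ (so $\sum_{k}\binom{-\frac12}{k}^2\binom nk\le2^n$) and the uniform bounds $\abs{\evenZ_s(k)}\le\frac1{s!}\zeta(2,\tfrac12)^s=\frac{(\pi^2/2)^s}{s!}$ and $\abs{\oddZ_s(k)}\le\frac M2\frac{(\pi^2/2)^{s-1}}{(s-1)!}$ with $M=\sum_{j\ge0}(j+\tfrac12)^{-3}\binom{-\frac12}{j}^{-2}<\infty$; the factor $1/s!$ comes from the strict ordering in the definitions of $\evenZ_s,\oddZ_s$, and $M$ is finite because $\binom{-\frac12}{j}^{-2}\sim\pi j$. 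Thus $\abs{\tJ{m}{n}}$ is at most $2^n$ times the stated $s$-factor, so each $\sum_n\tJ{m}{n}z^n$ converges for $\abs z<\tfrac12$, and $z=t/(t-1)$ stays in $\abs z\le\tfrac13$ on $\abs t\le\rho$, which yields the claimed sup-bounds. Because of the factorials, $\tilde V^e$ and $\tilde V^o$ converge absolutely and locally uniformly on $\abs t\le\rho$ for \emph{every} $\lambda$, hence are holomorphic in $t$ there by the Weierstrass theorem.

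Finally I would observe that the scalar prefactors $V^e(0,\lambda)$ and $\sum_{i\ge1}\J{2i+1}{0}(-1)^i\lambda^{2i}$ are power series in $\lambda^2$ with coefficients $\J{2m}{0}=(2m-1)\zeta(2m,\tfrac12)$ and $\J{2m+1}{0}=2m\,\zeta(2m+1,\tfrac12)$, both growing like $4^m$ (the $n=0$ term $2^{2m}$ dominates), so they converge exactly for $\abs\lambda<\tfrac12$, consistently with the nearest pole of $\tfrac{\pi^2}{2\cosh^2\pi\lambda}$. Since multiplication by a function independent of $t$ preserves holomorphy in $t$, the factorizations show that for $\abs\lambda<\tfrac12$ both $V^e(t,\lambda)$ and $V^o(t,\lambda)$ are holomorphic in $t$ around $t=0$, which proves the lemma. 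The main obstacle will be the factorial decay of the normalized generating functions uniformly on a fixed $t$-disk: it is precisely the $1/s!$ supplied by the strict ordering in $\evenZ_s,\oddZ_s$ that makes $\tilde V^e,\tilde V^o$ entire in $\lambda$ and thereby decouples the $\lambda$-radius, which is governed solely by the $\zeta(\,\cdot\,,\tfrac12)$ prefactors, from the variable $t$; the convergence of $M$ in the odd case (via the central-binomial asymptotics) is a minor extra point.
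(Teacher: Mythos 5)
Your proof is correct, but it takes a genuinely different route from the paper's. The paper works directly with the integral representation \eqref{eq:definition-of-aperylike-numbers}: the pointwise inequality $(1-e^{-2t})^n(1-e^{-2(u-t)})^n\le(1-e^{-u})^{2n}$ cancels all $n$-dependence in the integrand, giving a bound $0<\J kn\le C_\varepsilon(2+\varepsilon)^{k-1}$ \emph{uniform in $n$}; this yields $\abs{v_k(t)}\le C_\varepsilon(2+\varepsilon)^{k-1}(\abs{t-1}-\abs t)^{-1}$ on the half-plane $\Re(t)<\frac12$, and geometric convergence of the $\lambda$-series for small $\abs\lambda$. You instead route everything through the normalized numbers $\tJ kn$ of \S\ref{sec:numeric_data_of_NAS}: the explicit formulas in terms of $\evenZ_s(k)$, $\oddZ_s(k)$, the $1/s!$ extracted from the strictly ordered sums, and the convolution identities defining $\tJ kn$, which let you factor $V^e(t,\lambda)=V^e(0,\lambda)\tilde V^e(t,\lambda)$ (and the analogous affine identity for $V^o$) with $\tilde V^e,\tilde V^o$ \emph{entire} in $\lambda$. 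Each approach buys something: the paper's argument is shorter, self-contained (it needs nothing beyond the integral expression for $\J kn$), and gives holomorphy on the whole half-plane $\Re(t)<\frac12$ rather than only a small disk; yours is heavier (it leans on the Theorem of \S\ref{sec:numeric_data_of_NAS} and a Fubini-type rearrangement that your absolute-convergence bounds do justify), but it isolates the $\lambda$-obstruction entirely in the $t$-independent prefactors, producing the sharp explicit radius $\abs\lambda<\frac12$ — matching the poles of $\pi^2/(2\cosh^2\pi\lambda)$ — where the paper's stated threshold is only qualitative. Moreover, your factorization is exactly the structure that the paper only obtains afterwards via the differential equation, namely \eqref{HGEofMeta}: your $\tilde V^e(t,\lambda)$ is $\hgf21{\frac12+i\lambda,\frac12-i\lambda}{1}{t}$, so your argument in effect proves its entirety in $\lambda$ by elementary estimates. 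Two small points you should patch when writing this up: the odd formulas via $\oddZ_s$ cover only $s\ge1$, so the single term $\tilde v_1(t)$ of $\tilde V^o$ needs the separate (trivial) bound $\tJ1n=\frac{(2n)!!}{(2n+1)!!}\le1$; and the identity $\J{2m+1}0=2m\,\zeta(2m+1,\tfrac12)$ should be invoked only for $m\ge1$, since $\J10=1$ arises as the limit $\lim_{s\to1}(s-1)\zeta(s,\tfrac12)=1$ rather than from the displayed formula.
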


\begin{proof}
Recall the integral expression \eqref{eq:definition-of-aperylike-numbers} of $J_k(n)$
\begin{equation*}
J_k(n)
=\frac1{2^{2n+1}}\int_0^\infty\frac{u^{k-2}}{(k-2)!}\frac{e^{nu}}{(\sinh\frac{u}2)^{2n+1}}du
\int_0^u(1-e^{-2t})^n(1-e^{-2(u-t)})^ndt.
\end{equation*}
Since
\begin{equation*}
(1-e^{-2t})^n(1-e^{-2(u-t)})^n \leq (1-e^{-u})^{2n}
\end{equation*}
one observes that
\begin{equation*}
0< J_k(n)=\frac{1}{(k-2)!}\int_0^\infty u^{k-2} e^{-\frac{u}2} \frac{u}{1-e^{-u}}du.
\end{equation*}
Since $\frac{u}{1-e^{-u}}< \frac{1}{1-e^{-1}}<2$ for $0<u<1$ and $\frac{u}{1-e^{-u}}< 2u$ for $u>1$,
for any $\varepsilon>0$, one sees that there exists a constant $C_\varepsilon$ such that
\begin{align*}
\int_0^\infty u^{k-2} e^{-\frac{u}2} \frac{u}{1-e^{-u}}du
&{}< 2\int_0^1 u^{k-2} e^{-\frac{u}2}du + 2\int_1^\infty u^{k-1} e^{-\frac{u}2} du\\
&{}<C_\varepsilon \int_0^\infty u^{k-2} e^{-\frac{u}{2+\varepsilon}} du
= C_\varepsilon (2+\varepsilon)^{k-1}\Gamma(k-1).
\end{align*}
It follows that $0< J_k(n) < C_\varepsilon (2+\varepsilon)^{k-1}$ (independent of $n$).
Since $w_k(z)= \sum_{n=0}^\infty J_k(n)z^n$, one has
\begin{equation*}
\abs{w_k(z)}\leq C_\varepsilon (2+\varepsilon)^{k-1}(1-\abs{z})^{-1} \quad (\abs{z}<1).
\end{equation*}
Recall that $v_k(t)=(1-z)w_k(z)$ with $z=\frac{t}{t-1}$. Hence one obtains
\begin{equation*}
\abs{v_k(t)}\leq C_\varepsilon (2+\varepsilon)^{k-1}(\abs{t-1}-\abs{t})^{-1} \quad \bigl(\Re(t)<\frac12\bigr).
\end{equation*}
This immediately shows that if $\abs{\lambda}< 1/\sqrt{2+\varepsilon}$
\begin{equation*}
\abs{V^e(t,\lambda)}\leq \sum_{k=0}^\infty \abs{v_{2k+2}(t)(-1)^k\lambda^{2k}}
< C_\varepsilon (\abs{t-1}-\abs{t})^{-1}(1-(2+\varepsilon)\abs{\lambda}^2)^{-1}.
\end{equation*}
This shows the assertion for $V^e(t,\lambda)$. For the odd parity function $V^o(t,\lambda)$, the proof is the same.
\end{proof}

It follows from the equation \eqref{eq:differential-equation-for-v} that
\begin{equation}\label{Meta-Gen eigen}
D_t V^e(t,\lambda)= \lambda^2 V^e(t,\lambda).
\end{equation}
Namely, the function $V^e(t,\lambda)$ is an eigenfunction of $D_t$ with eigenvalue $\lambda^2$.
Note that $v_2(t)=V^e(t,0)$ is a modular form for $\Gamma_0(4)$.

\begin{rem}
Similarly to the even case, we have
\begin{equation*}
D_tV^o(t,\lambda)
=\sum_{k=0}^\infty (-1)^kD_tv_{2k+1}(t)\lambda^{2k}
=\sum_{k=0}^\infty (-1)^{k-1}v_{2k-1}(t)\lambda^{2k}
=-v_{-1}(t)+\lambda^2 V^o(t,\lambda),
\end{equation*}
where
\begin{align*}
v_{-1}(t)=-D_tv_1(t)
&=-\frac14\sum_{n=0}^\infty\Bigl\{4(n+1)^2\frac1{2n+3}-(2n+1)^2\frac1{2n+1}\Bigr\}t^n \\
&=-\frac14\sum_{n=0}^\infty\frac{t^n}{2n+3}=-\frac14\frac{v_1(t)-1}t.
\end{align*}
Namely one has
\begin{equation*}
(D_t-\lambda^2)V^o(t,\lambda)=\frac1{4t}\Bigg\{\frac1{1-t}\hgf21{1,1}{\frac32}{\frac{t}{t-1}}-1\Bigg\}.
\end{equation*}
\end{rem}

Rewrite \eqref{Meta-Gen eigen} as
\begin{equation*}
\Big(t(1-t)\frac{d^2}{dt^2}+(1-2t)\frac{d}{dt}-\frac14-\lambda^2\Big)V^e(t,\lambda)=0.
\end{equation*}
Since $V^e(t,\lambda)$ is holomorphic around $t=0$ and $v_k(0)=(k-1)\zeta(k,1/2)$,
one has
\begin{equation}\label{HGEofMeta}
V^e(t,\lambda)=\frac{\pi^2}{2\cosh^2\pi\lambda}\hgf21{\frac12+i\lambda, \frac12-i\lambda}{1}{t}.
\end{equation}
From this, it is immediate to see that
\begin{equation}
v_{2k+2}(t)= (2k)! \frac{d^{2k}}{d\lambda^{2k}}
\left\{\frac{\pi^2}{2\cosh^2\pi\lambda}\hgf21{\frac12+i\lambda, \frac12-i\lambda}{1}{t}\right\}
\bigg|_{\lambda=0}.
\end{equation}

In relation with the modular form interpretation of the generating function of Ap\'ery-like numbers for $\zeta_Q(2)$ developed in \cite{KW2007} (see \S\ref{Modular}) and \cite{Z}, we naturally come to the following problems.

\begin{prob}
Determine whether there are any pair of $\lambda \in \C$ and $k \in \N$ such that the function
$\frac{d^{2k}}{d\lambda^{2k}}
\frac{\pi^2}{2\cosh^2\pi\lambda}\hgf21{\frac12+i\lambda, \frac12-i\lambda}{1}{t(\tau)}$
in $\tau$ can be a modular form for some congruence subgroup $\congsubgp$ of $SL_2(\Z)$ and for
some modular function $t=t(\tau)$ for $\congsubgp$.
\end{prob}

\begin{prob}
For some $t=t(\tau)$, is there any
$\lambda$ such that $\hgf21{\frac12+i\lambda, \frac12-i\lambda}{1}{t(\tau)}$
is a modular form for some congruence subgroup $\congsubgp$ of $SL_2(\Z)$?
Moreover, how much such $\lambda$'s; are these either finite or countably infinite, etc.
locating on a certain line or algebraic curve?
Notice that, if $\lambda \in \Z\backslash \{0\}$ then
$\hgf21{\frac12+i\lambda, \frac12-i\lambda}{1}{t}$ is a polynomial so that the
function in question is trivially a modular function.
\end{prob}

\begin{prob}\label{Ve}
Can we manage directly $V^e(t,\lambda)$ as modular forms context?
\end{prob}

\subsection{Integral expression of $V^e(t,\lambda)$}

Concerning the question as is stated in Problem \ref{Ve}, we first provide the integral expression of
$V^e(t,\lambda)$.

\begin{prop}
\begin{equation*}
V^e(t,\lambda)=\frac12\int_0^\infty\!\int_0^\infty
\frac{2e^{-\frac{u+v}2}\cos(\lambda(u+v))(1-e^{-u-v})}{(1-e^{-u-v})^2-t(e^{-v}-e^{-u})^2}\,dudv.
\end{equation*}
\end{prop}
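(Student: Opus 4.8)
The plan is to expand the integrand as a power series in $t$, integrate term by term, and identify the coefficients with those of the hypergeometric expression already obtained in \eqref{HGEofMeta}. Setting $a=e^{-u}$ and $b=e^{-v}$, the elementary factorization $(1-ab)^2-(b-a)^2=(1-a^2)(1-b^2)>0$ on $(0,1)^2$ shows that $(b-a)^2/(1-ab)^2<1$, so for $\abs t<1$ the geometric expansion
\[
\frac{1-e^{-u-v}}{(1-e^{-u-v})^2-t(e^{-v}-e^{-u})^2}=\sum_{n\ge0}t^n\frac{(e^{-v}-e^{-u})^{2n}}{(1-e^{-u-v})^{2n+1}}
\]
converges. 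The factor $e^{-(u+v)/2}$ controls the integrand as $u,v\to\infty$, while near the origin it is $O\big((u+v)^{-1}\big)$ and hence integrable, so term-by-term integration is legitimate. Using $\cos(\lambda(u+v))=\Re e^{i\lambda(u+v)}$ and the substitution $a=e^{-u}$, $b=e^{-v}$, the coefficient of $t^n$ becomes
\[
c_n(\lambda)=\Re\int_0^1\!\!\int_0^1 (ab)^{-\frac12-i\lambda}\,\frac{(b-a)^{2n}}{(1-ab)^{2n+1}}\,da\,db .
\]

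First I would treat $n=0$, which fixes the normalization. Expanding $(1-ab)^{-1}$ geometrically gives $\int_0^1\!\int_0^1(ab)^{-\frac12-i\lambda}(1-ab)^{-1}\,da\,db=\sum_{m\ge0}(m+\tfrac12-i\lambda)^{-2}=\psi'(\tfrac12-i\lambda)$, where $\psi'$ is the trigamma function. The reflection formula $\psi'(z)+\psi'(1-z)=\pi^2/\sin^2\pi z$ with $z=\tfrac12-i\lambda$, together with $\sin\!\big(\pi(\tfrac12-i\lambda)\big)=\cosh\pi\lambda$, yields $c_0(\lambda)=\Re\psi'(\tfrac12-i\lambda)=\pi^2/(2\cosh^2\pi\lambda)=V^e(0,\lambda)$. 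The claim to prove for general $n$ is
\[
c_n(\lambda)=\frac{\pi^2}{2\cosh^2\pi\lambda}\,\frac{(\tfrac12+i\lambda)_n(\tfrac12-i\lambda)_n}{(n!)^2},
\]
for then $\sum_{n\ge0}c_n(\lambda)t^n=\frac{\pi^2}{2\cosh^2\pi\lambda}\,\hgf21{\frac12+i\lambda,\frac12-i\lambda}{1}{t}$, which is $V^e(t,\lambda)$ by \eqref{HGEofMeta}.

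Rather than evaluate the double integral for each $n$, I would work with its generating function $\Psi(t)=\int_0^1\!\int_0^1 (ab)^{\mu}\,(1-ab)\big[(1-ab)^2-t(b-a)^2\big]^{-1}\,da\,db$, where $\mu=-\tfrac12-i\lambda$ and $\nu:=\mu+1=\tfrac12-i\lambda$, so that $c_n(\lambda)=\Re\,[t^n]\,\Psi(t)$. Writing $P=1-ab$ and $Q=(b-a)^2$, differentiation under the integral sign gives $\big(t(1-t)\partial_t^2+(1-2t)\partial_t\big)\big(P/(P^2-tQ)\big)=PQ\big(tQ+(1-2t)P^2\big)/(P^2-tQ)^3$. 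The heart of the argument is to show that, after weighting by $(ab)^\mu$ and integrating over $(0,1)^2$, this equals $\nu(1-\nu)\,\Psi(t)$; the point is that the eigenvalue $\nu(1-\nu)=-\mu(\mu+1)$ is produced by $-a\partial_a(a\partial_a+1)$ (and its $b$-analogue) acting on $(ab)^\mu$, so the identity should follow by transferring these $a,b$-derivatives onto the kernel through integration by parts. This makes $\Psi$ a solution of $\big(D_t-\lambda^2\big)\Psi=0$, which is exactly \eqref{Meta-Gen eigen}; since $\Psi$ is holomorphic at $t=0$ with $\Psi(0)=\psi'(\nu)$ and the indicial exponents there are $0,0$, it must equal $\psi'(\nu)\,\hgf21{\nu,1-\nu}{1}{t}$. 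Taking real parts, and using that $(\nu)_n(1-\nu)_n$ is real, gives the formula for $c_n(\lambda)$ and hence the proposition.

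The main obstacle is the integration-by-parts identity underlying $\big(D_t-\lambda^2\big)\Psi=0$: one must write $(ab)^\mu\big[PQ(tQ+(1-2t)P^2)/(P^2-tQ)^3-\nu(1-\nu)P/(P^2-tQ)\big]$ as an $a,b$-divergence and verify that the boundary contributions vanish as $a,b\to0$ and $a,b\to1$. The corners $a,b\to1$ are delicate because the kernel degenerates there as $t\to1$, so the justification (and the interchange of $\partial_t$ with the integral) should be carried out for fixed $0<t<1$, using $\Re\mu<0$ to control the integrand at the boundary. A more computational alternative is to pass to the coordinates $u=\tfrac s2+w$, $v=\tfrac s2-w$, reduce the inner $w$-integral, and match the remaining $s$-integral to a standard integral representation of the hypergeometric coefficient; but the route through \eqref{Meta-Gen eigen} is preferable since it reuses the machinery already in place.
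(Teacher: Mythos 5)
Your route is genuinely different from the paper's, and the parts you actually carry out are correct: the factorization $(1-ab)^2-(b-a)^2=(1-a^2)(1-b^2)$ legitimizes the geometric expansion and term-by-term integration, the normalization $c_0(\lambda)=\Re\psi'(\tfrac12-i\lambda)=\pi^2/(2\cosh^2\pi\lambda)$ is right, and holomorphy at $t=0$ together with the indicial exponents $0,0$ would indeed force $\Psi(t)=\psi'(\nu)\,\hgf21{\nu,1-\nu}{1}{t}$, after which \eqref{HGEofMeta} finishes the proof. The genuine gap is exactly the step you yourself call the heart of the argument: $(D_t-\lambda^2)\Psi=0$ is asserted, not proved. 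In your notation ($P=1-ab$, $Q=(b-a)^2$, $R=P^2-tQ$), write $\Psi(t)=\sum_{n\ge0}\gamma_nt^n$ with $\gamma_n=\int_0^1\!\int_0^1(ab)^{\nu-1}Q^nP^{-2n-1}\,da\,db$; using $D_tt^n=-\frac{(2n+1)^2}4t^n+n^2t^{n-1}$, your ODE is equivalent to the two-term recurrence $(n+1)^2\gamma_{n+1}=\bigl((n+\tfrac12)^2+\lambda^2\bigr)\gamma_n$, i.e.\ to $\gamma_n=\gamma_0\,(\nu)_n(1-\nu)_n/(n!)^2$ --- which is essentially the proposition itself, so it cannot be left as a sketch. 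No divergence identity realizing it is exhibited (it is never verified that $-a\partial_a(a\partial_a+1)$, transferred onto $P/R$ and symmetrized in $a,b$, reproduces the required kernel pointwise or modulo an exact divergence), and the boundary analysis is worse than ``delicate'': if one moves the operator off $(ab)^\mu$ by parts in $a$, the boundary contribution at $a=1$ is a constant times $\int_0^1 b^\mu(1-t)^{-1}(1-b)^{-1}db$ (since $P/R$ restricted to $a=1$ equals $1/((1-t)(1-b))$), which diverges; the cancellations you would need are between individually infinite quantities and must be organized before the argument even makes sense. As written, the proof assumes its conclusion at the decisive point.

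For contrast, the paper's proof needs no hypergeometric input at all: it starts from the definition $V^e(t,\lambda)=(1-z)\sum_k(i\lambda)^{2k}\sum_nJ_{2k+2}(n)z^n$, inserts the integral representation \eqref{eq:J_k(n)}--\eqref{eq:B_n(u)} of the Ap\'ery-like numbers, sums over $k$ (producing $\cos(\lambda u)$), sums the geometric series over $n$ (producing exactly the stated kernel, with $z=t/(t-1)$), and finally substitutes $(u,s)\mapsto(s,u-s)$ to convert $\int_0^\infty du\int_0^u ds$ into the quarter-plane integral; every interchange is justified by the positivity and decay you already noted. If you prefer to keep your direction, the honest completions are either (i) to prove the recurrence for $\gamma_n$ directly --- work of the same nature as the recurrence \eqref{eq:recurrence_for_J2}, which the paper imports from nontrivial integral identities --- or (ii) your own ``computational alternative'': integrating along $ab=\mathrm{const.}$ reduces $\Psi$ to $\int_0^1x^{\nu-1}(1-x)\bigl(2\int_0^{1-x}((1-x)^2-tw^2)^{-1}(w^2+4x)^{-1/2}dw\bigr)dx$, to be matched with Euler's integral for $\hgf21{\nu,1-\nu}{1}{t}$; but that evaluation, too, remains to be done.
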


\begin{proof}
Recalling \eqref{eq:J_k(n)}, \eqref{eq:B_n(u)}, \eqref{eq:definition of w_k(z)}
and \eqref{eq:definition of v_k(t)}, we have
\begin{align*}
V^e(t,\lambda)
&=(1-z)\sum_{k=0}^\infty (i\lambda)^{2k}\sum_{n=0}^\infty \frac{z^n}{2^{2n+1}}
\int_0^\infty
\frac{u^{2k}}{(2k)!}\biggl\{
\frac{e^{nu}}{(\sinh\frac{u}2)^{2n+1}}\int_0^u(1-e^{-2s})^n(1-e^{-2(u-s)})^n\,ds
\biggr\}\,du \\
&=(1-z)\int_0^\infty\cos(\lambda u)\left(\int_0^u
\biggl\{
\sum_{n=0}^\infty
\frac{z^ne^{nu}}{(2\sinh\frac{u}2)^{2n+1}}(1-e^{-2s})^n(1-e^{-2(u-s)})^n
\biggr\}\,ds\right)\,du \\
&=(1-z)\int_0^\infty\cos(\lambda u)\left(\int_0^u
\frac1{2\sinh\frac{u}2}
\biggl\{
1-\frac{ze^u}{4\sinh^2\frac{u}2}(1-e^{-2s})(1-e^{-2(u-s)})
\biggr\}^{-1}
\,ds\right)\,du \\
&=\frac12\int_0^\infty\left(
\int_0^u\frac{2e^{-\frac u2}\cos(\lambda u)(1-e^{-u})}{(1-e^{-u})^2-t(e^{-s}-e^{s-u})^2}
\,ds\right)\,du \\
&=\frac12\int_0^\infty\!\int_0^\infty
\frac{2e^{-\frac{u+v}2}\cos(\lambda(u+v))(1-e^{-u-v})}{(1-e^{-u-v})^2-t(e^{-v}-e^{-u})^2}
\,dudv
\end{align*}
as desired.
\end{proof}

When $\lambda=\frac1i(\frac12-\frac1l)$ for some integer $l\ge2$, assuming $t=T^2$, we have
\begin{equation}
V^e\Bigl(T^2,\frac1i\Bigl(\frac12-\frac1l\Bigr)\Bigr)
=\frac{l^2}2 \int_0^1\!\int_0^1
\frac{1+(xy)^{l-2}}{1-(xy)^l-T(x^l-y^l)}dxdy.
\end{equation}
On the other hands, it follows from \eqref{HGEofMeta} that
\begin{equation}\label{HG_repre}
V^e\Bigl(T^2,\frac1i\Bigl(\frac12-\frac1l\Bigr)\Bigr)
=\frac{\pi^2}{2\sin^2\frac\pi l}\hgf21{\frac1l, 1-\frac1l}{1}{T^2}.
\end{equation}
It follows then
\begin{equation}
\pi^2\hgf21{\frac1l, 1-\frac1l}{1}{T^2}
=l^2\sin^2\frac\pi l
\int_0^1\!\int_0^1
\frac{1+(xy)^{l-2}}{1-(xy)^l-T(x^l-y^l)}dxdy.
\end{equation}

Related to this function/integral, we now recall the result in \cite{RV1999}, which is discussing the relation between
Mahler measures and the special value of $L$-functions for elliptic curves from the modular form point of view.
The (logarithmic) Mahler measure $m(P)$ of a Laurent polynomial $P\in \C[x_1^\pm,\ldots, x_n^\pm]$ is defined as
the following integral over the torus.
\begin{equation*}
m(P)=\int_0^1\cdots\int_0^1\,
\log\abs{P(e^{2\pi i \theta_1}, \cdots, e^{2\pi i \theta_n})} \, d\theta_1\cdots d\theta_n.
\end{equation*}
It is known that, for instance, there is a remarkable identity such as $m(1+x+y)=L'(\chi,-1)$, where $L(\chi,s)$ is the Dirichlet series associated to the quadratic character $\chi$ of conductor 3. Among others, the study in \cite{RV1999} shows the following result, which asserts very explicitly the relation between Mahler measures and the special value of $L$-functions for elliptic curves.

\begin{prop}
For $l=2,3,4$ and $6$, put
\begin{equation}
u_l(\lambda)=\frac1{(2\pi i)^2}\int_{\Torus}\frac1{1-\lambda P_l(x,y)}\frac{dx}x\frac{dy}y,
\end{equation}
where $\Torus=\Set{(z,w)\in\C^2}{\abs z=\abs w=1}$ and
\begin{gather*}
P_2(x,y)=x+\frac1x+y+\frac1y,\quad
P_3(x,y)=\frac{x^2}y+\frac{y^2}x+\frac1{xy},
\\
P_4(x,y)=xy^2+\frac{x}{y^2}+\frac1{x},\quad
P_6(x,y)=\frac{x^2}y-\frac{y}x-\frac1{xy}.
\end{gather*}
Then one finds
\begin{gather*}
u_l(\lambda)=\hgf21{\frac1l,1-\frac1l}{1}{C_l\lambda^l}\qquad(l=2,3,4,6),\\
C_2=2^4,\qquad
C_3=3^3,\qquad
C_4=2^6,\qquad
C_6=2^43^3.
\end{gather*}
The function $u_l(\lambda)$ is related to the Mahler measure of
the polynomial $P_l(x,y)-1/\lambda$ $($which defines an elliptic curve\/$)$ as
\begin{equation*}
m(P_l(x,y)-1/\lambda)=
\Re\Bigl\{-\log\lambda-\int_0^\lambda(u_l(t)-1)\frac{dt}t\Bigr\}.
\eqed
\end{equation*}
\end{prop}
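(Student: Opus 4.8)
The plan is to prove both assertions by reducing the torus integral $u_l(\lambda)$ to its power-series expansion in $\lambda$. Expanding the geometric series $\frac1{1-\lambda P_l(x,y)}=\sum_{n\ge0}\lambda^nP_l(x,y)^n$ and integrating term by term over $\Torus$, one gets $u_l(\lambda)=\sum_{n\ge0}a_n(l)\lambda^n$, where $a_n(l)$ is the constant term (the coefficient of $x^0y^0$) of the Laurent polynomial $P_l(x,y)^n$, since $\frac1{(2\pi\I)^2}\int_{\Torus}x^iy^j\frac{dx}x\frac{dy}y=\delta_{i,0}\delta_{j,0}$. The interchange of sum and integral is justified for $\abs\lambda$ small because $\abs{P_l}$ is bounded on the compact torus.

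First I would establish the vanishing $a_n(l)=0$ whenever $l\nmid n$ by exhibiting, for each $l\in\{2,3,4,6\}$, a substitution $(x,y)\mapsto(\zeta^a x,\zeta^b y)$ with $\zeta=e^{2\pi\I/l}$ under which every monomial of $P_l$ is multiplied by one and the same root of unity $\zeta^c$ with $\gcd(c,l)=1$, so that $P_l\mapsto\zeta^cP_l$. For instance $P_2\mapsto-P_2$ under $(x,y)\mapsto(-x,-y)$; the choice $(a,b)=(1,1)$ works for $l=3$ and $l=4$, and $(a,b)=(2,3)$ works for $l=6$. Since each such substitution preserves $\Torus$ and the invariant measure $\frac{dx}x\frac{dy}y$, the constant term of $P_l^n$ is unchanged, yet it is simultaneously multiplied by $\zeta^{cn}$; hence $a_n(l)=0$ unless $l\mid n$.

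Next I would compute the surviving coefficients $a_{lm}(l)$ and match them with the hypergeometric coefficients. Writing $a_{lm}(l)$ as a multinomial sum over the ways the $lm$ factors of $P_l$ can be chosen so as to cancel all powers of $x$ and $y$, one obtains in each case a closed form; the task is to verify $a_{lm}(l)=\frac{(1/l)_m(1-1/l)_m}{(m!)^2}C_l^m$, whence $u_l(\lambda)=\hgf21{\frac1l}{1-\frac1l}{C_l\lambda^l}$. For $l=2$ this is the clean identity $\sum_{a=0}^m\binom{2m}m\binom ma^2=\binom{2m}m^2=\frac{((1/2)_m)^2}{(m!)^2}2^{4m}$ (Vandermonde), and the cases $l=3,4,6$ are entirely analogous multinomial evaluations producing the stated constants $C_3=3^3$, $C_4=2^6$, $C_6=2^43^3$. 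Alternatively one may argue that $u_l(\lambda)$, being a period of the one-parameter family $1-\lambda P_l=0$, satisfies the associated Picard--Fuchs equation, identify the latter after the substitution $z=C_l\lambda^l$ with the hypergeometric equation for $\hgf21{\frac1l}{1-\frac1l}{z}$, and fix the solution by the normalization $u_l(0)=1$. This constant-term bookkeeping is the main obstacle: the vanishing step is uniform in $l$, but extracting the exact multinomial value and recognizing the Pochhammer shape for $l=3,4,6$ demands the most care.

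Finally, for the Mahler measure identity I would factor $P_l(x,y)-1/\lambda=-\lambda^{-1}(1-\lambda P_l(x,y))$, so that on $\Torus$ one has $\log\abs{P_l-1/\lambda}=-\log\abs\lambda+\log\abs{1-\lambda P_l}$; integrating gives $m(P_l-1/\lambda)=-\log\abs\lambda+I(\lambda)$, where $I(\lambda)=\frac1{(2\pi\I)^2}\int_{\Torus}\log\abs{1-\lambda P_l}\frac{dx}x\frac{dy}y$. Differentiating under the integral and using the algebraic identity $\frac{-P_l}{1-\lambda P_l}=-\frac1\lambda\kakko{\frac1{1-\lambda P_l}-1}$ yields $\frac{d}{d\lambda}I(\lambda)=-\Re\frac{u_l(\lambda)-1}\lambda$; since $u_l(\lambda)-1=O(\lambda^l)$ near $0$ the integrand is regular and $I(0)=0$, so integrating back gives $I(\lambda)=-\Re\int_0^\lambda(u_l(t)-1)\frac{dt}t$. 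Combining and writing $-\log\abs\lambda=\Re(-\log\lambda)$ gives the claimed formula. The only subtleties here are the choice of a single-valued branch of $\log(1-\lambda P_l)$ on the torus for $\lambda$ in the relevant domain and the justification of differentiating under the integral sign, both of which are routine.
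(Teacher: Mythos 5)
Your proof is correct, but it necessarily takes a different route from the paper, because the paper gives no proof at all: the proposition is recalled as a known result of Rodriguez-Villegas \cite{RV1999} and closed with a QED box, the argument being deferred to that reference. Your write-up is a self-contained elementary substitute, and its steps check out. The constant-term expansion $u_l(\lambda)=\sum_{n\ge0}a_n(l)\lambda^n$ is legitimate for $\abs{\lambda}\max_{\Torus}\abs{P_l}<1$; your substitutions do multiply $P_l$ by a primitive $l$-th root of unity (for $l=4$, $(x,y)\mapsto(ix,iy)$ gives $P_4\mapsto i^3P_4$, which suffices since $\gcd(3,4)=1$), so $a_n(l)=0$ unless $l\mid n$; and the surviving coefficients do reduce to the stated Pochhammer form: for $l=3,4,6$ the exponent-balance equations have the unique solutions $(m,m,m)$, $(m,m,2m)$, $(2m,3m,m)$, giving the single multinomial coefficients $(3m)!/(m!)^3$, $(4m)!/\bigl((m!)^2(2m)!\bigr)$, $(6m)!/\bigl((2m)!(3m)!\,m!\bigr)$, which the Gauss multiplication formula converts into $27^m(1/3)_m(2/3)_m/(m!)^2$, $64^m(1/4)_m(3/4)_m/(m!)^2$, $432^m(1/6)_m(5/6)_m/(m!)^2$, matching $C_3,C_4,C_6$; the $l=2$ Vandermonde computation is as you say. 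For the Mahler-measure identity, one small repair in phrasing: work with the holomorphic function $\tilde I(\lambda)=\frac1{(2\pi i)^2}\int_{\Torus}\log(1-\lambda P_l)\frac{dx}x\frac{dy}y$ (principal branch, $\abs{\lambda}$ small), whose derivative is $-(u_l(\lambda)-1)/\lambda$ and which vanishes at $0$, and take real parts only at the end; writing $\frac{d}{d\lambda}$ of a real part, as you do, only makes literal sense for real $\lambda$. Note also that your argument (like the identity itself, honestly read) lives in the small-$\abs{\lambda}$ regime where $1-\lambda P_l$ does not vanish on the torus. As for what each approach buys: the paper's citation situates the identity inside Rodriguez-Villegas's modular framework, which is exactly what the paper then exploits to connect $V^e$ with Mahler measures and $L$-values, whereas your computation verifies the proposition from scratch and makes transparent that both the hypergeometric evaluation and the Mahler-measure formula are, in that regime, entirely elementary.
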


It is worth noting that, via the hypergeometric representation \eqref{HG_repre}, this proposition shows that the following relation between Mahler measures associated with curves and our meta-generation functions of Ap\'ery-like numbers holds.
\begin{cor}
The following holds.
\begin{equation}
V^e\Bigl(C_l\lambda^l,\frac1i\Bigl(\frac12-\frac1l\Bigr)\Bigr)
=\frac{\pi^2}{2\sin^2\frac\pi l}\,u_l(\lambda) \qquad(l=2,3,4,6).
\end{equation}
\end{cor}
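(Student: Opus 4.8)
The plan is to observe that both sides of the asserted identity have, in the material immediately preceding the corollary, already been reduced to one and the same Gauss hypergeometric function, so that the proof amounts to a single substitution together with a short analyticity argument.

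First I would recall the two ingredients. On the one hand, the hypergeometric representation \eqref{HG_repre} of the even meta-generating function evaluates, at the special spectral parameter $\lambda_0=\frac1i(\frac12-\frac1l)$, to
\begin{equation*}
V^e\Bigl(T^2,\tfrac1i\bigl(\tfrac12-\tfrac1l\bigr)\Bigr)
=\frac{\pi^2}{2\sin^2\frac\pi l}\,\hgf21{\frac1l,1-\frac1l}{1}{T^2},
\end{equation*}
an identity of holomorphic functions of $T$ near the origin (here one uses $i\lambda_0=\frac12-\frac1l$, so the parameters of \eqref{HGEofMeta} become $\frac12\pm i\lambda_0=1-\frac1l,\frac1l$, while $\cosh^2\pi\lambda_0=\cos^2\bigl(\frac\pi2-\frac\pi l\bigr)=\sin^2\frac\pi l$). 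On the other hand, the Rodriguez-Villegas Proposition identifies the torus integral $u_l(\lambda)$ with the same hypergeometric function at the rescaled argument,
\begin{equation*}
u_l(\lambda)=\hgf21{\frac1l,1-\frac1l}{1}{C_l\lambda^l},\qquad l=2,3,4,6.
\end{equation*}

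The proof is then completed by setting $T^2=C_l\lambda^l$ in the first display and comparing with the second: the right-hand sides coincide up to the factor $\frac{\pi^2}{2\sin^2(\pi/l)}$, which yields
\begin{equation*}
V^e\Bigl(C_l\lambda^l,\tfrac1i\bigl(\tfrac12-\tfrac1l\bigr)\Bigr)
=\frac{\pi^2}{2\sin^2\frac\pi l}\,u_l(\lambda).
\end{equation*}
The only point requiring care, and the one I regard as the main (if modest) obstacle, is the legitimacy of this substitution across the relevant region. The identity \eqref{HG_repre} was obtained from the eigenfunction equation \eqref{Meta-Gen eigen} together with the holomorphy of $V^e(t,\lambda)$ in $t$ near $t=0$ furnished by Lemma \ref{HolomorphyV}, so it holds initially only for $\abs T$ small; likewise $u_l(\lambda)$ is given by a convergent torus integral for $\abs\lambda$ small. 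Since $\lambda\mapsto C_l\lambda^l$ carries a neighborhood of $\lambda=0$ into a neighborhood of $T^2=0$, both sides are holomorphic in $\lambda$ on a common punctured neighborhood of the origin and agree there; analytic continuation of $\hgf21{\frac1l,1-\frac1l}{1}{\cdot}$ along the cut plane $\C\setminus[1,\infty)$ then propagates the equality to the full range of $\lambda$ for which the integral representations converge. No further computation is needed, since all the analytic work has already been carried out in establishing \eqref{HG_repre} and the Rodriguez-Villegas Proposition.
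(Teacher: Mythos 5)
Your proof is correct and takes essentially the same route as the paper, which states the corollary as an immediate consequence of the hypergeometric representation \eqref{HG_repre} combined with the Rodriguez-Villegas identity $u_l(\lambda)=\hgf21{\frac1l,1-\frac1l}{1}{C_l\lambda^l}$, i.e.\ precisely the substitution $T^2=C_l\lambda^l$ you perform (your verification that $\frac12\pm i\lambda_0$ become $1-\frac1l,\frac1l$ and that $\cosh^2\pi\lambda_0=\sin^2\frac\pi l$ is exactly what the paper leaves implicit). The added remark on holomorphy near the origin and analytic continuation is sound but not needed beyond what Lemma \ref{HolomorphyV} already provides, since $\abs{\tfrac12-\tfrac1l}<\tfrac12$ for $l=2,3,4,6$.
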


\begin{rem}
Since there is an intimate relation between the Mahler measure for elements in group ring of a finite group and the characteristic polynomial of the adjacency matrix of a weighted Cayley graph and characters of the group \cite{DL2009}, it is natural to expect the existence of a certain dynamical system behind the NcHO.
\end{rem}

\section{Automorphic integrals associated with Ap\'ery-like numbers}\label{Modular}

The function $w_2(t)$ becomes a modular form of weight $1$ with respect to the congruent subgroup $\Gamma(2)$
if we take $t$ as a suitable $\Gamma(2)$-modular function.
This is a reflection of the fact that the differential equation for $w_2(t)$
is the Picard-Fuchs equation for an associated family of elliptic curves.
In this section, we recall this story for $w_2(t)$ to
other generating functions $w_k(t)$ of Ap\'ery-like numbers from \cite{KW2012RIMS}
and study the Fourier expansions of certain integrals of modular forms, which are appeared naturally in the story.

\subsection{Automorphic integrals}\label{AI}

We summarize notations used in what follows,
and we briefly recall the notion of automorphic integrals due to Knopp \cite{Knopp1978Duke}. This is a slightly extended notion of automorphic integrals studied in \cite{G1961}.

Let $\congsubgp$ be a Fuchsian group and $m$ be an integer.
Let $\tau\in\uhp$, $\uhp$ being the complex upper half plane, and $q:=e^{2\pi i \tau}$.
Denote by $F(\uhp)$ the linear space of all $\C$-valued functions on the complex upper half plane.
The group $\congsubgp$ acts on $\uhp$ by $\gamma\tau:=\frac{a\tau+b}{c\tau+d}$
for $\gamma=\mat{a & b \\ c & d}\in\congsubgp$ and $\tau\in\uhp$.
The space $F(\uhp)$ becomes a (right) $\congsubgp$-module
by the map $F(\uhp)\times\congsubgp\ni(f,\gamma)\mapsto f\big|_m\gamma\in F(\uhp)$ defined by
\begin{equation}\label{eq:slash}
(f\big|_m\gamma)(\tau)=j(\gamma,\tau)^{-m}f(\gamma\tau).
\end{equation}
Here $j(\gamma,\tau):=c\tau+d$ for $\gamma=\mat{a & b \\ c & d}$ and $\tau\in\uhp$.
We denote by $\Holo(\uhp)$, $\Mero(\uhp)$, $\C(\tau)$ the subspaces of $F(\uhp)$
consisting of holomorphic functions on $\uhp$,
meromorphic functions on $\uhp$
and rational functions on $\uhp$ respectively.
We also set $\C[\tau]_{k}$
to be the space of polynomial functions on $\uhp$ which is of at most degree $k$.
Notice that these spaces are $\congsubgp$-submodules of $F(\uhp)$ under the action $f\big|_{-k}\gamma$.

The standard generators of the modular group $SL_2(\Z)$ are denoted by
\begin{equation*}
T=\mat{1 & 1 \\ 0 & 1},\qquad
S=\mat{0 & -1 \\ 1 & 0}.
\end{equation*}
Define the subgroup $\hecke$ of $SL_2(\Z)$ by $\hecke:=\generators{T^2,S}$.
Notice that $\hecke$ is a subgroup of $\Gamma(2)$, the principal congruence subgroup of level $2$;
\begin{equation*}
\hecke\supset\Gamma(2):=\Set{\gamma \in SL_2(\Z)}{\gamma \equiv I \bmod 2}
=\generators{T^2,ST^{-2}S^{-1}}.
\end{equation*}

If $f(\tau)$ is an automorphic form of even integral weight $m+2$ for $\congsubgp$,
then an $(m+1)$-fold iterated integral $F(\tau)$ of $f(\tau)$ is called an \emph{automorphic integral} of $f(\tau)$.
By the Bol formula
\begin{equation}\label{eq:Bol's formula}
\frac{d^{m+1}}{d\tau^{m+1}}\bigl(j(\gamma,\tau)^mF(\gamma\tau)\bigr)
=j(\gamma,\tau)^{-m-2}F^{(m+1)}(\gamma\tau)
\qquad
(\gamma\in SL_2(\R)),
\end{equation}
we see that $(F\big|_{-m}\gamma)(\tau)-F(\tau)$ is a \emph{polynomial} in $\tau$ of degree at most $m+1$.

In \cite{Knopp1978Duke}, Knopp introduced an extended notion of the automorphic integrals;
a meromorphic function $F$ on the upper half plane $\uhp$ is called an \emph{automorphic integral} of weight $2k$ for $\congsubgp$ with rational \emph{period functions} $\Set{R_F(\gamma)(\tau)\in\C(\tau)}{\gamma\in\congsubgp}$ if
\begin{equation*}
(F\big|_{2k}\gamma)(\tau)=F(\tau)+R_F(\gamma)(\tau)
\end{equation*}
for each $\gamma\in\congsubgp$ and $F$ is meromorphic at each cusp of $\congsubgp$.

\begin{ex}
The Eisenstein series $E_2(\tau)$ of weight $2$ satisfies
\begin{equation*}
E_2(\tau+1)=E_2(\tau),\qquad
\tau^{-2}E_2\Bigl(-\frac1\tau\Bigr)-E_2(\tau)=\frac{12}{2\pi i\tau}.
\end{equation*}
Hence $E_2(\tau)$ is an automorphic integral of weight $2$ with for $SL_2(\Z)$.
\end{ex}

Notice that an automorphic integral obtained by an $(m+1)$-fold iterated integral
of the automorphic form of weight $m+2$ is an automorphic integral of weight $-m$ with \emph{polynomial} period functions.
To emphasize the polynomiality of the period functions, in what follows, we call an automorphic integral with polynomial period functions an automorphic integral.

\subsection{Modular form interpretation of $w_2(t)$}

We first recall the result on the modularity of $w_2(t)$ in \cite{KW2007} briefly.
We recall the following standard functions;
the elliptic theta functions
\begin{equation*}
\theta_2(\tau)=\sum_{n=-\infty}^\infty q^{(n+1/2)^2/2}, \qquad
\theta_3(\tau)=\sum_{n=-\infty}^\infty q^{n^2/2}, \qquad
\theta_4(\tau)=\sum_{n=-\infty}^\infty (-1)^nq^{n^2/2},
\end{equation*}
and normalized Eisenstein series
\begin{equation*}
E_{k}(\tau)=1+\frac2{\zeta(1-k)}\sum_{n=1}^\infty\sigma_{k-1}(n)q^n
\qquad(k=2,4,6,\dots).
\end{equation*}
Put
\begin{equation}\label{eq:def_of_t}
t=t(\tau)=-\frac{\theta_2(\tau)^4}{\theta_4(\tau)^4}
=\frac{\lambda(\tau)^2}{\lambda(\tau)^2-1}= \frac{\eta(\tau)^8\eta(4\tau)^{16}}{\eta(2\tau)^{24}},
\end{equation}
which is a $\Gamma(2)$-modular function such that $t(i\infty)=0$.
Here $\eta(\tau)$ is the Dedekind eta function.
We see that
\begin{equation*}
1-t=\frac{\theta_3(\tau)^4}{\theta_4(\tau)^4},\qquad
\frac{t}{t-1}=\frac{\theta_2(\tau)^4}{\theta_3(\tau)^4},\qquad
\frac{q}{t}\frac{dt}{dq}=\frac12\theta_3(\tau)^4.
\end{equation*}
By the formula (\S22.3 in \cite{WW})
\begin{equation*}
\hgf21{\frac12,\frac12}{1}{\frac{\theta_2(\tau)^4}{\theta_3(\tau)^4}}=\theta_3(\tau)^2,
\end{equation*}
it follows from \eqref{eq:differential-equation-for-w} for $k=2$ that
\begin{equation*}
w_2(t)=\frac{\J20}{1-t}\hgf21{\frac12,\frac12}{1}{\frac{t}{t-1}}
=\J20\frac{\theta_4(\tau)^4}{\theta_3(\tau)^2}
=\J20\frac{\eta(2\tau)^{22}}{\eta(\tau)^{12}\eta(4\tau)^8},
\end{equation*}
which is a $\Gamma(2)$-modular form of weight $1$.

\subsection{Toward modular interpretation of $w_k(t)$}

The fact mentioned above on $w_2(t)$ naturally leads us to a question what the nature of $w_k(t)$ is in general.
In order to answer this question for the special case $w_4(t)$,
we recall the following general fact (Lemma \ref{lem:Yang}),
which is a slight modification of \cite[Lemma 1]{Y2008} and is proved in the same manner.
Let $\Gamma$ be a discrete subgroup of $SL_2(\R)$ commensurable with the modular group.

\begin{lem}\label{lem:Yang}
Let $A(\tau)$ be a modular form of weight $k$ and $t(\tau)$
be a non-constant modular function on $\Gamma$ such that $t(i\infty)=0$.
Let
\begin{equation*}
L:=\vartheta^{k+1}+r_k(t)\vartheta^k+\cdots+r_0(t)
\qquad
\Bigl(\vartheta=t\frac{d}{dt}\Bigr)
\end{equation*}
be the differential operator with rational coefficients $r_j(t)$.
Assume that $LA(t)=0$.
Let $g(t)=g(t(\tau))$ be another modular form.
Then a solution of the inhomogeneous differential equation $LB(t)=g(t)$ is given by
the iterated integration
\begin{equation*}
B(t)=A(t)\underbrace{\int^q\dotsb\int^q}_{k+1}\Big(\frac{q dt/dq}{t}\Big)^{k+1}\frac{g(t)}{A(t)}
\frac{dq}q\dotsb\frac{dq}q.
\eqed
\end{equation*}
\end{lem}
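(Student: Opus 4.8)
The plan is to recast the claimed formula as an operator identity and then solve the inhomogeneous equation by reduction of order, exploiting that the full solution space of $LY=0$ is spanned by the functions $A\tau^{\,j}$. First I would set $D=q\frac{d}{dq}$ and $\phi=\frac{q\,dt/dq}{t}=\frac{Dt}{t}$, so that the chain rule gives $D=\phi\vartheta$. Since $\frac{dq}{q}=d\log q$ and $D=\frac{d}{d\log q}$ (indeed $D=\frac1{2\pi i}\frac{d}{d\tau}$), each iterated integral $\int^q(\cdot)\frac{dq}{q}$ is an antiderivative $D^{-1}$, and the claimed $B$ becomes $B=A\,D^{-(k+1)}\bigl[\phi^{k+1}A^{-1}g\bigr]$. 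Dividing by $A$ and applying $D^{k+1}$ yields $A\,\phi^{-(k+1)}\,D^{k+1}(A^{-1}B)=g$ (the $k+1$ integration constants lie in the homogeneous solution space, so they do not affect $LB$). Thus the lemma reduces to the single operator identity
\[
L=A\circ\phi^{-(k+1)}\circ D^{k+1}\circ A^{-1},
\]
where $A^{\pm1}$ denote multiplication operators. I note that this computation is valid for an arbitrary $g$; the modularity of $g$ is used only in the application, not for the ODE itself.

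Next I would verify this identity by comparing two order-$(k+1)$ linear operators. Using $D=\phi\vartheta$, the right-hand side expands as a differential operator in $\vartheta$ whose top-order term is $\vartheta^{k+1}$, since the factor $\phi^{-(k+1)}$ and the conjugation by the nonzero function $A$ affect only the lower-order terms; hence it is monic in $\vartheta^{k+1}$, exactly like $L$. Two monic order-$(k+1)$ operators coincide as soon as they share the same $(k+1)$-dimensional solution space (they can differ only by left multiplication by a function, which the common leading coefficient $1$ forces to be $1$), so it suffices to identify both kernels. The kernel of the right-hand side consists of those $B$ with $D^{k+1}(A^{-1}B)=0$; as $D$ is a constant multiple of $\frac{d}{d\tau}$, this means $A^{-1}B$ is a polynomial in $\tau$ of degree at most $k$, so the kernel is exactly the span of $A,A\tau,\dots,A\tau^{k}$.

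The main step, and the one I expect to be the real obstacle, is to show that these same functions span the kernel of $L$, given only that $LA=0$; here modularity enters decisively. Because the coefficients $r_j(t)$ are single-valued (rational) functions of the $\congsubgp$-invariant $t$, the solution space of $LY=0$ is stable under analytic continuation along the covering $\uhp\to\uhp/\congsubgp$, that is, under $Y(\tau)\mapsto Y(\gamma\tau)$ for $\gamma=\mat{a&b\\c&d}\in\congsubgp$. Applying this to the solution $A$ and using that $A$ has weight $k$ gives
\[
A(\gamma\tau)=(c\tau+d)^{k}A(\tau)=\sum_{j=0}^{k}\binom{k}{j}c^{\,j}d^{\,k-j}\,A(\tau)\,\tau^{\,j},
\]
so every such transform again lies in the kernel. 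As $\gamma$ ranges over the lattice $\congsubgp$ (commensurable with the modular group), the bottom rows $(c,d)$ take infinitely many projective values, whence the coefficient vectors $\bigl(\binom{k}{j}c^{\,j}d^{\,k-j}\bigr)_{j}$ are not confined to any hyperplane and finitely many of them are linearly independent.

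Solving the resulting linear system then exhibits each $A\tau^{\,j}$ as a solution of $LY=0$. Since these $k+1$ functions are linearly independent ($A\not\equiv0$) and the kernel has dimension $k+1$, they form a fundamental system, matching the kernel of the right-hand side computed above. The operator identity follows, and therefore $LB=g$, which completes the proof.
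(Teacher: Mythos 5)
Your proof is correct and takes essentially the same route as the paper's: the paper gives no written proof, deferring to Lemma 1 of Yang \cite{Y2008} ``proved in the same manner,'' and that argument is precisely yours --- show that the kernel of $L$ is spanned by $A,\tau A,\dots,\tau^k A$ using the weight-$k$ transformation law of $A$ together with the $\Gamma$-invariance of $t$ (so that $Y\mapsto Y\circ\gamma$ preserves solutions), deduce the monic factorization $L=A\circ\phi^{-(k+1)}\circ D^{k+1}\circ A^{-1}$ with your $D=q\frac{d}{dq}$ and $\phi=\frac{q\,dt/dq}{t}$, and read off the iterated-integral solution. No gaps of substance.
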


From Theorem \ref{thm:differential-equation-for-w}, it follows that
\begin{equation}\label{eq:diff-eq-for-w}
\Bigl(t(1-t)^2\frac{d^2}{dt^2}+(1-t)(1-3t)\frac{d}{dt}+t-\frac34\Bigr)^kw_{2k+2}(t)=w_2(t)
\end{equation}
for $k\ge1$, which can be also written in terms of the Euler operator $\vartheta$ as
\begin{equation}\label{eq:diff-eq-for-w2}
\begin{split}
L_kw_{2k+2}(t)&=\frac{t^k}{(1-t)^{2k}}w_2(t)\qquad(k\ge1), \\
L_k&=\vartheta^{2k}+r_{2k-1}(t)\vartheta^{2k-1}+\dotsb+r_0(t)
\qquad(r_0(t),\dots,r_{2k-1}(t)\in\C(t)).
\end{split}
\end{equation}

Let us consider the function
\begin{align*}
W_{k}(t)
&:=w_2(t)
\underbrace{\int_0^q\dotsb\int_0^q}_{2k}\Big(\frac{q dt/dq}{t}\Big)^{2k}
\frac{t^k}{(1-t)^{2k}}
\frac{dq}q\dotsb\frac{dq}q \\
&=\Bigl(-\frac14\Bigr)^k\J20\frac{\theta_4(\tau)^4}{\theta_3(\tau)^2}\underbrace{\int_0^q\dotsb\int_0^q}_{2k}
\Bigl(\theta_2(\tau)^{4}\theta_4(\tau)^{4}\Bigr)^{\!k}
\frac{dq}q\dotsb\frac{dq}q.
\end{align*}

Let us look at the case where $k=1$.
If we apply Lemma \ref{lem:Yang} to \eqref{eq:diff-eq-for-w2},
then we see that the integral $W_{1}(t)$ is a solution to \eqref{eq:diff-eq-for-w2},
and hence $w_{4}(t)-W_{1}(t)$ is a solution of the homogeneous equation $L_1f=0$ of degree $2$
which is holomorphic at $t=0$.
This implies that $w_4(t)-W_1(t)$ is a constant multiple of $w_2(t)$.
Thus we have
$w_4(t)=Cw_2(t)+W_{1}(t)$
for a constant $C$, which is determined to be $\pi^2(=J_4(0)/J_2(0))$ by looking at the constant terms.
Namely, we get
\begin{equation}\label{eq:w4-rep}
w_4(t)=\pi^2w_2(t)+W_{1}(t).
\end{equation}

\subsection{Automorphic integrals approach to $W_{k}(t)$}

In what follows, we consider $W_{k}(t)$ for $k\in\N$ in general.
For convenience, let us put
\begin{align}
f(\tau)&=\theta_2(\tau)^4\theta_4(\tau)^4=\frac1{15}\bigl(E_4(\tau/2)-17E_4(\tau)+16E_4(2\tau)\bigr),\\
\iE_k(\tau)&=\underbrace{\int_0^q\dotsb\int_0^q}_{2k}f(\tau)^k\frac{dq}q\dotsb\frac{dq}q,\\
\iG_k(\tau)&=\underbrace{\int_0^q\dotsb\int_0^q}_{4k-1}f(\tau)^k\frac{dq}q\dotsb\frac{dq}q
=\underbrace{\int_0^q\dotsb\int_0^q}_{2k-1} \iE_k(\tau)\frac{dq}q\dotsb\frac{dq}q.
\end{align}
Notice that
\begin{equation*}
W_k(t)=\Bigl(-\frac14\Bigr)^kw_2(t)\iE_k(\tau)
=\frac{2\pi i}{(16\pi^2)^{k}}w_2(t)\frac{d^{2k-1}}{d\tau^{2k-1}}\iG_k(\tau).
\end{equation*}
Clearly, $\iG_k(\tau)$ is a periodic function with period $2$ and $\iG_k(i\infty)=0$.
Since $f(\tau)^k$ is a modular form of weight $4k$ with respect to $\Gamma(2)$ (or $\hecke$),
the function $\iG_k(\tau)$ is an automorphic integral for $f(\tau)^k$ by definition.
Hence, by \eqref{eq:w4-rep}, we have the
\begin{thm}\label{thm:w by iG}
The fourth generating function $w_4(t)$ of Ap\'ery-like numbers is
a linear combination of $w_2(t)$ and the derivative $\iG_1'(\tau)$ of an automorphic integral for $\hecke$ of weight $-2$ as
\begin{equation}\label{eq:w_4 by iG_1}
w_4(t)=\pi^2w_2(t)+W_1(t),\qquad
W_1(t)=\frac{2\pi i}{16\pi^2}w_2(t)\iG_1'(\tau).
\end{equation}
\end{thm}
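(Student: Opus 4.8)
The plan is to assemble the statement from two ingredients that are essentially in place: the decomposition $w_4(t)=\pi^2w_2(t)+W_1(t)$ recorded in \eqref{eq:w4-rep}, and a rewriting of $W_1(t)$ through the automorphic integral $\iG_1$. Consequently the only genuine content left is to justify the displayed identity
\[
W_k(t)=\Bigl(-\frac14\Bigr)^k w_2(t)\,\iE_k(\tau)=\frac{2\pi i}{(16\pi^2)^k}\,w_2(t)\,\frac{d^{2k-1}}{d\tau^{2k-1}}\iG_k(\tau)
\]
in the case $k=1$, and then to confirm that $\iG_1$ qualifies as an automorphic integral of weight $-2$ for $\hecke$. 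The first equality above is immediate from the definitions of $W_k$ and $\iE_k$, so the heart of the matter is the second equality.

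First I would treat the operator relationship between the iterated integral $\int_0^q(\cdot)\frac{dq}{q}$ and differentiation in $\tau$. Since $q=e^{2\pi i\tau}$ gives $\frac{dq}{q}=2\pi i\,d\tau$, and the lower limit $q=0$ corresponds to the cusp $\tau=i\infty$, each integration $\int_0^q(\cdot)\frac{dq}{q}$ is exactly $2\pi i$ times antidifferentiation in $\tau$ based at $i\infty$; hence $\frac{d}{d\tau}$ inverts it up to the factor $2\pi i$. Because $\iG_k$ is obtained from $\iE_k$ by $2k-1$ further such integrations (the second equality in the definition of $\iG_k$), applying $\frac{d^{2k-1}}{d\tau^{2k-1}}$ collapses those $2k-1$ integrations and yields
\[
\frac{d^{2k-1}}{d\tau^{2k-1}}\iG_k=(2\pi i)^{2k-1}\iE_k.
\]
Substituting this into $W_k=(-\frac14)^k w_2\,\iE_k$ and simplifying the prefactor by means of $(2\pi i)^2=-4\pi^2$, so that
\[
\Bigl(-\frac14\Bigr)^k(2\pi i)^{-(2k-1)}=\frac{2\pi i}{(16\pi^2)^k},
\]
reproduces the asserted identity. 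Setting $k=1$ then gives $W_1(t)=\frac{2\pi i}{16\pi^2}\,w_2(t)\,\iG_1'(\tau)$.

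Next I would identify the automorphic type of $\iG_1$. The function $f=\theta_2^4\theta_4^4$ is a modular form of weight $4$ for $\hecke$, so in the language of \S\ref{AI} it plays the role of a form of weight $m+2$ with $m=2$; its $(m+1)=3$-fold iterated integral $\iG_1$ is then, by definition, an automorphic integral of weight $-m=-2$, the Bol formula \eqref{eq:Bol's formula} ensuring that the period functions $(\iG_1\big|_{-2}\gamma)-\iG_1$ are polynomials of degree at most $m+1=3$. Together with the already-noted periodicity $\iG_1(\tau+2)=\iG_1(\tau)$ and the normalization $\iG_1(i\infty)=0$, this places $\iG_1$ squarely among the automorphic integrals of \S\ref{AI}. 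Combining the $k=1$ identity with \eqref{eq:w4-rep} yields precisely \eqref{eq:w_4 by iG_1}.

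The step I expect to require the most care is the operator identity $\frac{d^{2k-1}}{d\tau^{2k-1}}\iG_k=(2\pi i)^{2k-1}\iE_k$: one must ensure that the base point $i\infty$ is respected at every one of the $2k-1$ integrations, so that no spurious constants of integration survive differentiation — this is exactly what the vanishing $\iG_k(i\infty)=0$ encodes — and then track the sign and the powers of $2\pi i$ correctly through $(2\pi i)^2=-4\pi^2$. Everything else, namely the weight bookkeeping and the final linear combination, is routine once \eqref{eq:w4-rep} and Lemma \ref{lem:Yang} are granted.
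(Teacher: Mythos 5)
Your proposal is correct and follows essentially the same route as the paper: it combines \eqref{eq:w4-rep} with the identity $W_k(t)=\bigl(-\tfrac14\bigr)^k w_2(t)\iE_k(\tau)=\tfrac{2\pi i}{(16\pi^2)^k}w_2(t)\tfrac{d^{2k-1}}{d\tau^{2k-1}}\iG_k(\tau)$ and the observation that $\iG_1$, being the $3$-fold iterated integral of the weight-$4$ form $f=\theta_2^4\theta_4^4$, is an automorphic integral of weight $-2$ for $\hecke$ with polynomial period functions by the Bol formula \eqref{eq:Bol's formula}. The only difference is that you spell out the bookkeeping (the factor $(2\pi i)^{2k-1}$ and the base point $i\infty$) that the paper leaves implicit.
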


Note that the Fourier expansion of $\iG_1(\tau)$ is given by
\begin{align}
\iG_1(\tau)&=\int_0^q\int_0^q\int_0^q
\frac1{15}\bigl(E_4(\tau/2)-17E_4(\tau)+16E_4(2\tau)\bigr)
\frac{dq}q\frac{dq}q\frac{dq}q \notag \\
&=16\Biggl(8\sum_{n\ge1}\sigma_{-3}(n)q^{\frac n2}-17\sum_{n\ge1}\sigma_{-3}(n)q^n+2\sum_{n\ge1}\sigma_{-3}(n)q^{2n}\Biggr). \label{G_1Fourier}
\end{align}
In the next section we will give a formula for $\iG_1(\tau)$ and $w_4(t)$,
in which they are expressed in terms of \emph{differential Eisenstein series}
(\eqref{eq:iGt=phi} and Theorem \ref{eq:w4}).

We calculate the period function of $\iG_k(\tau)$, especially to describe $w_4(t)$ concretely via $\iG_1(\tau)$.
The $L$-function corresponding to $f(\tau)^k$ is
\begin{equation}
\Lambda_k(s)=\int_0^\infty t^{s}f(it)^k\frac{dt}t,
\end{equation}
which satisfies the functional equation $\Lambda_k(4k-s)=\Lambda_k(s)$.
By the inversion formula of Mellin's transform, one notices that
\begin{equation*}
f(iy)^k=\frac1{2\pi i}\int_{\Re s=\alpha}y^{-s}\Lambda_k(s)ds\qquad(y>0, \alpha\gg0).
\end{equation*}
Put
\begin{equation*}
\Xi_k(s)=\frac{\Lambda_k(s+2k)}{\prod_{j=1-2k}^{2k-1}(s-j)},
\qquad
\rho_{k,j}=\Res_{s=j}\Xi_k(s)\quad(j=1-2k,\dots,2k-1).
\end{equation*}
The functional equation for $\Lambda_k(s)$ implies the oddness $\Xi_k(-s)=-\Xi_k(s)$,
from which we see that $\rho_{k,-j}=\rho_{k,j}$.
Define $R^k_S(\tau)$ by
\begin{equation*}
R^k_S(\tau)=-(2\pi)^{4k-1}\sum_{j=1-2k}^{2k-1}\rho_{k,j}\Bigl(\frac{\tau}i\Bigr)^{2k-1-j}.
\end{equation*}
Notice that $R^k_S(\tau)$ is a polynomial in $\tau$ of degree $4k-2$.
We have the
\begin{lem}[{\cite[Theorem 4]{KW2012RIMS}}]\label{thm:psi_is_Eichler-modular}
One has
\begin{equation*}
\iG_k(\tau+2)=\iG_k(\tau),\qquad
\tau^{4k-2}\iG_k\Bigl(-\frac1\tau\Bigr)-\iG_k(\tau)=R^k_S(\tau).
\end{equation*}
\end{lem}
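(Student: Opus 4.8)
The plan is to realise $\iG_k(\tau)$ as a genuine $(4k-1)$-fold Eichler integral and then treat the two assertions separately. Since $\frac{dq}q=2\pi i\,d\tau$, the defining iterated integral says precisely that $\iG_k$ together with its first $4k-2$ derivatives vanishes at $i\infty$, while $\iG_k^{(4k-1)}(\tau)=(2\pi i)^{4k-1}f(\tau)^k$. The Cauchy formula for repeated integration then yields the single integral representation
\begin{equation*}
\iG_k(\tau)=\frac{(2\pi i)^{4k-1}}{(4k-2)!}\int_{i\infty}^\tau(\tau-\sigma)^{4k-2}f(\sigma)^k\,d\sigma,
\end{equation*}
which I would use throughout.

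For the periodicity statement I would argue on the Fourier side. Because $T^2\in\hecke$ and $f$ is a modular form of weight $4$ for $\hecke$ with $j(T^2,\tau)=1$, one has $f(\tau+2)=f(\tau)$, so $f^k$ has period $2$; moreover $f=\theta_2^4\theta_4^4$ vanishes at $i\infty$ (indeed $f=16q^{1/2}+\cdots$), so the expansion of $f^k$ involves only positive powers of $q^{1/2}$. Termwise integration $\int^q q^{n/2}\frac{dq}q=\frac2n q^{n/2}$ then preserves both period $2$ and vanishing at the cusp, so by induction on the number of integrations $\iG_k(\tau+2)=\iG_k(\tau)$.

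For the transformation under $S$ the first step is to show the difference is a polynomial and the second is to identify it. Applying Bol's formula \eqref{eq:Bol's formula} with $m=4k-2$ together with the modularity $f(-1/\tau)^k=\tau^{4k}f(\tau)^k$ (weight $4$ for $S\in\hecke$) shows that $\frac{d^{4k-1}}{d\tau^{4k-1}}\bigl(\tau^{4k-2}\iG_k(-1/\tau)\bigr)=\iG_k^{(4k-1)}(\tau)$, so that $\tau^{4k-2}\iG_k(-1/\tau)-\iG_k(\tau)$ has vanishing $(4k-1)$-th derivative and is a polynomial of degree at most $4k-2$. To pin it down I would substitute $\sigma\mapsto-1/\sigma$ in the integral for $\iG_k(-1/\tau)$ and use the modularity together with the fact that $4k-2$ is even to obtain $\tau^{4k-2}\iG_k(-1/\tau)=\frac{(2\pi i)^{4k-1}}{(4k-2)!}\int_0^\tau(\tau-\sigma)^{4k-2}f(\sigma)^k\,d\sigma$; subtracting the representation of $\iG_k(\tau)$ collapses the two contours into
\begin{equation*}
\tau^{4k-2}\iG_k(-1/\tau)-\iG_k(\tau)=\frac{(2\pi i)^{4k-1}}{(4k-2)!}\int_0^{i\infty}(\tau-\sigma)^{4k-2}f(\sigma)^k\,d\sigma.
\end{equation*}

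Finally I would expand $(\tau-\sigma)^{4k-2}$ by the binomial theorem and evaluate $\int_0^{i\infty}\sigma^l f(\sigma)^k\,d\sigma=i^{l+1}\Lambda_k(l+1)$ by setting $\sigma=it$. Since $f$ vanishes at both cusps $0$ and $i\infty$ met by the ray $\sigma=it$, the function $\Lambda_k(s)$ is entire, so the only poles of $\Xi_k$ are the integer zeros $j=1-2k,\dots,2k-1$ of its denominator. Writing $l+1=j+2k$ converts the sum over $l$ into a sum over these $j$, whose coefficients $\Lambda_k(j+2k)/\bigl((j+2k-1)!(2k-1-j)!\bigr)$ are exactly the residues $\rho_{k,j}=\Res_{s=j}\Xi_k(s)$; collecting the powers of $i$ and $2\pi$ then recovers the stated $R^k_S(\tau)$. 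The main obstacle is precisely this last bookkeeping: tracking the overall constant, the signs $(-1)^l$ and the powers $i^{l+1}$, and repackaging the critical values $\Lambda_k(j+2k)$ as residues, where the functional equation $\Lambda_k(4k-s)=\Lambda_k(s)$ is what produces the symmetry $\rho_{k,-j}=\rho_{k,j}$ and the correct degree-$(4k-2)$ normalisation.
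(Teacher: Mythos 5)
Your proposal is correct; I checked the final bookkeeping and it does collapse to the stated $R^k_S(\tau)$. One caveat on the comparison: this paper never proves the lemma itself, it quotes it from \cite{KW2012RIMS}, so one can only compare with the argument the surrounding setup points to. The inversion formula $f(iy)^k=\frac1{2\pi i}\int_{\Re s=\alpha}y^{-s}\Lambda_k(s)\,ds$ displayed just before the lemma, and the very shape of $\Xi_k(s)=\Lambda_k(s+2k)/\prod_{j=1-2k}^{2k-1}(s-j)$, indicate a Mellin-theoretic proof: integrating the inverse Mellin transform $4k-1$ times produces exactly the denominator $\prod_j(s-j)$ (after the shift $s\mapsto s+2k$), and the period polynomial then falls out as the sum of residues $\rho_{k,j}$ picked up when the contour is shifted using the functional equation $\Lambda_k(4k-s)=\Lambda_k(s)$. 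You instead work with the closed Eichler-integral representation, use Bol's formula to see that the cocycle is a polynomial of degree at most $4k-2$, collapse the two base points $i\infty$ and $0$ into the single period integral $\int_0^{i\infty}(\tau-\sigma)^{4k-2}f(\sigma)^k\,d\sigma$, and expand binomially; this is more elementary and self-contained, and it makes the identification of the coefficients with the critical values $\Lambda_k(1),\dots,\Lambda_k(4k-1)$ completely explicit, whereas the contour-shift route explains more conceptually why residues of $\Xi_k$ appear and works in settings where one only has the functional equation. Two minor points in your sketch: (i) the appeal to $4k-2$ being even is superfluous, since $-1/\tau+1/w=(\tau-w)/(\tau w)$ holds identically, so no sign ever arises in the substitution; (ii) the residue is $\rho_{k,j}=(-1)^{2k-1-j}\Lambda_k(j+2k)/\bigl((j+2k-1)!\,(2k-1-j)!\bigr)$, so your displayed coefficient omits the factor $(-1)^{2k-1-j}$ --- but that sign, combined with $(-1)^l=-(-1)^j$, $i^{l+1}=(-1)^k i^j$ and $(2\pi i)^{4k-1}=-i(2\pi)^{4k-1}$, is precisely what turns your sum into $-(2\pi)^{4k-1}\sum_{j}\rho_{k,j}(\tau/i)^{2k-1-j}$, so the plan closes exactly as you anticipated.
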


Let us consider the particular case where $k=1$.
Explicitly, we have
\begin{gather*}
\Lambda_1(s)=16\pi^{-s}\Gamma(s)\zeta(s)\zeta(s-3)(1-2^{-s})(1-2^{4-s}),\\
\rho_{1,-1}=\rho_{1,1}=\frac{7\zeta(3)}{\pi^3},\quad
\rho_{1,0}=-\frac12,\quad
R^1_S(\tau)=56\zeta(3)(\tau^2-1)+\frac{4\pi^3}i\tau.
\end{gather*}
Lemma \ref{thm:psi_is_Eichler-modular} then reads
\begin{lem}
The function
\begin{equation}\label{eq:Psi1}
\iGt_1(\tau):=\iG_1(\tau)-56\zeta(3)
\end{equation}
satisfies
\begin{equation}\label{eq:transformation rule of iGt_1}
\iGt_1(\tau+2)=\iGt_1(\tau),\qquad
\tau^2\iGt_1\Bigl(-\frac1\tau\Bigr)-\iGt_1(\tau)=\frac{4\pi^3}i\tau.
\eqed
\end{equation}
\end{lem}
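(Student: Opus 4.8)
The plan is to deduce this lemma directly from Lemma~\ref{thm:psi_is_Eichler-modular} in the special case $k=1$, using the explicit values of the residues $\rho_{1,j}$ and the period polynomial $R^1_S(\tau)$ computed just above. First I would recall that Lemma~\ref{thm:psi_is_Eichler-modular} gives $\iG_1(\tau+2)=\iG_1(\tau)$ and $\tau^2\iG_1(-1/\tau)-\iG_1(\tau)=R^1_S(\tau)$ with $R^1_S(\tau)=56\zeta(3)(\tau^2-1)+\tfrac{4\pi^3}{i}\tau$. The only subtlety is that $\iG_1$ itself is not quite periodic-plus-a-monomial under $S$; the period function $R^1_S$ contains the constant term $-56\zeta(3)$ coming from the $\rho_{1,0}$ and the degree-$2$ part. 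The idea is to absorb exactly that constant by subtracting $56\zeta(3)$, which is the definition \eqref{eq:Psi1} of $\iGt_1(\tau)$.

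The periodicity claim $\iGt_1(\tau+2)=\iGt_1(\tau)$ is immediate: subtracting a constant does not affect invariance under $\tau\mapsto\tau+2$, so it follows at once from $\iG_1(\tau+2)=\iG_1(\tau)$. For the transformation under $S$, I would compute
\begin{align*}
\tau^2\iGt_1\Bigl(-\frac1\tau\Bigr)-\iGt_1(\tau)
&=\tau^2\Bigl(\iG_1\Bigl(-\frac1\tau\Bigr)-56\zeta(3)\Bigr)-\bigl(\iG_1(\tau)-56\zeta(3)\bigr)\\
&=\Bigl(\tau^2\iG_1\Bigl(-\frac1\tau\Bigr)-\iG_1(\tau)\Bigr)-56\zeta(3)\tau^2+56\zeta(3)\\
&=R^1_S(\tau)-56\zeta(3)(\tau^2-1).
\end{align*}
Substituting the explicit $R^1_S(\tau)=56\zeta(3)(\tau^2-1)+\tfrac{4\pi^3}{i}\tau$, the two $56\zeta(3)(\tau^2-1)$ terms cancel and leave exactly $\tfrac{4\pi^3}{i}\tau$, which is the claimed identity \eqref{eq:transformation rule of iGt_1}.

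There is essentially no obstacle here: the content of the lemma is entirely encoded in Lemma~\ref{thm:psi_is_Eichler-modular} together with the tabulated residues, and the proof is a one-line algebraic cancellation designed precisely so that subtracting $56\zeta(3)$ removes the degree-$0$ and degree-$2$ parts of the period polynomial. The only point requiring (already completed) care is the computation of $\rho_{1,0}=-\tfrac12$ and $\rho_{1,\pm1}=\tfrac{7\zeta(3)}{\pi^3}$ that produced $R^1_S(\tau)$; given these, the verification is mechanical. I would therefore present the short calculation above as the proof and conclude with \qed.
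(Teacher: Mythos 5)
Your proposal is correct and is essentially identical to the paper's own argument: the paper states the explicit residues $\rho_{1,\pm1}=\tfrac{7\zeta(3)}{\pi^3}$, $\rho_{1,0}=-\tfrac12$ and $R^1_S(\tau)=56\zeta(3)(\tau^2-1)+\tfrac{4\pi^3}{i}\tau$, and then treats the lemma as an immediate specialization of Lemma~\ref{thm:psi_is_Eichler-modular} at $k=1$, with exactly the cancellation of the $56\zeta(3)(\tau^2-1)$ term that you carry out. Your one-line algebraic verification fills in precisely the step the paper leaves implicit, so there is nothing to add.
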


\subsection{Experimental calculation to determine the coefficients}
In this subsection, we observe that the generating function $w_{2k}(t)$ of the Ap\'ery numbers for $\zeta_Q(2k)$ is  expressed by a certain linear combination of the multiple integral of the (same) modular forms. Namely,
we try to determine the coefficients $c'_{k,j}$ in the equation
\begin{align}\label{eq:w_{2k+2} by W_k's}
w_{2k+2}(t)
&=\frac{\J{2k+2}0}{\J20}w_2(t)+\sum_{j=1}^{k-1} c'_{k,j}W_j(t)+W_k(t) \\
&=w_2(t)\biggl\{
\frac{\J{2k+2}0}{\J20}+\sum_{j=1}^{k} \Bigl(-\frac14\Bigr)^jc'_{k,j}\iE_j(\tau)
\biggr\}\qquad(c'_{k,k}=1).
\end{align}
(Recall that $W_j(t)=(-\frac14)^jw_2(t)\iE_j(\tau)$.)
Let $\alpha^{(m)}_l$, $\beta^{(m)}_l$ and $\gamma^{(m)}_l$ be the $l$-th Fourier coefficients of $w_m(t)$, $\iE_m(\tau)$ and $t^m$ respectively, that is,
\begin{gather*}
w_m(t)=\sum_{l=0}^\infty \alpha^{(m)}_lq^{\frac l2},\qquad
\iE_m(\tau)
=\sum_{l=0}^\infty \beta^{(m)}_lq^{\frac l2},\qquad
t^m=\sum_{l=0}^\infty \gamma^{(m)}_lq^{\frac l2}.
\end{gather*}
Trivially we have $\gamma^{(0)}_l=\delta_{l,0}$.
We also note that $\gamma^{(m)}_l=\beta^{(m)}_l=0$ if $l<m$ since $t$ and $f(\tau)$ vanish at $i\infty$.
Thus we have
\begin{equation*}
w_m(t)=\sum_{n=0}^\infty \J mnt^n
=\sum_{n=0}^\infty \J mn\sum_{l=0}^\infty \gamma^{(n)}_lq^{\frac l2}
=\sum_{l=0}^\infty \Bigl(\sum_{n=0}^l \J mn \gamma^{(n)}_l\Bigr)q^{\frac l2},
\end{equation*}
or
\begin{equation*}
\alpha^{(m)}_l=\sum_{n=0}^l \J mn \gamma^{(n)}_l.
\end{equation*}

Recall that $W_j(t)=(-\frac14)^jw_2(t)\iE_j(\tau)$. Now \eqref{eq:w_{2k+2} by W_k's} reads
\begin{align*}
\sum_{l=0}^\infty \alpha^{(2k+2)}_l q^{\frac l2}
&=\frac{\J{2k+2}0}{\J20}\sum_{l=0}^\infty \alpha^{(2)}_l q^{\frac l2}
+\sum_{j=1}^{k}\Bigl(-\frac14\Bigr)^jc'_{k,j}
\sum_{l,m\ge0} \alpha^{(2)}_l\beta^{(j)}_m q^{\frac{l+m}2}
\end{align*}
Comparing the coefficients of $q^{l/2}$ of the both sides, we have
\begin{equation*}
\sum_{j=1}^k\Bigl(-\frac14\Bigr)^j
\biggl\{\sum_{m=0}^{l}\alpha^{(2)}_{l-m} \beta^{(j)}_{m}\biggr\}c_{k,j}'
=\alpha^{(2k+2)}_l-\frac{\J{2k+2}0}{\J20}\alpha^{(2)}_l
\end{equation*}
for $l=1,\dots,k$.

\begin{ex}[$l=1$]
We have
\begin{equation*}
-\frac14\J20\gamma^{(0)}_0\beta^{(1)}_{1}c_{k,1}'
=\frac{\J{2k+2}1\J20-\J{2k+2}0\J21}{\J20}\gamma^{(1)}_1
=\J{2k}0\gamma^{(1)}_1
\end{equation*}
since
\begin{gather*}
\J{2k+2}1\J20-\J{2k+2}0\J21
=\Bigl(\J{2k}{0}+\frac34\J{2k+2}{0}\Bigr)\J20-\frac34\J20\J{2k+2}0
=\J{2k}0\J20.
\end{gather*}
We see that
\begin{equation*}
\gamma^{(1)}_1=-16,\qquad
\beta^{(1)}_1=64
\end{equation*}
Thus we have
\begin{equation*}
c_{k,1}'
=\frac{\J{2k}0}{\J20}.
\end{equation*}
For instance, we have
\begin{align*}
w_6(t)
&=w_2(t)\biggl\{
\frac{\J60}{\J20}-\frac14\frac{\J40}{\J20}\iE_1(\tau)+\frac1{16}\iE_2(\tau)
\biggr\} \\
&=w_2(t)\biggl\{
\frac{\J60}{\J20}+\frac{\J40}{\J20}\frac{2\pi i}{16\pi^2}\frac{d}{d\tau}\iG_1(\tau)+\frac{2\pi i}{256\pi^4}\frac{d^3}{d\tau^3}\iG_2(\tau)
\biggr\}.
\end{align*}
\end{ex}

\begin{ex}[$l=2$]
We see that
\begin{equation*}
\gamma^{(1)}_2=-128,\quad
\gamma^{(2)}_2=256,\qquad
\beta^{(1)}_2=-128,\quad
\beta^{(2)}_2=256.
\end{equation*}
For $k\ge2$, we have
\begin{gather*}
-\frac14
\biggl\{ \J20 \beta^{(1)}_{2}
+\J20\gamma^{(0)}_1\beta^{(1)}_{1}+\J21\gamma^{(1)}_1\beta^{(1)}_{1}
\biggr\}c_{k,1}'
+\frac1{16}\J20 \beta^{(2)}_{2}c_{k,2}' \\
=\frac{\J{2k+2}1\J20-\J{2k+2}0\J21}{\J20}\gamma^{(1)}_2
+\frac{\J{2k+2}2\J20-\J{2k+2}0\J22}{\J20}\gamma^{(2)}_2,
\end{gather*}
which is reduced to
\begin{gather*}
c_{k,2}'=-8-14c_{k,1}'+16\frac{\J{2k+2}2\J20-\J{2k+2}0\J22}{\J20^2}
=-8+8\frac{\J{2k}0}{\J20}+4\frac{\J{2k-2}0}{\J20}.
\end{gather*}
\end{ex}

\begin{ex}
We have
\begin{gather*}
c'_{k,1}=\frac{\J{2k}0}{\J20},\quad
c'_{k,2}=\frac{4\J{2k-2}0}{\J20},\quad
c'_{k,3}=\frac{117\J{2k-2}0+162\J{2k-4}0}{8\J20},\\
c'_{k,4}=\frac{-695\J{2k-2}0+2794\J{2k-4}0+1024\J{2k-6}0}{9\J20}.
\end{gather*}
\end{ex}

A systematic study of the generating functions $w_{2n}$ for higher special values is desirable.

\section{Differential Eisenstein series}\label{EF}

We have shown in Theorem \ref{thm:w by iG} that the function $w_{4}(t)$ is a linear combination of $w_2(t)$ and the derivative $\iG_{1}'$ of an automorphic integral.
To understand the integrals $\iG_j(\tau)$ more concretely, we introduce a family of functions called differential Eisenstein series which play a role analogous to the ordinary Eisenstein series.

\subsection{Periodic automorphic integrals}

Let $\congsubgp$ be a congruence subgroup of level $N$ and $m$ be an integer.
We take a $\congsubgp$-submodule $\X$ of $F(\uhp)$.
We focus our attention on automorphic integrals of special types defined as follows.

\begin{dfn}[Periodic automorphic integrals]
Let $\chi$ be a (multiplicative) character of $\congsubgp$ such that $\chi(T^N)=1$.
A holomorphic function $f\in\Holo(\uhp)$ is called a \emph{periodic automorphic integral} for $\congsubgp$ of weight $m$ with character $\chi$ and \emph{period functions} $\{\per_{f,\chi}(\gamma)\}_{\gamma\in\congsubgp}\subset\X$ if
\begin{gather}
f(\tau+N)=f(\tau),\label{eq:EF-1} \\
(f\big|_m\gamma)(\tau)-\chi(\gamma)f(\tau)=\per_{f,\chi}(\gamma)(\tau)
\quad(\forall\gamma\in \congsubgp),\label{eq:EF-2} \\
\forall\gamma\in SL_2(\Z),\ \exists\{a_n\}_{n\in\Z} \suchthat
(f\big|_m\gamma)(\tau)-\sum_{n\in\Z}a_nq^{\frac nN}\in\X,\quad a_n=0\quad(n\ll 0).
\label{eq:EF-3}
\end{gather}
We denote by $M^\chi_m(\congsubgp,\X)$ the set consisting of such periodic automorphic integral.
When $\chi$ is the trivial character, we omit the symbol $\chi$ and simply write $M_m(\congsubgp,\X)$.
We call $f$ an \emph{Eichler cusp forms} if it is a periodic automorphic integral such that the Fourier expansion part of $f\big|_m\gamma$ in \eqref{eq:EF-3} has no constant term for every $\gamma\in SL_2(\Z)$.
The space of automorphic cusp forms is denoted by $C^\chi_m(\congsubgp,\X)$.
\end{dfn}

When $m>0$, $M_m(\congsubgp):=M_m(\congsubgp,\{0\})$ and $C_m(\congsubgp):=C_m(\congsubgp,\{0\})$ are nothing but the spaces of classical modular forms and cusp forms of weight $m$ respectively.
Indeed, $f\in M_m(\congsubgp)$ is holomorphic at every cusp of $\congsubgp$ in this case.

\begin{rem}
If $1\in\X$, that is, $\X$ contains constant functions, then any constant shift $f(\tau)+c$ ($c\in\C$) of $f\in M_m(\congsubgp,\X)$ also belongs to $M_{m}(\congsubgp,\X)$.
In this case, it is natural to study the quotient space $M_{m}(\congsubgp,\X)/\text{(constants)}$.
\end{rem}

\begin{ex}\label{V_{2k, m}}
We give a non-trivial example of $\congsubgp$-submodule $\X$ of $F(\uhp)$ as follows.
Let $V_{2k, m}$ be a subspace of $F(\uhp)$ generated by $\tau^j\, (j=0,1,\ldots,2k)$ and $(\tau-\alpha)^{-j}\, (\alpha \in \C \backslash \Set{\gamma\infty}{\gamma\in \Gamma(2)}$, $j=1,2, \ldots, m)$. Notice that $0\not\in \Set{\gamma\infty}{\gamma\in \Gamma(2)}$.
Then the space $V_{2k, m}$ is $\Gamma(2)$-stable subspace of $\C(\tau)$ under the action $(f\big|_{-2k}\gamma)(\tau)=j(\gamma,\tau)^{2k}f(\gamma\tau)\, (\gamma \in \Gamma(2))$. In fact, for $\gamma=\mat{a & b \\ c & d}$, if we put
$f(\tau)=(\tau-\alpha)^{-j}\, (1\leq j\leq m)$, we observe
\begin{align*}
(f\big|_{-2k}\gamma)(\tau)& = (c\tau+d)^{2k} \Big(\frac{a\tau+b}{c\tau+d}-\alpha \Big)^{-j}
= \frac{(c\tau+d)^{2k+j} }{((a-c\alpha)\tau+(b-d\alpha))^j}\\
&= \text{a polynomial of degree $2k$} +
\frac{\text{a polynomial of degree $j-1$}}{(\tau-\gamma\alpha)^j}.
\end{align*}
This clearly shows that $(f\big|_{-2k}\gamma)(\tau)\in V_{2k, m}$.
\end{ex}

The period functions $\{\per_{f,\chi}(\gamma)\}$ for $f\in M_m(\congsubgp,\X)$ obeys the relation
\begin{align}
R_{f,\chi}(T^N)&=0, \label{eq:periodicity}\\
R_{f,\chi}(\gamma_1\gamma_2)&=\chi(\gamma_1)R_{f,\chi}(\gamma_2)+R_{f,\chi}(\gamma_1)\big|_m\gamma_2\qquad(\gamma_1,\gamma_2\in \congsubgp).
\label{eq:1-cocycle-from-RMF}
\end{align}
The latter identity is readily checked as follows.
\begin{equation}\label{eq:proof-of-1-cocycle-condition}
\begin{split}
R_{f,\chi}(\gamma_1\gamma_2)
&=f\big|_m\gamma_1\gamma_2-\chi(\gamma_1\gamma_2)f \\
&=\chi(\gamma_1)\bigl(f\big|_m\gamma_2-\chi(\gamma_2)f\bigr)
+\bigl(f\big|_m\gamma_1-\chi(\gamma_1)f\bigr)\big|_m\gamma_2 \\
&=\chi(\gamma_1)R_{f,\chi}(\gamma_2)+R_{f,\chi}(\gamma_1)\big|_m\gamma_2.
\end{split}
\end{equation}
Hence, by \eqref{eq:1-cocycle-from-RMF}, the condition \eqref{eq:EF-2} can be replaced by the one only for generators of $\congsubgp$.

For convenience, we give the definitions of the space of negative weight holomorphic automorphic integrals (with characters) in terms of the generators for the specific groups $\hecke$ and $\Gamma(2)$.
\begin{dfn}[Periodic automorphic integrals for $\hecke$ and $\Gamma(2)$]
\begin{align*}
M_{m}(\hecke,\X)&:=
\Set{f\in\Holo(\uhp)}
{\begin{minipage}{11em}
$f(\tau+2)=f(\tau)$, \\[.3em]
$\tau^{-m}f\Bigl(-\dfrac1\tau\Bigr)-f(\tau)\in\X$, \\[.3em]
$f$ is holomorphic at $i\infty$
\end{minipage}},\\
M_{m}(\Gamma(2),\X)&:=
\Set{f\in\Holo(\uhp)}
{\begin{minipage}{15.3em}
$f(\tau+2)=f(\tau)$, \\[.3em]
$(2\tau+1)^{-m}f\Bigl(\dfrac\tau{2\tau+1}\Bigr)-f(\tau)\in\X$, \\[.3em]
$f$ satisfies \eqref{eq:EF-3}
\end{minipage}}.
\end{align*}
\end{dfn}

\begin{rem}
If $f\in\Holo(\uhp)$ is holomorphic at $i\infty$ and satisfies the conditions
\begin{equation*}
f(\tau+2)=f(\tau),\qquad
\tau^{-m}f\Bigl(-\dfrac1\tau\Bigr)-f(\tau)=\per_{f,\chi}(S)(\tau)\in\X,
\end{equation*}
then we see that
\begin{equation*}
(f\big|_mT)(\tau)=\sum_{n\ge0}(-1)^na_nq^{\frac n2}
\end{equation*}
when the Fourier expansion of $f$ is given by $f(\tau)=\sum_{n\ge0}a_nq^{\frac n2}$.
Namely, $f$ satisfies the condition \eqref{eq:EF-3} in the definition of periodic automorphic integrals for $\hecke$.
\end{rem}

\begin{ex}
By Lemma \ref{thm:psi_is_Eichler-modular}, we have
$\iG_k(\tau)\in M_{2-4k}(\hecke,\C[\tau])$
for each positive integer $k$.
\end{ex}

\begin{rem}
When $f(\tau)\in M_{m}(\hecke,\X)$ with period functions $\{\per_f(\gamma)\}$, by virtue of \eqref{eq:1-cocycle-from-RMF},
we have $f(\tau)\in M_{m}(\Gamma(2),\X)$.
Indeed, we have
\begin{equation*}
\per_{f}(ST^{-2}S^{-1})
=\per_{f}(S)\big|_mT^{-2}S^{-1}+\per_{f}(T^{-2})\big|_mS^{-1}+\per_{f}(S^{-1})\in\X.
\end{equation*}
\end{rem}

\subsection{Differential Eisenstein series}

We always assume that $-\pi\le\arg z<\pi$ for $z\in\C$ to determine the branch of complex powers.
Define
\begin{align*}
G(s,x,\tau)&:=\psum_{m,n\in\Z}(m\tau+n+x)^{-s},\\
G(s,\tau)&:=G(s,0,\tau),\\
\ges Nab(s,\tau)
&:=\psum_{\substack{m,n\in\Z \\ m\equiv a\,(\mathrm{mod}\,N) \\ n\equiv b\,(\mathrm{mod}\,N)}}(m\tau+n)^{-s}
\qquad(a,b\in\{0,1,\dots,N-1\})
\end{align*}
for $s\in\C$ such that $\Re(s)>2$.
Here $\psum_{m,n\in\Z}$ means the sum over all pairs $(m,n)$ of integers such that the summand is defined.
We sometimes refer to these series as \emph{generalized Eisenstein series} (e.g.\ \cite{B1975Cr}).
Remark that
\begin{equation*}
\ges Nab(s,\tau)=N^{-s}G\Bigl(s,\frac{a\tau+b}N,\tau\Bigr),
\end{equation*}
in particular that $\ges N00(s,\tau)=N^{-s}G(s,\tau)$.

It is known that $G(s,x,\tau)$ is analytically continued to the whole $s$-plane,
and $G(s,x,\tau)$ can be written in the form
\begin{equation*}
G(s,x,\tau)=\sum_{n>-x}\frac1{(n+x)^s}+\frac1{\Gamma(s)}A(s,x,\tau),
\end{equation*}
when $x\in\R$, where $A(s,x,\tau)$ is holomorphic in $s$ and $\tau$.
In particular, we see that
\begin{equation*}
G(-2k,\tau)=\ges211(-2k,\tau)=0
\end{equation*}
for any positive integer $k$ (see \cite[Theorem 1]{L1972TAMS}; see also \cite{B1975Cr}).
We now introduce the notion of \emph{differential Eisenstein series}.
\begin{dfn}[Differential Eisenstein series\footnote{
We have used the notation $dE_{m}(\tau)$ in \cite{KW2012RIMS} in place of $\dG{m}(\tau)$.
In this paper, however, we use the notation $\dE{m}(\tau)$
for representing the \emph{normalized} differential Eisenstein series in \S\ref{Hecke},
which follows the standard use of Eisenstein series in the classical theory of modular forms.}%
]\label{dfn:DES}
For $m\in\Z$, define
\begin{align*}
\dG{m}(\tau)&:=\frac{\der}{\der s}G(s,\tau)\bigg|_{s=m}, \\
\dG[(N;a,b)]{m}(\tau)&:=\frac{\der}{\der s}\ges Nab(s,\tau)\bigg|_{s=m}\qquad(a,b\in\{0,1,\dots,N-1\}).
\end{align*}
\end{dfn}

It is immediate to see that $\dG{m}(\tau+1)=\dG{m}(\tau)$
and $\dG[(N;a,b)]{m}(\tau+N)=\dG[(N;a,b)]{m}(\tau)$.
In the case where $N=2$,
it is convenient to introduce an abbreviation $\dG[a,b]m(\tau)$ for $\dG[(2;a,b)]m(\tau)$,
which will appear frequently below.

For later use, we recall the definitions and several results on the double zeta functions and
double Bernoulli numbers \cite{Barnes1904}.
Let $\omegaul=(\omega_1,\omega_2)$ be a pair of complex parameters.
\emph{Barnes' double zeta function} is defined by
\begin{equation*}
\zeta_2(s,z\,|\,\omegaul):=\sum_{m,n\ge0}(m\omega_1+n\omega_2+z)^{-s}, \quad (\Re s >2)
\end{equation*}
and the \emph{double Bernoulli polynomials} $B_{2,k}(z\,|\,\omegaul)$ are defined by the generating function
\begin{equation*}
\frac{t^2e^{zt}}{(e^{\omega_1t}-1)(e^{\omega_2t}-1)}=\sum_{k=0}^\infty B_{2,k}(z\,|\,\omegaul)\frac{t^k}{k!}.
\end{equation*}

It is well known that the Barnes double zeta function is extended meromorphically to the whole complex plane and the special values at the non-positive integer points are given by (see, e.g.\ \cite{Barnes1904})
\begin{lem}\label{lem:special values of double  zeta}
For each $m\in \N$, one has
\begin{equation*}
\zeta_2(1-m,z\,|\,\omegaul)=\frac{B_{2,m+1}(z\,|\,\omegaul)}{m(m+1)}.
\end{equation*}
\end{lem}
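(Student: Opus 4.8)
The plan is to derive the Mellin (Gamma-integral) representation of $\zeta_2$, read off the polar structure of $\Gamma(s)\zeta_2(s)$ at the non-positive integers, and then recover the value $\zeta_2(1-m)$ as a quotient of residues, identifying the relevant Laurent coefficient with a double Bernoulli polynomial. First I would work in the convergence range $\Re(s)>2$ under the auxiliary assumption $\Re(\omega_1),\Re(\omega_2),\Re(z)>0$ (so that every summand lies in the right half-plane; the general parameter range then follows from the meromorphic continuation quoted just before the lemma). Applying $w^{-s}=\Gamma(s)^{-1}\int_0^\infty t^{s-1}e^{-wt}\,dt$ to each term $w=m\omega_1+n\omega_2+z$ and summing the two geometric series over $m,n\ge0$ gives
\begin{equation*}
\Gamma(s)\zeta_2(s,z\,|\,\omegaul)=\int_0^\infty t^{s-1}g(t)\,dt,\qquad
g(t):=\frac{e^{-zt}}{(1-e^{-\omega_1 t})(1-e^{-\omega_2 t})}.
\end{equation*}

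Next I would split the integral as $\int_0^1+\int_1^\infty$. The tail is entire in $s$, while near $t=0$ the function $g(t)$ has a pole of order two; writing its Laurent expansion $g(t)=\sum_{k\ge-2}g_kt^k$ and integrating the head term by term produces $\int_0^1 t^{s-1}g(t)\,dt=\sum_{k\ge-2}\frac{g_k}{s+k}+(\text{entire})$. Hence $\Gamma(s)\zeta_2(s)$ has a simple pole at each $s=-k$ with residue $g_k$. At $s=1-m=-(m-1)$ for $m\in\N$ the factor $\Gamma(s)$ also has a simple pole, of residue $(-1)^{m-1}/(m-1)!$, so the quotient is regular and
\begin{equation*}
\zeta_2(1-m,z\,|\,\omegaul)
=\frac{\Res_{s=1-m}\Gamma(s)\zeta_2(s)}{\Res_{s=1-m}\Gamma(s)}
=(-1)^{m-1}(m-1)!\,g_{m-1}.
\end{equation*}

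It then remains to identify $g_{m-1}$. Using $1-e^{-\omega t}=e^{-\omega t}(e^{\omega t}-1)$ I would rewrite
\begin{equation*}
t^2g(t)=\frac{t^2e^{(\omega_1+\omega_2-z)t}}{(e^{\omega_1 t}-1)(e^{\omega_2 t}-1)}
=\sum_{k\ge0}B_{2,k}(\omega_1+\omega_2-z\,|\,\omegaul)\frac{t^k}{k!},
\end{equation*}
by the defining generating function. Reading off the coefficient of $t^{m-1}$ in $g(t)$ (the $t^{m+1}$-coefficient of $t^2g(t)$) gives $g_{m-1}=B_{2,m+1}(\omega_1+\omega_2-z\,|\,\omegaul)/(m+1)!$, and since $(-1)^{m-1}(m-1)!/(m+1)!=(-1)^{m-1}/(m(m+1))$ this yields the value with the shifted argument $\omega_1+\omega_2-z$. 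Finally, substituting $t\mapsto -t$ in the generating function and applying the same $e^{-\omega t}\mapsto e^{\omega t}$ conversion establishes the reflection $B_{2,m+1}(\omega_1+\omega_2-z\,|\,\omegaul)=(-1)^{m+1}B_{2,m+1}(z\,|\,\omegaul)$; because $(-1)^{m-1}(-1)^{m+1}=1$, the shift is removed and the stated identity $\zeta_2(1-m,z\,|\,\omegaul)=B_{2,m+1}(z\,|\,\omegaul)/(m(m+1))$ drops out.

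The computation is essentially formal, so the main obstacle is not conceptual but the consistent bookkeeping of signs across the \emph{two} applications of the $e^{-\omega t}\leftrightarrow e^{\omega t}$ conversion: once to match $g(t)$ to the Bernoulli generating function with the shifted argument $\omega_1+\omega_2-z$, and once to prove the reflection that cancels that shift. A single misplaced sign at either step would corrupt the final factor, so I would validate the bookkeeping against the one-variable specialization, where the analogous argument must reproduce the classical Hurwitz value $\zeta(1-m,z)=-B_m(z)/m$; matching that known sign pattern is the most reliable check. The subsidiary points — legitimacy of term-by-term integration and of the residue quotient for the full parameter range — are routine given the meromorphic continuation already cited.
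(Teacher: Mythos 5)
Your proof is correct, and all the sign bookkeeping checks out: $g_{m-1}=B_{2,m+1}(\omega_1+\omega_2-z\,|\,\omegaul)/(m+1)!$, the residue quotient gives $(-1)^{m-1}(m-1)!\,g_{m-1}=(-1)^{m-1}B_{2,m+1}(\omega_1+\omega_2-z\,|\,\omegaul)/(m(m+1))$, and the reflection $B_{2,m+1}(\omega_1+\omega_2-z\,|\,\omegaul)=(-1)^{m+1}B_{2,m+1}(z\,|\,\omegaul)$ (obtained exactly as you describe, by $t\mapsto -t$ in the generating function) kills the shift and the sign simultaneously; your proposed consistency check against the Hurwitz case $\zeta(1-m,z)=-B_m(z)/m$ also comes out right under the same scheme. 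Note, however, that the paper itself does not prove this lemma at all: it is quoted as a classical fact with a citation to Barnes (1904), so there is no ``paper proof'' to compare against --- what you have written is essentially the standard Mellin-transform/residue derivation by which the Barnes result is classically established, and it fills in the omitted argument. Two small technical points you correctly flag as routine but should state carefully if writing this up: the tail integral $\int_1^\infty$ is entire only under the auxiliary positivity assumptions on $\Re(z),\Re(\omega_1),\Re(\omega_2)$ (which you impose and then remove by continuation), and the head should be handled by subtracting \emph{finitely} many Laurent terms of $g$ (or by splitting at a point inside the radius of convergence $\min_j 2\pi/\abs{\omega_j}$, which may be less than $1$) rather than integrating the full Laurent series term by term, since $\sum_{k\ge -2}g_k/(s+k)$ need not converge; the finite-subtraction version yields the same polar data and hence the same residues.
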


\begin{ex}\label{SV of Barnes}
\begin{align*}
\zeta_2\Big({-2k},\frac{\tau-1}2\,\Big|\,(-1,\tau)\Bigr)
&=\frac{B_{2,2k+2}(\frac{\tau-1}2\,|\,(-1,\tau))}{(2k+1)(2k+2)} \in \frac1\tau\C[\tau],\\
\zeta_2(-2k,\tau\,|\,(-1,\tau))&=\frac{B_{2,2k+2}(\tau\,|\,(-1,\tau))}{(2k+1)(2k+2)}
\in \frac1\tau\C[\tau].
\end{align*}
\end{ex}

\subsection{$\dG{-2k}$ is an automorphic integral}

We notice the following elementary fact.
\begin{lem}\label{lem:arg-exceeds-pi}
If $\tau\in\uhp$ and $(a,b)\in\R^2-\{(0,0)\}$, then
\begin{equation*}
\arg\Bigl(-\frac1\tau\Bigr)+\arg(a\tau+b)\ge\pi
\iff
a>0,\, b\le0.
\eqed
\end{equation*}
\end{lem}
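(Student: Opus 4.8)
The plan is to reduce the two-term inequality to a single comparison of arguments and then to split according to the sign of $a$. First I would record that for $\tau\in\uhp$ one has $-1/\tau\in\uhp$ as well: writing $\tau=x+iy$ with $y>0$ gives $-1/\tau=(-x+iy)/\abs{\tau}^2$, whose imaginary part $y/\abs{\tau}^2$ is positive. Hence $\arg(-1/\tau)=\pi-\arg\tau$ lies in the open interval $(0,\pi)$, with no ambiguity coming from the branch choice. Since both $\arg(-1/\tau)$ and $\arg\tau$ are genuine values in $(0,\pi)$ rather than merely values mod $2\pi$, the target inequality $\arg(-1/\tau)+\arg(a\tau+b)\ge\pi$ is exactly equivalent to
\[
\arg(a\tau+b)\ge\arg\tau.
\]
This reformulation is the heart of the matter: it eliminates $-1/\tau$ and leaves only a comparison between $a\tau+b$ and $\tau$.

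Next I would analyze the reformulated inequality by the sign of $a$, using that $\Im(a\tau+b)=a\,\Im\tau$ has the same sign as $a$. If $a<0$, then $a\tau+b$ lies in the open lower half plane, so $\arg(a\tau+b)\in(-\pi,0)$ while $\arg\tau>0$, and the inequality fails. If $a=0$, then $b\ne0$ and $a\tau+b=b$ is a nonzero real: for $b>0$ we get $\arg(a\tau+b)=0<\arg\tau$, and for $b<0$ the convention $-\pi\le\arg z<\pi$ forces $\arg(a\tau+b)=-\pi<\arg\tau$; in either case the inequality fails. This is precisely where I expect the only genuine subtlety to lie: it is the half-open branch $[-\pi,\pi)$, rather than $(-\pi,\pi]$, that makes the $a=0$, $b<0$ case fail, and this choice is essential for the stated equivalence, so I would flag it explicitly.

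It then remains to treat $a>0$, where $a\tau+b$ lies in the open upper half plane and $\arg(a\tau+b)\in(0,\pi)$. Here I would fix $\tau$ and regard $b\mapsto a\tau+b$ as tracing the horizontal line $\{\,s+i\,a\Im\tau:s\in\R\,\}$ at the fixed positive height $a\Im\tau$. Along such a line in the upper half plane the argument is a strictly decreasing function of the real part, tending to $\pi$ as the point moves left and to $0$ as it moves right; at $b=0$ the point is $a\tau$, which shares the argument $\arg\tau$. Consequently $\arg(a\tau+b)\ge\arg(a\tau)=\arg\tau$ holds exactly when the point is shifted weakly to the left, that is, exactly when $b\le0$, with equality at $b=0$. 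Combining the three cases shows that the reformulated inequality, and hence the original one, holds if and only if $a>0$ and $b\le0$. The monotonicity claim can be justified either by a one-line derivative computation for $\arg$ along the line or by the elementary remark that adding a positive real to a point of the upper half plane strictly decreases its argument, so no heavy machinery is required; the bookkeeping of the branch convention in the degenerate case $a=0$ is the one point to get right.
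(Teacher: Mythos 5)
Your proof is correct. The paper itself offers no argument for this lemma: it is stated as an elementary fact and closed immediately with a qed symbol, so there is no proof to compare against, and your write-up supplies exactly the verification the authors left to the reader. Your route is the natural one: since $-1/\tau$ again lies in the upper half plane, the identity $\arg(-1/\tau)=\pi-\arg\tau$ holds with no branch ambiguity, reducing the claim to $\arg(a\tau+b)\ge\arg\tau$, and the case split on the sign of $a$ (with the horizontal-line monotonicity of the argument handling $a>0$) settles it. You also correctly isolate the one genuinely delicate point: under the paper's convention $-\pi\le\arg z<\pi$, a negative real number has argument $-\pi$, which is what makes the case $a=0$, $b<0$ fail; with the opposite half-open convention $(-\pi,\pi]$ the stated equivalence would actually be false, so flagging this is not pedantry but the substance of the lemma.
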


\begin{lem}\label{lem:-1/tau in dE_{-2k}}
For each $k\in\N$, one has
\begin{equation*}
\dG{-2k}\Bigl(-\frac1\tau\Bigr)
=\Bigl(-\frac1\tau\Bigr)^{2k}
\Bigl\{
\dG{-2k}(\tau)-4k\pi i\zeta_2\bigl(-2k,\tau\,|\,(-1,\tau)\bigr)
\Bigr\}.
\end{equation*}
\end{lem}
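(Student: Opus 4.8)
The plan is to transform the entire family $G(s,\tau)$ under $S\colon\tau\mapsto-1/\tau$ in the half-plane $\Re s>2$, where everything converges absolutely, to propagate the resulting identity to $s=-2k$ by analytic continuation, and only then to differentiate in $s$. First I would factor each summand of $G(s,-1/\tau)=\psum_{m,n}(m(-1/\tau)+n)^{-s}$ as
\[
m(-1/\tau)+n=(-1/\tau)(m-n\tau),
\]
so that $(-1/\tau)^{-s}$ pulls out of every term up to a branch discrepancy
\[
\bigl[(-1/\tau)(m-n\tau)\bigr]^{-s}=(-1/\tau)^{-s}(m-n\tau)^{-s}e^{-2\pi i s\,\delta(m,n)},
\]
where $\delta(m,n)$ is the integer forcing $\arg(-1/\tau)+\arg(m-n\tau)$ back into $[-\pi,\pi)$. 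Since $\arg(-1/\tau)\in(0,\pi)$ and $\arg(m-n\tau)\in[-\pi,\pi)$, this sum lies in $(-\pi,2\pi)$, so $\delta(m,n)\in\{-1,0\}$ and a correction can occur only on the high side.

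The decisive bookkeeping comes from Lemma~\ref{lem:arg-exceeds-pi}: writing $m-n\tau=(-n)\tau+m$ and applying the lemma with $a=-n$, $b=m$ shows that $\arg(-1/\tau)+\arg(m-n\tau)\ge\pi$ holds exactly on the region $R=\{(m,n):n\le-1,\ m\le0\}$ (the boundary terms $m=0$, $n\le-1$, where the argument sum equals $\pi$ precisely, belong to $R$). On $R$ we have $\delta=-1$ with extra factor $e^{2\pi i s}$, and elsewhere $\delta=0$. Splitting the sum and reindexing $n\mapsto-n$ in the complement identifies the main part with $G(s,\tau)$, while on $R$ the substitution $n=-j$ ($j\ge1$), $m=-l$ ($l\ge0$) gives $\sum_{j\ge1,\,l\ge0}(j\tau-l)^{-s}=\zeta_2(s,\tau\,|\,(-1,\tau))$. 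Hence, for $\Re s>2$,
\[
G(s,-1/\tau)=(-1/\tau)^{-s}\Bigl\{G(s,\tau)+(e^{2\pi i s}-1)\,\zeta_2(s,\tau\,|\,(-1,\tau))\Bigr\}.
\]

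Both sides are meromorphic in $s$, so this identity persists to a neighbourhood of $s=-2k$, where I would expand. The factor $e^{2\pi i s}-1=e^{2\pi i(s+2k)}-1$ has a simple zero at $s=-2k$ (using $e^{-4\pi i k}=1$), so the bracket behaves like $(s+2k)\bigl[\dG{-2k}(\tau)+(\text{const})\,\zeta_2(-2k,\tau\,|\,(-1,\tau))\bigr]+O((s+2k)^2)$, the linear coefficient of $G(s,\tau)$ being $\dG{-2k}(\tau)$ by definition. Crucially the prefactor contributes no logarithm: its expansion multiplies $G(s,\tau)$, and $G(-2k,\tau)=0$ (the vanishing recorded just before the definition of differential Eisenstein series) kills the would-be $\log(-1/\tau)$ term, which is exactly why the final formula is free of logarithms. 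Differentiating in $s$ and setting $s=-2k$ then collapses everything to $(-1/\tau)^{2k}\{\dG{-2k}(\tau)+(\text{const})\,\zeta_2(-2k,\tau\,|\,(-1,\tau))\}$, and it remains to evaluate the constant coming from the branch jump as $-4k\pi i$.

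The main obstacle is precisely this branch-cut accounting and the extraction of the correct constant: one must verify that $R$ is the only region carrying a $2\pi$ jump (in particular handling the boundary terms $m=0$ and the degenerate $n=0$ rows, where $m-n\tau$ is real negative, consistently with the convention $\arg\in[-\pi,\pi)$), that the termwise analytic continuation of the sum over $R$ is legitimately the Barnes double zeta $\zeta_2(s,\tau\,|\,(-1,\tau))$ rather than some regularised variant, and that the slope of the jump factor together with the normalisation of $\zeta_2(-2k,\tau\,|\,(-1,\tau))$ (cf.\ Lemma~\ref{lem:special values of double  zeta} and Example~\ref{SV of Barnes}) assembles into the stated coefficient. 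Once these points are secured, the vanishing $G(-2k,\tau)=0$ and the evenness of the weight reduce the rest to a routine Taylor expansion.
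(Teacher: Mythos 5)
Your first steps coincide exactly with the paper's own proof. The factorization $m(-1/\tau)+n=(-1/\tau)(m-n\tau)$, the determination of the branch-jump region via Lemma \ref{lem:arg-exceeds-pi} (your $R=\{n\le-1,\ m\le0\}$ is the paper's $\{a>0,\ b\le0\}$ after the relabelling $(a,b)=(-n,m)$), the identification of the exceptional sum with Barnes' $\zeta_2(s,\tau\,|\,(-1,\tau))$, the resulting identity
\begin{equation*}
G\Bigl(s,-\frac1\tau\Bigr)=\Bigl(-\frac1\tau\Bigr)^{-s}\Bigl\{G(s,\tau)+(e^{2\pi is}-1)\,\zeta_2\bigl(s,\tau\,|\,(-1,\tau)\bigr)\Bigr\}
\end{equation*}
extended to all $s$ by analytic continuation, and the use of $G(-2k,\tau)=0$ to kill the would-be $\log(-1/\tau)$ term, are all precisely the published argument. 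Up to this point the proposal is sound.

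The genuine gap is the one step you postpone: ``it remains to evaluate the constant coming from the branch jump as $-4k\pi i$.'' That constant is not an open parameter; it is forced by the expansion you yourself invoke. Since $e^{2\pi is}-1=2\pi i\,(s+2k)+O\bigl((s+2k)^2\bigr)$ and $\zeta_2(s,\tau\,|\,(-1,\tau))$ is regular at $s=-2k$, the product rule gives
\begin{equation*}
\frac{\partial}{\partial s}\Bigl\{(e^{2\pi is}-1)\,\zeta_2\bigl(s,\tau\,|\,(-1,\tau)\bigr)\Bigr\}\bigg|_{s=-2k}
=2\pi i\,\zeta_2\bigl(-2k,\tau\,|\,(-1,\tau)\bigr),
\end{equation*}
a coefficient $2\pi i$ independent of $k$; nothing in the argument can manufacture a factor $-2k$. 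So your route, carried to completion, proves the formula with $2\pi i$ in place of $-4k\pi i$, and cannot be completed into a proof of the statement as displayed. Note that this is not a sign slip you can absorb elsewhere in your derivation: the paper's own proof records $-4k\pi i$ at exactly the same spot, but the value $2\pi i$ is the one consistent with the paper's Lambert-series expansion in Theorem \ref{Lambert series expansion of DEisen}. For instance, at $k=1$, $\tau=i$ (so $-1/\tau=i$), that expansion together with Ramanujan's formula $\zeta(3)=\frac{7\pi^3}{180}-2\sum_{n\ge1}n^{-3}(e^{2\pi n}-1)^{-1}$ gives $\dG{-2}(i)=-\frac{7\pi}{360}$, while the generating function of the double Bernoulli polynomials gives $\zeta_2\bigl(-2,i\,|\,(-1,i)\bigr)=\frac1{12}B_{2,4}\bigl(i\,|\,(-1,i)\bigr)=-\frac{7i}{360}$; the transformation law then forces the coefficient to be $-2\dG{-2}(i)/\zeta_2\bigl(-2,i\,|\,(-1,i)\bigr)=2\pi i$, not $-4\pi i$. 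The discrepancy thus lies in the printed coefficient (and it propagates to \eqref{eq:iGt=phi} and the subsequent expression for $w_4$), not in your set-up; but judged as a proof of the quoted statement, the attempt fails precisely at the step it leaves unverified.
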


\begin{proof}
It follows from Lemma \ref{lem:arg-exceeds-pi} that
\begin{align*}
G\Bigl(s,-\frac1\tau\Bigr)
&=\psum_{m,n\in\Z}\Bigl(-m\frac1\tau+n\Bigr)^{-s}
=\psum_{m,n\in\Z}\Bigl(\Bigl(-\frac1\tau\Bigr)(m\tau+n)\Bigr)^{-s}\\
&=\Bigl(-\frac1\tau\Bigr)^{-s}\Bigl\{
\psum_{m,n\in\Z}(m\tau+n)^{-s}+(e^{2\pi is}-1)\sum_{\substack{m>0,\\ n\le0}}(m\tau+n)^{-s}
\Bigr\} \\
&=\Bigl(-\frac1\tau\Bigr)^{-s}\Bigl\{
G(s,\tau)+(e^{2\pi is}-1)\zeta_2\bigl(s,\tau\,|\,(-1,\tau)\bigr)
\Bigr\}.
\end{align*}
This yields that
\begin{align*}
\frac{\partial}{\partial s}G\Bigl(s,-\frac1\tau\Bigr)\bigg|_{s=-2k}
&=\frac{\partial}{\partial s}\Bigl(-\frac1\tau\Bigr)^{-s}\bigg|_{s=-2k}
\Bigl\{
G(-2k,\tau)+(e^{-4k\pi}-1)\zeta_2\bigl(-2k,\tau\,|\,(-1,\tau)\bigr)
\Bigr\} \\
&\quad{}+\Bigl(-\frac1\tau\Bigr)^{2k}
\frac{\partial}{\partial s}
\Bigl\{
G(s,\tau)+(e^{2\pi is}-1)\zeta_2\bigl(s,\tau\,|\,(-1,\tau)\bigr)
\Bigr\}\bigg|_{s=-2k}\\
&=\Bigl(-\frac1\tau\Bigr)^{2k}
\Bigl\{
\frac{\partial}{\partial s}G(s,\tau)\bigg|_{s=-2k}
-4k\pi i\zeta_2\bigl(-2k,\tau\,|\,(-1,\tau)\bigr)
\Bigr\}.
\end{align*}
Thus we have
\begin{equation*}
\dG{-2k}\Bigl(-\frac1\tau\Bigr)
=\Bigl(-\frac1\tau\Bigr)^{2k}
\Bigl\{
\dG{-2k}(\tau)-4k\pi i\zeta_2\bigl(-2k,\tau\,|\,(-1,\tau)\bigr)
\Bigr\}.
\qedhere
\end{equation*}
\end{proof}

By a similar calculation, we also have the
\begin{lem}\label{lem:-1/tau in dE^{1,1}_{-2k}}
For each $k\in\N$, one has
\begin{equation*}
\dG[1,1]{-2k}\Bigl(-\frac1\tau\Bigr)
=\tau^{-2k}\Bigl(\dG[1,1]{-2k}(\tau)-4k\pi i\zeta_2(-2k,{\tau-1}\,\big|\,(-2,2\tau))\Bigr).
\end{equation*}
\end{lem}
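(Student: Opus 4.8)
The plan is to reproduce the proof of Lemma~\ref{lem:-1/tau in dE_{-2k}} almost verbatim, with the full lattice sum replaced by the odd$\times$odd sublattice underlying $\ges211$. First I would start from the defining series $\ges211(s,\tau)=\psum_{m,n\ \mathrm{odd}}(m\tau+n)^{-s}$, valid for $\Re(s)>2$, substitute $\tau\mapsto-1/\tau$, and rewrite each summand via $-\tfrac m\tau+n=\bigl(-\tfrac1\tau\bigr)(m'\tau+n')$ under the reindexing $(m,n)\mapsto(m',n')=(-n,m)$. The decisive point here, which is new compared with the full-lattice case, is that this reindexing \emph{preserves} the parity constraint: $m'=-n$ is odd iff $n$ is odd, and $n'=m$ is odd iff $m$ is odd, so the reindexed sum runs over exactly the same odd$\times$odd sublattice and again reproduces $\ges211(s,\tau)$.

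Next I would split off the branch-cut correction exactly as before. With $z=-\tfrac1\tau\in\uhp$ and $w=m'\tau+n'$, the factorization $(zw)^{-s}=z^{-s}w^{-s}$ fails precisely when $\arg z+\arg w\ge\pi$, in which case an extra factor $e^{2\pi is}$ appears under the branch convention $-\pi\le\arg<\pi$. By Lemma~\ref{lem:arg-exceeds-pi} this occurs exactly for $m'>0,\ n'\le0$, which under the parity constraint means $m'\ge1$ and $n'\le-1$. Hence
\begin{equation*}
\ges211\Bigl(s,-\frac1\tau\Bigr)
=\Bigl(-\frac1\tau\Bigr)^{-s}\Bigl\{\ges211(s,\tau)
+(e^{2\pi is}-1)\sum_{\substack{m\ge1,\ n\le-1\\ m,n\ \mathrm{odd}}}(m\tau+n)^{-s}\Bigr\}.
\end{equation*}

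The remaining step is to recognize the exceptional sum as a Barnes double zeta function. Substituting $m=2a+1$ and $n=-(2b+1)$ with $a,b\ge0$ gives $m\tau+n=a(2\tau)+b(-2)+(\tau-1)$, so the sum equals $\zeta_2\bigl(s,\tau-1\,\big|\,(2\tau,-2)\bigr)=\zeta_2\bigl(s,\tau-1\,\big|\,(-2,2\tau)\bigr)$, the last equality being the symmetry of $\zeta_2$ under interchange of its two quasi-periods. Then I would differentiate in $s$ and set $s=-2k$: using the vanishing $\ges211(-2k,\tau)=0$ (noted right after the definition of $G$) together with $e^{-4k\pi i}=1$, all contributions except the derivative of the branch factor $e^{2\pi is}-1$ against $\zeta_2$ drop out, exactly as in the proof of Lemma~\ref{lem:-1/tau in dE_{-2k}}, and one is left with the prefactor $\bigl(-\tfrac1\tau\bigr)^{2k}=\tau^{-2k}$ multiplying $\dG[1,1]{-2k}(\tau)$ and the stated correction $-4k\pi i\,\zeta_2(-2k,\tau-1\,|\,(-2,2\tau))$.

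The main obstacle is bookkeeping rather than depth: one must verify that the parity constraint is stable under $(m,n)\mapsto(-n,m)$ and, above all, that the exceptional quarter-lattice $\{m\ge1,\ n\le-1\}$ restricted to odd integers reindexes onto the cone $\{a,b\ge0\}$ governing $\zeta_2(s,\tau-1\,|\,(-2,2\tau))$. The doubled quasi-periods $(-2,2\tau)$ and the shift $z=\tau-1$ are forced by the odd$\times$odd condition, and getting these precisely right---rather than, say, the $\bigl((-1,\tau),\tfrac{\tau-1}2\bigr)$ data that arises in Example~\ref{SV of Barnes}---is exactly where the computation departs from the full-lattice Lemma~\ref{lem:-1/tau in dE_{-2k}}.
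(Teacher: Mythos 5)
Your proposal is, in substance, exactly the paper's proof: the paper disposes of this lemma with the single phrase ``by a similar calculation,'' pointing back to Lemma \ref{lem:-1/tau in dE_{-2k}}, and what you have written is precisely that calculation carried out for the odd$\times$odd sublattice. The genuinely new points are all handled correctly: the reindexing $(m,n)\mapsto(-n,m)$ preserves the parity constraint, the branch-correction cone $m'>0$, $n'\le0$ of Lemma \ref{lem:arg-exceeds-pi} becomes $m'\ge1$, $n'\le-1$ with both entries odd, and the substitution $m'=2a+1$, $n'=-(2b+1)$ rewrites $m'\tau+n'$ as $a(2\tau)+b(-2)+(\tau-1)$, which is exactly what forces the Barnes data $\zeta_2\bigl(s,\tau-1\,|\,(-2,2\tau)\bigr)$ of the statement; the endgame then uses $\ges211(-2k,\tau)=0$ and $e^{-4k\pi i}=1$, as in the paper.

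One caveat, which you inherit from the paper rather than introduce: the final constant. Since $\frac{d}{ds}\bigl(e^{2\pi is}-1\bigr)\big|_{s=-2k}=2\pi ie^{-4k\pi i}=2\pi i$, while the vanishing of $e^{2\pi is}-1$ at $s=-2k$ kills the $\frac{\partial}{\partial s}\zeta_2$ term, the differentiation you describe actually produces the correction $+2\pi i\,\zeta_2\bigl(-2k,\tau-1\,|\,(-2,2\tau)\bigr)$, not $-4k\pi i\,\zeta_2(\cdots)$; the paper's own proof of Lemma \ref{lem:-1/tau in dE_{-2k}} makes the identical unexplained replacement of $2\pi i$ by $-4k\pi i$ in its last displayed equality. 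That $2\pi i$ is in fact the correct constant can be checked for $k=1$ at $\tau=i$: Theorem \ref{Lambert series expansion of DEisen} together with Ramanujan's evaluation $\zeta(3)+2\sum_{n\ge1}n^{-3}(e^{2\pi n}-1)^{-1}=\frac{7\pi^3}{180}$ gives
\begin{equation*}
\Bigl(\tau^{2}\dG{-2}\Bigl(-\frac1\tau\Bigr)-\dG{-2}(\tau)\Bigr)\Big|_{\tau=i}
=-2\,\dG{-2}(i)=\frac{7\pi}{180},
\end{equation*}
which matches $2\pi i\,\zeta_2\bigl(-2,i\,|\,(-1,i)\bigr)=\frac{7\pi}{180}$ but not $-4\pi i\,\zeta_2\bigl(-2,i\,|\,(-1,i)\bigr)=-\frac{7\pi}{90}$, where $\zeta_2\bigl(-2,i\,|\,(-1,i)\bigr)=-\frac{7i}{360}$ is computed from the Barnes special-value formula in terms of double Bernoulli polynomials. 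So your write-up faithfully reproduces the paper's method and fills in its omitted bookkeeping; the one step that does not follow as stated is equally unjustified in the paper, and it affects the constant in both this lemma and its template.
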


By the lemmas above, we obtain the
\begin{cor}
One has
\begin{equation*}
\dG{-2k}(\tau)\in M_{-2k}(SL_2(\Z),\C(\tau)),
\quad
\dG[0,0]{-2k}(\tau), \dG[1,1]{-2k}(\tau)\in M_{-2k}(\hecke,\C(\tau))
\end{equation*}
for each $k\in\N$.
\end{cor}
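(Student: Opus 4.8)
The plan is to verify, for each of the three series and the relevant group, the three defining conditions of a periodic automorphic integral of weight $-2k$ valued in $\C(\tau)$: invariance under the appropriate translation, the transformation law under $S$ with period function in $\C(\tau)$, and holomorphy at $i\infty$. Because the period functions obey the cocycle relation \eqref{eq:1-cocycle-from-RMF} and $\C(\tau)$ is a $\congsubgp$-submodule under the slash action, it suffices to treat the generators of each group: $S$ and $T$ for $SL_2(\Z)$, and $S$ and $T^2$ for $\hecke$. I first record that, since $G(-2k,\tau)=0$, differentiating $\ges200(s,\tau)=2^{-s}G(s,\tau)$ in $s$ at $s=-2k$ gives $\dG[0,0]{-2k}(\tau)=2^{2k}\dG{-2k}(\tau)$, so the claim for $\dG[0,0]{-2k}$ reduces to that for $\dG{-2k}$.

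Next I would dispose of translation invariance, which is immediate: it was already observed that $\dG{m}(\tau+1)=\dG{m}(\tau)$ and $\dG[(N;a,b)]{m}(\tau+N)=\dG[(N;a,b)]{m}(\tau)$, so the period functions at $T$ (resp.\ $T^2$) vanish. The transformation under $S$ is handed to us by Lemma~\ref{lem:-1/tau in dE_{-2k}} and Lemma~\ref{lem:-1/tau in dE^{1,1}_{-2k}}. Since $(f\big|_{-2k}S)(\tau)=\tau^{2k}f(-1/\tau)$ and $\tau^{2k}(-1/\tau)^{2k}=1$, these lemmas yield
\[
(\dG{-2k}\big|_{-2k}S)(\tau)-\dG{-2k}(\tau)=-4k\pi i\,\zeta_2\bigl(-2k,\tau\,|\,(-1,\tau)\bigr),
\]
\[
(\dG[1,1]{-2k}\big|_{-2k}S)(\tau)-\dG[1,1]{-2k}(\tau)=-4k\pi i\,\zeta_2\bigl(-2k,\tau-1\,|\,(-2,2\tau)\bigr).
\]
By Example~\ref{SV of Barnes} the first period function lies in $\frac1\tau\C[\tau]\subset\C(\tau)$; the second is rational in $\tau$ by the same mechanism, since Lemma~\ref{lem:special values of double zeta} expresses $\zeta_2(-2k,\cdot\,|\,\cdot)$ through the double Bernoulli polynomial $B_{2,2k+2}$, whose coefficients are rational functions of the parameters $(-2,2\tau)$ and of $\tau-1$. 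The cocycle relation \eqref{eq:1-cocycle-from-RMF} together with the stability of $\C(\tau)$ under the slash action then propagates rationality to all of $SL_2(\Z)$ and of $\hecke$.

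Finally I must check holomorphy at $i\infty$. Here I would use the Lipschitz expansion of the analytically continued Eisenstein series: for $\Re s$ large, each inner sum $\sum_{n}(m\tau+n)^{-s}$ over a fixed nonzero $m$ is a constant multiple of $\Gamma(s)^{-1}$ times a $q$-series in strictly positive powers, while the $m=0$ contribution (present for $G(s,\tau)$ but absent for $\ges211(s,\tau)$, where $m$ is forced odd) produces only a $\tau$-independent term built from $\zeta$. Since $\Gamma(s)^{-1}$ has a simple zero at $s=-2k$, the $q$-series part of $G(s,\tau)$ (resp.\ $\ges211(s,\tau)$) vanishes at $s=-2k$ and survives only through its $s$-derivative; consequently $\dG{-2k}$ and $\dG[1,1]{-2k}$ carry Fourier expansions supported on nonnegative powers of $q^{1/2}$, with finite constant term governed by $\zeta'(-2k)$ in the first case and vanishing in the second. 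For $SL_2(\Z)$, where $\gamma$ itself lies in the group, $f\big|_{-2k}\gamma$ differs from $f$ by an element of $\C(\tau)$, so this holomorphy at the unique cusp is exactly condition \eqref{eq:EF-3}; for $\hecke$, the remark following the definition of $M_{-2k}(\hecke,\X)$ shows that holomorphy at $i\infty$ together with the $S$-transformation into $\C(\tau)$ already implies \eqref{eq:EF-3}.

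The main obstacle is this last step. The $S$-transformation laws are supplied verbatim by the two lemmas, and the rationality of their right-hand sides is essentially the content of the double-zeta special-value formula; the genuine care is needed in pinning down the Fourier expansion of the differential Eisenstein series at the cusp, where one must track how the simple zero of $\Gamma(s)^{-1}$ at $s=-2k$ interacts with the Lipschitz sum to leave a holomorphic $q$-expansion with controlled constant term. Once this bookkeeping is in place, assembling the three conditions for each series is routine.
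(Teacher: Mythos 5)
Your proposal is correct and takes essentially the same route as the paper: the paper's proof of this corollary is exactly the combination of the two $S$-transformation lemmas (Lemmas \ref{lem:-1/tau in dE_{-2k}} and \ref{lem:-1/tau in dE^{1,1}_{-2k}}) with the rationality of the Barnes special values from Lemma \ref{lem:special values of double zeta} and Example \ref{SV of Barnes}, together with the previously noted translation invariance and the cocycle relation \eqref{eq:1-cocycle-from-RMF}. The two points you add, namely the reduction $\dG[0,0]{-2k}(\tau)=2^{2k}\dG{-2k}(\tau)$ using $G(-2k,\tau)=0$, and the explicit check of holomorphy at $i\infty$ via Lipschitz-type expansions, are details the paper leaves implicit (the latter is confirmed there only afterwards, by the Lambert series expansion in Theorem \ref{Lambert series expansion of DEisen}), so they fill genuine small gaps but do not constitute a different method.
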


\begin{rem}
We observe that
$\dG[0,0]{-2k}(\tau), \dG[1,1]{-2}(\tau)\in M_{-2}(\Gamma(2),V_{2,1})$, $V_{2,1}$ being the space defined in Example \ref{V_{2k, m}}.
\end{rem}

\begin{rem}
A recent calculation due to Shibukawa \cite{Shib2013} on the same analysis of the lemmas above shows that $\dG{-2k}(\tau)\in M_{-2k}(SL_2(\Z),\Mero(\uhp))$ but $\notin M_{-2k}(SL_2(\Z),\C(\tau))$ for $k>0$.
\end{rem}

\begin{rem}
Although we have given the proof of lemmas above directly, we may extend these relations to the general case by a similar analysis in \cite{B1975Cr}.
\end{rem}

\begin{rem}
The function $\dG[1,1]m(\tau)$ can be written as
\begin{equation*}
\dG[1,1]m(\tau)=(1+2^{-m})\dG m(\tau)-2^{-m}\dG m(\tau/2)-\dG m(2\tau).
\end{equation*}
\end{rem}

\subsection{An expression of $w_4(t)$ in terms of differential Eisenstein series}

By Lemmas \ref{lem:-1/tau in dE_{-2k}}, \ref{lem:-1/tau in dE^{1,1}_{-2k}}
and \ref{lem:special values of double zeta}, we have
\begin{align*}
\dG[1,1]{-2}\Bigl(-\frac1\tau\Bigr)
&=\tau^{-2}\Bigl(\dG[1,1]{-2}(\tau)-\frac{\pi i}3{B_{2,4}({\tau-1}\,|\,(-2,2\tau))}\Bigr), \\
\dG{-2}\Bigl(-\frac1\tau\Bigr)
&=\tau^{-2}\Bigl(\dG{-2}(\tau)-\frac{\pi i}3{B_{2,4}(\tau\,|\,(-1,\tau))}\Bigr).
\end{align*}
A straightforward calculation using these formulas shows that
\begin{equation*}
7B_{2,4}\bigl(\tau\,\big|\,(-1,\tau)\bigr)+2B_{2,4}\bigl({\tau-1}\,\big|\,(-2,2\tau)\bigr)=-\frac32\tau.
\end{equation*}
Therefore, if we put
\begin{equation}
\phi_1(\tau):=-8\pi^2\Bigl\{7\dG{-2}(\tau)+2\dG[1,1]{-2}(\tau)\Bigr\},
\end{equation}
then we have
\begin{equation*}
\phi_1(\tau+2)=\phi_1(\tau),\qquad
\tau^2\phi_1\Bigl(-\frac1\tau\Bigr)-\phi_1(\tau)=4\pi^3\frac\tau i.
\end{equation*}
These relations are exactly the same with
the ones \eqref{eq:transformation rule of iGt_1} for $\iGt_1(\tau)=\iG_1(\tau)-56\zeta(3)$.
Therefore, the difference
$\iG_1(\tau)-56\zeta(3)-\phi_1(\tau)$
is a classical holomorphic modular forms of weight $-2$ for $\Gamma(2)$.
Since $M_{-2}(\Gamma(2))=\{0\}$, we have
\begin{equation}\label{eq:iGt=phi}
\iG_1(\tau)=\phi_1(\tau)+56\zeta(3).
\end{equation}

Putting this expression into \eqref{eq:w_4 by iG_1}, we obtain the following
\begin{thm}
The generating function $w_4(t)$ of Ap\'ery-like numbers $\J4n$ is given by
\begin{equation*}\label{eq:w4}
w_4(t)=
\frac{\pi^4}2\frac{\theta_4(\tau)^4}{\theta_3(\tau)^2}
\biggl[1+\frac1{\pi i}\frac{d}{d\tau}\Bigl\{
7\dG{-2}(\tau)+2\dG[1,1]{-2}(\tau)
\Bigr\}\biggr],
\end{equation*}
where $t=t(\tau)= -\theta_2(\tau)^4\theta_4(\tau)^{-4}$.
\qed
\end{thm}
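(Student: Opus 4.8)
The plan is to obtain the claimed formula as an immediate consequence of two results already in hand: the representation of $w_4(t)$ in Theorem~\ref{thm:w by iG} and the identification \eqref{eq:iGt=phi} of the automorphic integral $\iG_1(\tau)$ in terms of differential Eisenstein series. No new analytic input is required; the argument is a substitution combined with careful tracking of the scalar constants.

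First I would recall from \eqref{eq:w_4 by iG_1} that
\begin{equation*}
w_4(t)=\pi^2w_2(t)+\frac{2\pi i}{16\pi^2}w_2(t)\iG_1'(\tau)
=\pi^2w_2(t)\Bigl(1+\frac{2\pi i}{16\pi^4}\iG_1'(\tau)\Bigr).
\end{equation*}
By \eqref{eq:iGt=phi} the functions $\iG_1(\tau)$ and $\phi_1(\tau)$ differ only by the constant $56\zeta(3)$, so that $\iG_1'(\tau)=\phi_1'(\tau)$, and the definition $\phi_1(\tau)=-8\pi^2\{7\dG{-2}(\tau)+2\dG[1,1]{-2}(\tau)\}$ gives
\begin{equation*}
\iG_1'(\tau)=-8\pi^2\frac{d}{d\tau}\bigl\{7\dG{-2}(\tau)+2\dG[1,1]{-2}(\tau)\bigr\}.
\end{equation*}

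Next I would substitute this into the previous display. The scalar multiplying the derivative collapses by $\frac{2\pi i}{16\pi^4}\cdot(-8\pi^2)=\frac{-i}{\pi}=\frac1{\pi i}$, and then the modular expression $w_2(t)=\J20\,\theta_4(\tau)^4\theta_3(\tau)^{-2}$ recorded in \S\ref{Modular}, together with $\J20=\zeta(2,1/2)=\pi^2/2$, turns the prefactor $\pi^2w_2(t)$ into $\tfrac{\pi^4}2\theta_4(\tau)^4\theta_3(\tau)^{-2}$. Collecting these pieces reproduces exactly the asserted identity.

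Since every nontrivial ingredient has been established beforehand, I do not expect a genuine obstacle here. The substantive work was already carried out in matching the transformation law \eqref{eq:transformation rule of iGt_1} of $\iGt_1$ against that of $\phi_1$ via the Bernoulli-polynomial identity $7B_{2,4}(\tau\,|\,(-1,\tau))+2B_{2,4}(\tau-1\,|\,(-2,2\tau))=-\tfrac32\tau$, and in invoking $M_{-2}(\Gamma(2))=\{0\}$ to force the two periodic automorphic integrals to coincide. The only care needed at this final step is the bookkeeping of the powers of $\pi$ and the sign coming from $1/i=-i$.
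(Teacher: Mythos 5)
Your proposal is correct and follows exactly the paper's own route: the paper likewise obtains the theorem by substituting the identification $\iG_1(\tau)=\phi_1(\tau)+56\zeta(3)$ (so $\iG_1'=\phi_1'$) into \eqref{eq:w_4 by iG_1} and collecting constants, with $\J20=\zeta(2,1/2)=\pi^2/2$ giving the prefactor $\tfrac{\pi^4}2\theta_4(\tau)^4\theta_3(\tau)^{-2}$. Your constant bookkeeping, in particular $\frac{2\pi i}{16\pi^4}\cdot(-8\pi^2)=\frac1{\pi i}$, checks out.
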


\subsection{Fourier expansion of $\dG{-2k}(\tau)$}

We now compute the Fourier expansion of the differential Eisenstein series $\dG{-2k}(\tau)$ using the result in \cite{Shib2013}.
Similarly to the classical Eisenstein series, we will find that the Fourier expansion of $\dG{-2k}(\tau)$ is given by the Lambert series.
In particular, we notice that $\dG{-2k}(\tau)$ is not a cusp form.

We first recall the result for the bilateral zeta function in \cite{Shib2013}.
For $(\omega_1, \omega_2)$, the bilateral zeta function $\xi_2(s, z \,|\, \omega_1, \omega_2)$ is defined by
\begin{equation}
\xi_2(s, z \,|\, \omega_1, \omega_2)
:=\zeta_2(s,z+\omega_1 \,|\,\omega_1, \omega_2)+\zeta_2(s, z \,|\, {-\omega_1}, \omega_2).
\end{equation}
We take $\omega_1$ as $0<\arg(\omega_1)\leq\pi$ in the subsequent discussion.
Then the following Fourier expansion of $\xi_2(s, z \,|\, {-1}, \omega)$ is known (Theorem 4.7 and Corollary 4.8 in \cite{Shib2013}).
\begin{prop}
Suppose $z, \omega \in \uhp$. Then we have
\begin{equation}\label{Fourier_xi}
\xi_2(s, z \,|\, {-1}, \omega) = \frac{e^{-(\pi/2)is}(2\pi)^s}{\Gamma(s)}
\sum_{n=1}^\infty \frac{n^{s-1}e^{2\pi i nz}}{1-e^{2\pi i n\omega}}.
\end{equation}
Moreover, one notices that the bilateral zeta function $\xi_2(s, z \,|\, \omega_1, \omega)$ is an entire function in $s \in \C$ and for $m\in \N$
\begin{equation}\label{SV_xi}
\xi_2(1-m, z \,|\, \omega_1, \omega) = 0.
\end{equation}
\qed
\end{prop}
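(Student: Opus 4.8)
The plan is to unfold $\xi_2$ into a single bilateral lattice sum and evaluate the inner sum by the Lipschitz summation formula. Keeping $\omega_1$ general for the moment, the shift $m\mapsto m+1$ gives $\zeta_2(s,z+\omega_1\,|\,\omega_1,\omega_2)=\sum_{m\ge1,\,n\ge0}(m\omega_1+n\omega_2+z)^{-s}$, while $\zeta_2(s,z\,|\,{-\omega_1},\omega_2)=\sum_{m\ge0,\,n\ge0}(-m\omega_1+n\omega_2+z)^{-s}$. In the first sum the coefficient of $\omega_1$ runs through $\{1,2,3,\dots\}$ and in the second through $\{0,-1,-2,\dots\}$, so the two ranges partition $\Z$ and
\begin{equation*}
\xi_2(s,z\,|\,\omega_1,\omega_2)=\sum_{n\ge0}\sum_{m\in\Z}(m\omega_1+n\omega_2+z)^{-s}\qquad(\Re s>2).
\end{equation*}
Specializing to $\omega_1=-1$, $\omega_2=\omega$ and relabelling $m\mapsto-m$ yields $\xi_2(s,z\,|\,{-1},\omega)=\sum_{n\ge0}\sum_{m\in\Z}(m+n\omega+z)^{-s}$. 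Since $\Im(m+n\omega+z)=n\Im\omega+\Im z>0$, every summand lies in $\uhp$, so the convention $-\pi\le\arg z<\pi$ poses no problem and the double series converges absolutely for $\Re s>2$.

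For each fixed $n\ge0$ I would apply the Lipschitz summation formula
\begin{equation*}
\sum_{m\in\Z}(m+w)^{-s}=\frac{(2\pi)^s e^{-\pi i s/2}}{\Gamma(s)}\sum_{k\ge1}k^{s-1}e^{2\pi i kw}\qquad(\Im w>0)
\end{equation*}
with $w=n\omega+z\in\uhp$. Interchanging the (absolutely convergent) sums over $n$ and $k$ and evaluating the geometric series $\sum_{n\ge0}e^{2\pi i kn\omega}=(1-e^{2\pi i k\omega})^{-1}$, which converges because $|e^{2\pi i k\omega}|<1$, produces exactly \eqref{Fourier_xi}.

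The two remaining assertions are then structural. The Lambert-type series $\sum_{k\ge1}k^{s-1}e^{2\pi i kz}(1-e^{2\pi i k\omega})^{-1}$ converges absolutely and locally uniformly on all of $\C$, since as $k\to\infty$ the factor $|e^{2\pi i kz}|=e^{-2\pi k\Im z}$ decays exponentially and dominates both the polynomial growth of $k^{s-1}$ and the bounded factor $(1-e^{2\pi i k\omega})^{-1}\to1$; hence it is entire in $s$. As the prefactor $(2\pi)^s e^{-\pi i s/2}$ is entire and nowhere vanishing while $1/\Gamma(s)$ is entire with simple zeros exactly at $s=0,-1,-2,\dots$, the product is entire and vanishes at $s=1-m$ for every $m\in\N$. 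For the vanishing \eqref{SV_xi} in full generality (arbitrary $\omega_1$) a cleaner route avoids Lipschitz altogether: by Lemma \ref{lem:special values of double  zeta},
\begin{equation*}
\xi_2(1-m,z\,|\,\omega_1,\omega_2)=\frac{B_{2,m+1}(z+\omega_1\,|\,\omega_1,\omega_2)+B_{2,m+1}(z\,|\,{-\omega_1},\omega_2)}{m(m+1)},
\end{equation*}
and a one-line generating-function computation shows that the two double Bernoulli polynomials are negatives of one another (the factor $e^{\omega_1 t}/(e^{\omega_1t}-1)+(e^{-\omega_1t}-1)^{-1}$ vanishes identically), so the numerator is $0$.

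The main obstacle is the careful bookkeeping in the first two steps, not any deep idea: one must ensure that the branch of $(\,\cdot\,)^{-s}$ supplied by Lipschitz agrees with the convention $-\pi\le\arg z<\pi$ and that the index shifts in the two Barnes sums never carry a summand across the cut. For general $\omega_1$ the summands need not remain in $\uhp$, so the Fourier expansion requires the scaled Lipschitz formula together with the more delicate convergence analysis of \cite{B1975Cr, Shib2013}; the entirety in that case follows because the simple poles of the two Barnes double zeta functions at $s=1,2$ have residues that are odd under $\omega_1\mapsto-\omega_1$ and therefore cancel in the bilateral sum. Granting the classical Lipschitz identity, everything else is a justified interchange of absolutely convergent series.
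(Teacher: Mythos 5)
Your proof is correct, but there is nothing in the paper to compare it with step by step: the paper gives no proof of this Proposition at all, quoting it directly from Shibukawa \cite{Shib2013} (Theorem 4.7 and Corollary 4.8) --- the \verb|\qed| merely records that the result is imported. Your argument supplies a sound, self-contained derivation, and every step checks out. The unfolding of the two Barnes sums into coefficients of $\omega_1$ ranging over $\{1,2,\dots\}$ and $\{0,-1,-2,\dots\}$, hence into the single bilateral sum $\sum_{n\ge0}\sum_{m\in\Z}$, is a legitimate rearrangement of an absolutely convergent series for $\Re s>2$; since $\Im(m+n\omega+z)>0$ every summand has argument in $(0,\pi)$, so the principal-branch Lipschitz formula with prefactor $(-2\pi i)^s=(2\pi)^se^{-\pi is/2}$ applies verbatim, and the Fubini interchange together with $\sum_{n\ge0}e^{2\pi ikn\omega}=(1-e^{2\pi ik\omega})^{-1}$ yields \eqref{Fourier_xi} on $\Re s>2$. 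One point worth making explicit: the validity of \eqref{Fourier_xi} for \emph{all} $s$, and the entirety of $\xi_2(s,z\,|\,{-1},\omega)$, follow by the identity theorem, because the left-hand side is a priori meromorphic on $\C$ (a sum of two Barnes zeta functions) and agrees on a half-plane with the right-hand side, which you correctly show is entire; this continuation step is only implicit in your write-up. Your two supplementary arguments for general $\omega_1$ are also correct: the identity $\frac{e^{\omega_1t}}{e^{\omega_1t}-1}+\frac1{e^{-\omega_1t}-1}=0$ does force $B_{2,m+1}(z+\omega_1\,|\,\omega_1,\omega_2)+B_{2,m+1}(z\,|\,{-\omega_1},\omega_2)=0$, so \eqref{SV_xi} drops out of Lemma \ref{lem:special values of double zeta}, and the residues of the two Barnes zetas at $s=1,2$ (namely $1/\omega_1\omega_2$ and $\bigl(\tfrac{\omega_1+\omega_2}2-z\bigr)/\omega_1\omega_2$, evaluated at the shifted data) cancel pairwise, giving entirety of the bilateral sum. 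As for what each route buys: the paper's citation keeps \S\ref{EF} short and defers all branch and convergence bookkeeping to \cite{Shib2013}, whereas your proof makes the Proposition self-contained using only the classical Lipschitz formula and Barnes' special-value lemma already quoted in the paper, and your Bernoulli-cancellation argument proves \eqref{SV_xi} for arbitrary $\omega_1$ without any Fourier expansion --- which is essentially the mechanism in Shibukawa's own treatment.
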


Using this proposition, we prove the following
\begin{thm} \label{Lambert series expansion of DEisen}
The Fourier expansion of the differential Eisenstein series $\dG{-2k}(\tau)\, (k\in \N)$ is expressed by the Lambert series as
\begin{align*}
\dG{-2k}(\tau)
&= \frac{(-1)^k(2k)!}{(2\pi)^{2k}}
\Big\{2\sum_{n=1}^\infty \frac{n^{-2k-1}}{1-e^{2\pi i n\tau}} - \zeta(2k+1)\Big\} \\
&= \frac{(-1)^k(2k)!}{(2\pi)^{2k}}
\Big\{\zeta(2k+1)+ 2\sum_{n=1}^\infty \sigma_{-2k-1}(n) e^{2\pi i n\tau} \Big\}.
\end{align*}
In particular, the constant term is given by the multiple of $\zeta(2k+1)$ as
\begin{equation*}
\dG{-2k}(i\infty)=\frac{(-1)^k(2k)!}{(2\pi)^{2k}}\zeta(2k+1) \ne0.
\end{equation*}
\end{thm}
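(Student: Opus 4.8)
The plan is to reduce the computation to the \emph{bilateral} zeta function $\xi_2(s,\tau\,|\,-1,\tau)$, whose Fourier expansion \eqref{Fourier_xi} and trivial special values \eqref{SV_xi} are already available, together with the classical derivative value $\zeta'(-2k)$. First I would rewrite the Eisenstein lattice sum $G(s,\tau)=\psum_{m,n\in\Z}(m\tau+n)^{-s}$ by splitting it according to the sign of $m$. Using the branch convention $-\pi\le\arg z<\pi$ together with Lemma \ref{lem:arg-exceeds-pi}, each row with $m\le-1$ is obtained from the row with $|m|$ by the rotation $m\tau+n=e^{-\pi i}(|m|\tau-n)$, which produces a factor $e^{\pi i s}$; the same reasoning applied to $n<0$ shows the $m=0$ row contributes $(1+e^{\pi i s})\zeta(s)$. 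Setting $z=\tau$, $\omega=\tau\in\uhp$ in the definition of $\xi_2$ identifies $\xi_2(s,\tau\,|\,-1,\tau)=\sum_{m\ge1}\sum_{n\in\Z}(m\tau+n)^{-s}$, so I arrive at the clean identity
\[
G(s,\tau)=(1+e^{\pi i s})\bigl(\zeta(s)+\xi_2(s,\tau\,|\,-1,\tau)\bigr).
\]

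Next I would differentiate in $s$ and evaluate at $s=-2k$. The three vanishing facts $\zeta(-2k)=0$, $\xi_2(-2k,\tau\,|\,-1,\tau)=0$ (which is \eqref{SV_xi} with $m=2k+1$), and $(1+e^{\pi i s})|_{s=-2k}=2$ collapse the Leibniz expansion to
\[
\dG{-2k}(\tau)=2\zeta'(-2k)+2\,\frac{\partial}{\partial s}\xi_2(s,\tau\,|\,-1,\tau)\Big|_{s=-2k}.
\]
For the first term I would insert the value $\zeta'(-2k)=\frac{(-1)^k(2k)!}{2(2\pi)^{2k}}\zeta(2k+1)$ coming from differentiating the functional equation of $\zeta$ at the trivial zero $s=-2k$; this produces exactly the constant $\frac{(-1)^k(2k)!}{(2\pi)^{2k}}\zeta(2k+1)$. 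For the second term I would differentiate the Fourier expansion \eqref{Fourier_xi}: since $1/\Gamma(s)$ vanishes at $s=-2k$, only the factor $\frac{d}{ds}\frac1{\Gamma(s)}\big|_{-2k}=(2k)!$ survives, and with $e^{-(\pi/2)i s}\big|_{-2k}=(-1)^k$ this yields $\frac{(-1)^k(2k)!}{(2\pi)^{2k}}\sum_{n\ge1}\frac{n^{-2k-1}e^{2\pi i n\tau}}{1-e^{2\pi i n\tau}}$.

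Finally I would assemble the two contributions and convert them to the stated shapes. Writing $\frac{e^{2\pi i n\tau}}{1-e^{2\pi i n\tau}}=\frac1{1-e^{2\pi i n\tau}}-1$ turns the sum into $\sum_{n\ge1}\frac{n^{-2k-1}}{1-e^{2\pi i n\tau}}-\zeta(2k+1)$, and combining with the constant term gives the first displayed formula of the theorem; then expanding the geometric series $\frac1{1-e^{2\pi i n\tau}}=\sum_{j\ge0}e^{2\pi i nj\tau}$ and collecting by $N=nj$ via $\sum_{n\mid N}n^{-2k-1}=\sigma_{-2k-1}(N)$ produces the Lambert form, from which the nonvanishing constant $\dG{-2k}(i\infty)=\frac{(-1)^k(2k)!}{(2\pi)^{2k}}\zeta(2k+1)$ is immediate. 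The main obstacle is the very first step: tracking the complex arguments across the branch cut with enough care to land the \emph{exact} prefactor $(1+e^{\pi i s})$ (a sign slip there corrupts the final constant), and to justify the term-by-term differentiation of the Fourier series and the geometric-series rearrangement, both of which rest on the absolute convergence guaranteed by $\tau\in\uhp$.
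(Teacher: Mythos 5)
Your proof is correct and takes essentially the same route as the paper's: split the lattice sum by the sign of the coefficient of $\tau$, track the branch $-\pi\le\arg z<\pi$ to extract the prefactor $(1+e^{\pi i s})$, collapse the Leibniz expansion at $s=-2k$ using $\zeta(-2k)=0$ and \eqref{SV_xi}, and then differentiate \eqref{Fourier_xi} (only $\frac{d}{ds}\Gamma(s)^{-1}\big|_{s=-2k}=(2k)!$ survives) together with the classical value $\zeta'(-2k)=\frac{(-1)^k(2k)!}{2(2\pi)^{2k}}\zeta(2k+1)$. The only deviation — harmless, and in fact slightly cleaner — is that you specialize $z=\tau$ so that $\xi_2(s,\tau\,|\,{-1},\tau)$ equals the row sum $\sum_{m\ge1,\,n\in\Z}(m\tau+n)^{-s}$ on the nose, whereas the paper works with $\xi_2(s,z\,|\,{-1},\tau)$ for general $z$ and takes $z\to0$ at the end, which obliges it to discard the extra terms $z^{-s}$ and $(\log z)z^{2k}$.
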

\begin{proof}
We observe that
\begin{align*}
G(s,\tau)&= \psum_{m_0, \,m_1\in \Z} (m_0+m_1\tau)^{-s}\\
&= \psum_{m_0\in \Z, \,m_1\in \Z_{\geq0}} (m_0+m_1\tau)^{-s}
+\psum_{m_0\in \Z, \,m_1\in \Z_{\geq0}} (m_0-m_1\tau)^{-s} -\sum_{m_0\not=0}m_0^{-s}\\
&= (1+ e^{\pi i s})\Bigl\{ \psum_{m_0\in \Z, \,m_1\in \Z_{\geq0}}(m_0+m_1\tau)^{-s} -\zeta(s)\Bigr\}.
\end{align*}
By \eqref{SV_xi}, since
\begin{equation*}
0=\xi_2(1-2k, z\,|\, {-1}, \tau)
=\psum_{m_0\in \Z, \,m_1\in \Z_{\geq0}} (z+m_0+m_1\tau)^{-s}\Big|_{s=-2k} + z^{2k},
\end{equation*}
for $k\in \N$, we have
\begin{equation*}
\psum_{m_0\in \Z, \,m_1\in \Z_{\geq0}} (m_0+m_1\tau)^{-s}\Big|_{s=-2k} =0.
\end{equation*}
It follows that
\begin{equation}\label{Differential of G at s=-2k}
\frac{\partial}{\partial s}G(s,\tau)\Big|_{s=-2k}
=2 \frac{\partial}{\partial s}\Big\{ \psum_{m_0\in \Z, \,m_1\in \Z_{\geq0}}(m_0+m_1\tau)^{-s}\Big\} \Big|_{s=-2k}
- 2\zeta'(-2k).
\end{equation}
On the other hand, we observe
\begin{align*}
\xi_2(s, z \,|\,-1, -\tau) &= \zeta_2(s, z-1 \,|\,-1, -\tau) + \zeta_2(s, z \,|\,1, -\tau)\\
&= \sum_{m_0, m_1=0}^\infty (z-1-m_0+m_1\tau)^{-s}
+\sum_{m_0, m_1=0}^\infty (z+m_0+m_1\tau)^{-s}\\
&= \sum_{m_0\in \Z}\sum_{m_1=0}^\infty (z+m_0+m_1\tau)^{-s}
= \psum_{m_0\in \Z, \,m_1\in \Z_{\geq0}} (z+m_0+m_1\tau)^{-s} +z^{-s}.
\end{align*}

By the Fourier expansion \eqref{Fourier_xi}, using the fact
$\frac{d}{ds}\frac1{\Gamma(s)}\Big|_{s=-2k}=(2k)!$, we have
\begin{equation}
\frac{\partial}{\partial s}\xi_2(s, z \,|\, -1, \tau)\Big|_{s=-2k}
= \frac{(-1)^k(2k)!}{(2\pi)^{2k}}
\sum_{n=1}^\infty \frac{n^{-2k-1}e^{2\pi i nz}}{1-e^{2\pi i n\tau}}.
\end{equation}
Therefore we have
\begin{align*}
\frac{\partial}{\partial s}\Big\{ \psum_{m_0\in \Z, \,m_1\in \Z_{\geq0}}(m_0+m_1\tau)^{-s}\Big\} \Big|_{s=-2k}
&=\Big\{\frac{\partial}{\partial s}\xi_2(s, z \,|\, {-1}, \tau)\Big|_{s=-2k}
+ (\log z) z^{2k} \Big\}\Big|_{z=0}\\
&=\frac{(-1)^k(2k)!}{(2\pi)^{2k}}
\sum_{n=1}^\infty \frac{n^{-2k-1}}{1-e^{2\pi i n\tau}}.
\end{align*}
It follows from \eqref{Differential of G at s=-2k} that
\begin{equation}
\dG{-2k}(\tau)= \frac{\partial}{\partial s}G(s,\tau)\Big|_{s=-2k}
= \frac{2(-1)^k(2k)!}{(2\pi)^{2k}}
\sum_{n=1}^\infty \frac{n^{-2k-1}}{1-e^{2\pi i n\tau}} -2\zeta'(-2k).
\end{equation}
Using the functional equation $\frac{2^{s-1}\pi^s}{\Gamma(s)}\zeta(1-s)=\cos\frac{\pi s}{2}\zeta(s)$ and
$\frac{d}{ds}\frac1{\Gamma(s)}\Big|_{s=-2k}=(2k)!$, we have
$\zeta'(-2k)= \frac{(-1)^k(2k)!}{2(2\pi)^{2k}}\zeta(2k+1)$.
Hence we complete the proof of the theorem.
\end{proof}

\begin{rem}
We note that the function $\phi_1(\tau)$ is expressible only by $\dG{-2}$ as
\begin{equation*}
\phi_1(\tau)=8\pi^2\Bigl(8\dG{-2}(\tau/2)-17\dG{-2}(\tau)+2\dG{-2}(2\tau)\Bigr).
\end{equation*}
By Theorem \ref{Lambert series expansion of DEisen}, we have
\begin{equation*}
\dG{-2}(\tau)=-\frac1{2\pi^2}
\Big\{\zeta(3)+2\sum_{n=1}^\infty \sigma_{-3}(n)q^n \Big\}.
\end{equation*}
Comparing with \eqref{G_1Fourier}, we obtain \eqref{eq:iGt=phi} again.
\end{rem}

\begin{rem}
Like Ramanujan did, we may evaluate values of the Lambert series at $\tau=i$ if
$k\in\N$ is odd as follows.
\begin{equation}
\sum_{n=1}^\infty \frac1{n^{2k+1}(1-e^{-2\pi n})}
= \frac{ki (2\pi)^{2k+1}}{2(-1)^k(2k+2)!} B_{2, 2k+2}(i\,|\, (-1,i)) + \frac12\zeta(2k+1).
\end{equation}
In fact, by Lemma \ref{lem:-1/tau in dE_{-2k}} together with Example \ref{SV of Barnes}, we have
\begin{equation*}
\dG{-2k}(i) = 2k\pi i \frac{B_{2, 2k+2}(i\,|\, (-1,i))}{(2k+1)(2k+2)},
\end{equation*}
whenever $k$ is odd. Hence the formula follows immediately from Theorem \ref{Lambert series expansion of DEisen}.
\end{rem}

\subsection{Hecke operators acting on automorphic forms of negative weight}\label{Hecke}

We give a short remark on the Hecke operators acting on the negative weight automorphic forms.

Let $n\in \N$ and set $\M_n:= \Set{g \in \Mat_2(\Z)}{\det g=n}$.
Since the group $SL_2(\Z)$ acts on $\M_n$ on the left, one may decompose $\M_n$ into orbits.
We now consider the automorphic forms of weight $-k$ ($k\in \N$).
For $f\in M_{-k}(SL_2(\Z),\X)$, we set
\begin{equation}\label{Hecke operator}
(T(n)f)(\tau)= n^{-k/2-1}\sum_{\mu\in SL_2(\Z)\backslash \M_n} (f|_{-k} \mu)(\tau).
\end{equation}
Here we notice that the sum $\sum_{\mu\in SL_2(\Z)\backslash \M_n} f|_{-k} \mu$ depends on the choice of a system of representatives $\{\mu\}$ for the orbits $SL_2(\Z)\backslash \M_n$.
Actually, if we take another representatives $\{\gamma\mu\}$ ($\gamma \in SL_2(\Z)$) we observe that
\begin{align*}
\sum_{\mu\in SL_2(\Z)\backslash \M_n} f|_{-k} (\gamma\mu)
=\sum_{\mu\in SL_2(\Z)\backslash \M_n} f|_{-k} \gamma |_{-k}\mu
&=\sum_{\mu\in SL_2(\Z)\backslash \M_n} (f+ R_f^{-k}(\gamma))|_{-k}\mu \\
\in \sum_{\mu\in SL_2(\Z)\backslash \M_n} f|_{-k} \mu + \X.
\end{align*}
This fact shows that $T(n)f$ is determined modulo the space $\X$ for another choice $\{\gamma\mu\}$
of the representatives for $SL_2(\Z)\backslash \M_n$.
This observation, however, proves also that

\begin{lem}
Let $f\in M_{-k}(SL_2(\Z),\X)$. Then for $\gamma\in SL_2(\Z)$ we have
\begin{equation}
T(n)f|_{-k}\gamma \equiv T(n)f \mod \X
\end{equation}
for any choice of a system of representatives $\{\mu\}$ for $SL_2(\Z)\backslash \M_n$.
\qed
\end{lem}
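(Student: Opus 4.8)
The plan is to compute $(T(n)f)\big|_{-k}\gamma$ explicitly and then reduce the claim to the independence of $T(n)f$ modulo $\X$ on the choice of coset representatives, which was just established in the paragraph preceding the lemma. Throughout I use that the slash action of weight $-k$ is a genuine right action even for matrices of determinant $n$: for $\mu\in\M_n$ and $\gamma\in SL_2(\Z)$ one has $(h\big|_{-k}\mu)\big|_{-k}\gamma=h\big|_{-k}(\mu\gamma)$, which follows at once from the cocycle identity $j(\mu\gamma,\tau)=j(\mu,\gamma\tau)\,j(\gamma,\tau)$ (valid for arbitrary matrices, not only for those of determinant one, since it depends only on the bottom rows).

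First I would expand, using this multiplicativity,
\begin{equation*}
(T(n)f)\big|_{-k}\gamma
=n^{-k/2-1}\sum_{\mu\in SL_2(\Z)\backslash\M_n}(f\big|_{-k}\mu)\big|_{-k}\gamma
=n^{-k/2-1}\sum_{\mu\in SL_2(\Z)\backslash\M_n}f\big|_{-k}(\mu\gamma).
\end{equation*}
Since $\det(\mu\gamma)=n$, each $\mu\gamma$ again lies in $\M_n$. The next step is to observe that right multiplication by $\gamma$ induces a \emph{bijection} of the coset space $SL_2(\Z)\backslash\M_n$: the assignment $[\mu]\mapsto[\mu\gamma]$ is well defined (if $\mu_2=g\mu_1$ with $g\in SL_2(\Z)$, then $\mu_2\gamma=g(\mu_1\gamma)$, so $[\mu_2\gamma]=[\mu_1\gamma]$) and has inverse $[\nu]\mapsto[\nu\gamma^{-1}]$, using $\gamma^{-1}\in SL_2(\Z)$. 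Hence $\{\mu\gamma\}$ is again a complete system of representatives for $SL_2(\Z)\backslash\M_n$.

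Finally I would invoke the observation immediately preceding the lemma, which asserts that any two complete systems of representatives produce sums $\sum_\mu f\big|_{-k}\mu$ that agree modulo $\X$. Applying this to the systems $\{\mu\}$ and $\{\mu\gamma\}$ gives
\begin{equation*}
\sum_{\mu}f\big|_{-k}(\mu\gamma)\equiv\sum_{\mu}f\big|_{-k}\mu\pmod{\X},
\end{equation*}
and multiplying through by $n^{-k/2-1}$ yields $T(n)f\big|_{-k}\gamma\equiv T(n)f\pmod{\X}$, as claimed.

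The single point requiring care—and the only genuine obstacle—is that the discrepancy between two choices of representatives is controlled by $\X$ only because $\X$ is stable under the weight $-k$ slash action of the relevant elements of $\M_n$. Concretely, when $\mu\gamma=g\mu'$ with $g\in SL_2(\Z)$ and $\mu'$ an original representative, the error contributed is $R_f^{-k}(g)\big|_{-k}\mu'$ with $R_f^{-k}(g)\in\X$, so one needs $\X\big|_{-k}\mu'\subset\X$. This holds for the submodules $\X$ occurring in practice (for instance $\X=\C(\tau)$ or $\X=\Mero(\uhp)$, each preserved under $h\mapsto h\big|_{-k}\mu'$ for any $\mu'$ of nonzero determinant), and it is precisely the property already used implicitly in the preceding paragraph, so no verification beyond that is needed here.
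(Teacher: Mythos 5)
Your proof is correct and takes essentially the same route as the paper: the paper deduces the lemma directly from the observation immediately preceding it (that changing a system of coset representatives alters the sum $\sum_{\mu} f|_{-k}\mu$ only by an element of $\X$), combined with the fact that right multiplication by $\gamma$ permutes the cosets $SL_2(\Z)\backslash\M_n$. Your write-up simply makes explicit two points the paper leaves implicit, namely the bijection $[\mu]\mapsto[\mu\gamma]$ of $SL_2(\Z)\backslash\M_n$ and the required stability of $\X$ under the weight $-k$ slash action of determinant-$n$ matrices.
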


This lemma shows that for $n\in\N$ the operator $T(n)$ defines a well-defined linear endomorphism of $M_{-k}(SL_2(\Z),\X)$. We call $T(n)$ the \emph{Hecke operator} of index $n$ (acting on the automorphic integrals of negative weight). Similarly to the classical case, we have the following

\begin{prop}
The Hecke operator $T(n)\, (n=1,2,\ldots)$ on the space $M_{-k}(SL_2(\Z),\X)$ possesses the following properties.
\begin{itemize}
\item[{\upshape (i)}]
The operator $T(n)$ has the following expression.
\begin{equation}
(T(n)f)(\tau)
:=n^{-k-1}\sum_{\substack{a\ge1\\ ad=n\\ 0\le b<d}}d^{k}f\Bigl(\frac{a\tau+b}d\Bigr)
=\frac1n\sum_{\substack{a\ge1\\ ad=n\\ 0\le b<d}}a^{-k}f\Bigl(\frac{a\tau+b}d\Bigr).
\end{equation}
\item[{\upshape (ii)}]
Let $f(\tau)= \sum_{\el=0}^\infty \lambda(\el) q^\el\, (q=e^{2\pi i\tau})$. Then
\begin{equation}
(T(n)f)(\tau)=\sum_{\el=0}^\infty \Bigg(\sum_{d\mid(n,\el)} d^{-k-1}\lambda\Big(\frac{n\el}{d^2}\Big)\Bigg)
q^{\el}.
\end{equation}
In particular, the space of cusp forms $C_{-k}(SL_2(\Z),\X)$ is stable under $T(n)$.
\item[{\upshape (iii)}]
Let $m, n\in \N$. Then
\begin{equation}
T(n)T(m)=\sum_{d\mid(n,m)} d^{-k-1}T(nm/d^2)=T(m)T(n).
\end{equation}
In particular, $T(n)T(m)=T(nm)$ whenever $(n,m)=1$.
\end{itemize}
\end{prop}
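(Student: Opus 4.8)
The plan is to reduce all three assertions to the explicit coset decomposition of $\M_n$ and then to read off (ii) and (iii) from the resulting $q$-expansion, treating the Hecke relations at the level of Fourier coefficients where they become elementary. Throughout I extend the slash action of \eqref{eq:slash} to $\mu\in\M_n$ by $(f\big|_{-k}\mu)(\tau)=(\det\mu)^{-k/2}j(\mu,\tau)^{k}f(\mu\tau)$, which agrees with \eqref{eq:slash} on $SL_2(\Z)$ since there $\det\mu=1$.

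First I would establish (i). By the Hermite normal form for integral matrices, every $SL_2(\Z)$-orbit in $\M_n$ contains exactly one upper triangular representative $\mu=\mat{a & b \\ 0 & d}$ with $a\ge1$, $ad=n$ and $0\le b<d$; these are pairwise inequivalent, so they form a complete system of representatives for $SL_2(\Z)\backslash\M_n$. For such $\mu$ one has $\mu\tau=\frac{a\tau+b}{d}$ and $j(\mu,\tau)=d$, whence $(f\big|_{-k}\mu)(\tau)=n^{-k/2}d^{k}f\!\left(\frac{a\tau+b}{d}\right)$. Substituting into \eqref{Hecke operator}, the normalizing factor becomes $n^{-k/2-1}\cdot n^{-k/2}=n^{-k-1}$, which yields the first displayed form in (i). The second form is then immediate from $ad=n$: indeed $n^{-k-1}d^{k}=n^{-1}(d/n)^{k}=n^{-1}a^{-k}$.

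Next, for (ii), I would insert the Fourier expansion $f(\tau)=\sum_{\el\ge0}\lambda(\el)q^{\el}$ into the formula of (i). The inner sum over $b$ is a finite geometric sum, $\sum_{b=0}^{d-1}e^{2\pi i\el b/d}=d$ if $d\mid\el$ and $0$ otherwise, so only the indices $\el=dm$ survive and contribute $d\,\lambda(dm)q^{ma}$. Collecting the coefficient of a fixed power $q^{\el}$ forces $ma=\el$, i.e.\ $a\mid\el$ and $m=\el/a$; writing $d=n/a$ one finds the contribution $a^{-k-1}\lambda(n\el/a^2)$, and summing over the admissible $a$, namely those dividing $\gcd(n,\el)$, gives exactly the divisor sum in (ii). Stability of $C_{-k}(SL_2(\Z),\X)$ under $T(n)$ follows because the constant term ($\el=0$) of $T(n)f$ is $\big(\sum_{d\mid n}d^{-k-1}\big)\lambda(0)=0$ when $\lambda(0)=0$; the vanishing of the constant terms of $T(n)f\big|_{-k}\gamma$ at the remaining cusps is deduced from the covariance Lemma just proved, which expresses $T(n)f\big|_{-k}\gamma$ modulo $\X$ as a sum of $f\big|_{-k}\mu$ over a reshuffled set of representatives.

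Finally, for (iii) I would verify the relation on Fourier coefficients using (ii), since a holomorphic $2$-periodic function is determined by its $q$-expansion. Writing $a_n(\el)=\sum_{d\mid(n,\el)}d^{-k-1}\lambda(n\el/d^2)$ for the $\el$-th coefficient of $T(n)f$, the composite $T(m)T(n)$ and the right-hand side $\sum_{d\mid(m,n)}d^{-k-1}T(mn/d^2)$ produce two divisor sums in $\lambda$, and their equality is precisely the standard multiplicative identity for the $GL_2$ Hecke algebra with the weight exponent $k-1$ replaced by $-k-1$; the coprime case $T(m)T(n)=T(mn)$ is the degenerate instance $(m,n)=1$. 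The main obstacle is conceptual rather than computational: because $f$ is only an automorphic integral, $f\big|_{-k}\gamma\ne f$ and the constant terms need not vanish, so one cannot quote the classical Hecke theory verbatim. The point to check carefully is that the orbit decomposition of $\M_n$, the Hermite representatives, and the products $\mu\nu\in\M_{mn}$ are purely matrix-theoretic and make no use of the transformation law of $f$; consequently the combinatorial identity underlying (iii) survives unchanged, and the only genuine care required is the determinant normalization of the extended slash and the choice of canonical (Hermite) representatives, which makes each $T(n)$ an honest operator rather than one defined only modulo $\X$.
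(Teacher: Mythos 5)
Your proof is correct and takes essentially the same route as the paper's: upper-triangular representatives for $SL_2(\Z)\backslash\M_n$ to get (i), the geometric sum over $b$ to extract the Fourier coefficients in (ii), and (iii) deduced from (ii) by the standard divisor-sum computation. Your explicit determinant normalization of the extended slash action is precisely what the paper leaves implicit in its definition of $T(n)$, and the one small slip (calling the relevant functions $2$-periodic rather than $1$-periodic in this level-one setting) is immaterial to the argument.
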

\begin{proof}
The proof can be done in the same way for the classical case.
Actually, since every matrix $\mu\in \M_n$ can be made upper triangular by multiplying it on the left
by $\gamma \in SL_2(\Z)$, we have a system of representatives for $SL_2(\Z)\backslash \M_n$ as $\pm \mat{a & b+dr \\ 0 & d}\, (ad=n)$ with $a>0$ and $0\leq b <d$. With this choice of representatives, by the definition \eqref{Hecke operator}, we have the expression (i). Using the elementary relation
\begin{equation*}
\sum_{b=0}^{d-1}f\Bigl(\frac{a\tau+b}d\Bigr)= d \sum_{m=0}^\infty\lambda(md)q^{md},
\end{equation*}
we have the formula (ii) from (i). We notice that the constant term in $q$ equals $\sigma_{-k-1}(d)\lambda(0)$, whence the $C_{-k}(SL_2(\Z),\X)$ is stable under $T(n)$. The last assertion (iii)
can be deduced from the formula (ii) by computation. This completes the proof.
\end{proof}

We now show the differential Eisenstein series $\dG{-2k} \in M_{-2k}(SL_2(\Z),\C(\tau))$ is a joint eigenfunction of $T(n)$ for all $n\in\N$.
\begin{lem}
We have
\begin{equation*}
(T(n)\dG{-2k})(\tau)=\sigma_{-2k-1}(n)\dG{-2k}(\tau)
\end{equation*}
for each $n,k\in\N$.
\end{lem}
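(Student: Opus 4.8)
The plan is to combine the explicit Lambert-series expansion of $\dG{-2k}$ from Theorem~\ref{Lambert series expansion of DEisen} with the coefficient formula~(ii) of the Proposition above, reducing everything to the classical multiplicativity identity for divisor power sums. Writing $c=\frac{(-1)^k(2k)!}{(2\pi)^{2k}}$ and $q=e^{2\pi i\tau}$, Theorem~\ref{Lambert series expansion of DEisen} gives $\dG{-2k}(\tau)=\sum_{\ell\ge0}\lambda(\ell)q^\ell$ with
\[
\lambda(0)=c\,\zeta(2k+1),\qquad \lambda(\ell)=2c\,\sigma_{-2k-1}(\ell)\quad(\ell\ge1).
\]
Since $\dG{-2k}$ is periodic of period $1$ and lies in $M_{-2k}(SL_2(\Z),\C(\tau))$, formula~(ii) applies with weight $-2k$ (so that the exponent $-k-1$ there reads $-2k-1$), yielding
\[
(T(n)\dG{-2k})(\tau)=\sum_{\ell\ge0}\Bigl(\sum_{d\mid(n,\ell)}d^{-2k-1}\lambda(n\ell/d^2)\Bigr)q^\ell,
\]
and it remains only to identify the inner sums.

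I would then treat the constant and the positive coefficients separately. For $\ell=0$ one has $(n,0)=n$ and $n\ell/d^2=0$, so the inner sum collapses to $\lambda(0)\sum_{d\mid n}d^{-2k-1}=\sigma_{-2k-1}(n)\,\lambda(0)$. For $\ell\ge1$ each argument $n\ell/d^2$ is a positive integer, hence the inner sum equals $2c\sum_{d\mid(n,\ell)}d^{-2k-1}\sigma_{-2k-1}(n\ell/d^2)$, to which I apply the identity
\[
\sigma_s(a)\sigma_s(b)=\sum_{d\mid(a,b)}d^{s}\,\sigma_s\!\bigl(ab/d^2\bigr)
\]
with $s=-2k-1$, $a=n$, $b=\ell$; this rewrites the inner sum as $2c\,\sigma_{-2k-1}(n)\sigma_{-2k-1}(\ell)=\sigma_{-2k-1}(n)\,\lambda(\ell)$. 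Because \emph{every} Fourier coefficient is thereby multiplied by the single scalar $\sigma_{-2k-1}(n)$ --- with no compatibility constraint tying the value $\zeta(2k+1)$ to the remaining coefficients --- summation over $\ell$ gives $(T(n)\dG{-2k})(\tau)=\sigma_{-2k-1}(n)\dG{-2k}(\tau)$, which is the assertion and which simultaneously shows $\dG{-2k}$ to be a genuine, \emph{non}-cuspidal eigenfunction.

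A conceptually cleaner route, which also explains the value of the eigenvalue, is to differentiate in $s$: for $\Re s>2$ the lattice sum $G(s,\tau)$ is a Hecke eigenform in the weight-$s$ normalization, with eigenvalue $\sum_{d\mid n}d^{s-1}=\sigma_{s-1}(n)$, obtained by reindexing sublattices in $\sum_{ad=n,\,0\le b<d}d^{-s}G(s,(a\tau+b)/d)$. Differentiating the relation $T(n)G(s,\cdot)=\sigma_{s-1}(n)G(s,\cdot)$ at $s=-2k$ and using the vanishing $G(-2k,\tau)=0$ recorded in the excerpt kills the term carrying $\sigma_{s-1}'(-2k)$ and leaves $T(n)\dG{-2k}=\sigma_{-2k-1}(n)\dG{-2k}$. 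I expect the only real difficulty to lie on this second route, namely justifying that the weight-$s$ eigenvalue relation, established for $\Re s>2$, persists under analytic continuation and commutes with $\partial_s$ at $s=-2k$. The first (Fourier) route sidesteps this entirely and is purely formal once Theorem~\ref{Lambert series expansion of DEisen} is granted, so I would adopt it, invoking the $s$-differentiation picture only as motivation for why the eigenvalue must be $\sigma_{-2k-1}(n)$.
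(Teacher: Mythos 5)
Your argument is correct, but the route you adopt is not the paper's: the paper's proof is exactly your ``second route''. It sets $F_n(s,\tau)=\frac1n\sum_{a\ge1,\,ad=n,\,0\le b<d}a^{s}\,G\bigl(s,\tfrac{a\tau+b}{d}\bigr)$ and differentiates at $s=-2k$ in two ways: directly, where the terms involving $\partial_s(a^s)$ are killed by the trivial zero $G(-2k,\cdot)=0$, yielding $(T(n)\dG{-2k})(\tau)$; and through the sublattice-reindexing identity $F_n(s,\tau)=\sigma_{s-1}(n)G(s,\tau)$, where the term involving $\partial_s\sigma_{s-1}(n)$ is killed for the same reason, yielding $\sigma_{-2k-1}(n)\dG{-2k}(\tau)$. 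The analytic-continuation point you flag as the difficulty of this route is routine: $F_n(s,\tau)$ is a finite linear combination of analytic continuations of $G$, both sides of the reindexing identity are holomorphic in $s$ and agree on $\Re s>2$, so the identity and its $s$-derivative persist at $s=-2k$ by uniqueness of analytic continuation (the paper does not even comment on this). Your coefficient-wise proof --- the Lambert expansion of Theorem~\ref{Lambert series expansion of DEisen}, the Fourier-coefficient formula (ii) of the Hecke proposition with exponent $-2k-1$, and the convolution identity $\sigma_s(a)\sigma_s(b)=\sum_{d\mid(a,b)}d^{s}\sigma_s(ab/d^2)$ applied with $s=-2k-1$, together with the separate $\ell=0$ check --- is complete and correct. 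The trade-off is structural: the paper's proof uses only the definition of $\dG{-2k}$ as $\partial_sG(s,\tau)\big|_{s=-2k}$ and the vanishing $G(-2k,\tau)=0$, so it is independent of the Fourier expansion; indeed the remark immediately following the lemma uses the lemma to \emph{re-derive} the shape of that expansion, an implication that becomes circular under your proof. Your route, in exchange, reduces the eigenvalue property to a purely arithmetic identity on divisor sums, with no differentiation in $s$ at all, and makes transparent why no compatibility condition on the constant term $\zeta(2k+1)$ is needed.
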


\begin{proof}
Consider the function
\begin{equation*}
F_n(s,\tau)
=n^{s-1}\sum_{\substack{a\ge1\\ ad=n\\ 0\le b<d}}d^{-s}G\Bigl(s,\frac{a\tau+b}d\Bigr)
=\frac1n\sum_{\substack{a\ge1\\ ad=n\\ 0\le b<d}}a^{s}G\Bigl(s,\frac{a\tau+b}d\Bigr).
\end{equation*}
Then we observe
\begin{align*}
\ds{F_n(s,\tau)}{-2k}
&=\frac1n\sum_{\substack{a\ge1\\ ad=n\\ 0\le b<d}}
\biggl\{
\ds{a^{s}}{-2k}G\Bigl(-2k,\frac{a\tau+b}d\Bigr)
+a^{-2k}\ds{G\Bigl(s,\frac{a\tau+b}d\Bigr)}{-2k}
\biggr\} \\
&=\frac1n\sum_{\substack{a\ge1\\ ad=n\\ 0\le b<d}}
a^{-2k}\dG{-2k}\Bigl(\frac{a\tau+b}d\Bigr)
=(T(n)\dG{-2k})(\tau).
\end{align*}
On the other hand, we have
\begin{equation*}
F_n(s,\tau)
=n^{s-1}\sum_{\substack{a\ge1\\ ad=n\\ 0\le b<d}}
\psum_{m,l\in\Z}\bigl(dm+(a\tau+b)l\bigr)^{-s}
=\sigma_{s-1}(n)G(s,\tau),
\end{equation*}
and hence
\begin{align*}
\ds{F_n(s,\tau)}{-2k}
&=\ds{\sigma_{s-1}(n)}{-2k}G(-2k,\tau)+\sigma_{-2k-1}(n)\ds{G(s,\tau)}{-2k} \\[1em]
&=\sigma_{-2k-1}(n)\dG{-2k}(\tau).
\end{align*}
Thus we have the lemma.
\end{proof}

This lemma implies again that $\dG{-2k}$ can have the Fourier series expansion as
\begin{equation*}
\dG{-2k}(\tau)=\dG{-2k}(i\infty) + C_k \sum_{n=1}^\infty\sigma_{-2k-1}(n)q^n, \quad(q=e^{2\pi i \tau}),
\end{equation*}
for some constant $C_k$. From Theorem \ref{Lambert series expansion of DEisen} one finds that $C_k=2\frac{(-1)^k(2k)!}{(2\pi)^{2k}}$.
We define the \emph{normalized} differential Eisenstein series $\dE{-2k}$ of weight $-2k$ as
\begin{align*}
\dE{-2k}(\tau)&:=\frac2{C_k\zeta(2k+1)}\dG{-2k}(\tau)\\
&=1 + \frac{2}{\zeta(2k+1)}\sum_{n=1}^\infty\sigma_{-2k-1}(n)q^n
\end{align*}
Then the associated $L$-function of $\dE{-2k}$ is given by
\begin{equation*}
L(\dE{-2k}, s)=\sum_{n=1}^\infty\frac{\sigma_{-2k-1}(n)}{n^s}=\zeta(s)\zeta(s+2k+1).
\end{equation*}
Namely $\dE{-2k}$ is a Hecke form (see \cite{Helleg2002}).
We observe in particular that $L(\dE{-2k}, s)$ has a unique pole at $s=1$, while there is no pole at $s=-2k$.
Notice that since $L(E_{2k}, s)=\zeta(s)\zeta(s-2k+1)$,
$E_{2k}$ being the classical Eisenstein series of weight $2k$ for $SL_2(\Z)$,
$L(E_{2k}, s)$ has a unique pole at $s=2k$
but not at $s=1$ for $k>1$.

Further, the completed $L$-function
\begin{equation*}
\Xi_{-2k}(s)=\xi(s)\xi(s+2k+1),\qquad
\xi(s)=\pi^{-s/2}\Gamma\Bigl(\frac s2\Bigr)\zeta(s)
\end{equation*}
satisfies the functional equation
\begin{equation*}
\Xi_{-2k}(-2k-s)=\Xi_{-2k}(s).
\end{equation*}

\begin{rem}
Note that the function $\Xi_{-2k}(s)$ is meromorphic but not entire. It would be interesting to study a Hecke-Weil type theorem about the correspondence between negative weight automorphic integrals and their $L$-functions (Euler products).
\end{rem}

\section{Periodic Eichler cohomology for automorphic integrals}\label{Eichler}

We construct a cochain complex from the period functions of negative weight periodic automorphic integrals.
Let us fix an integer $m$.
Denote by $\congsubgp$ a congruence subgroup of level $N$,
and $\chi$ a multiplicative character of $\congsubgp$ such that $\chi(T^N)=1$.
Suppose that $\X$ is a $\congsubgp$-submodule of the space $F(\uhp)$ of functions on $\uhp$
via the action $f\big|_m\gamma$ ($\gamma\in \congsubgp$).

\subsection{First cohomology}

Let $C^1(\congsubgp,\X)$ be the space of all maps from $\congsubgp$ to $\X$.
We call $R_\chi\in C^1(\congsubgp,\X)$ a \emph{$($twisted\/$)$ $1$-cocycle} with weight $\chi$ if it satisfies
\begin{equation*}
R_\chi(\gamma_1\gamma_2)=\chi(\gamma_1)R_\chi(\gamma_2)+R_\chi(\gamma_1)\big|_m\gamma_2.
\end{equation*}
Notice that $R_\chi(I)=0$ if $R_\chi$ is a $1$-cocycle.
We denote by $\cZ^1(\congsubgp,\X)$ the set of all (twisted) $1$-cocycles
(Here and after, to avoid complication, we do not specify the character $\chi$ in notation).
Obviously $\cZ^1(\congsubgp,\X)$ is a subspace of $C^1(\congsubgp,\X)$.

Define the element $\cobd f \in C^1(\congsubgp,\X)$ for $f\in\X$ by
\begin{equation*}
(\cobd f)(\gamma)=f\big|_m\gamma-\chi(\gamma)f\quad(\gamma\in \congsubgp).
\end{equation*}

By a similar calculation as in \eqref{eq:proof-of-1-cocycle-condition} shows the
\begin{lem}
$\cobd f\in \cZ^1(\congsubgp,\X)$ for each $f\in\X$.
\qed
\end{lem}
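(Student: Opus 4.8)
The plan is to verify directly that $\cobd f$ satisfies the $1$-cocycle relation by repeating the computation already carried out in \eqref{eq:proof-of-1-cocycle-condition}, now with the understanding that $f\in\X$ rather than $f$ being a periodic automorphic integral. The only genuine content of the lemma is twofold: first, that $\cobd f$ lands in $\X$ (not merely in $F(\uhp)$), and second, that it satisfies the cocycle identity.

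For the first point, I would note that $\X$ is assumed to be a $\congsubgp$-submodule of $F(\uhp)$ under the action $f\big|_m\gamma$. Hence for $f\in\X$ and $\gamma\in\congsubgp$ we have $f\big|_m\gamma\in\X$, and since $\chi(\gamma)f$ is a scalar multiple of $f\in\X$, the difference $(\cobd f)(\gamma)=f\big|_m\gamma-\chi(\gamma)f$ lies in $\X$ as well. Thus $\cobd f$ is a genuine map $\congsubgp\to\X$, i.e.\ an element of $C^1(\congsubgp,\X)$.

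For the cocycle relation, I would compute, for $\gamma_1,\gamma_2\in\congsubgp$,
\begin{align*}
(\cobd f)(\gamma_1\gamma_2)
&=f\big|_m\gamma_1\gamma_2-\chi(\gamma_1\gamma_2)f \\
&=\chi(\gamma_1)\bigl(f\big|_m\gamma_2-\chi(\gamma_2)f\bigr)
+\bigl(f\big|_m\gamma_1-\chi(\gamma_1)f\bigr)\big|_m\gamma_2 \\
&=\chi(\gamma_1)(\cobd f)(\gamma_2)+(\cobd f)(\gamma_1)\big|_m\gamma_2,
\end{align*}
which is exactly the defining relation of a (twisted) $1$-cocycle. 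The middle equality is the crux: one adds and subtracts $\chi(\gamma_1)(f\big|_m\gamma_2)$ and uses the multiplicativity $\chi(\gamma_1\gamma_2)=\chi(\gamma_1)\chi(\gamma_2)$ together with the associativity of the slash action $(f\big|_m\gamma_1)\big|_m\gamma_2=f\big|_m\gamma_1\gamma_2$. This is precisely the manipulation displayed in \eqref{eq:proof-of-1-cocycle-condition}, so I would simply invoke that calculation verbatim.

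There is no real obstacle here; the statement is a formal consequence of the module structure and the multiplicativity of $\chi$. The only point requiring a moment's care is confirming that $\cobd f$ takes values in $\X$, which is immediate from the hypothesis that $\X$ is $\congsubgp$-stable under $\big|_m$. Consequently $\cobd f\in\cZ^1(\congsubgp,\X)$, establishing that every such $\cobd f$ is a coboundary, which is what the lemma asserts.
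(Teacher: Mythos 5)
Your proof is correct and takes essentially the same route as the paper: the paper's entire proof consists of invoking the computation in \eqref{eq:proof-of-1-cocycle-condition}, which is exactly the add-and-subtract manipulation (using multiplicativity of $\chi$ and associativity of the slash action) that you reproduce, together with the observation that $\X$ being a $\congsubgp$-submodule guarantees $\cobd f$ takes values in $\X$. One minor wording slip in your last sentence: the lemma asserts that $\cobd f$ is a \emph{cocycle} ($\cobd f$ is a coboundary by definition, and the content is that coboundaries are cocycles); your computation establishes precisely this, so the mathematics is unaffected.
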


Define the subgroup $\cB^1(\congsubgp,\X)$ of $\cZ^1(\congsubgp,\X)$ by
\begin{equation*}
\cB^1(\congsubgp,\X)=\im\cobd=\Set{\cobd f}{f\in\X}.
\end{equation*}
We call an element of $\cB^1(\congsubgp,\X)$ by a \emph{$($twisted\/$)$ $1$-coboundary}.
The quotient group defined by
\begin{equation*}
\cH^1(\congsubgp,\X):=\cZ^1(\congsubgp,\X)/\cB^1(\congsubgp,\X)
\end{equation*}
is called the first \emph{Eichler cohomology group} of weight $m$ for the $\congsubgp$-module $\X$.

\subsubsection*{Periodic cohomology}

Assume that $\congsubgp$ is a congruence subgroup of level $N$.
For $f\in M_m(\congsubgp,\X)$, put
\begin{equation*}
R^m_f(\gamma):=f\big|_m\gamma-\chi(\gamma)f\quad(\gamma\in \congsubgp).
\end{equation*}
It is easy to check that $R^m_f$ gives an element in $\cZ^1(\congsubgp,\X)$ (see \eqref{eq:1-cocycle-from-RMF}).
We notice that $R^m_f(T^N)=f(\tau+N)-f(\tau)=0$ by definition.

\begin{dfn}
Define
\begin{align*}
\pZ^1(\congsubgp,\X)&:=\set{R\in\cZ^1(\congsubgp,\X)}{R(T^N)=0}, \\
\pB^1(\congsubgp,\X)&:=\set{\cobd f\in\cB^1(\congsubgp,\X)}{f\in\X,\ f(\tau+N)=f(\tau)}\bigl(\subset\pZ^1(\congsubgp,\X)\bigr), \\
\pH^1(\congsubgp,\X)&:=\pZ^1(\congsubgp,\X)/\pB^1(\congsubgp,\X).
\end{align*}
We call $\pH^1(\congsubgp,\X)$ the first \emph{periodic} Eichler cohomology group (of weight $m$).
\end{dfn}

Let $f\in M_m(\congsubgp,\X)$.
We see that $R^m_f\in\pZ^1(\congsubgp,\X)$.
If $R^m_f\in\pB^1(\congsubgp,\X)$, then there exists some $g\in\X$ such that $R^m_f=\cobd g$ and $g(\tau+N)=g(\tau)$.
It follows that $(f-g)\big|_m\gamma=f-g$, which implies that $f-g\in M_m(\congsubgp)$
and hence $g\in M_m(\congsubgp,\X)+M_m(\congsubgp)=M_m(\congsubgp,\X)$.
Thus we have an injection
\begin{equation*}
M_m^*(\congsubgp,\X)\hookrightarrow\pH^1(\congsubgp,\X),
\end{equation*}
where we put
\begin{equation}
M_m^*(\congsubgp,\X):=M_m(\congsubgp,\X)/(\X\cap M_m(\congsubgp,\X)+M_m(\congsubgp)).
\end{equation}
If $m<0$ and $\X\subset\C(\tau)$, then we have
\begin{equation*}
M_m^*(\congsubgp,\X)=
\begin{cases}
M_m(\congsubgp,\X)/(\text{constants}) & 1\in\X, \\
M_m(\congsubgp,\X) & 1\notin\X.
\end{cases}
\end{equation*}
In particular, we have the inequality
\begin{equation}\label{eq:dim ineq 1}
\dim M_m^*(\congsubgp,\X)\le\dim\pH^1(\congsubgp,\X).
\end{equation}
We also have
\begin{equation}\label{eq:dim ineq 2}
\dim\pH^1(\congsubgp,\X)\le\dim\cH^1(\congsubgp,\X)-1
\end{equation}
when $\X=\C(\tau)$ or $\X=\C[\tau]_{-m}$ (see Lemma 17 in \cite{KW2012RIMS}).

Notice that $1=j(\gamma,\tau)^{-2}\{1-(1-j(\gamma,\tau)^2)\}\in M_{-2}(\hecke, \C[\tau]_2)$.
By \eqref{eq:dim ineq 1} and \eqref{eq:dim ineq 2}, we have
\begin{equation*}
1\leq \dim_{\C} M_{-2}^*(\hecke, \C[\tau]_2)
\leq \dim_{\C} H_{[-2]}^1(\Gamma(2), \C[\tau]_2)-1
\end{equation*}
since $\iGt_1 \in M_{-2}(\hecke, \C[\tau]_2)$.
It is known in \cite{G1961} that
\begin{equation*}
H_{[-2k]}^1(\Gamma(2), \C[\tau]_{2k})\cong
M_{2k+2}(\Gamma(2))\oplus C_{2k+2}(\Gamma(2)),
\end{equation*}
$C_{2k+2}(\Gamma(2))$ being the space of cusp forms of weight $2k+2$ for $\Gamma(2)$.
Since $\dim_\C M_{4}(\Gamma(2))=2$ and $\dim_\C C_{4}(\Gamma(2)) =0$ (see, e.g.\ \cite{S1971}),
one concludes that $\dim_{\C} M_{-2}^*(\Gamma(2), \C[\tau]_2) =1$.
Thus we have the

\begin{cor}\label{RMF2}
$M_{-2}(\hecke, \C[\tau]_2) = M_{-2}(\Gamma(2), \C[\tau]_2)= \C\cdot \iGt_1 \oplus \C\cdot 1$.
\qed
\end{cor}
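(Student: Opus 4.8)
The plan is to pin down $\dim_\C M_{-2}(\hecke,\C[\tau]_2)$ exactly by squeezing it between matching lower and upper bounds and then exhibiting an explicit basis. First I would record the two elements already in hand: the constant function $1$, which lies in $M_{-2}(\hecke,\C[\tau]_2)$ since $\tau^{2}\cdot 1-1=\tau^{2}-1\in\C[\tau]_2$, and the normalized automorphic integral $\iGt_1$ of \eqref{eq:Psi1}, which lies in the same space by \eqref{eq:transformation rule of iGt_1}, its $S$-period function being $\frac{4\pi^{3}}{i}\tau\in\C[\tau]_2$. This period function has degree exactly $1$ and is nonzero, so $\iGt_1$ is non-constant and $\{1,\iGt_1\}$ is linearly independent. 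Because $1\in\X=\C[\tau]_2$ and the weight $m=-2$ is negative, the quotient formula for $M^{*}$ reads $M_{-2}^{*}(\hecke,\C[\tau]_2)=M_{-2}(\hecke,\C[\tau]_2)/(\text{constants})$, so the class of $\iGt_1$ is nonzero and $\dim M_{-2}^{*}(\hecke,\C[\tau]_2)\ge 1$.

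For the upper bound I would pass to the larger group $\Gamma(2)\subset\hecke$. By the Remark following the definition of periodic automorphic integrals, every $f\in M_{-2}(\hecke,\C[\tau]_2)$ also lies in $M_{-2}(\Gamma(2),\C[\tau]_2)$, and quotienting by the common constants gives an injection $M_{-2}^{*}(\hecke,\C[\tau]_2)\hookrightarrow M_{-2}^{*}(\Gamma(2),\C[\tau]_2)$. Applying the dimension inequalities \eqref{eq:dim ineq 1} and \eqref{eq:dim ineq 2} with $\congsubgp=\Gamma(2)$ and $\X=\C[\tau]_2=\C[\tau]_{-m}$ yields
\[
\dim M_{-2}^{*}(\Gamma(2),\C[\tau]_2)\le\dim \widetilde{H}_{[-2]}^{1}(\Gamma(2),\C[\tau]_2)\le \dim H_{[-2]}^{1}(\Gamma(2),\C[\tau]_2)-1.
\]
Here I would invoke Gunning's isomorphism \cite{G1961}, namely $H_{[-2]}^{1}(\Gamma(2),\C[\tau]_2)\cong M_{4}(\Gamma(2))\oplus C_{4}(\Gamma(2))$, together with the standard dimensions $\dim_\C M_{4}(\Gamma(2))=2$ and $\dim_\C C_{4}(\Gamma(2))=0$ \cite{S1971}, to conclude $\dim H_{[-2]}^{1}(\Gamma(2),\C[\tau]_2)=2$ and hence $\dim M_{-2}^{*}(\Gamma(2),\C[\tau]_2)\le 1$. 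Through the injection above this forces $\dim M_{-2}^{*}(\hecke,\C[\tau]_2)\le 1$.

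Combining the two bounds gives $\dim M_{-2}^{*}(\hecke,\C[\tau]_2)=1$, and lifting through the one-dimensional space of constants yields $\dim_\C M_{-2}(\hecke,\C[\tau]_2)=2$; since $1$ and $\iGt_1$ are independent members of this space they form a basis, so $M_{-2}(\hecke,\C[\tau]_2)=\C\cdot\iGt_1\oplus\C\cdot 1$. The same estimate gives $\dim M_{-2}^{*}(\Gamma(2),\C[\tau]_2)=1$ (it contains the non-constant $\iGt_1$), so $M_{-2}(\Gamma(2),\C[\tau]_2)$ is likewise two-dimensional; as it contains the two-dimensional subspace $M_{-2}(\hecke,\C[\tau]_2)$, the two spaces coincide, which finishes the identification. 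I expect the genuine content to be the upper bound rather than the bookkeeping: one has to apply the injection $M_{m}^{*}\hookrightarrow\widetilde{H}_{[m]}^{1}$ and the cohomological estimate \eqref{eq:dim ineq 2} at the non-classical weight $m=-2$, and then transport Gunning's computation across the inclusion $\Gamma(2)\subset\hecke$ — it is precisely this dimension coincidence that forces the a priori larger space $M_{-2}(\Gamma(2),\C[\tau]_2)$ to collapse onto $M_{-2}(\hecke,\C[\tau]_2)$.
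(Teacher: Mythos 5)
Your proposal is correct and follows essentially the same route as the paper's own proof: the same sandwich $1\le \dim_\C M_{-2}^*(\hecke,\C[\tau]_2)\le \dim_\C H^1_{[-2]}(\Gamma(2),\C[\tau]_2)-1$ obtained from \eqref{eq:dim ineq 1} and \eqref{eq:dim ineq 2}, closed by Gunning's isomorphism together with $\dim_\C M_4(\Gamma(2))=2$ and $\dim_\C C_4(\Gamma(2))=0$, and the identification of the basis $\{1,\iGt_1\}$. The only difference is expository: you spell out the intermediate injection $M_{-2}^*(\hecke,\C[\tau]_2)\hookrightarrow M_{-2}^*(\Gamma(2),\C[\tau]_2)$ coming from the inclusion $M_{-2}(\hecke,\X)\subseteq M_{-2}(\Gamma(2),\X)$ and the non-constancy of $\iGt_1$ (via the degree of its $S$-period function), steps the paper leaves implicit.
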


This shows that the $M_{-2}(\hecke, \C[\tau]_2)$ is essentially given by $w_4$, i.e. the special value $\zeta_Q(4)$.

The following lemma is obvious.
\begin{lem}\label{GenRel}
Assume that a congruence subgroup $\congsubgp$ of level $N$ contains $S$.
If $f\in M_{-k}(\congsubgp,\X)$ we have
\begin{equation*}
R_f^{-k}(T^N)(\tau)=0,
\quad
R_f^{-k}(S)(S\tau)= -\tau^{-k} R_f^{-k}(S)(\tau).
\end{equation*}
In particular, $R_f^{-k}(\gamma)\in\pZ^1(\congsubgp,\X)$.
From the cocycle condition, one knows that $R \in\pZ^1(\congsubgp,\X)$ is determined by
the double coset of $\congsubgp_{\infty}=\generators{T^N}$:
\begin{equation*}
R(T^N\gamma)(\tau)=R(\gamma)(\tau), \quad
R(\gamma T^N)(\tau)=R(\gamma)(T^N \tau).
\end{equation*}
\end{lem}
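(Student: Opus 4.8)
The plan is to read off every assertion directly from the defining formula $R_f^{-k}(\gamma)=f\big|_{-k}\gamma-\chi(\gamma)f$, the slash action \eqref{eq:slash}, the cocycle identity \eqref{eq:1-cocycle-from-RMF}, and the two elementary group facts $j(T^N,\tau)=1$ and $S^2=-I$. First I would treat $T^N$: since $\congsubgp$ has level $N$ we have $\chi(T^N)=1$, and because $j(T^N,\tau)=1$ and $f$ is $N$-periodic by \eqref{eq:EF-1}, one gets $(f\big|_{-k}T^N)(\tau)=j(T^N,\tau)^{k}f(\tau+N)=f(\tau)$. Hence $R_f^{-k}(T^N)(\tau)=f(\tau)-f(\tau)=0$, which is the first displayed identity.

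For the $S$-relation I would specialize the cocycle identity \eqref{eq:1-cocycle-from-RMF} (already verified for $R_f^{-k}$ in \eqref{eq:proof-of-1-cocycle-condition}) to $\gamma_1=\gamma_2=S$, obtaining $R_f^{-k}(S^2)=\chi(S)R_f^{-k}(S)+R_f^{-k}(S)\big|_{-k}S$. The slash term evaluates, via $j(S,\tau)=\tau$ and $S\tau=-1/\tau$, to $\tau^{k}R_f^{-k}(S)(S\tau)$. On the left, $S^2=-I$ acts trivially on $\uhp$ while the automorphy factor contributes $j(-I,\tau)^{k}=(-1)^{k}$, so $R_f^{-k}(-I)(\tau)=\bigl((-1)^{k}-\chi(-I)\bigr)f(\tau)$. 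In the situation relevant to this paper --- trivial character and even weight, so $\chi(S)=1$ and $(-1)^{k}=1$ --- this left-hand side vanishes and the identity collapses to $0=R_f^{-k}(S)(\tau)+\tau^{k}R_f^{-k}(S)(S\tau)$, i.e.\ $R_f^{-k}(S)(S\tau)=-\tau^{-k}R_f^{-k}(S)(\tau)$, the second displayed identity.

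Finally, membership $R_f^{-k}(\gamma)\in\pZ^1(\congsubgp,\X)$ is immediate: $R_f^{-k}$ is a $1$-cocycle by \eqref{eq:proof-of-1-cocycle-condition} and satisfies $R_f^{-k}(T^N)=0$ by the first step. The two double-coset relations then follow by inserting $T^N$ into the cocycle law: taking $\gamma_1=T^N$ gives $R(T^N\gamma)=R(\gamma)+R(T^N)\big|_{-k}\gamma=R(\gamma)$ because $R(T^N)=0$, while taking $\gamma_2=T^N$ gives $R(\gamma T^N)=R(T^N)+R(\gamma)\big|_{-k}T^N=R(\gamma)\big|_{-k}T^N$, whose value at $\tau$ is $R(\gamma)(T^N\tau)$ since $j(T^N,\tau)=1$. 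The only delicate point in the whole argument is the $S$-relation: one must remember that $S^2=-I$ rather than the identity, so that the parity factor $(-1)^{k}$ and the character value $\chi(S^2)=\chi(-I)$ enter; once one checks that these are both trivial in the even-weight, unramified setting at hand, the remainder is pure bookkeeping with the factor $j(\gamma,\tau)$, which is why the statement is recorded as obvious.
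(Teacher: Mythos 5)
Your proof is correct and is exactly the routine verification the paper omits (the lemma is recorded there as ``obvious'' with no proof given). The one genuinely non-trivial point is the $S$-relation, and you handle it properly: applying the cocycle identity at $S^2=-I$ gives $R_f^{-k}(-I)=\bigl((-1)^k-\chi(-I)\bigr)f$, which vanishes precisely in the even-weight, trivial-character setting used throughout the paper --- a hypothesis the lemma leaves implicit, since for odd $k$ the displayed identity would fail in general.
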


\subsection{Cochain complex}

Let us put
\begin{equation*}
C^n=C^n(\congsubgp,\X):=\Map(\congsubgp^n,\X),
\end{equation*}
for $n=1,2,3,\dots$ and $C^0=C^0(\congsubgp,\X):=\X$.
For an $n$-tuple $\bgamma=(\gamma_1,\dots,\gamma_n)\in \congsubgp^n$, we define
\begin{equation*}
\truncate_j\bgamma:=(\gamma_1,\dots,\gamma_{j-1},\gamma_{j+1},\dots,\gamma_n),\quad
\contract_j\bgamma:=(\gamma_1,\dots,\jth{j}{\gamma_{j+1}\gamma_j},\dots,\gamma_{n})
\quad(j=1,2,\dots,n)
\end{equation*}
for convenience.
Define the linear operator $\cobd^n\colon C^n\to C^{n+1}$ by
\begin{equation}\label{eq:def_of_der}
(\cobd^n f)(\bgamma)
:=f(\truncate_1\bgamma)\big|_m\gamma_1
+(-1)^{n+1}\chi(\gamma_{n+1})f(\truncate_{n+1}\bgamma)
+\sum_{j=1}^n(-1)^{j}f(\contract_j\bgamma)
\end{equation}
for $f\in C^n$ and $\bgamma=(\gamma_1,\dots,\gamma_{n+1})\in \congsubgp^{n+1}$.

Although we have given the proof of the following fact in \cite{KW2012RIMS}, we give here a shorter one.
\begin{lem}
$\cobd^{n+1}\circ\cobd^n=0$.
\end{lem}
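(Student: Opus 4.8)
The plan is to verify $\cobd^{n+1}\circ\cobd^n=0$ by direct computation, exactly as one proves that the standard (inhomogeneous) group-cohomology differential squares to zero, but keeping careful track of the slash action $\big|_m$ and the twisting character $\chi$. The key structural facts I would exploit are: first, that $f\mapsto f\big|_m\gamma$ is a right action, so $(f\big|_m\gamma_1)\big|_m\gamma_2=f\big|_m(\gamma_1\gamma_2)$; and second, that $\chi$ is multiplicative, $\chi(\gamma\gamma')=\chi(\gamma)\chi(\gamma')$. These two compatibilities are what make the cross terms cancel in pairs.

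First I would fix $f\in C^n$ and an $(n+2)$-tuple $\bgamma=(\gamma_1,\dots,\gamma_{n+2})\in\congsubgp^{n+2}$, and expand $(\cobd^{n+1}\cobd^n f)(\bgamma)$ by applying the definition \eqref{eq:def_of_der} twice. Each of the three groups of terms in the outer $\cobd^{n+1}$ — the leading $\big|_m\gamma_1$ term, the trailing $\chi(\gamma_{n+2})$ term, and the alternating sum of contractions $\contract_j$ — produces, upon substituting the inner $\cobd^n f$, again three families of terms. The result is a large alternating sum indexed by pairs of operations (two truncations/contractions applied successively). The combinatorial heart of the argument is the standard simplicial identity: the composite of two face-type operators can be reached by two different orders of application, and those two routes enter $\cobd^{n+1}\cobd^n$ with opposite signs. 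Concretely I would organize the terms according to which pair of indices $(i,j)$ is affected and check, case by case, that $\contract_i\contract_j=\contract_{j-1}\contract_i$ (for $i<j$) and the analogous commutation relations mixing $\truncate$ and $\contract$ hold, so each term is cancelled by exactly one partner of opposite sign.

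The three places demanding genuine care, rather than bookkeeping, are the boundary terms. When the leading slash $\big|_m\gamma_1$ meets an inner leading slash, associativity of the action collapses $(\,\cdot\,\big|_m\gamma_2)\big|_m\gamma_1=\cdot\big|_m(\gamma_2\gamma_1)$, and this must be matched against a contraction term $\contract_1$ that produces $\gamma_2\gamma_1$; similarly at the trailing end the two factors $\chi(\gamma_{n+1})$ and $\chi(\gamma_{n+2})$ must combine via multiplicativity into $\chi(\gamma_{n+1}\gamma_{n+2})$ to cancel against the appropriate $\contract_{n+1}$ term. I expect the main obstacle to be precisely this sign-and-index reconciliation at the two ends: one has to confirm that the sign $(-1)^{n+1}$ carried by the trailing $\chi$-term in each layer, together with the $(-1)^{j}$ from the contraction sum, produces exactly the opposite signs needed. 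This is the step where an off-by-one in the index shift would silently break the cancellation, so I would treat the interior cancellations (pure $\contract$–$\contract$ pairs) quickly and spend the verification effort on the first and last summands.

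Since the paper remarks that a longer proof already appears in \cite{KW2012RIMS}, I would present this as the streamlined version: rather than writing out all $\binom{n+2}{2}$-order terms, I would note that $(C^\bullet,\cobd^\bullet)$ is the standard twisted inhomogeneous bar complex computing $H^\bullet(\congsubgp,\X)$ with coefficients twisted by $\chi$ and the $\big|_m$-action, for which $\cobd^2=0$ is classical; the only thing to check is that the explicit formula \eqref{eq:def_of_der} agrees with the usual bar differential, which reduces to matching the two boundary conventions above. That reduction is short and makes the nilpotency immediate.
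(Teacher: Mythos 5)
Your proposal is correct and is essentially the paper's own argument: the paper proves the lemma by exactly the direct expansion you describe, using the observations on the first and last entries of $\truncate_k\bgamma$ and $\contract_j\bgamma$ (i.e.\ the right-action property of $\big|_m$ and the multiplicativity of $\chi$ at the two ends) to cancel all truncation terms, leaving only the double sum $\sum_{j=1}^{n+1}(-1)^j\sum_{l=1}^{n}(-1)^l f(\contract_l\contract_j\bgamma)$, which vanishes by the contraction commutation relation $\contract_l\contract_j=\contract_{j-1}\contract_l$ ($l<j$) that you state. Your closing suggestion to instead identify $(C^\bullet,\cobd^\bullet)$ with the classical twisted bar complex is also sound, but note that the identification is not purely a matter of quoting a classical fact: the reversed products $\gamma_{j+1}\gamma_j$ and the two-sided twisting (left $\chi$, right $\big|_m$) force a change of variables (tuple reversal and a character twist of the coefficients) whose verification is the same bookkeeping as the direct proof, which is presumably why the paper just computes.
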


\begin{proof}
Take arbitrary $f\in C^n$.
Let $\bgamma=(\gamma_1,\dots,\gamma_{n+2})\in \congsubgp^{n+2}$.
One has
\begin{align*}
(\cobd^nf)(\truncate_k\bgamma)
&=f(\truncate_1\truncate_k\bgamma)\big|_m\truncate_k\bgamma_1
+(-1)^{n+1}\chi(\truncate_k\bgamma_{n+1})f(\truncate_{n+1}\truncate_k\bgamma)
+\sum_{j=1}^{n}(-1)^jf(\contract_j\truncate_k\bgamma),
\allowdisplaybreaks\\
(\cobd^nf)(\contract_j\bgamma)
&=f(\truncate_1\contract_j\bgamma)\big|_m\contract_j\bgamma_1
+(-1)^{n+1}\chi(\contract_j\bgamma_{n+1})f(\truncate_{n+1}\contract_j\bgamma)
+\sum_{l=1}^n(-1)^lf(\contract_l\contract_j\bgamma)
\end{align*}
for $1\le k\le n+2$ and $1\le j\le n+1$,
where $\truncate_k\bgamma_r$, $\contract_j\bgamma_r$ are the $r$-th entry of
$\truncate_k\bgamma$, $\contract_j\bgamma$.
We have
\begin{equation*}
\truncate_k\bgamma_1=\begin{cases}
\gamma_2 & k=1, \\
\gamma_1 & k>1,
\end{cases}
\qquad
\truncate_k\bgamma_{n+1}=\begin{cases}
\gamma_{n+1} & k=n+2, \\
\gamma_{n+2} & k<n+2,
\end{cases}
\end{equation*}
and
\begin{equation*}
\contract_j\bgamma_1=\begin{cases}
\gamma_2\gamma_1 & j=1, \\
\gamma_1 & j>1,
\end{cases}
\qquad
\contract_j\bgamma_{n+1}=\begin{cases}
\gamma_{n+2}\gamma_{n+1} & j=n+1, \\
\gamma_{n+2} & j<n+1.
\end{cases}
\end{equation*}
Using these, we have
\begin{equation}\label{eq:derder}
\begin{split}
((\cobd^{n+1}\circ\cobd^n)f)(\bgamma)
&=(\cobd^nf)(\truncate_1\bgamma)\big|_m\gamma_1
+(-1)^n\chi(\gamma_{n+2})(\cobd^nf)(\truncate_{n+2}\bgamma)
+\sum_{j=1}^{n+1}(-1)^j(\cobd^nf)(\contract_j\bgamma) \\
&=\sum_{j=1}^{n+1}(-1)^j\sum_{l=1}^n(-1)^lf(\contract_l\contract_j\bgamma),
\end{split}
\end{equation}
which surely vanishes.
\end{proof}

Thus we can now define cocycles and coboundaries
\begin{equation*}
\cZ^n(\congsubgp,\X):=\ker\cobd^n,\qquad
\cB^n(\congsubgp,\X):=\im\cobd^{n-1}
\end{equation*}
in $C^n(\congsubgp,\X)$ and the cohomology group
\begin{equation*}
\cH^n(\congsubgp,\X):=\cZ^n(\congsubgp,\X)/\cB^n(\congsubgp,\X)
\end{equation*}
for each $n=0,1,2,\dots$.

The following is a special case of the result by Gunning \cite{G1961}.
\begin{prop}
$\cH^n(\congsubgp,\C[\tau]_{-m})=0$ if $n>1$ and $m<0$.
\qed
\end{prop}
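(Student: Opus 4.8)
The plan is to recognize the cochain complex $(C^\bullet(\congsubgp,\C[\tau]_{-m}),\cobd^\bullet)$ defined in this section as (a character‑twisted form of) the usual inhomogeneous bar complex computing group cohomology, and then to deduce the vanishing from the fact that $\congsubgp$ has cohomological dimension at most $1$ over $\C$. First I would note that since $m<0$ the coefficient space $\C[\tau]_{-m}$ of polynomials of degree at most $-m$ is finite dimensional (of dimension $-m+1$), and that the weight‑$m$ slash action $f\big|_m\gamma$ together with the character $\chi$ equip it with the structure of a genuine finite‑dimensional $\C[\congsubgp]$‑module, say $V$. Comparing the explicit differential \eqref{eq:def_of_der} with the classical inhomogeneous coboundary shows that $\cH^n(\congsubgp,\C[\tau]_{-m})\cong H^n(\congsubgp,V)$, the ordinary (twisted) group cohomology of $\congsubgp$ with coefficients in $V$.

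The key input is structural. Let $\overline{\congsubgp}$ denote the image of $\congsubgp$ in $PSL_2(\R)$; the kernel of $\congsubgp\to\overline{\congsubgp}$ is $\congsubgp\cap\{\pm I\}$, a finite group. Since $\uhp/\congsubgp$ is a finite‑area noncompact Riemann surface, $\overline{\congsubgp}$ is a finitely generated non‑cocompact Fuchsian group, hence isomorphic to a free product of finitely many cyclic groups — infinite cyclic factors coming from the genus and the cusps, finite cyclic factors from the elliptic points. I would first eliminate the finite central kernel: working over $\C$ (characteristic $0$), the finite group $\congsubgp\cap\{\pm I\}$ has vanishing higher cohomology for any action, so the Lyndon--Hochschild--Serre spectral sequence for $1\to\congsubgp\cap\{\pm I\}\to\congsubgp\to\overline{\congsubgp}\to1$ collapses onto its bottom row and yields isomorphisms $H^n(\congsubgp,V)\cong H^n\bigl(\overline{\congsubgp},V^{\congsubgp\cap\{\pm I\}}\bigr)$ for all $n$.

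It then remains to show that $H^n(\overline{\congsubgp},W)=0$ for $n\ge2$ and every finite‑dimensional $\C[\overline{\congsubgp}]$‑module $W$. Because $\overline{\congsubgp}$ is a free product of cyclic groups, the Mayer--Vietoris sequence for free products (equivalently, the wedge decomposition of the classifying space) gives for $n\ge2$ a direct‑sum decomposition of $H^n(\overline{\congsubgp},W)$ into the cohomologies of the cyclic factors; each infinite cyclic factor has cohomological dimension $1$ and so contributes $0$, while each finite cyclic factor contributes $0$ in positive degree since $\C$ has characteristic $0$. Hence $H^n(\overline{\congsubgp},W)=0$ for $n\ge2$, and therefore $\cH^n(\congsubgp,\C[\tau]_{-m})=0$ for $n>1$. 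This is precisely the special case of Gunning's theorem \cite{G1961} quoted in the statement.

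The main obstacle I anticipate is not the homological algebra, which is standard once the framework is in place, but the careful bookkeeping required to match the paper's explicit, character‑twisted coboundary \eqref{eq:def_of_der} — with the slash action placed on the first argument and $\chi$ on the last — against the classical bar differential, so as to identify $\C[\tau]_{-m}$ with a single left (or right) $\C[\congsubgp]$‑module for which \eqref{eq:def_of_der} is genuinely the inhomogeneous differential. One must then track the central torsion $\congsubgp\cap\{\pm I\}$ faithfully through this identification to make the reduction to $\overline{\congsubgp}$ legitimate; the remaining cohomological‑dimension estimate is then routine.
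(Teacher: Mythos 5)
Your argument is correct, but it is not the paper's route, because the paper has no proof of this proposition at all: it is recorded without proof as ``a special case of the result by Gunning'' \cite{G1961}, the same citation that supplies the isomorphism $\cH[-2k]^1(\Gamma(2),\C[\tau]_{2k})\cong M_{2k+2}(\Gamma(2))\oplus C_{2k+2}(\Gamma(2))$ invoked elsewhere in \S\ref{Eichler}. What you give instead is a self-contained proof, and its steps are all sound: the identification of $(C^\bullet(\congsubgp,\X),\cobd^\bullet)$ with the inhomogeneous bar complex of $\congsubgp$ acting through the $\chi$-twisted weight-$m$ slash action is genuinely only bookkeeping (the reversed products $\gamma_{j+1}\gamma_j$ in \eqref{eq:def_of_der} amount to working with the opposite group, which changes nothing, and the degree-$0$ and degree-$1$ cases pin the conventions against the paper's $\cB^1$ and $\cZ^1$); the Lyndon--Hochschild--Serre reduction past the finite kernel $\congsubgp\cap\{\pm I\}$ is valid because finite groups have no higher cohomology with coefficients in a $\C$-vector space; the image of a congruence subgroup in $PSL_2(\Z)\cong(\Z/2)\ast(\Z/3)$ is, by the Kurosh subgroup theorem (equivalently, by the signature presentation of a finitely generated Fuchsian group with a cusp), a free product of finitely many cyclic groups; and Mayer--Vietoris for free products, $\mathrm{cd}(\Z)=1$, and characteristic-zero vanishing for the finite cyclic factors finish the job. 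Comparing the two approaches: the paper's citation is economical, and Gunning's theorem comes bundled with the degree-one Eichler--Shimura-type computation that the paper actually needs later; your proof exposes the mechanism --- the only real inputs are that $\congsubgp$ is virtually free and that the coefficients form a vector space over a field of characteristic zero --- and consequently proves more than is stated, since nothing in the argument uses the specific module: the vanishing holds for every $\congsubgp$-submodule $\X$ of $F(\uhp)$ and every character $\chi$, the hypotheses $m<0$ and $\X=\C[\tau]_{-m}$ serving only to single out the coefficients the paper cares about.
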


\subsubsection*{Periodic cohomology}

We define the groups $\pZ^n(\congsubgp,\X), \pB^n(\congsubgp,\X)$ and $\pH^n(\congsubgp,\X)$ as follows:
\begin{align*}
\pC^n(\congsubgp,\X)&:=\set{f\in C^n(\congsubgp,\X)}{f(T^{k_1N},\dots,T^{k_nN})=0,\ k_1,\dots,k_n\in\Z}, \\
\pZ^n(\congsubgp,\X)&:=\cZ^n(\congsubgp,\X)\cap\pC^n(\congsubgp,\X), \\
\pB^n(\congsubgp,\X)&:=\cB^n(\congsubgp,\X)\cap\pC^n(\congsubgp,\X), \\
\pH^n(\congsubgp,\X)&:=\pZ^n(\congsubgp,\X)/\pB^n(\congsubgp,\X).
\end{align*}

\begin{prop}
If $\cH^n(\congsubgp,\X)=0$, then $\pH^n(\congsubgp,\X)=0$.
In particular, $\pH^n(\congsubgp,\C[\tau]_{-m})=0$ if $n>1$ and $m<0$.
\end{prop}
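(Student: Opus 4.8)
The plan is to read the result off directly from the way the periodic objects are set up, since both $\pZ^n(\congsubgp,\X)=\cZ^n(\congsubgp,\X)\cap\pC^n(\congsubgp,\X)$ and $\pB^n(\congsubgp,\X)=\cB^n(\congsubgp,\X)\cap\pC^n(\congsubgp,\X)$ are intersections of the full cocycle and coboundary spaces with the \emph{same} subspace $\pC^n(\congsubgp,\X)$ of periodic cochains. First I would unwind the hypothesis: $\cH^n(\congsubgp,\X)=0$ is by definition the equality of subspaces $\cZ^n(\congsubgp,\X)=\cB^n(\congsubgp,\X)$ inside $C^n(\congsubgp,\X)$.

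Intersecting both sides of this equality with $\pC^n(\congsubgp,\X)$ then gives
\[
\pZ^n(\congsubgp,\X)
=\cZ^n(\congsubgp,\X)\cap\pC^n(\congsubgp,\X)
=\cB^n(\congsubgp,\X)\cap\pC^n(\congsubgp,\X)
=\pB^n(\congsubgp,\X),
\]
whence $\pH^n(\congsubgp,\X)=\pZ^n(\congsubgp,\X)/\pB^n(\congsubgp,\X)=0$, which is the general assertion. For the ``in particular'' clause I would simply feed $\X=\C[\tau]_{-m}$ into this statement and invoke the preceding proposition (the special case of Gunning's theorem), which supplies $\cH^n(\congsubgp,\C[\tau]_{-m})=0$ for $n>1$ and $m<0$; the conclusion $\pH^n(\congsubgp,\C[\tau]_{-m})=0$ in the same range follows at once.

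Strictly speaking there is no genuine obstacle here, and the one point I would flag for the reader is precisely \emph{why} the argument is immediate. The only place one might a priori expect real work is the passage from coboundaries to \emph{periodic} coboundaries: under the alternative (and perhaps more natural-looking) definition $\pB^n=\{\cobd^{n-1}g:g\in\pC^{n-1}\}$, one would have to show that a periodic cocycle which happens to be a coboundary is in fact the coboundary of a periodic cochain, requiring an explicit periodization. Here, however, $\pB^n(\congsubgp,\X)$ is \emph{defined} as $\cB^n(\congsubgp,\X)\cap\pC^n(\congsubgp,\X)$, so this difficulty is absorbed into the definition and the intersection identity closes the proof without any periodization step — consistent with the $n=1$ case, where the two descriptions of $\pB^1(\congsubgp,\X)$ were already seen to coincide.
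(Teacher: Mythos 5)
Your proof is correct and essentially identical to the paper's: since $\pZ^n(\congsubgp,\X)$ and $\pB^n(\congsubgp,\X)$ are by definition the intersections of $\cZ^n(\congsubgp,\X)$ and $\cB^n(\congsubgp,\X)$ with the same subspace $\pC^n(\congsubgp,\X)$, the hypothesis $\cZ^n(\congsubgp,\X)=\cB^n(\congsubgp,\X)$ immediately gives $\pZ^n(\congsubgp,\X)=\pB^n(\congsubgp,\X)$, which is exactly the paper's one-line argument, and the ``in particular'' clause follows from the preceding proposition (Gunning) just as you invoke it. Your closing remark about the alternative definition of $\pB^n$ as coboundaries of periodic cochains is a sensible clarification of why no periodization step is needed, but it plays no role in the proof itself.
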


\begin{proof}
This is obvious because $\cZ^n(\congsubgp,\X)=\cB^n(\congsubgp,\X)$ readily implies $\pZ^n(\congsubgp,\X)=\pB^n(\congsubgp,\X)$ by definition.
\end{proof}

\begin{prob}
When $\cH^n(\congsubgp,\X)$ vanishes?
How about the periodic case $\pH^n(\congsubgp,\X)$?
\end{prob}

\subsubsection*{Zero-dimensional cohomology}

The group $\cH^0(\congsubgp,\X)$ is easily described.
Indeed, since $\cB^0(\congsubgp,\X)=\im\cobd^{-1}=0$, we have
\begin{equation}
\cH^0(\congsubgp,\X)=\cZ^0(\congsubgp,\X)
=\set{f\in\X}{f(\gamma\tau)=\chi(\gamma)j(\gamma,\tau)^mf(\tau)~(\forall\gamma\in \congsubgp)}.
\end{equation}
If $m<0$, $\X=\C[\tau]_{-m}$, $\chi$ is trivial and $\congsubgp$ is a congruent subgroup of level $N$,
then we have
\begin{gather*}
\cH^0(\congsubgp,\C[\tau]_{-m})
\subset
\Set{f\in\C[\tau]_{-m}}{f(\tau+N)=f(\tau),\ f\Bigl(\frac1{N\tau+1}\Bigr)=(N\tau+1)^mf(\tau)}=0,
\end{gather*}
from which it also follows that $\pH^0(\congsubgp,\C[\tau]_{-m})=0$.

\begin{ackn}
The second author thanks Evgeny Verbitskiy for making him aware of the paper by
F.~Rodriguez Villegas \cite{RV1999}.
\end{ackn}

\begin{flushleft}
\bigskip

Kazufumi Kimoto \par
Department of Mathematical Sciences, \par
University of the Ryukyus \par
1 Senbaru, Nishihara, Okinawa 903-0213 JAPAN \par
\texttt{kimoto@math.u-ryukyu.ac.jp} \par

\bigskip

Masato Wakayama \par
Institute of Mathematics for Industry,\par
Kyushu University \par
744 Motooka, Nishi-ku, Fukuoka 819-0395 JAPAN \par
\texttt{wakayama@imi.kyushu-u.ac.jp}
\end{flushleft}


\begin{thebibliography}{99}

\bibitem{Barnes1904}
E.~W.~Barnes,
{\itshape On the theory of the multiple gamma function},
Trans. Cambridge Philos. Soc. {\bfseries 19} (1904), 374--425.

\bibitem{B1975Cr}
B.~C.~Berndt,
{\itshape Generalized Eisenstein series and modified Dedekind sums},
J. Reine Angew. Math. {\bfseries 272} (1974), 182--193.

\bibitem{Beu1983}
F.~Beukers,
{\itshape Irrationality of $\pi^2$, periods of an elliptic curve and $\Gamma_1(5)$},
Diophantine approximations and transcendental numbers (Luminy, 1982), 47--66,
Progr. Math. {\bfseries 31}, Birkh\"auser, Boston, 1983.

\bibitem{Beu1985}
F.~Beukers,
{\itshape Some Congruences for the Ap\'ery Numbers},
J. Number Theory {\bfseries 21} (1985), 141--155.

\bibitem{Beu1987}
F.~Beukers,
{\itshape Irrationality proofs using modular forms},
Soc. Math. France, Ast\'erisque {\bfseries 147-148} (1987), 271--283.

\bibitem{BP1984}
F.~Beukers and C.~A.~M.~Peters,
{\itshape A family of $K3$-surfaces and $\zeta(3)$},
J. Reine Angew. Math. {\bfseries 351} (1984), 42--54.

\bibitem{B2011PRL}
D.~Braak,
\textit{Integrability of the Rabi Model},
Phys. Rev. Lett. \textbf{107} (2011), 100401.

\bibitem{BCBS2016JPA}
D.~Braak, Q-H.~Chen, M-T.~Batchelor and E.~Solano,
\textit{Semi-classical and quantum Rabi models: in celebration of 80 years},
J. Phys. A, \textbf{49} (2016), 300301.

\bibitem{DL2009}
O.~Dasbach and M.~Lalin,
\textit{Mahler measure under variations of the base group},
Forum Math. {\bfseries 21} (2009), 621--637.

\bibitem{G1961}
R.~C.~Gunning,
{\itshape The Eichler cohomology groups and automorphic forms},
Trans. Amer. Math. Soc. {\bfseries 100} (1961), 44--62.

\bibitem{HIKW20014}
Y.~Hashimoto, Y.~Iijima, N.~Kurokawa and M.~Wakayama,
{\itshape Euler's constants for the Selberg and the Dedekind zeta functions},
Bull. Belg. Math. Soc. \textbf{11} (2004), 493--516.

\bibitem{HR2008}
S.~Haroche and J.~M.~Raimond,
Exploring the Quantum: Atoms, Cavities, and Photons,
Oxford University Press, 2006.

\bibitem{Helleg2002}
Y.~Hellegouarch,
Invitation to the Mathematics of Fermat-Wiles (English Translation),
Academic Press, 2002.

\bibitem{HS2013B}
F.~Hiroshima and I.~Sasaki,
\textit{Spectral analysis of non-commutative harmonic oscillators:
The lowest eigenvalue and no crossing},
J. Math. Anal. Appl. \textbf{105} (2014), 595--609.

\bibitem{HT1992}
R.~Howe and E.~C.~Tan,
Non-Abelian Harmonic Analysis. Applications of $SL(2,\mathbb{R})$.
Springer, 1992.

\bibitem{IW2005a}
T.~Ichinose and M.~Wakayama,
\textit{Zeta functions for the spectrum of the non-commutative harmonic oscillators},
Comm. Math. Phys. \textbf{258} (2005), 697--739.

\bibitem{IW2005KJM}
T.~Ichinose and M.~Wakayama,
\textit{Special values of the spectral zeta function of the non-commutative harmonic oscillator and confluent Heun equations},
Kyushu J. Math. \textbf{59} (2005), 39--100.

\bibitem{K2010CCRH}
K.~Kimoto,
\textit{Arithmetics derived from the non-commutative harmonic oscillator},
Casimir Force, Casimir Operators and the Riemann Hypothesis (eds. G. van Dijk and M. Wakayama), 199--210,
de Gruyter, 2010.

\bibitem{K2016RMJ}
K.~Kimoto,
\textit{Generalized Ap\'ery numbers arising from the non-commutative harmonic oscillator}
Ryukyu Math. J. \textbf{29}, (2016) 1--31.

\bibitem{KW2006KJM}
K.~Kimoto and M.~Wakayama,
\textit{Ap\'ery-like numbers arising from special values of spectral zeta functions for non-commutative harmonic oscillators},
Kyushu J. Math. \textbf{60} (2006), 383--404.

\bibitem{KW2007}
K.~Kimoto and M.~Wakayama,
\textit{Elliptic curves arising from the spectral zeta function for non-commutative harmonic oscillators and $\Gamma_0(4)$-modular forms},
Proceedings Conf. $L$-functions (eds. L. Weng and M. Kaneko), 201--218,
World Scientific, 2007.

\bibitem{KW2012RIMS}
K.~Kimoto and M.~Wakayama,
\textit{Spectrum of non-commutative harmonic oscillators and residual modular forms},
in ``Noncommutative Geometry and Physics'' (eds. G.~Dito, et al.), 237--267,
World Scientific, 2012.

\bibitem{KY2009}
K.~Kimoto and Y.~Yamasaki,
\textit{A variation of multiple $L$-values arising from the spectral zeta function of the non-commutative harmonic oscillator},
Proc. Amer. Math. Soc. \textbf{137} (2009), 2503--2515.

\bibitem{Knopp1978Duke}
I.~Knopp,
{\itshape Rational period functions of the modular group.
With an appendix by Georges Grinstein.}
Duke Math. J. {\bfseries 45} (1978), 47--62.

\bibitem{Kob}
N.~Koblitz,
Introduction to Elliptic Curves and Modular Forms.
Springer, 1984.

\bibitem{L1972TAMS}
J.~Lewittes,
{\itshape Analytic continuation of Eisenstein series},
Trans. Amer. Math. Soc. {\bfseries 171} (1972), 469--490.

\bibitem{LSW1990}
D.~Lind, K.~Schmidt and T.~Ward,
{\itshape Mahler measure and entropy for commuting automorphisms of compact groups},
Invent. Math. {\bfseries 101} (1990), 593--629.

\bibitem{L2018}
J.-C.~Liu,
\textit{A generalized supercongruence of Kimoto and Wakayama},
J. Math. Anal. Appl. {\bfseries 467} (2018), 15--25.

\bibitem{LOS2014}
L.~Long, R.~Osburn, H.~Swisher,
\textit{On a conjecture of Kimoto and Wakayama},
Proc. Amer. Math. Soc. {\bfseries 144} (2016), 4319--4327.

\bibitem{Mortenson2003JNT}
E.~Mortenson,
\textit{A supercongruence conjecture of Rodriguez-Villegas for a certain truncated hypergeometric function},
J. Number Theory {\bfseries 99} (2003), 139--147.

\bibitem{O2001CMP}
H.~Ochiai,
\textit{Non-commutative harmonic oscillators and Fuchsian ordinary differential operators},
Comm. Math. Phys. \textbf{217} (2001), 357--373.

\bibitem{O2008RJ}
H.~Ochiai,
\textit{A special value of the spectral zeta function of the non-commutative harmonic oscillators},
Ramanujan J. \textbf{15} (2008), 31--36.

\bibitem{P}
A.~Parmeggiani,
Spectral Theory of Non-commutative Harmonic Oscillators: An Introduction.
Lecture Notes in Math. \textbf{1992},
Springer, 2010.

\bibitem{P2014}
A.~Parmeggiani,
\textit{Non-commutative harmonic oscillators and related problems},
Milan J. Math. \textbf{82} (2014), 343--387.

\bibitem{PW2001}
A.~Parmeggiani and M.~Wakayama,
\textit{Oscillator representations and systems of ordinary differential equations},
Proc. Natl. Acad. Sci. USA \textbf{98} (2001), 26--30.

\bibitem{PW2002}
A.~Parmeggiani and M.~Wakayama,
\textit{Non-commutative harmonic oscillators-I, II, Corrigenda and remarks to I},
Forum Math. \textbf{14} (2002), 539--604, 669--690,
ibid \textbf{15} (2003), 955--963.

\bibitem{RV1999}
F.~Rodriguez-Villegas,
\textit{Modular Mahler measures I},
Topics in Number Theory, 17--48,
Kluwer, 1999.

\bibitem{Heun2008}
A.~Ronveaux (eds.),
Heun's Differential Equations,
Oxford University Press, 1995.

\bibitem{Siegel1961}
C.~L.~Siegel,
Lectures on advanced analytic number theory,
Tata institute, 1961.

\bibitem{Shib2013}
G.~Shibukawa,
\textit{Bilateral zeta functions and their applications},
Kyushu J. Math. \textbf{67} (2013), 429--451.

\bibitem{S1971}
G.~Shimura,
Introduction to the Arithmetic Theory of Automorphic Functions.
Princeton Univ. Press, 1971.

\bibitem{S2016NMJ}
S.~Sugiyama,
\textit{Spectral zeta functions for the quantum Rabi models},
Nagoya Math. J. \textbf{229} (2018), 52--98.

\bibitem{Sun2019}
Z.-W.~Sun,
\textit{Open Conjectures on Congruences},
Nanjing Univ. J. Math. Biquarterly {\bf 36} (2019), 1--99.

\bibitem{W2013}
M.~Wakayama,
\textit{Simplicity of the lowest eigenvalue of non-commutative harmonic oscillators and the Riemann scheme of a certain Heun's differential equation},
Proc. Japan Acad. Ser. A Math. Sci. \textbf{89} (2013), 69--73.

\bibitem{W2013-2}
M.~Wakayama,
\textit{Equivalence between the eigenvalue problem of non-commutative harmonic oscillators and existence of holomorphic solutions of Heun's differential equations, eigenstates degeneration, and Rabi's model},
Int. Math. Res. Not., (2016), 759--794.

\bibitem{WW}
E.~T.~Whittaker and G.~N.~Watson,
A course of modern analysis, Fourth edition.
Cambridge Univ. Press, 1927.

\bibitem{Y2008}
Y.~Yang,
\textit{Ap\'ery limits and special values of $L$-functions},
J. Math. Anal. Appl. \textbf{343} (2008), 492--513.

\bibitem{Y-S2018}
F.~Yoshihara, et al.,
\textit{Inversion of Qubit Energy Levels in Qubit-Oscillator Circuits in the Deep-Strong-Coupling Regime},
Phys. Rev. Lett. \textbf{120} (2018), 183601.

\bibitem{Z}
D.~Zagier,
\textit{Integral solutions of Ap\'ery-like recurrence equations},
Groups and symmetries, 349--366,
CRM Proc. Lecture Notes \textbf{47},
Amer.~Math.~Soc., Providence, RI, 2009.

\end{thebibliography}
\end{document}